\documentclass[11pt]{amsart}
\usepackage{amsmath, amsthm, amssymb, amsfonts}
\usepackage[normalem]{ulem}
\usepackage{hyperref}
\usepackage{verbatim} 

\usepackage{mathtools}

\usepackage{tikz-cd}
\usetikzlibrary{arrows}
\usepackage{lmodern}
\usetikzlibrary{decorations.pathmorphing}
\tikzset{snake it/.style={decorate, decoration=snake}}

\theoremstyle{plain}
\newtheorem{thm}{Theorem}
\newtheorem{cor}{Corollary}
\newtheorem{lemma}{Lemma}

\newtheorem{corstar}[cor]{Corollary*}
\newtheorem{prop}{Proposition}
\newtheorem{conjecture}{Conjecture}  

\theoremstyle{definition}

\theoremstyle{remark}

\newcommand{\BC}{{\mathbb{C}}}

\newcommand{\BH}{{\mathbb{H}}}

\newcommand{\BQ}{{\mathbb{Q}}}
\newcommand{\BR}{{\mathbb{R}}}

\newcommand{\BZ}{{\mathbb{Z}}}

\newcommand{\CC}{{\mathcal C}}
\newcommand{\CD}{{\mathcal D}}

\newcommand{\CF}{{\mathcal F}}

\newcommand{\CJ}{{\mathcal J}}
\newcommand{\CK}{{\mathcal K}}

\newcommand{\CO}{{\mathcal O}}
\newcommand{\CP}{{\mathcal P}}

\newcommand{\CZ}{{\mathcal Z}}

\newcommand{\blangle}{\big\langle}
\newcommand{\brangle}{\big\rangle}

\newcommand{\pt}{{\mathsf{p}}}
\newcommand{\ch}{{\mathrm{ch}}}

\newcommand{\bw}{{\mathbf{w}}}
\newcommand{\bc}{{\mathbf{c}}}

\newcommand{\aaa}{\mathsf{a}}
\newcommand{\bbb}{\mathsf{b}}
\newcommand{\A}{\mathsf{A}}

\DeclareFontFamily{OT1}{rsfs}{}
\DeclareFontShape{OT1}{rsfs}{n}{it}{<-> rsfs10}{}
\DeclareMathAlphabet{\curly}{OT1}{rsfs}{n}{it}

\newcommand\End{\operatorname{End}}
\newcommand{\p}{\mathbb{P}}

\newcommand*\dd{\mathop{}\!\mathrm{d}}
\newcommand{\Mbar}{{\overline M}}
\newcommand{\vir}{{\text{vir}}}

\newcommand{\Pic}{\mathop{\rm Pic}\nolimits}

\newcommand{\1}{\mathsf{1}}
\newcommand{\kk}{\mathsf{k}}
\newcommand{\C}{\mathsf{C}}
\newcommand{\Aut}{\mathrm{Aut}}
\newcommand{\DR}{\mathsf{DR}}
\newcommand\ev{\operatorname{ev}}
\newcommand{\QMod}{\mathsf{QMod}}
\newcommand{\Mod}{\mathsf{Mod}}

\newcommand{\w}{\mathbf{w}}
\newcommand{\Sb}{\ME}
\newcommand{\E}{\mathsf{E}}
\newcommand{\ME}{\mathsf{ME}}

\begin{document}
\baselineskip=14.6pt

\title[Holomorphic anomaly equations]
{Holomorphic anomaly equations
and the Igusa cusp form conjecture}

\author{Georg Oberdieck}
\address{MIT, Department of Mathematics}
\email{georgo@mit.edu}

\author{Aaron Pixton}
\address{MIT, Department of Mathematics}
\email{apixton@mit.edu}

\date{\today}
\begin{abstract}
Let $S$ be a K3 surface and let $E$ be an elliptic curve.
We solve the reduced Gromov--Witten theory of the Calabi--Yau threefold $S \times E$
for all curve classes which are primitive in the K3 factor.
In particular, we deduce the Igusa cusp form conjecture.

The proof relies on new results in the Gromov--Witten theory of elliptic curves and K3 surfaces.
We show the generating series of
Gromov-Witten classes of an elliptic curve
are cycle-valued quasimodular forms
and satisfy a holomorphic anomaly equation.
The quasimodularity generalizes a result by Okounkov and
Pandharipande, and the holomorphic anomaly equation
proves a conjecture of Milanov, Ruan and Shen.
We further conjecture quasimodularity and holomorphic anomaly
equations for the cycle-valued Gromov-Witten theory
of every elliptic fibration with section.
The conjecture generalizes the holomorphic anomaly equations
for elliptic Calabi--Yau threefolds predicted by
Bershadsky, Cecotti, Ooguri, and Vafa.
We show a modified conjecture holds numerically
for the reduced Gromov-Witten theory of K3 surfaces in primitive classes.
\end{abstract}

\maketitle

\setcounter{tocdepth}{1} 
\tableofcontents
\setcounter{section}{-1}
\newpage
\section{Introduction}
\subsection{Overview}
Let $S$ be a non-singular projective K3 surface and let $E$ be an elliptic curve.
In 1999, Katz, Klemm and Vafa \cite{KKV} predicted 
that the topological string partition function of the Calabi--Yau threefold
\[ X = S \times E \]
is the reciprocal of the Igusa cusp form $\chi_{10}$, a Siegel modular form.
In 2014 a conjecture for the reduced Gromov--Witten theory of $X$ in all curve classes was presented in \cite{K3xE}.
In the primitive case (i.e. for curve classes which are primitive in the K3 factor)
the conjecture matches exactly the earlier physics prediction.
We call the primitive case of the conjecture the Igusa cusp form conjecture.\footnote{
The Katz--Klemm--Vafa conjecture usually refers to the result proven in \cite{PT2}.}
In this paper we solve the reduced Gromov--Witten theory of $X$
in the primitive case and prove the Igusa cusp form conjecture.

The main tool used in the proof is
the correspondence between
Gromov--Witten theory (counting stable maps)
and Pandharipande--Thomas theory (counting sheaves) proven in \cite{PaPix1, PaPix2}.
Both sides yield modular constraints
and taken together, they determine the partition function from a single coefficient.
The sheaf theory side was developed in \cite{O1, OS1}
and yields the elliptic transformation law of Jacobi forms
(proven by derived auto-equivalences and wall-crossing in the motivic Hall algebra).
On the Gromov--Witten side we apply the product formula \cite{B2}
and study the theory for the K3 surface and the elliptic curve separately.
We prove the following new ingredients:
\begin{itemize}
 \item[(i)] A holomorphic anomaly equation for the cycle-valued Gromov--Witten theory of the elliptic curve $E$
 (Sections~\ref{Subsection_intro_Elliptic_curves} and~\ref{Subsection_intro_HAE})
 \item[(ii)] A holomorphic anomaly equation for the numerical reduced Gromov--Witten theory of the K3 surface $S$ in primitive classes
 (Section~\ref{Subsection_intro_K3surfaces}).
\end{itemize}
Part (i) contains a proof of the quasimodularity of the cycle-valued theory.
For both the elliptic curve and the K3 surface the holomorphic anomaly equation is formulated on the cycle-level and motivates
a conjectural holomorphic anomaly equation for elliptic fibrations with section (Section~\ref{Subsection_Elliptic_fibrations}).

\subsection{The Igusa cusp form conjecture} \label{Subsection_intro_Igusa_cusp_form}
Let
\[ \pi_1 : X \to S, \quad \pi_2 : X \to E \]
be the projections to the two factors and let
\[ \iota_S : S \hookrightarrow X, \quad \iota_E : E \hookrightarrow X \]
be inclusions of fibers of $\pi_2$ and $\pi_1$ respectively.

Let $\beta \in H_2(S,\BZ)$ be a non-zero curve class
and let $d$ be a non-negative integer.
The pair $(\beta,d)$ determines a class in $H_2(X,\mathbb{Z})$ by
\[ (\beta,d) = \iota_{S \ast}( \beta ) + \iota_{E \ast}(d [E] ). \]

The moduli space of stable maps
$\Mbar^{\bullet}_{g}(X, (\beta,d))$
from disconnected genus~$g$ curves to $X$
representing the class $(\beta,d)$
carries a reduced\footnote{Since $S$ is holomorphic symplectic
the (ordinary) virtual fundamental class vanishes. The theory is non-trivial only after reduction \cite{K3xE}.}
virtual fundamental class
\[ [ \Mbar^{\bullet}_{g}(X, (\beta,d)) ]^{\text{red}} \]
of dimension $1$.
Let $\pt \in H^2(E,\BZ)$ be the class Poincar\'e dual to a point,
and let $\beta^{\vee} \in H^2(S,\BQ)$ be any class
satisfying 
\[ \langle \beta , \beta^{\vee} \rangle = 1 \]
with respect to the intersection pairing on $S$.
Following \cite{K3xE}, reduced Gromov--Witten invariants of $X$ are defined by
\begin{equation} \label{DefnNghd}
\mathsf{N}_{g,\beta,d}
=
\int_{ [ \Mbar_{g,1}^{\bullet}(X, (\beta,d)) ]^{\text{red}} }
\ev_1^{\ast}\left(
\pi_1^{\ast} ( \beta^{\vee} ) \cup \pi_2^{\ast} ( \pt ) \right).
\end{equation}
By a degeneration argument $\mathsf{N}_{g,\beta,d}$ is independent of the choice of $\beta^{\vee}$. 

The elliptic curve $E$ acts on the moduli space
$\Mbar^{\bullet}_{g}(X, (\beta,d))$ by translation with $1$-dimensional orbits.
The Gromov--Witten invariant $\mathsf{N}_{g,\beta,d}$ is a virtual count of these $E$-orbits,
and hence enumerates (with degeneracies and multiplicities) maps from
algebraic curves to $X$ up to translation.

Let $\beta_h \in H_2(S,\BZ)$
be a primitive class satisfying
\[ \langle \beta_h, \beta_h \rangle = 2h-2. \]
By deformation invariance
$\mathsf{N}_{g,\beta_h,d}$
only depends on $g,h$ and $d$. We write
\[ \mathsf{N}_{g,h,d} = \mathsf{N}_{g,\beta_h,d} \,. \]
The partition function of primitive invariants is defined by
\begin{equation} \label{K3xEpartition}
\CZ(u,q, \tilde{q})
=
\sum_{g=0}^{\infty} \sum_{h=0}^{\infty} \sum_{d=0}^{\infty} 
\mathsf{N}_{g,h,d} u^{2g-2} q^{h-1} \tilde{q}^{d-1}.
\end{equation}

Consider the classical Jacobi theta functions
\[
\theta_2(q) = \sum_{n \in \BZ} q^{(n+\frac{1}{2})^2}, \ \
\theta_3(q) = \sum_{n \in \BZ} q^{n^2}, \ \
\theta_4(q) = \sum_{n \in \BZ} (-1)^n q^{n^2} \,.
\]
Let $c(n) \in \BZ$ be the 
Fourier coefficients of the following
meromorphic modular form for $\Gamma_0(4)$ of weight $-1/2$:
\begin{multline*} \label{hh}
\sum_{n} c(n) q^n = \frac{40 \theta_3( q )^4 - 8 \theta_4( q )^4 }{ \theta_3( q ) \theta_2( q )^4 } \\
= 2 q^{-1} + 20 - 128q^{3} + 216q^{4} - 1026q^{7} + 1616q^{8} + \ldots \,.
\end{multline*}
The Igusa cusp form $\chi_{10}$ is a weight $10$ Siegel modular form
of genus $2$,
defined as the Borcherds lift\footnote{See Gritsenko--Nikulin \cite{GN}.}
\begin{equation} \label{IGUSA}
\chi_{10}(p,q, \tilde{q}) \ = \ p q \tilde{q}
\prod_{k,h,d} (1-p^k q^h \tilde{q}^d)^{c(4hd-k^2)},
\end{equation}
where the product runs over all
$k \in \BZ$ and $h,d \geq 0$ such that
\begin{enumerate}
\item[$\bullet$]
 $h>0$ or $d>0$,
\item[$\bullet$]
  $h = d = 0$ and $k < 0$.
\end{enumerate}
We will assume the variables $p,q,\tilde{q}$
are taken in the non-empty open region defined by
$|p^k q^h \tilde{q}^d|<1$ whenever $4hd-k^2 \geq -1$.

The following result proves the Igusa cusp form conjecture \cite[Conj.A]{K3xE}.

\begin{thm} \label{thmIgusa}
The partition function $\CZ(u,q, \tilde{q})$
is the Laurent expansion of $-1/\chi_{10}$
under the variable change $p=e^{iu}$,
\[
\CZ(u,q, \tilde{q})
\ = \ 
- \frac{1}{\chi_{10}(p, q, \tilde{q})} \,.
\]
\end{thm}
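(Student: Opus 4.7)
The plan is to exploit the GW/PT correspondence of Pandharipande--Pixton \cite{PaPix1, PaPix2} so that the conjecture can be attacked from two independent automorphic angles. On the sheaf side, the derived auto-equivalences of $D^b(S\times E)$ and the wall-crossing in the motivic Hall algebra established in \cite{O1, OS1} endow the PT partition function with the elliptic transformation law of a Jacobi form in the variables $(p, \tilde q)$ at fixed $q$. On the Gromov--Witten side, the product formula of \cite{B2} splits the primitive invariants of $X=S\times E$ into a convolution of the reduced primitive theory of $S$ and the cycle-valued theory of $E$, and the two holomorphic anomaly equations proved in the paper (the cycle-level HAE for $E$ of Section~\ref{Subsection_intro_HAE} and the numerical HAE for $S$ in primitive classes of Section~\ref{Subsection_intro_K3surfaces}) impose quasimodularity and a first-order PDE on $\mathcal Z$ in the variable $q$. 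Taken together, the elliptic transformation and quasimodularity force $\mathcal Z(u,q,\tilde q)$ to be the Fourier expansion of a weight-$-10$ meromorphic Siegel modular form of genus two.

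The execution breaks into four steps. First, upgrade the GW/PT correspondence in the form proved in \cite{PaPix1, PaPix2} to the full disconnected primitive theory of $X$ so that $\mathcal Z$ is identified with a PT generating series; extract from the wall-crossing/auto-equivalence results of \cite{OS1} the precise elliptic transformation in $p\to p\tilde q^a q^b$ that a Jacobi form at appropriate index must satisfy. Second, apply the Behrend product formula to the reduced virtual class of $\overline M^{\bullet}_{g,1}(X,(\beta,d))$ with the insertion $\pi_1^\ast(\beta^\vee)\cup\pi_2^\ast(\pt)$ to write $\mathsf N_{g,h,d}$ as an intersection, over $\overline M_{g,n}$, of the cycle-valued class from $E$ against the numerical K3 reduced invariant; feed in the two HAEs to obtain an inhomogeneous PDE on $\mathcal Z$ with respect to $q$. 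Third, combine the two sets of constraints: the elliptic transformation and the modular closure of the HAE imply that $\chi_{10}\cdot\mathcal Z$ is holomorphic at the cusps, transforms under $\mathrm{Sp}(4,\BZ)$ as a Siegel modular form of weight $0$, and hence is a constant by the finite-dimensionality of weight-$0$ Siegel modular forms. Fourth, evaluate one coefficient (for instance the leading behavior coming from $g=0$, $h=0$, $d=0$, where the invariant reduces to a well-known computation, or equivalently the constant term of $\mathcal Z$ in $q,\tilde q$ given by the genus-$0$ theory of $K3\times E$) to pin down the constant as $-1$.

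The main obstacle will be step three: converting the two a priori independent automorphic inputs into a single Siegel-modular statement. The HAE on the GW side controls the failure of holomorphicity in $q$, and the wall-crossing gives the elliptic transformation in $p,\tilde q$; showing that the combined action fits into a full $\mathrm{Sp}(4,\BZ)$ symmetry requires using both the $S_2$-exchange that swaps $q\leftrightarrow\tilde q$ (which must be produced either by a geometric argument or by an independent appeal to the Noether--Lefschetz technology applied to $S\times E$) and a careful analysis of poles of $\mathcal Z$ to ensure that $\chi_{10}\cdot\mathcal Z$ is genuinely entire as a Siegel modular form. The quasimodularity produced by the HAE is only a priori for $\mathrm{SL}_2(\BZ)$ in one variable at a time, so the bulk of the technical work is to rigorously close the circle between the two directions and reduce to an at most one-parameter family that a single initial computation can fix.
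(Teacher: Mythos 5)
Your overall scaffolding is right — GW/PT correspondence, the Jacobi-form structure in $(p,\tilde q)$ from \cite{OS1}, the product formula feeding the two holomorphic anomaly equations, and a single initial coefficient — but your step three contains a genuine gap, and it is exactly the step the paper is engineered to avoid. You propose to show that $\chi_{10}\cdot\CZ$ transforms under the full $\mathrm{Sp}(4,\BZ)$ and is hence a constant. The available inputs do not support this. First, the $q\leftrightarrow\tilde q$ exchange symmetry of $\CZ$ is not known a priori: the two variables have asymmetric geometric meaning (one tracks the K3 class $\beta_h$, the other the degree over $E$), it is manifest only for $\chi_{10}$ via the Borcherds product \eqref{IGUSA}, and for $\CZ$ it is a \emph{consequence} of the theorem, not an ingredient; no geometric or Noether--Lefschetz argument producing it is on record. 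Second, what the anomaly equations give in the $q$-direction is quasimodularity of each $\CF_{g,d}(q)$ in $\frac{1}{\Delta}\QMod_{2g}$ together with $\frac{d}{dC_2}\CF_{g,d}=(2d-2)\CF_{g-1,d}$ — strictly weaker than modularity, let alone a Jacobi structure in $(u,q)$ — so one cannot assemble generators of the genus-two Siegel group. Third, even granting both transformation laws, the constancy argument needs global holomorphy of $\chi_{10}\cdot\CZ$, i.e.\ pole control that you would have to establish separately.

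The paper replaces your Siegel-modularity step with a purely one-variable-at-a-time uniqueness statement (Proposition~\ref{PropUniqueness}): any series satisfying Property 1 (shape of the expansion), Property 2 (each $q^{h-1}$-coefficient lies in $\frac{1}{\phi_{-2,1}\Delta}\CJ_{0,h}$ as a Jacobi form in $(u,\tilde q)$), and Property 3 (the quasimodularity plus anomaly equation in $q$) is determined by its single coefficient $a_{0,0,0}$. The engine is an induction on the power of $q$ using the elementary coefficient-vanishing criteria for modular and Jacobi forms (Lemmas~\ref{Modconstraint} and~\ref{Jacconstraint}) together with the pole bound $g\le h+d$ extracted from the elliptic transformation law (Lemma~\ref{Lemmaconstraint2}); the anomaly equation in Property 3(b) is what lets vanishing propagate from weight $2(g-1)$ to weight $2g$. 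One more maneuver you would need and did not anticipate: Property 2 for $\CZ$ itself is \emph{not} obtained by symmetry but by combining \cite[Thm.4]{OS1}, which writes $[\CZ]_{q^{h-1}}=\frac{\Theta^{2h-2}}{\Delta}\sum_i f_i\,\wp^{h-i}$ with a priori only quasimodular $f_i$, with the anomaly equation of Lemma~\ref{Lemma9457} to force $\frac{d}{dC_2}f_i=0$, upgrading the $f_i$ to genuine modular forms. So the anomaly equations are used twice — once to verify Property 3, and once to promote quasimodular data to modular data — rather than to produce any Siegel symmetry. Your step four (matching $\mathsf{N}_{0,0,0}=1$ against the leading coefficient of $-1/\chi_{10}$) then agrees with the paper's conclusion.
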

\vspace{8pt}

In genus $0$ and class $(\beta_h, 0)$
the Gromov--Witten invariants
enumerate rational curves on the K3 surface. 
Theorem~\ref{thmIgusa} then specializes to
the Yau--Zaslow formula proven by Beauville \cite{Beauville} and Bryan--Leung \cite{BL}:
\[
\sum_{h = 0}^{\infty} \mathsf{N}_{0,h,0} q^{h-1} =
\frac{1}{\Delta(q)},
\]
where the right hand side is the reciprocal of the modular discriminant
\[ \Delta(q) = q \prod_{m \geq 1} (1-q^m)^{24}. \]
More generally
$\mathsf{N}_{g,h,0}$
are the $\lambda_g$-integrals
in the Gromov-Witten theory of K3 surfaces and
we obtain the Katz--Klemm--Vafa formula proven in \cite{MPT}:
\begin{equation*} \label{KKV}
\sum_{g,h}
\mathsf{N}_{g,h,0} u^{2g-2} q^{h-1}
=
\frac{1}{(p - 2 + p^{-1})} \prod_{m \geq 1}
\frac{1}{(1-p q^m)^2 (1-q^m)^{20} (1-p^{-1} q^m)^{2}}.
\end{equation*}
We list several other known cases.
In case $h=0$ the invariants $\mathsf{N}_{g,h,d}$ were obtained by Maulik in \cite{M_An}.
The cases $h \in \{0,1\}$ were shown by Bryan \cite{Bryan-K3xE}
and a second time in \cite{OS1}.
The cases $d \in \{ 1,2 \}$ can be found in \cite{K3xP1}.

Theorem~\ref{thmIgusa} determines the Gromov--Witten invariants of $S \times E$ in the primitive case.
A conjecture in all curve classes $(\beta,d)$ has been proposed in \cite{K3xE}.
The case $d=0$ corresponds to the
imprimitive Katz--Klemm--Vafa
formula and was proven in \cite{PT2}.
The case $\beta=0$ is proven in \cite{OSK0} on the
sheaf theory side.
The intermediate cases remain open.

\subsection{Elliptic curves} \label{Subsection_intro_Elliptic_curves}
Let $E$ be a non-singular elliptic curve, and let
\[ \Mbar_{g,n}(E,d) \]
be the moduli space of degree $d$ stable maps of connected curves of genus $g$
to $E$ with $n$ markings.
Consider the correspondence\footnote{
We assume here that $g, n$ lie
in the stable range i.e. take only those values
for which the moduli spaces $\Mbar_{g,n}$ and $\Mbar_{g,n}(E,d)$
are Deligne-Mumford stacks.
We follow the same convention throughout the paper.
In all equations or diagrams or sums
we assume $(g,n)$
to lie in the range where all moduli spaces are
Deligne--Mumford stacks.
}
\begin{equation*} \label{correspondence}
\begin{tikzcd}
\Mbar_{g,n}(E,d) \ar{d}{\pi} \ar{rr}{\ev_1 \times \cdots \times \ev_n}
& & E^n \\
\Mbar_{g,n}
\end{tikzcd}
\end{equation*}
defined by the evaluation maps at the markings
$\ev_1, \ldots, \ev_n$,
and the forgetful morphism $\pi$ to the moduli space
of stable curves.
Gromov--Witten classes
of $E$ are defined by
the action of the virtual fundamental class
\[ [ \Mbar_{g,n}(E,d) ]^{\vir} \in H_{\ast}( \Mbar_{g,n}(E,d) ) \]
on cohomology classes $\gamma_1, \ldots, \gamma_n \in H^{\ast}(E)$
via the correspondence: 
\begin{equation} \label{GWclasses}
\CC_{g,d}(\gamma_1, \ldots, \gamma_n)
=
\pi_{\ast}
\left( [ \Mbar_{g,n}(E,d) ]^{\vir} \prod_{i=1}^{n} \ev_i^{\ast}(\gamma_i) \right) \, \in H^{\ast}(\Mbar_{g,n}),
\end{equation}
where we have suppressed an application of Poincar\'e duality on $\Mbar_{g,n}$.

Define the generating series
\[ \CC_g( \gamma_1, \ldots, \gamma_n )
=
\sum_{d=0}^{\infty} \CC_{g,d}(\gamma_1, \ldots, \gamma_n) q^d,
\]
which by definition is an element of $H^{\ast}(\Mbar_{g,n}) \otimes \BQ[[q]]$.

The ring of quasimodular forms is the free polynomial algebra
\[ \QMod = \BQ[C_2, C_4, C_6] \]
where $C_k$ are the weight $k$ Eisenstein series
\begin{equation} \label{EisensteinSeries}
C_{k}(q) = -\frac{B_k}{k \cdot k!} + \frac{2}{k!}
\sum_{n \geq 1} \sum_{d|n} d^{k-1} q^n
\end{equation}
and $B_k$ are the Bernoulli numbers.

\begin{thm}
\label{thm_E_quasimodularity}
For any $\gamma_1, \ldots, \gamma_n \in H^{\ast}(E)$
the series
$\CC_g( \gamma_1, \ldots, \gamma_n )$
is a cycle-valued quasimodular form:
\[
\CC_g( \gamma_1, \ldots, \gamma_n )
\in 
H^{\ast}(\Mbar_{g,n}) \otimes \QMod.
\]
\end{thm}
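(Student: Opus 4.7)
The plan is to combine the degeneration of $E$ to a nodal rational curve with a cycle-valued refinement of the Okounkov--Pandharipande operator formalism for relative Gromov--Witten theory of $\BP^1$, followed by the Bloch--Okounkov theorem on quasimodularity of $q$-brackets of shifted symmetric functions.

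First I would reduce to purely stationary insertions $\gamma_1 = \cdots = \gamma_n = \pt$. The identity $1 \in H^0(E)$ is removed by the string equation. An insertion $\alpha \in H^1(E)$ can be paired against its Poincar\'e dual $\beta \in H^1(E)$, using $\alpha \cup \beta = \pt$ and the splitting principle on a suitable boundary divisor of $\Mbar_{g,n+2}$; this is handled inductively in $(g,n)$, since the boundary strata contributions involve classes $\CC_{g',n'}$ of strictly lower total complexity, for which the statement may be assumed.

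Next I would apply the degeneration $E \rightsquigarrow E_0$ where $E_0$ is a nodal rational curve, and invoke the degeneration formula for Gromov--Witten classes. This rewrites $\CC_{g,d}(\pt, \ldots, \pt)$ as a finite sum, indexed by splittings of $(g,n)$ and a partition $\mu \vdash d$ over the node, of pushforwards to $\Mbar_{g,n}$ of products of relative rubber Gromov--Witten classes on $\Mbar^{\sim}_{\bullet}(\BP^1/\{0,\infty\}, \mu, \mu)$ with stationary insertions. After summing over $d$ weighted by $q^d$, the double gluing at $0$ and $\infty$ produces a trace structure: at the numerical level this is precisely the Okounkov--Pandharipande trace of vertex operators on the infinite wedge $\Lambda^{\infty/2} V$, whose quasimodularity is supplied by the Bloch--Okounkov theorem.

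The main obstacle, and the new ingredient required, is the cycle-valued lift of this trace formula. Concretely, one must show that the pushforward to $\Mbar_{g,n}$ of a rubber relative class with stationary and ramification insertions decomposes into an explicit finite sum of tautological classes on $\Mbar_{g,n}$ whose scalar coefficients are exactly the matrix elements, in the partition basis, of products of the vertex operators $\mathcal{E}_k$ appearing in the Okounkov--Pandharipande formalism. I would tackle this by a careful analysis of the rubber target, tracking $\psi$-classes at the relative points and exploiting the $\BC^*$-action scaling the target to pick off the tautological pieces via virtual localization; the combinatorics of the fixed loci should reproduce the commutation relations of the $\mathcal{E}_k$. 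Once such a cycle-valued operator formula is established, pairing $\CC_g(\pt, \ldots, \pt)$ against any tautological class on $\Mbar_{g,n}$ produces a single weighted trace of vertex operators, which is quasimodular by Bloch--Okounkov; applying this to a basis of the tautological ring of $\Mbar_{g,n}$ promotes quasimodularity from numerical to cycle-valued and completes the proof.
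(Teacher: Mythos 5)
Your proposal has a genuine gap at its center. The ``cycle-valued lift of the Okounkov--Pandharipande operator formalism'' --- a formula expressing the pushforward to $\Mbar_{g,n}$ of rubber relative classes as an explicit tautological sum whose scalar coefficients are matrix elements of the vertex operators $\mathcal{E}_k$ --- is exactly the hard part, and you do not prove it: you only propose a localization program, which moreover does not directly apply since rubber moduli spaces carry no $\BC^{\ast}$-action (it is quotiented out; one must rigidify first, as in \cite{MP}). No such operator formula is available, and establishing one would be an input of roughly the same depth as the theorem itself. The paper avoids the operator formalism entirely: it degenerates $E$ to an $n$-cycle of $\p^1$s rather than a single nodal cubic, so that each point insertion lands on its own component and every stable vertex of a contributing degeneration graph is forced to be a double ramification cycle with a single marking; the decisive input is then the polynomiality of $\DR_g$ in the ramification parameters (Proposition~\ref{Polynomiality}, from \cite{JPPZ, PZ}), after which quasimodularity of the trace-like edge sums is extracted by a constant-term analysis of multivariate elliptic functions (Proposition~\ref{prop:graph_to_coeff} and Theorem~\ref{THMAPP2}). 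Separately, even granting Bloch--Okounkov numerically, your final promotion step is logically broken: pairing against a basis of the tautological ring $R^{\ast}(\Mbar_{g,n})$ only constrains the projection of $\CC_g$ onto the tautological subspace with respect to the Poincar\'e pairing, and since that pairing restricted to $R^{\ast} \times R^{\ast}$ is not known to be perfect (the Gorenstein property is open, and fails in related settings), quasimodularity of all such pairings does not yield $\CC_g \in H^{\ast}(\Mbar_{g,n}) \otimes \QMod$; knowing each $\CC_{g,d}$ is tautological does not repair this, because dual functionals on $R^{\ast}$ need not be represented by tautological cycles. (Had you actually established the asserted operator formula with tautological terms and matrix-element coefficients, this pairing step would be unnecessary.)

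Your reduction to point insertions has a second, smaller gap. Gluing the $i$-th and $j$-th markings and splitting the diagonal $\Delta_E = \1 \otimes \pt + \pt \otimes \1 - \aaa \otimes \bbb + \bbb \otimes \aaa$ produces, modulo inductively known terms, only the antisymmetric combination $-\,\CC_g(\ldots,\aaa,\ldots,\bbb,\ldots) + \CC_g(\ldots,\bbb,\ldots,\aaa,\ldots)$, not the individual classes; that is the content of the paper's Lemma~\ref{FDFSDFS}, which by itself does not finish the reduction. To separate the two orderings the paper needs two further inputs that your sketch omits entirely: the vanishing of all unbalanced classes (Lemma~\ref{Balanced}, via $\mathrm{SL}_2(\BZ)$ monodromy) and monodromy invariance under $\phi = \binom{1\ 0}{1\ 1}$ applied to the unbalanced tensor $\aaa^{\otimes 2m} \otimes \cdots$, which produces $\binom{2m}{m}\gamma$ plus terms already known to be quasimodular (Section~\ref{Subsection_Proof_of_Theorem_Quasimodularity}). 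As stated, your induction ``in $(g,n)$'' has no mechanism to isolate a single balanced insertion pattern and does not close.
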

\vspace{7pt}

The Gromov--Witten invariants of $E$
are obtained from
the Gromov--Witten classes
by integration against the cotangent line classes
$\psi_i \in H^2(\Mbar_{g,n})$,
\[
\sum_{d = 0}^{\infty} \langle \tau_{k_1}(\gamma_1) \ldots \tau_{k_n}(\gamma_n) \rangle^E_{g,d} q^d
=
\int_{\Mbar_{g,n}} \psi_1^{k_1} \cdots \psi_n^{k_n} \cdot \CC_g(\gamma_1, \ldots, \gamma_n).
\]
Hence Theorem~\ref{thm_E_quasimodularity}
generalizes\footnote{Our argument is independent of \cite{OP1, OP3} and in fact yields a new proof.}
the quasimodularity of the
Gromov-Witten invariants of elliptic curves
proven by Okounkov and Pandharipande \cite{OP1, OP3}.

The \emph{double ramification cycle}
\[
\DR_g(\mu, \nu) \in A^{g}(\Mbar_{g,n})
\]
parametrizes curves of genus $g$ admitting a map to $\p^1$
with given ramification profiles $\mu$ over $0 \in \p^1$ and
$\nu$ over $\infty \in \p^1$. A precise definition is given in Section~\ref{Subsection_Double_ramification_Cycle}. The key ingredient in our study of $\CC_g(\gamma_1, \ldots, \gamma_n)$ is the polynomiality of the double ramification cycle in the parts of the ramification profiles. This polynomiality is a difficult result proved by a combinatorial study \cite{PZ} of an explicit formula for the double ramification cycle \cite{JPPZ}.

The proof of Theorem~\ref{thm_E_quasimodularity} proceeds by 
degenerating the elliptic curve to a rational nodal curve.
After degeneration
the Gromov--Witten classes
of the elliptic curve are expressed
as a trace-like sum of
double ramification cycles.
Quasimodularity then follows
from the polynomiality of the double ramification
cycle.

\subsection{Holomorphic anomaly equation} \label{Subsection_intro_HAE}
Let $\iota : \Mbar_{g-1, n+2} \to \Mbar_{g,n}$ be the
gluing map along the last two marked points, and
for any $g = g_1 + g_2$
and $\{ 1 , \ldots, n \} = S_1 \sqcup S_2$
let
\[
j : \Mbar_{g_1, S_1 \sqcup \{ \bullet \}}
\times \Mbar_{g_2, S_2 \sqcup \{ \bullet \}} \to
\Mbar_{g, n}
\]
be the map which glues the points marked by $\bullet$,
where
$\Mbar_{g_i, S_i}$ is the moduli space of stable
curves with markings in the set $S_i$.

\begin{thm} \label{thm_E_HAE} Considering $\CC_g( \gamma_1, \ldots, \gamma_n )$
as a polynomial in $C_2, C_4, C_6$ with coefficients in $H^\ast(\Mbar_{g,n})$,
we have
\begin{align*}
\frac{d}{dC_2} \CC_g( \gamma_1, \ldots, \gamma_n )
\ =\  & 
\iota_{\ast} \CC_{g-1}( \gamma_1, \ldots, \gamma_n, \1 , \1 )
\\
& + \sum_{\substack{ g= g_1 + g_2 \\ \{1,\ldots, n\} = S_1 \sqcup S_2}}
j_{\ast} \left( \CC_{g_1}( \gamma_{S_1}, \1 ) \boxtimes
\CC_{g_2}( \gamma_{S_2}, \1 ) \right) \\
& - 2 \sum_{i=1}^{n} \left( \int_{E} \gamma_i \right) \psi_i \cdot \CC_g( \gamma_1, \ldots, \gamma_{i-1}, \1, \gamma_{i+1}, \ldots, \gamma_n ),
\end{align*}
where $\gamma_{S_i} = ( \gamma_k )_{k \in S_i}$
and $\1 \in H^\ast(E)$ is the unit.
\end{thm}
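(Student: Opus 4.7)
The plan is to use the same degeneration framework that proves Theorem~\ref{thm_E_quasimodularity}. Degenerating $E$ to a nodal rational curve reduces the Gromov--Witten class $\CC_g(\gamma_1,\ldots,\gamma_n)$, via the degeneration formula, to a trace-like sum over partitions $\mu$: each contribution is a pushforward of a double ramification cycle weighted by an explicit $q$-series $f_\mu(q)$, and summing over $\mu$ assembles these $q$-series into polynomials in $C_2, C_4, C_6$. The HAE should then be visible at the level of this assembly.

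I would next analyze how the derivation $\partial / \partial C_2$ acts on this trace. Since $C_2$ is the only non-modular generator of $\QMod$, it enters the formula only through specific building blocks (roughly, $\sum_{n \geq 1} \sigma_1(n) q^n$-type series) attached to the ramification cycles. The operator $\partial / \partial C_2$ picks off precisely these contributions, and after rewriting, the result can be interpreted as inserting one extra pair of matched markings into the source curve of the degeneration, summed over all positions where the corresponding ``ring'' can be attached.

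The extra pair of markings, placed in the dual graph of the source, geometrically creates a new node. Depending on where this node lies, the three terms on the right arise: a non-separating node on a single vertex gives $\iota_\ast \CC_{g-1}(\gamma_1,\ldots,\gamma_n,\1,\1)$; a separating node splitting the graph produces the sum $\sum j_\ast\bigl(\CC_{g_1}(\gamma_{S_1},\1) \boxtimes \CC_{g_2}(\gamma_{S_2},\1)\bigr)$; and the case where the new pair is absorbed into an existing marked point yields, via the polynomiality of $\DR$ in its parts, the $\psi_i \cdot \int_E \gamma_i$ correction term. The factor of $2$ there reflects the symmetry between the two new markings, while the one-of-each-type contribution $\int_E \gamma_i$ emerges from the trace over the insertion of the unit $\1$.

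The main obstacle I anticipate is the precise combinatorial bookkeeping. I would first establish the key HAE identity at the level of the basic $q$-series building blocks attached to a single ramification cycle, then use the polynomial dependence of $\DR$ on its parts from \cite{JPPZ, PZ} to upgrade to arbitrary $\mu$, and finally verify that the resulting dual-graph sums match the right-hand side without spurious factors of $1/2$. The delicate point is that polynomiality must interact cleanly with the derivation $\partial / \partial C_2$: the trace structure should ensure that differentiating a single part of $\mu$ in the $q$-series side corresponds exactly to summing over boundary divisors on $\Mbar_{g,n}$.
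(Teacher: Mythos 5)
Your overall architecture does match the paper's proof of the \emph{point-insertion} case (Theorem~\ref{thm_E_HAE_point}): degenerate $E$ (the paper in fact uses an $n$-cycle of $\p^1$'s, one component per marking, so that the rubber vanishing lemma forces exactly $n$ stable vertices), write $\CC_{g,d}(\pt^{\times n})$ as a stable-graph sum of double ramification cycles with $q$-series edge weights, and differentiate. However, your central mechanism --- that $C_2$ ``enters the formula only through specific building blocks'' and that $\frac{d}{dC_2}$ simply ``picks off precisely these contributions'' --- fails for the non-loop part of the graph sum, and this is where the real content lies. The non-loop weights assemble into the $p^0$-coefficient of products $\prod \partial_{z_h}^{k_h}\partial_{z_{h'}}^{k_{h'}}(\wp(z_h-z_{h'})+2C_2)$ (Proposition~\ref{prop:graph_to_coeff}), and the constant-term extraction is itself a source of quasimodularity whose $C_2$-dependence is \emph{not} visible in the explicit $2C_2$ summands: by Theorem~\ref{THMAPP2}(3) one has $\frac{d}{dC_2}[F]_{p^0}=[\frac{d}{dC_2}F]_{p^0}$ \emph{minus} residue corrections along $z_i=z_j$. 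Differentiating the explicit $C_2$'s (deleting a loop or a non-loop edge, which by polynomiality of $\DR$ amounts to setting two parameters to zero, i.e.\ a forgetful pullback) accounts only for the $\iota_\ast$ and $j_\ast$ terms; the $\psi_i$-term comes entirely from the residue corrections. Matching those residues with $2\sum_i\psi_i\, p_i^{\ast}\CC_g(\pt^{\times n-1})$ is not ``bookkeeping'': it requires the BSSZ identity for $(\psi_i+\psi_j)\cdot\DR_g$ (Lemma~\ref{lemma_psi_times_DR}), which splits the vertex carrying the markings $i$ and $j$ into two vertices joined by new edges, together with a multivariate Euler--Maclaurin identity (Proposition~\ref{prop:multiEM}) to re-sum the weights on those edges. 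Polynomiality of the double ramification cycle by itself cannot produce a $\psi$-class, and your proposed mechanism of the new pair being ``absorbed into an existing marked point'' has no counterpart in the degeneration formula.

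There is a second gap: the degeneration argument proves the statement only for point insertions, since the point constraints are exactly what force one stable vertex per marking. The theorem is asserted for arbitrary $\gamma_i\in H^{\ast}(E)$, including odd classes $\aaa,\bbb$, which cannot be specialized through the degeneration in the same way. The paper closes this by a separate induction (Section~\ref{Section_Generalcase}): Janda's balancing lemma (Lemma~\ref{Balanced}) kills non-balanced odd insertions, and $\mathrm{SL}_2(\BZ)$ monodromy invariance combined with compatibility of both sides of the anomaly equation under forgetful pullback (unit insertions) and gluing pullback (diagonal insertions) reduces the general case to the point case. Your proposal is silent on this reduction, so even granting the point case it establishes only Theorem~\ref{thm_E_HAE_point}, not Theorem~\ref{thm_E_HAE}.
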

\vspace{7pt}

Theorem~\ref{thm_E_HAE} measures the dependence of the
modular completion \cite{KZ} of $\CC_g( \ldots )$ 
on the non-holomorphic parameter and is therefore called a \emph{holomorphic anomaly equation}.
Practically it determines the quasimodular form
from lower weight data
up to a purely modular part (involving only $C_4$ and $C_6$) which depends on strictly less parameters.
This will be used in the proof of the Igusa cusp form conjecture in Section~\ref{Section_Igusa_cusp_form_conjecture}.


Milanov, Ruan and Shen have proven a holomorphic anomaly equation
for some elliptic orbifold $\p^1$s (i.e. stack quotients of an elliptic curve
by a non-trivial finite group).
The elliptic curve case was left as a
conjecture in \cite{MRS} and is proven by Theorem~\ref{thm_E_HAE}.
For elliptic orbifold $\p^1$s
the genus~$0$ Gromov--Witten theory
is generically semisimple,
and the holomorphic anomaly equation 
is deduced by Teleman's higher genus
reconstruction theorem.
For the elliptic curve the genus $0$
theory is trivial and this approach breaks down.
Instead our proof
relies on a careful analysis
of the appearance of $C_2$
in the degeneration formula for $\CC_g$
and properties of the double ramification cycle.


The ring $\QMod$ is graded by the weight
of its generators
\[ \QMod = \bigoplus_{k \geq 0} \QMod_{k} \,. \]
In particular, each graded summand
$\QMod_k$ is a finite-dimensional vector space
and knowing the weight of a
quasimodular form yields strong constraints
on its Fourier coefficients.
One immediate consequence of Theorem~\ref{thm_E_HAE} is
the following refinement of Theorem \ref{thm_E_quasimodularity}
by weight.

For any homogeneous $\gamma \in H^{\ast}(E)$ let $\deg_{\BR}( \gamma )$
denote its real cohomological degree.
Hence $\gamma \in H^{\deg_{\BR}(\gamma)}(E)$.

\begin{cor}
\label{CorE}
Let $\gamma_1, \ldots, \gamma_n \in H^{\ast}(E)$ be homogeneous. Then
$\CC_g( \gamma_1, \ldots, \gamma_n )$
is a cycle-valued quasimodular form of weight
$2g - 2 + \sum_{i} \deg_{\BR}(\gamma_i)$,
\[
\CC_g( \gamma_1, \ldots, \gamma_n )
\in 
H^{\ast}(\Mbar_{g,n}) \otimes \QMod_{2g-2+\sum_i \deg_{\BR}(\gamma_i)}.
\]
\end{cor}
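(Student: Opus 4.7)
The plan is to prove Corollary~\ref{CorE} by induction on the weight $w := 2g - 2 + \sum_i \deg_{\BR}(\gamma_i)$, with the holomorphic anomaly equation (Theorem~\ref{thm_E_HAE}) as the engine. I first verify that every term on the right-hand side of Theorem~\ref{thm_E_HAE} applied to $\CC_g(\gamma_1,\ldots,\gamma_n)$ has predicted weight $w - 2$: the boundary term $\iota_\ast\CC_{g-1}(\gamma_1,\ldots,\gamma_n,\1,\1)$ has predicted weight $2(g-1)-2+\sum_i\deg_{\BR}(\gamma_i)=w-2$; each separating contribution $\CC_{g_1}(\gamma_{S_1},\1)\boxtimes\CC_{g_2}(\gamma_{S_2},\1)$ has summed predicted weight $(2g_1-2+\sum_{S_1}\deg_{\BR})+(2g_2-2+\sum_{S_2}\deg_{\BR})=w-2$; and the third-type term is nonzero only when $\gamma_i$ is a point class, in which case substituting $\gamma_i\mapsto\1$ lowers the predicted weight by $2$. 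Applying the induction hypothesis at weight $w-2$ to each such term, $\frac{d}{dC_2}\CC_g(\gamma_1,\ldots,\gamma_n)$ is then a cycle-valued quasimodular form of pure weight $w - 2$.

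I would then write $\CC_g(\gamma_1,\ldots,\gamma_n)=\sum_{a\geq 0}F_a C_2^a$ uniquely, with each $F_a\in H^\ast(\Mbar_{g,n})\otimes\BQ[C_4,C_6]$ a cycle-valued modular form. Since $\frac{d}{dC_2}\CC_g=\sum_{a\geq 1}aF_aC_2^{a-1}$ is of pure weight $w-2$, matching weights term by term forces $F_a$ to have pure modular weight $w-2a$ for every $a\geq 1$, so that each $F_aC_2^a$ carries pure weight $w$. The only remaining ambiguity is the $C_2$-independent piece $F_0\in H^\ast(\Mbar_{g,n})\otimes\BQ[C_4,C_6]$, which $d/dC_2$ annihilates and which must be shown to lie in the weight-$w$ graded piece of $\BQ[C_4,C_6]$.

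The main obstacle is therefore to determine the modular weight of $F_0$. To handle it, I would exploit the fact that $\CC_g$ has fixed cohomological degree: a virtual-dimension count combined with Poincar\'e duality on $\Mbar_{g,n}$ yields $\CC_{g,d}(\gamma_1,\ldots,\gamma_n)\in H^w(\Mbar_{g,n})$ for every $d\geq 0$, and the $q^0$-contribution can be computed explicitly as $(-1)^g\lambda_g\cdot\int_E\prod_i\gamma_i$ via the identification $\Mbar_{g,n}(E,0)=\Mbar_{g,n}\times E$ with obstruction bundle $\BE^\vee$. Combined with the already-fixed $F_a$ for $a\geq 1$, this leading Fourier coefficient constrains $F_0$; in low weights $w$ for which the graded pieces of $\BQ[C_4,C_6]$ are trivial or one-dimensional, the conclusion is immediate. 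In general, I would close the argument by appealing directly to the explicit trace-like formula for $\CC_g(\gamma_1,\ldots,\gamma_n)$ in terms of double ramification cycles $\DR_g(\mu,\nu)$ from the proof of Theorem~\ref{thm_E_quasimodularity}: tracking weights through the polynomiality of $\DR_g(\mu,\nu)$ in the parts of $\mu,\nu$ produces $\CC_g$ as a quasimodular form of weight $w$, pinning down the weight of $F_0$ and completing the induction.
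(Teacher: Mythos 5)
Your induction via Theorem~\ref{thm_E_HAE} only ever constrains the $C_2$-dependent part of $\CC_g$: writing $\CC_g = \sum_a F_a C_2^a$, the anomaly equation pins down $F_a$ for $a \geq 1$ but says nothing about $F_0$, and you correctly isolate this as the obstacle — but neither of your patches closes it. The leading-coefficient argument gives a single linear condition (the $q^0$-term, via $\CC_{g,0} = (-1)^g \lambda_g \int_E \prod_i \gamma_i$, is correct, as is the degree count $\CC_{g,d} \in H^w(\Mbar_{g,n})$; note however that cohomological degree places no constraint at all on modular weight), while $\dim \Mod_w$ grows with $w$, so this works only in the low-weight cases you concede. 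The fallback of "tracking weights through the polynomiality of the double ramification cycle" fails term by term: $\DR_g(A)$ is an \emph{inhomogeneous} polynomial of degree $2g$ — its constant term is $\DR_g(0,\ldots,0) = (-1)^g \lambda_g \neq 0$ — so a graph contribution in \eqref{eq:ppp2} carries weight $2|E(\Gamma)| + \sum_h k_h$, which is only bounded above by $w$ (with equality exactly when every vertex contributes the top-degree part of its DR polynomial). The lower-weight contributions must cancel in the full sum, and that cancellation is precisely the content of the corollary; nothing in Proposition~\ref{Polynomiality} or the graph formula supplies it. Moreover, the trace-like formula exists in the paper only for point insertions; for general $\gamma_i$ you would additionally have to push weight bookkeeping through the monodromy/gluing induction of Section~\ref{Section_Generalcase}, where arguments such as Lemma~\ref{FDFSDFS} work only modulo the ideal $K$ and would need a weight-graded refinement.

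The paper's proof sidesteps all of this with a different mechanism, which is the idea your proposal is missing: bring in the second derivation $q\frac{d}{dq}$, which, unlike $\frac{d}{dC_2}$, does not annihilate the modular part. By \eqref{COMMUTATOR}, $\QMod_k$ is exactly the $(-2k)$-eigenspace of the commutator $[\frac{d}{dC_2}, q\frac{d}{dq}]$, and the divisor equation identifies $q\frac{d}{dq}\, \CC_g(\gamma_1,\ldots,\gamma_n) = p_\ast \CC_g(\gamma_1,\ldots,\gamma_n,\pt)$ geometrically. Applying Theorem~\ref{thm_E_HAE} before and after $p_\ast$, together with the comparison \eqref{4514} and $p_\ast \psi_{n+1} = 2g-2+n$, yields
\[
\left[ \frac{d}{dC_2},\, q\frac{d}{dq} \right] \CC_g(\gamma_1,\ldots,\gamma_n)
= -2\,(2g-2+n-m_0+m_2)\, \CC_g(\gamma_1,\ldots,\gamma_n),
\]
and $2g-2+n-m_0+m_2 = 2g-2+\sum_i \deg_{\BR}(\gamma_i)$, so $\CC_g$ is an eigenvector of eigenvalue $-2w$ and hence lies in $\QMod_w$ in one stroke — no induction on weight, no separate treatment of $F_0$. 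If you want to salvage your outline, replace the $F_0$-analysis by this eigenvalue argument; as written, the proposal has a genuine gap at the modular part.
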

\vspace{3pt}

\subsection{Elliptic fibrations}
\label{Subsection_Elliptic_fibrations}
Let $X$ and $B$ be non-singular projective varieties and
consider an elliptic fibration
\[ \pi : X \to B, \]
a flat morphism with fibers connected curves of arithmetic genus $1$.
We assume $\pi$ has integral fibers and 
admits a section
\[ \iota : B \to X \,. \]

For every curve class $\beta \in H_2(X,\BZ)$
with $\pi_{\ast} \beta = \mathsf{k}$
the fibration $\pi$ induces a morphism
\[
\pi : \Mbar_{g,n}(X, \beta) \to \Mbar_{g,n}(B, \mathsf{k}).
\]
Define $\pi$-relative Gromov-Witten classes with insertions
$\gamma_1, \ldots, \gamma_n \in H^{\ast}(X)$,
\[
\CC^{\pi}_{g, \beta}(\gamma_1, \ldots, \gamma_n)
=
\pi_{\ast}\left(
[ \Mbar_{g,n}(X, \beta) ]^{\vir}
\prod_{i=1}^{n} \ev_i^{\ast}(\gamma_i) \right) \,
\in H_{\ast}( \Mbar_{g,n}(B, \kk)).
\]

Let $B_0 \in H^2(X)$ be the class of the section $\iota$ and
let $N_{\iota}$ be the normal bundle of $\iota$. We define the divisor class
\[ W = B_0 - \frac{1}{2} \pi^{\ast} c_1(N_{\iota}). \]
For every curve class $\mathsf{k} \in H_2(B, \BZ)$
we form the generating series
\[
\CC^{\pi}_{g, \kk}(\gamma_1, \ldots, \gamma_n)
=
\sum_{\pi_{\ast} \beta = \mathsf{k}}
q^{W \cdot \beta}
\CC^{\pi}_{g, \beta}(\gamma_1, \ldots, \gamma_n)
\]
where the sum runs over all curve classes
$\beta \in H_2(X,\BZ)$ with $\pi_{\ast} \beta = \mathsf{k}$.



\begin{conjecture} \label{Conj_Quasimodularity}
For any $\gamma_1, \ldots, \gamma_n \in H^{\ast}(X)$
and $\kk \in H_2(B,\BZ)$ we have
\[
\CC^{\pi}_{g, \kk}(\gamma_1, \ldots, \gamma_n)
\, \in \,
H_{\ast}(\Mbar_{g,n}(B, \kk)) \otimes \frac{1}{\Delta(q)^m}\QMod.
\]
where $m = -\frac{1}{2} c_1(N_{\iota}) \cdot \mathsf{k}$.
\end{conjecture}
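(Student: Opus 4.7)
The plan is to extend the strategy used to prove Theorem \ref{thm_E_quasimodularity} from a single elliptic curve to the fibration $\pi: X \to B$. The specialization $B = \mathrm{pt}$ recovers that theorem (with $m=0$), so the idea is to run the same degeneration argument \emph{in families over $B$}. Concretely, one degenerates $\pi$ to a nodal elliptic fibration $\pi_0: X_0 \to B$ whose general fiber is a rational nodal cubic; a natural model glues two $\p^1$-bundles over $B$ along two copies of the section $\iota$. This preserves the section and the fibration structure, so the pushforward to $\Mbar_{g,n}(B, \kk)$ remains defined throughout the degeneration.

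Next, apply the degeneration formula relative to $B$. Stable maps to $X_0$ decompose into relative maps to the two ruled components, matched along ramification profiles over the two gluing sections. The contributions from the ruled components should be analyzed by virtual localization with respect to the natural fiberwise $\BC^{\ast}$-action, reducing each contribution to an integral over a double ramification cycle pulled back along a map $\Mbar_{g,n}(B,\kk) \to \Mbar_{g,n+2\ell}$ (where $2\ell$ is the number of gluing nodes), weighted by tautological classes on $\Mbar_{g,n}(B,\kk)$. Summing over all $\beta$ with $\pi_\ast \beta = \kk$ amounts to summing over the parts of the ramification profile. The polynomiality of the double ramification cycle \cite{PZ, JPPZ} then upgrades this sum to a cycle-valued quasimodular form in $q$, exactly as in the proof sketched after Theorem \ref{thm_E_quasimodularity}.

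The main obstacle is to produce the factor $\Delta(q)^{-m}$ with the correct exponent $m = -\tfrac{1}{2} c_1(N_{\iota}) \cdot \kk$. In the elliptic curve case $c_1(N_{\iota}) = 0$ and no such factor appears; for a non-trivial fibration the singular fibers of $\pi$, which by Kodaira's formula are controlled by $c_1(N_{\iota}) \cdot \kk$, create extra localized contributions in the degeneration that cannot be absorbed into a purely polynomial expression in $C_2, C_4, C_6$. One expects them to yield precisely a $\Delta^{-m}$ denominator, in line with the $1/\Delta$ denominators predicted by Bershadsky--Cecotti--Ooguri--Vafa for elliptic Calabi--Yau threefolds and with the K3 results of Section \ref{Subsection_intro_K3surfaces}. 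Making this count precise will require a careful analysis of how stable maps meet the singular fibers, and is likely where most of the work will lie.

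A secondary difficulty is that what is needed is not the ordinary degeneration formula valued in $\Mbar_{g,n}$, but a version that tracks the pushforward to $\Mbar_{g,n}(B, \kk)$. This should be accessible by applying the degeneration formula on $\Mbar_{g,n}(X_0, \beta)$ and pushing forward along $\pi$, but one must check that the combinatorics of the gluing are compatible with this pushforward, for example by working fiberwise over $\Mbar_{g,n}(B, \kk)$ and invoking base change. Once the fiberwise picture is in place, the weight refinement in Corollary \ref{CorE} together with the degree bookkeeping on the two ruled components should control the weight of the resulting quasimodular form.
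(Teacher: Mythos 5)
You set out to prove Conjecture~\ref{Conj_Quasimodularity}, but the paper does not prove this statement: it is stated (and left) as a conjecture. The only case the paper establishes is the trivial fibration $X = B \times E$ (Corollary~\ref{Cor_Trivial_elliptic_fibration}), and that is done by an entirely different and much shorter route: Behrend's product formula \cite{B2} reduces the $\pi$-relative classes to the cycle-valued classes $\CC_g(\gamma_1,\ldots,\gamma_n)$ of the elliptic curve, and then Theorems~\ref{thm_E_quasimodularity} and~\ref{thm_E_HAE} apply directly; note that in this case $N_{\iota} \cong \CO_B$, so $m = 0$ and no $\Delta$-denominator arises. Your proposal, by its own admission, leaves open exactly the step that distinguishes the conjecture from the elliptic-curve theorem, namely the mechanism producing the factor $\Delta(q)^{-m}$ with $m = -\tfrac{1}{2}c_1(N_{\iota})\cdot \kk$; ``one expects them to yield precisely a $\Delta^{-m}$ denominator'' is the whole theorem, not a lemma to be filled in later.

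Beyond that admitted gap, two of your concrete steps fail. First, the degeneration you posit generally does not exist: for a nontrivial elliptic fibration the topology of $X$ obstructs specializing it to a union of two $\p^1$-bundles over $B$ glued along sections. Such an $X_0$ is isotrivial with every fiber a cycle of rational curves, whereas, e.g., an elliptic K3 surface has $e = 24$ concentrated in its singular fibers; in the K3 case normal-crossing limits must be d-semistable with double curves forming anticanonical cycles on each component, which fails for two Hirzebruch surfaces glued along two disjoint sections. (It is precisely because the fibration is trivial that $E \rightsquigarrow C_n$ can be performed in Section~1 and in Corollary~\ref{Cor_Trivial_elliptic_fibration}.) Second, even granting a degeneration, the vertex contributions of the degeneration formula are relative Gromov--Witten classes of $\p^1$-bundles over $B$, and fiberwise $\BC^{\ast}$-localization does not reduce these to pullbacks of double ramification cycles: the polynomiality of \cite{JPPZ,PZ} concerns classes on $\Mbar_{g,n}$ arising from rubber maps to $\p^1$, and it is exactly the collapse of the base to a point that makes it applicable; over a positive-dimensional $B$ the localization residues involve the Gromov--Witten theory of $B$ twisted by the bundle together with Hodge-type classes, not pulled-back DR cycles. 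The paper itself defers nontrivial fibrations to \cite{OPix2}, where even the rational elliptic surface is handled only numerically and by different methods (Jacobi-form and holomorphic-anomaly techniques), so the conjecture remains open and your outline does not constitute a proof of it.
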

\vspace{8pt}

A refinement of Conjecture~\ref{Conj_Quasimodularity}
by weight can be found in Appendix~\ref{Section_Elliptic_fibrations}. 

We conjecture a holomorphic anomaly equation.
Consider the diagram
\[
\begin{tikzcd}
\Mbar_{g,n}(B,\mathsf{k}) & M_{\Delta} \ar[swap]{l}{\iota} \ar{d} \ar{r} & \Mbar_{g-1, n+2}(B, \mathsf{k}) \ar{d}{\ev_{n+1} \times \ev_{n+2}} \\
& B \ar{r}{\Delta} & B \times B
\end{tikzcd}
\]
where $\Delta$ is the diagonal, $M_{\Delta}$ is the fiber product
and $\iota$ is the gluing map along the last two points.
Similarly, for every
splitting $g = g_1 + g_2$, $\{ 1, \ldots, n \} = S_1 \sqcup S_2$
and $\mathsf{k} = \mathsf{k_1} + \mathsf{k_2}$
consider
\[
\begin{tikzcd}
\Mbar_{g,n}(B,\mathsf{k}) & M_{\Delta, \kk_1, \kk_2} \ar{d} \ar[swap]{l}{j} \ar{r} & 
\Mbar_{g_1, S_1 \sqcup \{ \bullet \}}(B, \mathsf{k_1})
\times \Mbar_{g_2, S_2 \sqcup \{ \bullet \}}(B, \mathsf{k_2}) \ar{d}{\ev_{\bullet} \times \ev_{\bullet}} \\
& B \ar{r}{\Delta} & B \times B
\end{tikzcd}
\]
where $M_{\Delta, \kk_1, \kk_2}$ is the fiber product and
$j$ is the gluing map along the marked points labeled by $\bullet$.

\begin{conjecture} \label{Conj_HAE} On $\Mbar_{g,n}(B, \mathsf{k})$,
\begin{align*}
\frac{d}{dC_2} \CC^{\pi}_{g,\kk}( \gamma_1, \ldots, \gamma_n )
\ =\  & 
\iota_{\ast} \Delta^{!}
\CC^{\pi}_{g-1,\kk}( \gamma_1, \ldots, \gamma_n, \1, \1 ) \\
&
+ \sum_{\substack{ g= g_1 + g_2 \\
\{1,\ldots, n\} = S_1 \sqcup S_2 \\
\mathsf{k} = \mathsf{k}_1 + \mathsf{k}_2}}
j_{\ast} \Delta^{!} \left(
\CC^{\pi}_{g_1, \kk_1}( \gamma_{S_1}, \1) \boxtimes
\CC^{\pi}_{g_2, \mathsf{k}_2}( \gamma_{S_2}, \1 ) \right) \\
&
- 2 \sum_{i=1}^{n} 
\psi_i \cdot \CC^{\pi}_{g,\kk}( \gamma_1, \ldots, \gamma_{i-1},
 \pi^{\ast} \pi_{\ast} \gamma_i , \gamma_{i+1}, \ldots, \gamma_n ),
\end{align*}
where $\psi_i \in H^2(\Mbar_{g,n}(B,\kk))$
is the cotangent line class at the $i$-th marking.
\end{conjecture}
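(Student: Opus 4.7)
The strategy would mirror the proof of Theorem \ref{thm_E_HAE}, with the base $B$ treated as an auxiliary parameter. First I would verify Conjecture \ref{Conj_HAE} in the case of the trivial fibration $X = B \times E$, where Behrend's product formula identifies $[\Mbar_{g,n}(X,\beta)]^{\vir}$ for $\beta = (\kk,d)$ with the product of the virtual classes on $\Mbar_{g,n}(B,\kk)$ and $\Mbar_{g,n}(E,d)$, so that $\pi$-pushforward is integration over the elliptic factor. The projection formula gives $\pi^{\ast}\pi_{\ast}(\gamma_B \boxtimes \gamma_E) = \big(\int_E \gamma_E\big)(\gamma_B \boxtimes \1)$, and Theorem \ref{thm_E_HAE} applied fiberwise produces exactly the three terms on the right-hand side of Conjecture \ref{Conj_HAE}. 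In this case $W = B_0$ and $q^{W \cdot \beta} = q^d$, so everything reduces cleanly to the elliptic curve theory.

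For a general elliptic fibration $\pi : X \to B$, the plan is to perform a fiberwise degeneration of the smooth elliptic fibers to rational nodal curves, paralleling the degeneration of $E$ used in the proof of Theorem \ref{thm_E_HAE}. Concretely one would construct a semistable model $\mathcal{X} \to T$ of $X$ over a disk, compatible with $\pi$ and with the section $\iota$, whose central fiber is a nodal elliptic fibration over $B$. The divisor class $W = B_0 - \frac{1}{2}\pi^{\ast} c_1(N_{\iota})$ is chosen precisely so that the variable $q^{W \cdot \beta}$ is compatible with this degeneration, absorbing the normal bundle twist introduced by the section. The degeneration formula then expresses $\CC^{\pi}_{g,\kk}$ as a trace-like sum of relative Gromov--Witten classes indexed by ramification profiles at the new nodes, and the polynomiality of the double ramification cycle \cite{PZ, JPPZ} -- the key input already in the proof of Theorem \ref{thm_E_HAE} -- identifies this sum as a quasimodular form (up to the $\Delta(q)^{-m}$ factor of Conjecture \ref{Conj_Quasimodularity}) whose $C_2$-derivative matches the right-hand side of Conjecture \ref{Conj_HAE}: the two $\Delta^!$-terms arise from splittings of the source curve at the new nodes, while the $-2\sum_i \psi_i$ term comes from substituting $\pi^{\ast}\pi_{\ast} \gamma_i$ at the markings, exactly as in the product case.

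The main obstacle is the presence of singular fibers of $\pi$ over the discriminant $\Delta_{\pi} \subset B$: these fibers are already degenerate before any semistable construction, and their local contributions to the degeneration formula must be shown to reassemble correctly both into the $\Delta(q)^{-m}$ correction of Conjecture \ref{Conj_Quasimodularity} and into the right-hand side of Conjecture \ref{Conj_HAE}. A potentially cleaner alternative is to work in logarithmic Gromov--Witten theory relative to the section $\iota(B) \subset X$ together with the singular fiber locus, encoding the local elliptic fiber structure in the log structure from the outset. In either approach Conjectures \ref{Conj_Quasimodularity} and \ref{Conj_HAE} should be attacked simultaneously, since the explicit anomaly equation is a crucial input for establishing quasimodularity itself.
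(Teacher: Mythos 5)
You set out to prove Conjecture~\ref{Conj_HAE}, but you should be aware that in the paper this statement is exactly that: a conjecture, with no general proof given. The only case established is the trivial fibration $X = B \times E$ (Corollary~\ref{Cor_Trivial_elliptic_fibration}), deduced from Theorems~\ref{thm_E_quasimodularity} and~\ref{thm_E_HAE} via Behrend's product formula \cite{B2}; your first paragraph reproduces this argument correctly, modulo one glossed subtlety. The reduction is not entirely ``clean'': the comparison of cotangent line classes $p^{\ast}\psi_i = \psi_i - D_{g=0,i}$ on $\Mbar_{g,n}(B,\kk)$ produces boundary corrections, and these are precisely what supply the unstable summands ($g_i = 0$, $|S_i| = 1$) of the splitting term in Conjecture~\ref{Conj_HAE}; matching them uses an elliptic vanishing relation \cite[3.2.2]{BOPY}. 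Beyond this, the paper proves only a modified K3 analogue numerically (Theorem~\ref{thm_K3HAE}) and defers the rational elliptic surface to \cite{OPix2}; the general conjecture is open, so the real question is whether your second and third paragraphs could close it.

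They cannot, as written, and the gap is concrete. First, the proposed fiberwise degeneration does not exist for a non-isotrivial fibration: if $\CX \to T$ is a family over a disk whose general fiber is the fixed fibration $\pi : X \to B$, then the $j$-map of the fibers is the fixed map $j_X : B \dashrightarrow \p^1$ for all $t \neq 0$, so over a generic point $b \in B$ the limit fiber still has $j$-invariant $j_X(b) \neq \infty$ and remains a smooth elliptic curve. A central fiber that is a ``nodal elliptic fibration over $B$'' would force $j \equiv \infty$, which is incompatible with the limit. (The degeneration actually used for K3 surfaces is different in kind: it is the degeneration to the normal cone of a single fiber, $S \rightsquigarrow S \cup_E (\p^1 \times E)$, available because an elliptic fiber is a \emph{divisor} in a surface; for $\dim B \geq 2$ a fiber has codimension $\geq 2$ and this does not generalize.) Second, even granting some degeneration, the key input of the elliptic curve proof --- polynomiality of the double ramification cycle \cite{JPPZ, PZ} --- is specific to targets of relative dimension one over a point: it converts the degeneration formula for maps to a cycle of $\p^1$'s into polynomial vertex data on $\Mbar_{g,n}$. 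In your setting the vertex contributions would live on moduli of maps to rubber over $B$-families, for which no polynomiality statement is available; and a DR-type argument could in any case only produce honest quasimodular forms, so it cannot account for the pole $\Delta(q)^{-m}$ with $m = -\frac{1}{2} c_1(N_{\iota}) \cdot \kk$ demanded by Conjecture~\ref{Conj_Quasimodularity} as soon as $\kk \neq 0$. These obstructions are precisely why the paper formulates the general statement only as a conjecture, handles $B \times E$ by the product formula, and treats K3 surfaces by the induction of \cite{MPT} (degeneration to the normal cone of a fiber together with restriction to boundary divisors of $\Mbar_{g,n}$) rather than by any DR-cycle mechanism. Your closing suggestion --- logarithmic invariants relative to the section and the singular fibers, attacking Conjectures~\ref{Conj_Quasimodularity} and~\ref{Conj_HAE} simultaneously --- points in the direction of \cite{OPix2}, but as stated it is a program, not a proof.
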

\vspace{8pt}

If $X$ is a Calabi--Yau threefold the moduli space
of stable maps is of virtual dimension $0$.
The degree of the $\pi$-relative
classes are the genus $g$ Gromov--Witten potentials
\[ F_{g,\kk}(q) = \int \CC^{\pi}_{g,\kk}(). \]
Conjecture~\ref{Conj_Quasimodularity} implies
\[ F_{g,\kk}(q) \in \frac{1}{\Delta(q)^{-\frac{1}{2} K_B \cdot \kk}} \QMod. \]
Conjecture~\ref{Conj_HAE} and a direct calculation yields
\begin{align*}
\frac{d}{dC_2} F_{g,\kk}
& =
\langle \kk + K_S, \kk \rangle F_{g-1, \kk}
+ \sum_{\substack{g=g_1+g_2 \\ \kk = \kk_1 + \kk_2}}
\langle \kk_1, \kk_2 \rangle F_{g_1, \kk_1} F_{g_2, \kk_2}
- \frac{\delta_{g2} \delta_{k0}}{240} e(X),
\end{align*}
where
$\langle \cdot , \cdot \rangle$ is the intersection pairing on
the surface $B$.
We recover the holomorphic anomaly equation
for Calabi--Yau threefolds predicted by
Bershadsky, Cecotti, Ooguri, and Vafa \cite{BCOV}
using mirror symmetry\footnote{See
\cite[Eqns.(3.8) and (3.9)]{KMW}
and \cite{AS} for a discussion in the elliptic case.}.
This example is further discussed in Appendix~\ref{Subsection_CY3example}.

The generating series
$\CC^{\pi}_{g, \kk}( \ldots )$
captures only a slice of the
full $\pi$-relative Gromov--Witten theory of $X$.
For example, there might be distinct curve classes
$\beta_1, \beta_2 \in H_2(X,\BZ)$ with
\[ \pi_{\ast} \beta_1 = \pi_{\ast} \beta_2
 \ \ \text{and} \ \ \langle W, \beta_1 \rangle = \langle W, \beta_2 \rangle, \]
and $\CC^{\pi}_{g, \kk}( \ldots )$ only remembers
the sum of their Gromov--Witten classes.
A holomorphic anomaly equation
for the full relative potentials
will be conjectured in \cite{OPix2}.
There we also prove that Conjectures A and B
hold for
the rational elliptic surface
after specialization to numerical Gromov--Witten invariants.
Here we state the following
Corollary of Theorems \ref{thm_E_quasimodularity} and~\ref{thm_E_HAE}
which follows from Behrend's product formula \cite{B2}.

\begin{cor} \label{Cor_Trivial_elliptic_fibration}
Conjectures \ref{Conj_Quasimodularity} and \ref{Conj_HAE}
hold if $X = B \times E$ and $\pi : X \to B$ is the projection onto the first factor.
\end{cor}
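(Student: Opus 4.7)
The plan is to combine Theorems~\ref{thm_E_quasimodularity} and~\ref{thm_E_HAE} with Behrend's product formula \cite{B2}. By the K\"unneth decomposition $H^\ast(X) = H^\ast(B) \otimes H^\ast(E)$, it suffices to treat pure-tensor insertions $\gamma_i = \alpha_i \boxtimes \delta_i$ with $\alpha_i \in H^\ast(B)$ and $\delta_i \in H^\ast(E)$. Since the fibration is trivial, $N_\iota$ is trivial, so $c_1(N_\iota) = 0$, $W = B_0$, $m = 0$, and any curve class projecting to $\kk$ has the form $(\kk, d)$ with $q^{W \cdot (\kk, d)} = q^d$. First I would apply Behrend's product formula to the natural isomorphism
\[ \Mbar_{g,n}(X, (\kk, d)) \;\cong\; \Mbar_{g,n}(B, \kk) \times_{\Mbar_{g,n}} \Mbar_{g,n}(E, d), \]
realizing the virtual class on the left as a fibered product of virtual classes over $\Mbar_{g,n}$. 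Pushing forward to $\Mbar_{g,n}(B, \kk)$ and summing over $d$ yields the key factorization
\[
\CC^{\pi}_{g,\kk}(\gamma_1,\ldots,\gamma_n) \;=\; \CC^{B}_{g,\kk}(\alpha_1,\ldots,\alpha_n)\, \cdot \, f_B^{\ast}\, \CC_g(\delta_1,\ldots,\delta_n),
\]
where $\CC^B_{g,\kk}$ denotes the analogous pushforward of $[\Mbar_{g,n}(B, \kk)]^{\vir}$ with the $\alpha$-insertions and $f_B : \Mbar_{g,n}(B,\kk) \to \Mbar_{g,n}$ is the forgetful morphism.

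Conjecture~\ref{Conj_Quasimodularity} is then immediate: by Theorem~\ref{thm_E_quasimodularity} the elliptic factor lies in $H^\ast(\Mbar_{g,n}) \otimes \QMod$, and its pullback multiplied by the $q$-independent $B$-factor lies in $H_\ast(\Mbar_{g,n}(B, \kk)) \otimes \QMod$; since $m = 0$ no power of $1/\Delta$ appears.

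For Conjecture~\ref{Conj_HAE} only the elliptic factor depends on $q$, so $d/dC_2$ acts only on $\CC_g(\delta_1,\ldots,\delta_n)$. Applying Theorem~\ref{thm_E_HAE} produces three terms which I would match against the three terms on the right-hand side of Conjecture~\ref{Conj_HAE}. For the nonseparating term, the pushforward along $\iota : \Mbar_{g-1,n+2} \to \Mbar_{g,n}$ combines with the $B$-factor via the Cartesian diagram defining $M_\Delta$ in Conjecture~\ref{Conj_HAE}, and a second application of Behrend's formula identifies the result with $\iota_\ast \Delta^{!} \CC^{\pi}_{g-1,\kk}(\gamma_1,\ldots,\gamma_n, \1, \1)$; the Gysin pullback $\Delta^{!}$ along $\Delta : B \to B \times B$ is precisely what forces the two new markings to meet in $B$. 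The separating term matches analogously, with splittings $\kk = \kk_1 + \kk_2$ on the $B$-side combining with splittings $d = d_1 + d_2$ implicit in the elliptic generating series. For the $\psi$-term, the computation $\pi^\ast \pi_\ast(\alpha_i \boxtimes \delta_i) = (\int_E \delta_i)(\alpha_i \boxtimes \1)$ converts the term $-2 \psi_i \CC^\pi_{g,\kk}(\ldots, \pi^\ast \pi_\ast \gamma_i, \ldots)$ of Conjecture~\ref{Conj_HAE} into the pulled-back elliptic term $-2(\int_E \delta_i) \psi_i \CC_g(\ldots, \1, \ldots)$ of Theorem~\ref{thm_E_HAE}.

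\emph{The main obstacle} I foresee is twofold. First, the cotangent line class on $\Mbar_{g,n}$ pulled back along $f_B$ differs from the intrinsic cotangent line class on $\Mbar_{g,n}(B,\kk)$ by boundary corrections arising from contracted unstable source components; these corrections must be shown to be absorbed by, or compatible with, the boundary contributions that are already present in the other terms. Second, one must verify that the Gysin pullbacks $\Delta^{!}$ appearing in Conjecture~\ref{Conj_HAE} are exactly what Behrend's product formula produces along the boundary embeddings into $\Mbar_{g,n}$. This is a compatibility of virtual pullbacks under proper base change that I expect to follow from standard functoriality of virtual fundamental classes, but it is the step most demanding careful diagrammatic bookkeeping.
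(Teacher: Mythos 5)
Your skeleton is exactly the paper's own argument: the product formula of \cite{B2} gives the factorization
\[
\CC^{\pi}_{g,\kk}(\gamma_1,\ldots,\gamma_n) \;=\; p^{\ast}\CC_g(\delta_1,\ldots,\delta_n)\cap\Big( [\Mbar_{g,n}(B,\kk)]^{\vir}\prod_i \ev_i^{\ast}(\alpha_i)\Big)
\]
for $\gamma_i = \alpha_i\boxtimes\delta_i$, quasimodularity is then immediate from Theorem~\ref{thm_E_quasimodularity}, and the anomaly equation is obtained by differentiating the elliptic factor via Theorem~\ref{thm_E_HAE} and matching terms. Two small remarks: the natural map from $\Mbar_{g,n}(X,(\kk,d))$ to the fiber product $\Mbar_{g,n}(B,\kk)\times_{\Mbar_{g,n}}\Mbar_{g,n}(E,d)$ is \emph{not} an isomorphism -- \cite{B2} only asserts that the pushforward of the virtual class under this map equals $\Delta^{!}$ of the product of virtual classes, which is all you use; and your second worry is unproblematic, being exactly the splitting compatibility $\iota^{!}[\Mbar_{g,n}(B,\kk)]^{\vir} = h_{\ast}\Delta^{!}[\Mbar_{g-1,n+2}(B,\kk)]^{\vir}$ that the paper also invokes as standard.

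The genuine gap is at your first obstacle, which you correctly locate but leave unresolved, and your proposed mechanism points the wrong way. Pulling back the separating term of Theorem~\ref{thm_E_HAE} only produces splittings with $2g_i-2+|S_i|+1>0$ on \emph{both} sides, because that sum lives on $\Mbar_{g,n}$; but Conjecture~\ref{Conj_HAE} also contains the summands with $(g_2,|S_2|)=(0,1)$ and $\kk_2>0$, for which $\Mbar_{0,2}(B,\kk_2)$ is a perfectly good moduli space. These summands are not ``absorbed by boundary contributions already present in the other terms'': they are entirely absent from the pulled-back formula and must be \emph{created} by the comparison $p^{\ast}\psi_i = \psi_i - D_{g=0,i}$, where $D_{g=0,i}$ is the boundary divisor of maps whose source splits off a genus-$0$ component carrying only the $i$-th marking and having positive degree over $B$. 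Expanding $D_{g=0,i}$ against the virtual class yields
\[
2\sum_{\substack{\kk=\kk_1+\kk_2\\ \kk_2>0}}\sum_{\ell}
j_{\ast}\Delta^{!}\Big( \CC^{\pi}_{g,\kk_1}\big(\gamma_1,\ldots,\underbrace{\Delta_{E,\ell}}_{i^{\text{th}}},\ldots,\gamma_n\big)
\times \CC^{\pi}_{0,\kk_2}\big(\pi^{\ast}\pi_{\ast}\gamma_i,\,\Delta_{E,\ell}^{\vee}\big)\Big),
\]
with $\Delta_E = \sum_{\ell}\Delta_{E,\ell}\otimes\Delta_{E,\ell}^{\vee}$ the K\"unneth decomposition of the diagonal of $E$, and identifying this with the missing summands requires a nontrivial genus-$0$ evaluation: translation-invariance under the $E$-action forces all contributions to vanish unless $\Delta_{E,\ell}^{\vee} = \pt$ (and $\delta_i = \pt$), and then the elliptic vanishing relation of \cite[3.2.2]{BOPY} gives $\CC^{\pi}_{0,\kk_2}(\alpha_i\boxtimes\1,\,\1\boxtimes\pt) = \CC^{\pi}_{0,\kk_2}(\gamma_i,\1)$, exactly the shape of the conjectured second term. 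Without this input the two sides do not visibly match, so your proposal stops short of a proof precisely at the step that constitutes the real content of the corollary beyond formal bookkeeping.
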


\subsection{K3 surfaces} \label{Subsection_intro_K3surfaces}
Let $S$ be a non-singular projective K3 surface
and let $\beta \in \Pic(S)$ be a non-zero curve class.
Since $S$ carries a holomorphic symplectic form
the virtual class on the moduli space of stable maps vanishes,
\[ [ \Mbar_{g,n}(S, \beta) ]^{\text{vir}} = 0. \]
A non-trivial Gromov--Witten theory of $S$ is
defined by the reduced virtual class \cite{GWNL, KT}
\[ [ \Mbar_{g,n}(S, \beta) ]^{\text{red}} \in A_{\ast}( \Mbar_{g,n}(S, \beta) ). \]

Let $\pi : S \to \p^1$
be an elliptic K3 surface with a section, and let
\[ B, F \in \Pic(S) \]
be the class of a section and a fiber of $\pi$ respectively.
By deformation invariance the Gromov--Witten theory of $S$
in the classes
\[ \beta_h = B + hF, \ h \geq 0 \]
determines the Gromov--Witten theory
of all K3 surfaces in primitive classes.

Given $\gamma_1, \ldots, \gamma_n \in H^\ast(S)$
define the generating series of $\pi$-relative classes
\begin{multline*}
\label{defn_Kg}
\CK_{g}(\gamma_1, \ldots, \gamma_n)
=
\sum_{h = 0}^{\infty} q^{h-1}
\pi_{\ast}\left(
[ \Mbar_{g,n}(S, \beta_h) ]^{\text{red}}
\prod_{i=1}^{n} \ev_i^{\ast}(\gamma_i) \right) \\
\in H_{\ast}( \Mbar_{g,n}(\p^1, 1) ) \otimes \BQ[[q]],
\end{multline*}
where
$\pi : \Mbar_{g,n}(S,\beta_h) \to \Mbar_{g,n}(\p^1, 1)$
is the induced morphism.

\vspace{3pt}
\begin{conjecture} \label{K3_Conjecture1}
$\CK_{g}(\gamma_1, \ldots, \gamma_n)
\in H_{\ast}( \Mbar_{g,n}(\p^1, 1) ) \otimes \frac{1}{\Delta(q)} \QMod$.
\end{conjecture}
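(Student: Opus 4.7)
The plan is to prove Conjecture~\ref{K3_Conjecture1} by degenerating the elliptic K3 surface $S$ to the union $R_1 \cup_E R_2$ of two rational elliptic surfaces glued transversally along a common smooth elliptic fiber $E$, and then applying the cycle-level degeneration formula together with the elliptic fibration results of Section~\ref{Subsection_Elliptic_fibrations}.

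First I would fix the degeneration. In a one-parameter smoothing with central fiber $R_1 \cup_E R_2$, each $R_i \to \p^1$ is a rational elliptic surface with section $B_i$, the sections glue to form the section $B$ of $S$, and $E$ is a common smooth fiber. The primitive class $\beta_h = B + hF$ breaks on the central fiber into a pair $(\beta^{(1)}, \beta^{(2)})$ supported on $R_1$ and $R_2$ with contact multiplicity $1$ along $E$, since $B \cdot F = 1$ on $S$. The $\pi$-image class on $\p^1$ distributes as $\mathsf{k}_1 + \mathsf{k}_2 = 1$.

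Second, I would apply Jun Li's degeneration formula at the cycle level to express $\CK_g(\gamma_1, \ldots, \gamma_n)$ as a sum of products of $\pi_i$-relative and $E$-relative Gromov--Witten classes of the pairs $(R_i, E)$, with matching ramification conditions across $E$ and a finite collection of insertions determined by the splittings of the $\gamma_j$. Granting the cycle-level version of Conjectures~\ref{Conj_Quasimodularity} and~\ref{Conj_HAE} for each rational elliptic surface $R_i$ (proven numerically in the companion paper \cite{OPix2}), each factor is a cycle-valued quasimodular form with pole of order $m_i = -\tfrac{1}{2} c_1(N_{\iota_i}) \cdot \mathsf{k}_i$ at $\Delta$. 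A direct computation on the rational elliptic surface gives $c_1(N_{\iota_i}) \cdot [\p^1] = B_i^2 = -1$, so the total pole order of the product sums to $m_1 + m_2 = 1$, exactly matching the factor $1/\Delta(q)$ conjectured on the K3 side. The summation over the relative degrees along $E$ reduces, via the rubber calculus, to an integral that preserves quasimodularity and does not introduce additional $\Delta$-poles.

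Third, pushforward along $\Mbar_{g,n}(S, \beta_h) \to \Mbar_{g,n}(\p^1, 1)$ is formal once the assembled cycles have been identified inside a cycle-valued quasimodular form: pushforward along a proper morphism preserves the ring $\tfrac{1}{\Delta(q)}\QMod$ of coefficients. A parallel argument, combining Conjecture~\ref{Conj_HAE} on each factor with contributions from the gluing node along $E$, would also produce a holomorphic anomaly equation for $\CK_g$ (the refinement appearing in the paper's numerical statement).

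The main obstacle is the cycle-level upgrade. Even for the rational elliptic surface, Conjectures~\ref{Conj_Quasimodularity} and~\ref{Conj_HAE} are currently accessible only after specialization to numerical invariants, and promoting them requires a careful analysis of the rubber moduli along $E$, control of the virtual class on the expanded relative moduli, and verification that no extra poles in $\Delta$ arise from the cosection or from unstable rubber components. An alternative and more direct route, which is the one pursued numerically in the present paper, is to combine the GW/PT correspondence with the Katz--Klemm--Vafa analysis on the sheaf theory side, and to transfer modularity from Pandharipande--Thomas invariants using Behrend's product formula, at the cost of passing to numerical invariants and a modified form of the statement.
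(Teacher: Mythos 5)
First, be aware of what you are comparing against: the statement is stated in the paper as Conjecture~\ref{K3_Conjecture1} and is \emph{not proven there at the cycle level}. The paper establishes only the numerical specialization \eqref{454353}, imported from \cite[Prop.29]{MPT}, whose proof (recapped in Section~\ref{Section_K3_surfaces}) is an induction on genus and number of markings using the degeneration to the normal cone of a fiber, $S \rightsquigarrow S \cup_E (\p^1 \times E)$, together with restriction to boundary divisors of $\Mbar_{g,n}$. So the real question is whether your strategy could furnish a proof, and as written it cannot.

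The decisive gap is your second step. The class $\CK_g$ is defined with the \emph{reduced} virtual class, while Jun Li's degeneration formula applies to the ordinary virtual class, which vanishes identically for $S$. Applied to the type II degeneration $S \rightsquigarrow R_1 \cup_E R_2$, the formula therefore yields $0 = \sum(\text{products of relative classes})$, not an expression for $\CK_g$. Since neither rational elliptic surface carries a holomorphic symplectic form, there is no reduced relative theory of $(R_i,E)$ for the reduced class to localize to; handling reduced invariants across such a degeneration requires the Noether--Lefschetz/family machinery of \cite{GWNL} and is not a formal application of the degeneration formula. This is exactly why \cite{MPT} and the present paper degenerate to $S \cup_E (\p^1\times E)$ instead: the K3 persists as a component, the reduced class makes sense relative to $E$ (with positive-degree relative insertions vanishing, cf.\ \eqref{4540830vv}), and the other factor $\p^1\times E$ is controlled by the cycle-level Corollary~\ref{Cor_Trivial_elliptic_fibration}. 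Beyond this, your remaining inputs are conditional, as you acknowledge: Conjectures~\ref{Conj_Quasimodularity} and~\ref{Conj_HAE} for the rational elliptic surface are available in \cite{OPix2} only after specialization to numerical invariants, so even granting the degeneration step you would at best recover the numerical statement already known. Two further corrections: your closing claim that the paper's numerical route goes through the GW/PT correspondence and the sheaf side is inaccurate --- \eqref{454353} comes from \cite{MPT}, and GW/PT and wall-crossing enter only in the proof of the Igusa cusp form conjecture; and in the type II degeneration the base degenerates to a nodal union of two lines with the section class of degree one over \emph{each} component, so ``$\mathsf{k}_1+\mathsf{k}_2=1$'' is not the right bookkeeping, although your pole count $m_1+m_2=\tfrac{1}{2}+\tfrac{1}{2}=1$ is the expected one.
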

\vspace{7pt}

Because we use reduced virtual classes,
the holomorphic anomaly equation of Conjecture~\ref{Conj_HAE}
does not apply to $\CK_g$ and needs to be modified.
We require two additional ingredients.
First, the virtual class
on the moduli space of degree $0$ maps plays a role.
Identifying
$\Mbar_{g,n}(S,0) = \Mbar_{g,n} \times S$
we have
\[
[ \Mbar_{g,n}(S,0) ]^{\text{vir}}
=
\begin{cases}
[\Mbar_{0,n} \times S] & \text{if } g= 0 \\
\mathrm{pr}_2^{\ast} c_2(S) \cap [\Mbar_{1,n}  \times S]
& \text{if } g= 1\\
0 & \text{if } g \geq 2,
\end{cases}
\]
where $\mathrm{pr}_2$ is the projection to the second factor.
We let
\[ \CK_g^{\vir}(\gamma_1, \ldots, \gamma_n)
=
\pi_{\ast}\left( [ \Mbar_{g,n}(S, 0) ]^\vir \prod_i \ev_i^{\ast}(\gamma_i) \right), \]
where $\pi : \Mbar_{g,n}(S,0) \to \Mbar_{g,n}(\p^1,0)$
is the induced map.

Second, let $V$ be the orthogonal complement to
$B, F$ in $H^2(S,\BQ)$ with respect to the
intersection pairing,
\[
H^2(S,\BZ) = \langle B,F \rangle \oplus V,
\]
and let $\Delta_{V} \in V \boxtimes V$
be its diagonal.
Define the endomorphism
\[ \sigma: H^{\ast}(S^2) \to H^{\ast}(S^2) \]
by the following assignments:
\[
\sigma(\gamma \boxtimes \gamma') = 0
\ \text{ whenever } \gamma \text{ or } \gamma' \text{ lie in }
H^0(S) \oplus \BQ F  \oplus H^4(S),
\]
and for all $\alpha, \alpha' \in V$,
\begin{alignat*}{2}
\sigma( B \boxtimes B ) & = \Delta_{V} , \quad \quad \quad
& \sigma( B \boxtimes \alpha ) & = - \alpha \boxtimes F, \\
\sigma( \alpha \boxtimes B ) & = - F \boxtimes \alpha,
& \sigma( \alpha, \alpha' ) & = \langle \alpha, \alpha' \rangle F \boxtimes F.
\end{alignat*}

Define the class
\begin{align*} \mathsf{T}_g(\gamma_1, \ldots, \gamma_n)
& =
\iota_{\ast} \Delta^{!} \CK_{g-1}( \gamma_1, \ldots, \gamma_n, \1, \1 ) \\
& + 2 \sum_{\substack{g= g_1 + g_2 \\ \{1,\ldots, n\} = S_1 \sqcup S_2}}
j_{\ast} \Delta^{!} \left( \CK_{g_1}( \gamma_{S_1}, \1 ) \boxtimes \CK^{\vir}_{g_2}( \gamma_{S_2}, \1 ) \right) \\
& - 2 \sum_{i=1}^{n} \psi_i \cdot \CK_g( \gamma_1, \ldots, \gamma_{i-1}, \pi^{\ast} \pi_{\ast} \gamma_i, \gamma_{i+1}, \ldots, \gamma_n ) \\
& + 20 \sum_{i=1}^{n} \langle \gamma_i, F \rangle \CK_{g}(\gamma_1, \ldots, \gamma_{i-1}, F, \gamma_{i+1}, \ldots, \gamma_n ) \\
& -2 \sum_{i < j} \CK_g(\gamma_1, \ldots, \underbrace{\sigma_{1}(\gamma_i,\gamma_j)}_{i^{\text{th}}},
\ldots, \underbrace{\sigma_{2}(\gamma_i,\gamma_j)}_{j^{\text{th}}}, \ldots, \gamma_n ).
\end{align*}

\begin{conjecture} \label{K3_Conjecture2}
\label{conj_K3HAE}
For any $\gamma_1, \ldots, \gamma_n \in H^{\ast}(S)$,
\begin{align*}
\frac{d}{dC_2} \CK_g( \gamma_1, \ldots, \gamma_n )
\ =\  & \mathsf{T}_g(\gamma_1, \ldots, \gamma_n).
\end{align*} 
\end{conjecture}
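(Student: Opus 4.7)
The approach I would take combines the elliptic curve holomorphic anomaly equation (Theorem~\ref{thm_E_HAE}) with a controlled degeneration of the K3 surface. Specifically, I would degenerate $S$ to a union $R_1 \cup_E R_2$ of two rational elliptic surfaces glued along a smooth elliptic fiber $E$, chosen so that the section $B$ and the fiber class $F$ degenerate compatibly and $\beta_h = B + hF$ splits as $B_1 + h_1 F$ on $R_1$ and $B_2 + h_2 F$ on $R_2$ with $h_1 + h_2 = h$. The reduced virtual class of $S$ does not distribute symmetrically under this degeneration: exactly one of the two sides must carry the reduction, which already anticipates the asymmetric factor of $2$ in front of the $g=g_1+g_2$ sum and the appearance of $\CK^{\vir}_{g_2}$ rather than a second reduced factor in the definition of $\mathsf{T}_g$.

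The first step is to establish the cycle-valued degeneration formula, pushing forward to $\Mbar_{g,n}(\p^1,1)$. The base $\p^1$ degenerates to a chain of two $\p^1$'s, so the gluing contribution takes the form of a diagonal pullback $\Delta^!$ on the base, producing the maps $\iota$ and $j$ in Conjecture~\ref{K3_Conjecture2}. On each rational elliptic surface $R_i$, the virtual class is unreduced and the theory is expected to satisfy Conjecture~\ref{Conj_HAE} (the authors announce this in \cite{OPix2}). Applying $d/dC_2$ termwise to the degeneration expression, using Conjecture~\ref{Conj_HAE} on each $R_i$ and the resulting regluing along $E$, should produce the first three lines of $\mathsf{T}_g$ in essentially direct fashion — the $\psi_i$-correction with $\pi^{\ast}\pi_{\ast}\gamma_i$ is already present in the elliptic curve HAE, and it persists through the fibration by naturality.

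The second, more delicate step is to identify the last two correction terms. The coefficient $20$ multiplying the $F$-insertion correction is strongly suggestive of $\chi(S) - 4 = 20$, or equivalently the rank of $V$. I expect it to arise from the $c_2(S)$-contribution appearing in $\CK_g^{\vir}$ when reconciling contracted-component contributions across the degeneration: on the rational elliptic pieces $c_2$ of a fiber restricts to the Euler class of $E$, and the discrepancy with $c_2(S)$ is carried by $V$. The operator $\sigma$ should encode the Künneth decomposition of $\Delta_S$ along $H^2(S) = \langle B, F\rangle \oplus V$ as it appears in the splitting of insertions passing through the fiber $E$ during the degeneration; each of the four defining cases of $\sigma$ matches one block of this decomposition after using $B\cdot B = -2$, $B\cdot F = 1$, and the definition of $V$ as orthogonal complement.

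The main obstacle is the interaction between the reduction of the virtual class and the degeneration formula. Because the reduction is a global deformation-theoretic modification tied to the holomorphic symplectic form on $S$, which is \emph{not} present on $R_1 \cup_E R_2$, one cannot naively apply the standard cycle-valued degeneration formula; instead one needs an analog of Maulik--Pandharipande--Thomas style reduced degeneration, carefully tracking the ``reducing cocycle'' and its compatibility with both the base $\p^1$-degeneration and the fiberwise elliptic structure. A secondary obstacle is pinning down the purely modular (i.e.\ $C_2$-independent) part of $\CK_g$ that the HAE cannot see; this would require a base case, for which the primitive Katz--Klemm--Vafa formula \cite{PT2} at $d = 0$ and the $d \in \{1,2\}$ computations of \cite{K3xP1} provide natural anchors.
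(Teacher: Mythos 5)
First, a framing point: the statement you are proving is a \emph{conjecture} in the paper. What the paper actually establishes is Theorem~\ref{thm_K3HAE}, the specialization of Conjecture~\ref{conj_K3HAE} to numerical invariants (integrals against $p^{\ast}(\alpha)$ for tautological $\alpha$), and it does so by a different degeneration than yours: the degeneration to the normal cone of an elliptic fiber, $S \rightsquigarrow S \cup_E (\p^1 \times E)$, combined with induction on $(g,n)$, the divisor equation, Getzler's vanishing for boundary restrictions, and a compatibility check between the absolute and relative anomaly equations. The crucial feature of that degeneration is that one component remains a K3 (relative to $E$), so the holomorphic symplectic form survives on one side and the Maulik--Pandharipande--Thomas reduced degeneration formula \cite{MPT} applies: the reduced class sits on the $S/E$ factor and the $(\p^1\times E)/E$ factor carries ordinary classes, which are \emph{unconditionally} controlled by Theorems~\ref{thm_E_quasimodularity} and~\ref{thm_E_HAE} via the product formulas \cite{B2, LQ} (Corollary~\ref{Cor_Trivial_elliptic_fibration} and equation~\eqref{PrelHAE}).

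Your proposal has two genuine gaps relative to this. First, for the degeneration $S \rightsquigarrow R_1 \cup_E R_2$ into two rational elliptic surfaces there is no reduced degeneration formula available: neither component carries a holomorphic symplectic form, so your assertion that ``exactly one of the two sides must carry the reduction'' has no meaning in the deformation theory — the reduction on $S$ comes from the symplectic form, which dies on the entire central fiber, and the required comparison of obstruction theories is precisely the unsolved part you flag as the ``main obstacle''; it is not a technical footnote but the reason the paper avoids this degeneration. Relatedly, the factor of $2$ and the appearance of $\CK^{\vir}_{g_2}$ in $\mathsf{T}_g$ do not come from two sides of a surface degeneration: in the paper they arise from the splitting of the reduced class along separating boundary divisors of $\Mbar_{g,n}$ (the formula $j^{\ast}\widetilde{\CK}_g = \sum_{\ell} (\widetilde{\CK}_{g_1} \boxtimes \widetilde{\CK}^{\text{vir}}_{g_2} + \widetilde{\CK}^{\text{vir}}_{g_1} \boxtimes \widetilde{\CK}_{g_2})$ of Section~\ref{4524294}), where one side of a reduced class necessarily carries only the ordinary virtual class. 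Second, your key input — Conjecture~\ref{Conj_HAE} for the rational elliptic surfaces $R_i$ — is itself only announced in \cite{OPix2}, and there only after specialization to numerical invariants; so your argument replaces the known input ($\p^1 \times E$, proven in this paper) with a conjectural one, and even if completed would at best reprove the numerical Theorem~\ref{thm_K3HAE}, not the cycle-level Conjecture~\ref{conj_K3HAE}. Your reading of the coefficient $20$ as $\operatorname{rank}(V)$ and of $\sigma$ as a block decomposition of $\Delta_S$ along $H^2(S) = \langle B, F\rangle \oplus V$ is consistent with how these terms behave in the paper's compatibility checks, but as stated it is heuristic rather than a derivation.
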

\vspace{7pt}

Let $p : \Mbar_{g,n}(\p^1,1) \to \Mbar_{g,n}$
be the forgetful map, and let
\[ R^{\ast}(\Mbar_{g,n}) \subset H^{\ast}(\Mbar_{g,n}) \]
be the tautological subring spanned by push-forwards of
products of $\psi$ and $\kappa$ classes on boundary strata \cite{FP13}.
By 
\cite[Prop.29]{MPT},
for any tautological class
$\alpha \in R^{\ast}(\Mbar_{g,n})$
we have
\begin{equation} \label{454353}
\int_{\Mbar_{g,n}(\p^1,1)} p^{\ast}(\alpha) \cap \CK_g(\gamma_1, \ldots, \gamma_n)
\, \in \frac{1}{\Delta(q)} \QMod.
\end{equation}
Hence Conjecture~\ref{K3_Conjecture1} holds after specialization
to numerical Gromov--Witten invariants, or \emph{numerically}.
Here we show Conjecture~\ref{K3_Conjecture2} holds numerically.
\begin{thm} \label{thm_K3HAE}
For any tautological class $\alpha \in R^{\ast}(\Mbar_{g,n})$,
\[
\frac{d}{dC_2} \int p^{\ast}(\alpha) \cap \CK_g(\gamma_1, \ldots, \gamma_n)
=
\int p^{\ast}(\alpha) \cap \mathsf{T}_g(\gamma_1, \ldots, \gamma_n).
\]
\end{thm}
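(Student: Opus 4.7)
The plan is to reduce Theorem~\ref{thm_K3HAE} to the holomorphic anomaly equation for the elliptic curve (Theorem~\ref{thm_E_HAE}) by means of a degeneration argument along the elliptic fibration $\pi : S \to \p^1$. Specifically, I would degenerate the elliptic K3 surface $S$ into the union $R_1 \cup_E R_2$ of two rational elliptic surfaces glued along a smooth fiber $E$; this is the degeneration used by Maulik--Pandharipande--Thomas and by Bryan--Leung in their approach to the KKV formula. Under the degeneration, the class $\beta_h = B + hF$ splits in the natural way (the section degenerates to sections of $R_1, R_2$ meeting $E$ in a single point, and $h = h_1 + h_2$), and the reduced virtual class expands as a sum of products of relative virtual classes of $(R_i, E)$ with the reduction carried by exactly one component at a time.

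Because the statement is numerical, i.e.\ only the pairings $\int p^\ast(\alpha) \cap \CK_g(\gamma_1,\ldots,\gamma_n)$ must match, I would then rigidify on the $E$-side (the elliptic curve acts on the relative moduli spaces by translation) and express the K3 integral as a bilinear pairing in which the entire $q$-dependence is concentrated in factors that behave like absolute Gromov--Witten classes $\CC_{g'}(\ldots)$ of $E$. Since $d/dC_2$ commutes with proper push-forward and acts only on the quasimodular $E$-factors, Theorem~\ref{thm_E_HAE} may be applied term-by-term. Its three contributions---the genus-reduction term $\iota_\ast$, the component splitting $j_\ast$, and the $\psi$-correction at each marked point---reassemble via the degeneration into the first three terms of $\mathsf{T}_g$. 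The asymmetry forced by the reduced class (exactly one side is reduced) is precisely what produces the coefficient $2$ in front of the splitting term and the appearance of $\CK^{\vir}_{g_2}$ on the unreduced component; the $\psi$-correction of Theorem~\ref{thm_E_HAE} combines with the pairing $\pi^\ast \pi_\ast$ on cohomology coming from how the insertion $\gamma_i$ restricts to the fiber.

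The two remaining terms in $\mathsf{T}_g$---the $+20\langle \gamma_i, F\rangle$ correction and the $\sigma$ correction---arise from the extra cohomology of the K3 surface which has no analogue on the elliptic curve, namely the $20$-dimensional orthogonal complement $V$ to $\langle B, F\rangle$ in $H^2(S)$. The coefficient $20 = \dim V$ emerges as an Euler-characteristic contribution of $V$ to the degree-zero (unreduced) virtual class, mirroring the way $c_2(S)$ enters $\CK^{\vir}_1$; the operator $\sigma$ encodes the combinatorics of how cohomology classes supported on the two components pair through $V$ after degeneration, yielding the $\Delta_V$ and $F \boxtimes F$ assignments in its definition.

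The main obstacle will be the control of the reduced virtual class under degeneration. Unlike the ordinary virtual class, the reduced class does not satisfy the naive degeneration formula: exactly one side must carry the reduction, and this symmetry-breaking is responsible simultaneously for the factor of $2$, for the appearance of $\CK^{\vir}$ on the opposite component, and for the emergence of the $V$-corrections. Pinning down how a tautological class $\alpha \in R^\ast(\Mbar_{g,n})$ interacts with this distribution, and verifying that the combinatorial coefficients $20$ and the precise form of $\sigma$ drop out correctly from the degeneration formula, is the central technical difficulty.
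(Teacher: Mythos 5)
There is a genuine gap, and it starts with the choice of degeneration. Under your proposed type II degeneration $S \rightsquigarrow R_1 \cup_E R_2$, neither rational elliptic surface carries a holomorphic symplectic form, so neither component has a reduced class: your claimed expansion of $[\Mbar_{g,n}(S,\beta_h)]^{\text{red}}$ as a sum of products of relative classes ``with the reduction carried by exactly one component at a time'' does not exist for this degeneration, and the naive degeneration formula fails outright for the reduced class here. You have conflated two different phenomena: the asymmetry you describe (factor $2$, one factor reduced and one $\CK^{\vir}$) is the splitting behavior of the reduced class along boundary strata of $\Mbar_{g,n}$ for a \emph{fixed} K3 surface --- the formula for $j^{\ast}\widetilde{\CK}_g$ in Section~\ref{4524294} --- not a feature of any surface degeneration. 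The paper instead degenerates to the normal cone of a fiber, $S \rightsquigarrow S \cup_E (\p^1 \times E)$, which keeps the K3 (and hence the reduction) intact on one side; in the resulting formula \eqref{RFGRFG222} the reduction sits on the K3 side in every term, with no factor of $2$. Theorem~\ref{thm_E_HAE} enters only through the bubble: via Corollary~\ref{Cor_Trivial_elliptic_fibration} and the relative product formula of Lee--Qu it yields the anomaly equation \eqref{PrelHAE} for the relative classes of $\p^1 \times E$.

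Even granting a usable degeneration formula, your plan to apply Theorem~\ref{thm_E_HAE} ``term by term'' cannot close, because the $q$-dependence is \emph{not} concentrated in elliptic-curve factors: after degeneration the K3-relative factor $\CK^{\text{rel}}_{g_1}$ still carries the full $q$-series in $h$ (the bubble contributes only its degree over $E$), so that factor must be controlled by some other mechanism. In the paper this mechanism is a lexicographic induction on $(g,n)$ with base case the Yau--Zaslow formula: Proposition~\ref{DEGENERATIONPROP} shows the absolute equation implies the relative one at each stage, and the induction step splits into four cases using the dimension constraint --- the divisor equation with the commutator \eqref{COMMUTATOR} and the weight bounds of \cite{BOPY} when $g=0$ and all $\deg(\gamma_i)=1$; the degeneration with one insertion specialized to the bubble when some $\deg(\gamma_i)=2$; and, crucially, the strong form of Getzler's vanishing \cite[Prop.2]{FPrel} to write $\alpha$ as a boundary push-forward and invoke the restriction compatibilities of Section~\ref{4524294} in the remaining cases. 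Nothing in your sketch plays these roles, and without them arbitrary insertions $\gamma_i$ and arbitrary tautological $\alpha$ are out of reach. Finally, your heuristics for the coefficient $20$ (as an Euler characteristic of $V$) and for the operator $\sigma$ are motivation, not derivation: in the paper these terms are part of the conjectured $\mathsf{T}_g$ and are \emph{verified}, not produced, by the compatibility checks of Section~\ref{foigsdfgdfg} inside the induction.
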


\subsection{Comments}
1) In Conjecture~\ref{Conj_HAE}
we assumed that the elliptic fibration admits a section.
We expect quasimodularity and holomorphic anomaly equations
also for elliptic fibrations without a section,
with the modification that we use quasimodular forms for the congruence subgroup
\[ \Gamma(N) \subset \mathrm{SL}_2(\BZ), \]
where $N$ is the lowest degree of a multisection over the base.
This prediction is in agreement with computations
for elliptic Calabi--Yau threefolds \cite{AS, Gr2},
and also \cite{MRS} if we 
view an elliptic orbifold $\p^1$ as an elliptic fibration
over an orbifold point without a section\footnote{
The examples in \cite{AS} suggest that the congruence subgroup should be
$\Gamma_1(N)$ in general.
For elliptic orbifold $\p^1$s we have strictly $\Gamma(N)$ modular forms;
however this is not a counterexample since the target is an orbifold.
We leave determining the exact congruence subgroup for elliptic fibrations without a section
to a later date.}.

2) Conjecture~\ref{Conj_HAE} predicts a holomorphic anomaly equation
for $\pi$-relative Gromov--Witten classes
of (well-behaved) Calabi--Yau fibrations\footnote{A Calabi--Yau fibration is a flat connected morphism of
non--singular projective varieties whose
general fiber has trivial canonical class.}
of relative dimension~$1$.
It seems plausible that 
holomorphic anomaly equations
exist for Calabi--Yau fibrations of higher relative dimension,
and that the shape of the formula should be simular to Conjecture~\ref{Conj_HAE}.
Families of lattice polarized K3 surfaces is a particularly interesting case to study
and beyond fiber classes \cite{GWNL, PT2} not much is understood.
Another example is the equivariant theory
of local $\p^2$ which we may view as a local $\p^2$ fibration.
Here, \cite[Sec.8]{LP} proves a holomorphic anomaly equation (after a specialization of variables)
which exactly matches the shape of ours.

3) The virtual class on $\Mbar_{g,n}(E,d)$ can be defined as an algebraic cycle and yields a correspondence
between Chow groups. Hence it is natural to ask if
the Chow-valued generating series
\[ \CC_g(\alpha) = \sum_{d=0}^{\infty} \pi_{\ast}\left( [\Mbar_{g,n}(E,d)]^{\text{vir}} (\ev_1 \times \cdots \times \ev_n)^{\ast}(\alpha)\right) q^d
\]
lies in $A^{\ast}(\Mbar_{g,n}) \otimes \QMod$ for every algebraic cycle $\alpha \in A^{\ast}(E^n)$.

The methods used in the paper unfortunately do not provide any
answer even if $\alpha$ is the class of a point $(z_1, \ldots, z_n) \in E^n$.
The argument fails already in the first step -- 
finding a suitable degeneration of $E$ to a rational nodal curve.
If we work in Chow we require the degeneration to be over $\p^1$
and to admit $n$ sections that specialize to the points $z_i$.
However, if the $z_i$ are chosen to be linearly independent then
such degeneration yields
an elliptic surface over $\p^1$ with Mordell--Weil rank $\geq n$, hence
an elliptic curve over $\BC(t)$ of rank $\geq n$. It is an open question
whether those exist for $n \gg 0$.\footnote{We thank B.~Poonen for discussions on this point.}

4) The holomorphic anomaly equation for the elliptic curve (Theorem~\ref{thm_E_HAE}) can be interpreted in terms of Givental's $R$-matrix action on cohomological field theories as follows. By Theorem~\ref{thm_E_quasimodularity}, we can view $\CC_g$ as a CohFT with coefficients in the ring $\QMod$. Define
\[
\CC_g^{\text{mod}}(\gamma_1,\ldots,\gamma_n) \in H^{\ast}(\Mbar_{g,n}) \otimes \Mod
\]
to be the modular part of $\CC_g$ obtained by setting $C_2$ to zero.
Since the map $\QMod \to \Mod$ sending quasimodular forms to their modular parts is a ring homomorphism, $\CC_g^{\text{mod}}$ is also a CohFT (with coefficients in $\Mod\subset\QMod$). These two CohFTs are identical in genus $0$ since the genus $0$ theory of $E$ vanishes in positive degree, but Teleman's reconstruction theorem does not apply because they are not (generically) semisimple. Thus the two theories need not be related by an $R$-matrix. However, it turns out that they are: Theorem~\ref{thm_E_HAE} is equivalent to the statement
\[
\CC_g = R_E.\CC_g^{\text{mod}}
\]
for the $R$-matrix $R_E \in \End(H^*(E)) \otimes \QMod[[z]]$ defined by
\[
R_E(\gamma) = \gamma + 2C_2\left(\int_E \gamma\right)z\cdot\1.
\]

For an elliptic fibration $\pi: X\to B$, it should be possible to interpret Conjecture~\ref{Conj_HAE} as an $R$-matrix action (on an appropriate generalization of a CohFT that takes values in the moduli space of stable maps to $B$) in a similar way. In this case the $R$-matrix will be given by
\[
R_X(\gamma) = \gamma + 2C_2\pi^*\pi_*\gamma.
\]

\subsection{Plan of the paper}
In Section~\ref{Section_Elliptic_curves_Point_insertions}
we prove quasimodularity and the holomorphic anomaly equation for the elliptic curve
(Theorems~\ref{thm_E_quasimodularity} and~\ref{thm_E_HAE})
if all insertions are point classes.
In Section~\ref{Section_Generalcase}
we prove the general case and Corollary~\ref{CorE}.
In Section~\ref{Section_K3_surfaces} we prove the holomorphic anomaly equation for K3 surfaces numerically.
In Section~\ref{Section_Igusa_cusp_form_conjecture} we prove the Igusa cusp form conjecture.
In Appendix~\ref{Section_Graphs_and_quasimodularforms}
we study the constant term in the Fourier expansion of
certain multivariate elliptic functions appearing in Section~\ref{Section_Elliptic_curves_Point_insertions}.
In Appendix~\ref{Section_Elliptic_fibrations}
we give a refinement of Conjecture~\ref{Conj_Quasimodularity} by weight and we work out an example as evidence.

\subsection{Acknowledgements}
We would like to thank J.~Bryan, F.~Janda, D.~Maulik, R.~Pandharipande, J.~Shen
and Q.~Yin for useful discussions on curve counting on K3 surfaces and elliptic curves.
We are also very grateful to T.~Milanov, Y.~Ruan and Y.~Shen for
discussions about their paper \cite{MRS}.
We would also like to thank the anonymous referees for their comments.

The second author was supported by a fellowship from the Clay Mathematics Institute.

\section{Elliptic curves: Point insertions}
\label{Section_Elliptic_curves_Point_insertions}
\subsection{Overview}
Let $E$ be a non-singular elliptic curve and let
\[ \pt \in H^2(E) \]
be the class of a point. We write
$\pt^{\times n}$ for the $n$-tuple $(\pt, \ldots, \pt)$.
In this section we prove the following special cases of Theorems~\ref{thm_E_quasimodularity} and \ref{thm_E_HAE}.

\begin{thm} \label{thm_E_quasimodularity_point}
$\CC_g( \pt^{\times n} ) \in \QMod$ for every $n \geq 0$.
\end{thm}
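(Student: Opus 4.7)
The plan is to follow the strategy announced in the introduction: degenerate $E$ to a rational nodal curve, apply the degeneration formula to express $\CC_{g,d}(\pt^{\times n})$ as a sum of relative Gromov--Witten classes of $(\p^1,0,\infty)$, rewrite these as double ramification cycles, and finally sum over partitions using the polynomiality of $\DR_g$ to recognise the result as quasimodular.

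More precisely, I first degenerate $E$ to the nodal curve $C_0=\p^1/(0\sim\infty)$ in a one-parameter family. The $n$ marked points can be moved into the smooth locus of $C_0$, so the standard relative degeneration formula expresses
\[
\CC_{g,d}(\pt^{\times n})
=\sum_{\mu\vdash d}\frac{\prod_i\mu_i}{|\mathrm{Aut}(\mu)|}\,
\mathrm{glue}_{\ast}\!\left(
\pi_{\ast}\!\left([\Mbar_{g,n}(\p^1/\{0,\infty\},\mu,\mu)]^{\vir}\prod_j\ev_j^{\ast}(\pt)\right)\right),
\]
where the sum runs over partitions $\mu$ of $d$ playing the role of the common ramification profile at $0$ and $\infty$ (they must agree in order to glue along the node), and $\mathrm{glue}_{\ast}$ is the pushforward along the self-gluing of the two relative markings to a single node.

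Next I use rubber calculus to pass from relative invariants of $(\p^1,0,\infty)$ to double ramification cycles. Since all insertions are point classes and the marked points are used to rigidify the residual $\BC^\ast$ scaling on $\p^1$, integrating out the target reduces the relative moduli space to the rubber moduli space $\Mbar_{g,n}(\p^1/\{0,\infty\},\mu,-\mu)^{\sim}$, whose pushforward to $\Mbar_{g,n+2\ell(\mu)}$ is $\DR_g(\mu,-\mu)$ up to standard combinatorial factors and $\psi$--classes at the relative markings. The upshot is that, after the subsequent self-gluing, each term in the degeneration sum becomes a push-forward from $\Mbar_{g-\ell(\mu)+1,n}$ of a product of $\DR$--cycles with $\psi$--classes, with explicit coefficients that are polynomial in the parts of $\mu$.

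Substituting back and summing over $d$ yields an expression of the shape
\[
\CC_g(\pt^{\times n})=\sum_{\mu}\frac{q^{|\mu|}}{|\mathrm{Aut}(\mu)|}\,P(\mu_1,\ldots,\mu_{\ell(\mu)}),
\]
where $P$ is symmetric and polynomial in each argument, with cycle-valued coefficients on the corresponding boundary stratum. This is where the polynomiality of the double ramification cycle, proved by Pixton--Zagier \cite{PZ} starting from the formula of \cite{JPPZ}, is essential: the apriori delicate dependence of $\DR_g(\mu,-\mu)$ on $\mu$ is polynomial of bounded degree. Once this polynomiality is in place, the sum over $\mu$ decouples into products of elementary Eisenstein-type sums $\sum_{m\geq 1} m^{k-1}q^m/(1-q^m)$, which are precisely the generating series $C_k$ (up to additive constants). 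Hence each coefficient of $\CC_g(\pt^{\times n})$ lies in $\QMod=\BQ[C_2,C_4,C_6]$.

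The main obstacle I expect is the middle step: carefully setting up the rubber reduction and bookkeeping so that the relative class with $\pt^{\times n}$ insertions is identified, with correct combinatorial and $\psi$--class prefactors, as a polynomial expression in the parts of $\mu$ times a double ramification cycle. In particular, one must track the $\BC^\ast$--scaling and the cotangent line classes at the rubber boundary, and verify that the resulting coefficients in the parts $\mu_i$ are polynomial in each $\mu_i$ with universal bounded degree so that the final sum over $\mu$ indeed collapses to polynomials in Eisenstein series rather than to more exotic $q$-series.
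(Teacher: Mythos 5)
Your overall strategy --- degenerate to a rational nodal target, identify vertex contributions with double ramification cycles, and use the polynomiality of \cite{JPPZ,PZ} to turn the sum over profiles into Eisenstein-type series --- is the paper's strategy in outline, but your choice of degeneration creates a genuine gap at exactly the step you flag. You degenerate $E$ to the one-nodal curve $\p^1/(0\sim\infty)$ and place all $n$ point insertions on the single component. With $n\ge 2$ stationary insertions, the rigid relative class is \emph{not} a double ramification cycle: the virtual dimension of $\Mbar_{g,n}(\p^1,\mu,\nu)$ is $2g-2+n+\ell(\mu)+\ell(\nu)$, so after cutting by $n$ point conditions and pushing forward to $\Mbar_{g,n+\ell(\mu)+\ell(\nu)}$ you obtain a class of codimension $g-1+n$, which equals $g$ (the codimension of $\DR_g$) only when $n=1$. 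Rigidification uses exactly one point insertion to kill the residual $\BC^{\ast}$; with several points on one component the rigid-to-rubber reduction produces target and cotangent $\psi$-classes together with recursive boundary corrections, and you would then need polynomiality in the parts of $\mu$ for these \emph{decorated} rubber pushforwards --- a statement not supplied by Proposition~\ref{Polynomiality} and not proved in your proposal. So the ``middle step'' is not bookkeeping: as set up it fails, and repairing it would amount to redoing the stationary rubber calculus at the cycle level.

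The paper sidesteps this entirely by degenerating to an $n$-cycle $C_n$ of $\p^1$'s with exactly one point insertion per component. A $\BC^{\ast}$-localization vanishing lemma forces every stable vertex of a contributing degeneration graph to carry exactly one marking, so each vertex contribution is a plain $\DR_g(0,\mu_1,\ldots,-\nu_1,\ldots)$ by rigidification, while the chains of unmarked tubes wrapping around the cycle are what generate the edge factors $\bw(h)/(1-q^{\bw(h)})$ --- factors your schematic formula does not account for. Finally, your claim that the resulting sums ``decouple into products of elementary Eisenstein-type sums'' understates the difficulty: in the paper's $n$-vertex graphs the balancing conditions couple the edge weights, only loop edges give honest one-variable Eisenstein series as in \eqref{4trfsdf}, and the quasimodularity of the remaining constrained sums is itself a nontrivial theorem on constant terms of multivariate elliptic functions (Theorem~\ref{THMAPP2} of Appendix~\ref{Section_Graphs_and_quasimodularforms}), which any correct proof along these lines must either invoke or reprove.
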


\begin{thm} \label{thm_E_HAE_point} For every $n \geq 0$ we have
\begin{align}\label{eq:ppp:HAE}
\begin{split}
\frac{d}{dC_2} \CC_g(\pt^{\times n})
\ =\  & 
\iota_{\ast} \CC_{g-1}( \pt^{\times n}, \1 , \1 )
\\
& + \sum_{\substack{ g= g_1 + g_2 \\ \{1,\ldots, n\} = S_1 \sqcup S_2}}
j_{\ast} \left( \CC_{g_1}( \pt^{\times|S_1|}, \1 ) \boxtimes
\CC_{g_2}( \pt^{\times|S_2|}, \1 ) \right) \\
& - 2 \sum_{i=1}^{n} \psi_i \cdot p_i^*\CC_g( \pt^{\times n-1} ),
\end{split}
\end{align}
where $p_i:\Mbar_{g,n}\to\Mbar_{g,n-1}$ is the map forgetting the $i$th marked point.
\end{thm}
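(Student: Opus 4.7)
The plan is to reduce Theorems~\ref{thm_E_quasimodularity_point} and~\ref{thm_E_HAE_point} to combinatorial identities for partition sums weighted by double ramification cycles. First, I would degenerate the elliptic curve $E$ to a rational nodal curve $E_0 = \mathbb{P}^1/(0\sim\infty)$ and apply the degeneration formula. Since the insertions are point classes, I may place the marked points away from the degenerating node, so every marking specializes to a single fixed fiber of the rubber target $(\mathbb{P}^1, 0, \infty)$. After rigidifying the $\mathbb{C}^{\ast}$-action, each term in the degeneration formula is indexed by a ramification profile $\mu$ at the self-glued node, and involves the pushforward to $\overline{M}_{g,n}$ of a rubber virtual class. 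Schematically this gives a trace-like formula
\[
\CC_g(\pt^{\times n})
\ =\ \sum_{\mu} \frac{\prod_i \mu_i}{|\Aut(\mu)|}\, q^{|\mu|}\, \xi_{\mu,\ast}\!\left([\overline{M}_{g,n}^{\,\sim}(\mathbb{P}^1,\mu,\mu)]^{\mathrm{vir}}\cap\text{insertions}\right),
\]
where $\xi_{\mu}$ is the forgetful-then-glue map identifying the two copies of $\mu$.

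Next, I would use the fact that after pushforward to $\overline{M}_{g,n}$ the rubber class rewrites in terms of the double ramification cycle. Combined with the Pixton--Zvonkine polynomiality theorem \cite{PZ} applied to the explicit formula of \cite{JPPZ}, this implies that the coefficient of $q^{|\mu|}$ in the above display is a polynomial $P_{\ell}(\mu_1,\ldots,\mu_{\ell})$ in the parts of $\mu$ with coefficients in $H^{\ast}(\overline{M}_{g,n})$. Theorem~\ref{thm_E_quasimodularity_point} then reduces to showing that for any such polynomial,
\[
\sum_{\ell\geq 0}\frac{1}{\ell!}\sum_{\mu_1,\ldots,\mu_{\ell}\geq 1}P_{\ell}(\mu_1,\ldots,\mu_{\ell})\prod_i\mu_i\, q^{|\mu|}\, \in\, \QMod,
\]
which follows from the elementary fact that $\sum_{m\geq 1}m^{k} q^m/(1-q^m)$ is quasimodular of weight $k+1$, combined with an exponential-formula bookkeeping of the sum over $\ell$. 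The multivariate Fourier-expansion analysis needed to identify the precise quasimodular form is the content of Appendix~\ref{Section_Graphs_and_quasimodularforms}.

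For the holomorphic anomaly equation, I would carefully track where $C_2$ enters the partition sum. The weight-$2$ Eisenstein series arises in the previous step only through a specific ``propagator'' mechanism: a single factor of the form $\sum_{m\geq 1} m\, q^m/(1-q^m)$ coming from one part $\mu_i$. Differentiating with respect to $C_2$ accordingly removes one such factor, and geometrically this corresponds to \emph{collapsing} a part $\mu_i$ of the ramification profile. Three types of contribution arise: collapsing a part that glues the two copies of $\mu$ on a single connected component yields the nonseparating term $\iota_{\ast}\CC_{g-1}(\pt^{\times n},\1,\1)$; collapsing a part that connects two distinct components of the degenerated source yields the splitting term $j_{\ast}(\CC_{g_1}\boxtimes\CC_{g_2})$; and collapsing a part that carries a marked point together with cotangent line comparison on the rubber yields the $\psi$-insertion correction $-2\sum_i\psi_i\cdot p_i^{\ast}\CC_g(\pt^{\times n-1})$.

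The main obstacle is precisely this final bookkeeping: the map from ``$C_2$-dependence of the partition sum'' to geometric terms on $\overline{M}_{g,n}$ is not term-by-term transparent. It requires simultaneously exploiting polynomiality of the DR cycle, standard rubber calculus to reabsorb internal $\psi$-classes, and the detailed structure of the constant terms of the multivariate elliptic functions computed in Appendix~\ref{Section_Graphs_and_quasimodularforms}. In particular, obtaining the exact numerical coefficients on the right-hand side of \eqref{eq:ppp:HAE} — especially the factor $-2$ in front of the $\psi$-term — is the delicate step that ties the combinatorial identities to the geometric HAE.
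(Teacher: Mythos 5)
Your overall strategy is the paper's strategy in outline (degenerate $E$ to a cycle of rational curves, rewrite vertex contributions as double ramification cycles, use the polynomiality of \cite{PZ, JPPZ}, and read off the $C_2$-dependence of the resulting propagator sums), but there are two genuine gaps. First, your trace formula is set up over a degeneration of $E$ to a \emph{single} nodal $\p^1$ carrying all $n$ point insertions, with the claim that rigidification turns the glued relative/rubber class into a DR cycle. That step fails for $n\geq 2$: rigidification identifies a rubber class with a relative class carrying \emph{one} point insertion, and it is exactly the class $\pi_{\ast}\big([\Mbar_{g,1}(\p^1,\mu,\nu)]^{\vir}\,\ev_1^{\ast}(\pt)\big)$ that equals $\DR_g(0,\mu,-\nu)$. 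The pushforward of a relative class with several stationary insertions on one $\p^1$ is not a DR cycle, and the degeneration formula (which only sees the cohomological lift of the point classes) cannot separate the markings on a single component. This is precisely why the paper degenerates $E$ to an $n$-cycle of $\p^1$s: the localization vanishing $\pi_{\ast}[\Mbar_{g,n}(\p^1,\mu,\nu)]^{\vir}=0$ in positive virtual dimension then forces every stable vertex to carry exactly one marking, so the rigidification lemma applies vertex-by-vertex and one lands on the graph sum \eqref{goskogksf} with genuinely multivariate structure (weights $\bw$, wrapping numbers $\ell(e)$, loop versus non-loop edges). Your single-vertex trace suppresses exactly the graph combinatorics on which the rest of the argument runs.

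Second, your derivation of the $\psi$-term is asserted rather than proved, and the mechanism you propose ("collapsing a part that carries a marked point together with cotangent line comparison on the rubber") is not how the term arises. In the paper, $\frac{d}{dC_2}$ applied to a non-loop edge factor $\partial^{k_h}_{z_h}\partial^{k_{h'}}_{z_{h'}}(\wp(z_h-z_{h'})+2C_2)$ inside the constant-term extraction of Proposition~\ref{prop:graph_to_coeff} does \emph{not} simply delete the edge: by Theorem~\ref{THMAPP2} it produces, in addition to the edge-deletion terms (which give the $\iota_{\ast}$ and $j_{\ast}$ terms, as you correctly anticipate), a residue correction $\sum_{i\neq j}\big[(2\pi i)^2\,\mathrm{Res}_{z_i=z_j}\big((z_i-z_j)\cdot(\cdots)\big)\big]_{p^0}$. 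Matching this residue term with $2\sum_i\psi_i\cdot p_i^{\ast}\CC_g(\pt^{\times n-1})$ is the actual content of the proof: it requires the variant of the BSSZ identity (Lemma~\ref{lemma_psi_times_DR}) expressing $(\psi_1+\psi_2)\DR_g$ as an $X^1$-coefficient of glued DR cycles, the multivariate Euler--Maclaurin summation of Proposition~\ref{prop:multiEM} to convert the resulting constrained sums over edge weights $c_1,\ldots,c_m$ into simplex integrals with zeta-value corrections, and a term-by-term identification of the outcome with the Laurent expansion of $\prod_a\partial_z^{k_a}(\wp(z)+2C_2)$ at $z=0$ (separating the $D_{2r+2}$ and $\zeta(-1-2r)$ contributions). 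None of this is present or replaceable by "standard rubber calculus"; in particular the coefficient $-2$ you flag as delicate is only pinned down by this computation. So while your plan correctly locates where the difficulty sits, it does not contain the ideas that close it.
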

\vspace{4pt}

In Section~\ref{Subsection_Double_ramification_Cycle} we introduce
the double ramification cycles.
In Section~\ref{sec_graphs} we discuss a relationship between
certain graph sums and elliptic functions which is used later in the proof.
In Section~\ref{Subsection_Proof_of_QMod_for_points} we prove Theorem~\ref{thm_E_quasimodularity_point}
and in Section~\ref{Subsection_Proof_E_HAE_point} we prove Theorem~\ref{thm_E_HAE_point}.

\subsection{Double ramification cycles}
\label{Subsection_Double_ramification_Cycle}
Let
\[ A = (a_1, a_2, \ldots, a_n), \ a_i \in \BZ \]
be a vector satisfying $\sum_{i=1}^{n} a_i = 0$.
The $a_i$ are the parts of $A$.
Let $\mu$ be the partition
defined by the positive parts of $A$,
and let $\nu$ be the partition defined by the negatives
of the negative parts of $A$.
Let $I$ be the set of markings corresponding
to the $0$ parts of $A$.

Let $\Mbar_{g,I}(\p^1,\mu,\nu)^{\sim}$
be the moduli space of stable relative maps of
connected curves of genus $g$ to rubber with ramification
profiles $\mu,\nu$ over the points $0,\infty \in \p^1$ respectively.
The moduli space admits a forgetful morphism (but still remembering the relative markings)
\[ \pi : \Mbar_{g,I}(\p^1,\mu,\nu)^{\sim}
\to \Mbar_{g,n}. \]
The double ramification cycle is the push-forward
\[
\DR_g(A)
= \pi_{\ast} 
\left[ \Mbar_{g,I}(\p^1,\mu,\nu)^{\sim} \right]^{\text{vir}}
\ \in A^g(\Mbar_{g,n}).
\]


Consider the double ramification cycle
as a function of integer parameters $(a_1, \ldots, a_n)$
with $\sum_i a_i = 0$, taking values
in the Chow ring of $\Mbar_{g,n}$.
The following result is proven in \cite{JPPZ, PZ}
and forms the basis of our approach.

\begin{prop}[\cite{JPPZ, PZ}] \label{Polynomiality}
$\DR_g(A)$ is polynomial in the $a_i$, that is,
there exists a polynomial $\mathsf{P}_{g,n} \in A^g(\Mbar_{g,n})[x_1, \ldots, x_n]$
such that
\[ \DR_g(A) = \mathsf{P}_{g,n}(a_1, \ldots, a_n) \]
for all $(a_i)_i \in \BZ^n$ with $\sum_i a_i = 0$.
\end{prop}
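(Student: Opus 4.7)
The plan is to deduce the polynomiality of $\DR_g(A)$ from Pixton's explicit formula, whose equality with the double ramification cycle is the main result of \cite{JPPZ}. That formula presents $\DR_g(A)$ as the $r^0$-coefficient (for $r$ sufficiently large) of a weighted sum $P_g^{r}(A)$ over decorated stable graphs $\Gamma$: for each $\Gamma$ one sums over integer weights on half-edges satisfying a balance condition at each vertex and matching the $a_i$ on the legs, with each weighting contributing an explicit rational function in $r$ together with tautological classes at the vertices. My starting point would be to take this formula as given and reduce the statement to a purely combinatorial claim about the weighting sums.

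First I would fix a stable graph $\Gamma$ and its assignment of vertex decorations, and rewrite the sum over half-edge weights as a product of sums over edge flows along the cycles of $\Gamma$, once a spanning tree has been chosen and the legs' contributions distributed along it. The tree part produces a manifestly polynomial dependence on the $a_i$; the non-trivial content is therefore concentrated in the $h^1(\Gamma)$ cycle sums, each of which takes the form of a sum over residues $k \in \{0,1,\dots,r-1\}$ of a fixed polynomial in $k$ and the $a_i$. A priori such a sum only yields a quasi-polynomial in the $a_i$, with coefficients varying periodically with period $r$.

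The main obstacle, and the content of \cite{PZ}, is showing that these periodic pieces collapse to a genuine polynomial upon extracting the $r^0$-coefficient. The plan is to expand each cycle sum using the Bernoulli-polynomial identity
\[
\sum_{k=0}^{r-1} k^m = \frac{1}{m+1}\bigl(B_{m+1}(r) - B_{m+1}(0)\bigr),
\]
which, after the appropriate change of variables linearising the weights in terms of the $a_i$, produces an expansion in non-negative powers of $r$ whose coefficients are polynomial in the parts. Combining this with the overall $r^{-h^1(\Gamma)}$ factor and summing over $\Gamma$, the $r^0$-coefficient is then manifestly polynomial in $(a_1,\dots,a_n)$, defining the desired class $\mathsf{P}_{g,n} \in A^g(\Mbar_{g,n})[x_1,\dots,x_n]$.

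The hard part will be the combinatorial bookkeeping that ensures all the non-polynomial (genuinely periodic) contributions cancel in the $r^0$ extraction: this requires carefully tracking how the various factors of $r$ coming from the Bernoulli sums on different cycles combine with the global $r^{-h^1(\Gamma)}$ prefactor, and showing that any residual $r$-periodic contribution vanishes after summing over graphs and symmetrizing by $\mathrm{Aut}(\Gamma)$. Once this cancellation is established, polynomiality follows immediately, and $\mathsf{P}_{g,n}$ is read off as the leading constant-in-$r$ polynomial in the expansion.
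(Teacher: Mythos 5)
The paper does not actually prove this proposition: it is quoted from \cite{JPPZ, PZ}, with \cite{JPPZ} supplying the explicit formula for $\DR_g(A)$ as the constant term in $r$ of a sum over decorated stable graphs with weightings mod $r$, and the (unpublished) combinatorial analysis \cite{PZ} supplying the polynomiality in the $a_i$; the introduction of the paper explicitly calls this ``a difficult result proved by a combinatorial study.'' Your proposal correctly identifies this route, so the only question is whether your sketch of the combinatorial step holds up --- and there is a genuine gap there.

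The flaw is in the assertion that each cycle sum is ``a sum over residues $k \in \{0,1,\dots,r-1\}$ of a fixed polynomial in $k$ and the $a_i$.'' It is not. In Pixton's formula the weightings are taken modulo $r$: a half-edge carries the representative in $\{0,\dots,r-1\}$ of a linear form in the cycle variables and the $a_i$, reduced mod $r$. So after choosing a spanning tree, the summand on every edge --- tree edges included, so your claim that the tree part is ``manifestly polynomial'' in the $a_i$ also fails --- is a polynomial in these reduced representatives, i.e., a piecewise polynomial in $k$ and the $a_i$ with floor-function corrections whose breakpoints depend on the residues of the $a_i$ mod $r$. This is precisely the source of the a priori quasi-polynomial dependence on the $a_i$; if the summand were an honest polynomial, Faulhaber's identity would make the statement trivial and there would be nothing for \cite{PZ} to prove. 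Consequently the Bernoulli-polynomial expansion does not apply as you state it, and the cancellation of the genuinely periodic pieces in the $r^0$-coefficient --- which you yourself flag as ``the hard part'' --- is exactly the content of \cite{PZ} and is nowhere carried out in your argument. Your proposed mechanism for it (summing over graphs and symmetrizing by $\Aut(\Gamma)$) is speculative: nothing in the proposal shows that the floor-function contributions cancel, either per graph or in total. As it stands, the proposal is a plan that reduces the proposition to an unproved combinatorial claim, not a proof.
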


Since $\DR_g(A)$ is an $S_n$-equivariant function of $A$, we can choose the polynomial $\mathsf{P}_{g,n}$ to be $S_n$-equivariant as well.

\subsection{Graph sums} \label{sec_graphs}
Let $\Gamma$ be a connected finite graph with $n$ vertices $v_1,\ldots,v_n$ and no loops. Let $H(\Gamma)$ be the set of half-edges of $\Gamma$.
If $h\in H(\Gamma)$, let $v(h)$ denote the vertex to which $h$ is attached.
A function 
\[ \w :H(\Gamma) \to \BZ\setminus\{0\} \]
is called \emph{balanced} if it satisfies the following conditions:
\begin{enumerate}
\item $\w(h) + \w(h') = 0$ for every edge $e = \{h, h'\}$,
\item $\sum_{v(h) = v} \w(h) = 0$ for every vertex $v$.
\end{enumerate}

Let $k:H(\Gamma)\to\BZ_{\ge 0}$ be an arbitrary function, and let $\sigma$ be a total ordering of the vertices of $\Gamma$.
We consider the $q$-series
\[
F(\Gamma,k,\sigma) = \sum_{\substack{\w:H(\Gamma)\to\BZ\setminus\{0\} \\ \text{balanced}}}
\prod_{\substack{e = \{h,h'\} \\ v(h) <_\sigma v(h')}}
\frac{\w(h)}{1-q^{\w(h)}}\w(h)^{k(h)}\w(h')^{k(h')},
\]
where the product is over all edges $e = \{h,h'\}$ such that vertex $v(h)$ 
appears earlier than $v(h')$ in the total ordering $\sigma$,
and every $(1-q^m)^{-1}$ factor is expanded in positive powers of $q$.
If $m < 0$ then
\[
\frac{1}{1-q^m} = -q^{-m} -q^{-2m} - \cdots
\]
has leading term $-q^{-m}$, not $1$. This behavior implies
that the sum defining $F$ converges, since a direct check shows
that there are only finitely many terms in the sum
with all values of $\w(h)$ bounded from below. 

We rewrite the series $F(\Gamma,k,\sigma)$ in terms of elliptic functions.
Let $p_v$ be a formal variable for every vertex $v$ in $\Gamma$ and write
\[ p_h = p_{v(h)} \]
for every half-edge $h$.
Then $F(\Gamma,k,\sigma)$ is the coefficient of $\prod_v p_v^0$ of the series
\[
\sum_{\w}
\prod_{\substack{e = \{h,h'\} \\ v(h) <_\sigma v(h')}}\frac{\w(h)}{1-q^{\w(h)}} \w(h)^{k(h)} \w(h')^{k(h')}p_h^{\w(h)}p_{h'}^{\w(h')},
\]
where the sum is over all $\w:H(\Gamma)\to\BZ\setminus\{0\}$ satisfying condition (1).
This series factors as
\begin{equation} \label{4sdkfsdf}
\prod_{\substack{e = \{h,h'\} \\ v(h) <_\sigma v(h')}} \sum_{a \in \BZ\setminus\{0\}}\frac{a}{1-q^a}a^{k(h)}(-a)^{k(h')}\left(\frac{p_h}{p_{h'}}\right)^a.
\end{equation}

Let $z \in \BC$ and $\tau \in \BH$ where $\BH$ is the upper half plane, and let $p=e^{2 \pi i z}$ and $q=e^{2 \pi i \tau}$.
The Weierstra{\ss} elliptic function $\wp(z)$ reads
\[ \wp(z) = \frac{1}{12} + \frac{p}{(1-p)^2} + \sum_{d \geq 1} \sum_{k | d} k (p^k - 2 + p^{-k}) q^{d} \]
when expanded in $p,q$-variables in the region $0 < |q| < |p| < 1$. Hence
\[ \wp(z) + 2 C_2(\tau) = \sum_{a \in \BZ \setminus 0} \frac{a p^a}{1-q^a}, \]
where we consider $C_2(q)$ as a function on $\BH$ via $q=e^{2 \pi i \tau}$.

Consider the operator of differentiation with respect to $z$,
\[ \partial_z = \frac{1}{2 \pi i} \frac{d}{dz} = p \frac{d}{dp}. \]
Let $z_v \in \BC$ be a variable for every vertex $v$ and set $p_v = e^{2 \pi i z_v}$.
We write $z_h = z_{v(h)}$ for every half-edge $h$. 
The individual factors in \eqref{4sdkfsdf} can then be rewritten as
\[
\sum_{a \in \BZ\setminus\{0\}}\frac{a}{1-q^a}a^{k(h)}(-a)^{k(h')}\left(\frac{p_h}{p_{h'}}\right)^a
=
\partial_{z_h}^{k(h)}\partial_{z_{h'}}^{k(h')}(\wp(z_h-z_{h'}) + 2C_2),
\]
where the right hand side is expanded in the region $U_\sigma \subset \BC^n$ defined by
\[ 0 < |q| < |p_h|/|p_{h'}| < 1 \]
whenever $v(h) <_{\sigma} v(h')$.
We conclude the following result.

\begin{prop}\label{prop:graph_to_coeff} Let $\Gamma,k,\sigma$ be as above. Then
\[
F(\Gamma,k,\sigma)
=
\left[
\prod_{\substack{e = \{h,h'\} \\ v(h) <_\sigma v(h')}}
\partial_{z_h}^{k(h)}\partial_{z_{h'}}^{k(h')}(\wp(z_h-z_{h'}) + 2C_2)\right]_{p^0,\sigma}
\]
where we let $\left[ \, \cdot \, \right]_{p^0, \sigma}$ denote
taking the coefficient of $\prod_v p_v^0$ in the expansion in the variables $p_v$ in the region $U_{\sigma}$.
\end{prop}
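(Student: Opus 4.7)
The plan is to identify this proposition as a direct consequence of the factorization already noted in equation (4sdkfsdf) above the statement. Indeed, the text has already established that $F(\Gamma,k,\sigma)$ equals the coefficient of $\prod_v p_v^0$ in the product of edge factors
\[
\prod_{\substack{e = \{h,h'\} \\ v(h) <_\sigma v(h')}} \sum_{a \in \BZ\setminus\{0\}}\frac{a}{1-q^a}a^{k(h)}(-a)^{k(h')}\left(\frac{p_h}{p_{h'}}\right)^a,
\]
where the balance condition (1) on $\w$ is built into each edge factor via the substitution $\w(h) = -\w(h') = a$, and the vertex balance condition (2) is enforced by extracting the coefficient $\prod_v p_v^0$. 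It therefore suffices to identify each edge factor with the corresponding derivative expression $\partial_{z_h}^{k(h)}\partial_{z_{h'}}^{k(h')}(\wp(z_h-z_{h'})+2C_2)$, expanded in the region $U_\sigma$.

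To carry this out, I would start from the identity
\[
\wp(z) + 2C_2(\tau) = \sum_{a \in \BZ \setminus \{0\}} \frac{a\, p^a}{1-q^a},
\]
established just above the proposition in the region $0 < |q| < |p| < 1$. Substituting $z = z_h - z_{h'}$ and hence $p = p_h/p_{h'}$ yields an expansion valid precisely in the region $0 < |q| < |p_h/p_{h'}| < 1$, which is exactly the edge-defining condition of $U_\sigma$ when $v(h) <_\sigma v(h')$. Applying $\partial_{z_h}^{k(h)} \partial_{z_{h'}}^{k(h')}$ term-by-term is a one-line verification: since $\partial_{z_h}(p_h/p_{h'})^a = a (p_h/p_{h'})^a$ and $\partial_{z_{h'}}(p_h/p_{h'})^a = -a (p_h/p_{h'})^a$, the derivatives pull out the factors $a^{k(h)}(-a)^{k(h')}$, producing exactly the edge factor above.

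Multiplying over edges and extracting the coefficient of $\prod_v p_v^0$ then gives the desired identity. The only thing to check is that the product of the edge expansions converges to a well-defined Laurent expansion in $U_\sigma$, so that coefficient extraction is compatible with the factorization. This is automatic: the region $U_\sigma$ is non-empty (the variables $|p_v|$ can be chosen to decay rapidly along the ordering $\sigma$, while $|q|$ is taken smaller still), and every edge factor converges absolutely there. No step of this argument presents a genuine obstacle; the content of the proposition is essentially a bookkeeping identity between the combinatorial graph sum and the elliptic-function expansion.
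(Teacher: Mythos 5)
Your proposal is correct and follows essentially the same route as the paper: the paper likewise derives the proposition from the factorization \eqref{4sdkfsdf}, the identity $\wp(z)+2C_2 = \sum_{a\neq 0} \frac{a p^a}{1-q^a}$ with $p = p_h/p_{h'}$, and term-by-term application of $\partial_{z_h}^{k(h)}\partial_{z_{h'}}^{k(h')}$, with the region $U_\sigma$ matching the edge-wise condition $0<|q|<|p_h|/|p_{h'}|<1$ exactly as you state. Nothing essential is missing from your write-up.
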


\subsection{Proof of Theorem~\ref{thm_E_quasimodularity_point}}
\label{Subsection_Proof_of_QMod_for_points}
Since $\CC_g() = 0$ the claim holds if $n=0$, so we may assume $n \geq 1$.
Let $P_1, \ldots, P_n$ be disjoint copies of $\p^1$,
and let
\[ 0, \infty \in P_i \]
be two distinct points on each copy.
Let $C_n$ be the curve obtained by gluing for every $i$
the point $0$ on $P_i$ to the point $\infty$ on $P_{i+1}$,
where the indexing is taken modulo $n$.
In particular, $C_1$ is a $\p^1$ glued to itself along two points.
The curve $C_n$ is called an $n$-cycle of $\p^1$s and its dual graph
is depicted in Figure~\ref{FigureCn} in case $n=5$.

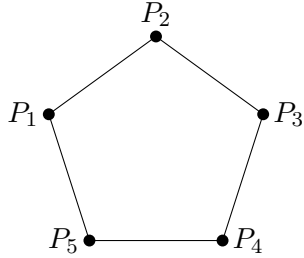
\begin{figure}
\centering
\begin{tikzpicture}[scale=1.5]
\draw[-] (0.95, 0.31) -- (0, 1.0) -- (-0.95, 0.31) -- (-0.59, - 0.81) -- (0.59, - 0.81) -- (0.95, 0.31);
\fill (-0.95,0.31)  circle[radius=1.5pt] node[left] {$P_1$};
\fill (0,1)  circle[radius=1.5pt] node[above] {$P_2$};
\fill (0.95,0.31)  circle[radius=1.5pt] node[right] {$P_3$};
\fill (0.59, -0.81)  circle[radius=1.5pt] node[right] {$P_4$};
\fill (-0.59, -0.81)  circle[radius=1.5pt] node[left] {$P_5$};
\end{tikzpicture}
\caption{The dual graph of an $n$-cycle in case $n=5$.} \label{FigureCn}
\end{figure}

Cconsider a degeneration of the elliptic curve $E$ to an $n$-cycle of $\p^1$s,
\[ E \rightsquigarrow C_n. \]
We apply the degeneration formula of \cite{Junli1, Junli2} to the class 
\[ \CC_{g,d}(\pt, \ldots, \pt) \in H^{\ast}(\Mbar_{g,n}) \]
where we choose the lift of the $i$-th point insertion $\pt \in H^2(E)$
to the total space of the degeneration such that its restriction
to the special fiber is the point class on the $i$-th copy of $\p^1$.
Hence after degeneration 
the $i$-th marking of the relative stable maps must lie on a component which maps to $P_i$.

For partitions
$\mu = (\mu_1, \ldots, \mu_{\ell(\mu)})$ and $\nu = (\nu_1, \ldots, \nu_{\ell(\nu)})$
of equal size let
\[ \Mbar_{g,n}(\p^1, \mu, \nu) \]
be the moduli space parametrizing
relative stable maps
from connected $n$-marked genus $g$ curves to $\p^1$
with (ordered)
ramification profile $\mu, \nu$ over the
relative points $0, \infty$ respectively.
If $2g-2+n+\ell(\mu)+\ell(\nu) > 0$, let 
\[ \pi : \Mbar_{g,n}(\p^1 , \mu, \nu) \to \Mbar_{g,n+\ell(\mu) + \ell(\nu)} \]
be the forgetful map (which remembers the relative markings).

\begin{lemma} If $2g-2+n+\ell(\mu)+\ell(\nu) > 0$, then
$\pi_{\ast} \big[ \Mbar_{g,n}(\p^1 , \mu, \nu) \big]^{\text{vir}} = 0$.
\end{lemma}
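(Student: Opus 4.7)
The plan is to exploit the natural $\BC^*$-action on $\p^1$ fixing the relative points $0$ and $\infty$. Post-composition with $\lambda \in \BC^*$ takes a relative stable map $f : C \to \p^1$ to $\lambda \cdot f$, defining a $\BC^*$-action on $\Mbar_{g,n}(\p^1, \mu, \nu)$ which preserves the ramification profiles and leaves the source curve and its markings unchanged. Consequently the forgetful morphism $\pi$ is $\BC^*$-equivariant with trivial action on the target $\Mbar_{g, n+\ell(\mu)+\ell(\nu)}$, and the relative virtual fundamental class is $\BC^*$-invariant. Under the stability hypothesis $2g-2+n+\ell(\mu)+\ell(\nu) > 0$, the generic relative stable map is not $\BC^*$-invariant, so the orbits of the action are generically $1$-dimensional and they lie inside the fibers of $\pi$.

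To conclude vanishing of the pushforward, I would factor $\pi$ through the rubber moduli space:
\[
\pi \colon \Mbar_{g,n}(\p^1, \mu, \nu) \xrightarrow{q} \Mbar_{g,n}(\p^1, \mu, \nu)^{\sim} \xrightarrow{\pi^{\sim}} \Mbar_{g, n+\ell(\mu)+\ell(\nu)},
\]
where $q$ sends a fixed-target relative map to the same map viewed as a rubber map. The morphism $q$ is essentially a $\BC^*$-bundle quotient on the locus of free orbits, and under the standard compatibility of the two obstruction theories one has
\[
[\Mbar_{g,n}(\p^1, \mu, \nu)]^{\vir} = q^{\ast}[\Mbar_{g,n}(\p^1, \mu, \nu)^{\sim}]^{\vir}.
\]
By the projection formula,
\[
q_{\ast}[\Mbar_{g,n}(\p^1, \mu, \nu)]^{\vir} = [\Mbar_{g,n}(\p^1, \mu, \nu)^{\sim}]^{\vir} \cdot q_{\ast}[\Mbar_{g,n}(\p^1, \mu, \nu)],
\]
and the factor $q_{\ast}[\Mbar_{g,n}(\p^1, \mu, \nu)]$ vanishes for dimensional reasons, since $q$ has $1$-dimensional generic fibers while $\dim \Mbar_{g,n}(\p^1, \mu, \nu)^{\sim} = \dim \Mbar_{g,n}(\p^1, \mu, \nu) - 1$. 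Hence $\pi_{\ast}[\Mbar_{g,n}(\p^1, \mu, \nu)]^{\vir} = \pi^{\sim}_{\ast} q_{\ast}[\Mbar_{g,n}(\p^1, \mu, \nu)]^{\vir} = 0$.

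The main obstacle is making the factorization and the virtual-class identity precise across the entire rubber compactification, since $\Mbar_{g,n}(\p^1, \mu, \nu)^{\sim}$ contains boundary strata (corresponding to broken rubber targets) that are not in the image of $q$, and since any $\BC^*$-fixed points in the non-rubber moduli (for example, Galois covers fully ramified above $0$ and $\infty$) require separate treatment. A cleaner alternative is to apply virtual $\BC^*$-localization directly to $\pi_{\ast}[\Mbar_{g,n}(\p^1, \mu, \nu)]^{\vir}$: since the target carries the trivial $\BC^*$-action, the equivariant pushforward lies in $A^{\ast}(\Mbar_{g, n+\ell(\mu)+\ell(\nu)})[t]$, and its value at $t=0$ recovers the non-equivariant pushforward, which vanishes because each fixed-locus contribution carries a strictly negative net power of the equivariant parameter coming from the moving parts of the virtual normal bundle.
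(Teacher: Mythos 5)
Your closing ``cleaner alternative'' is precisely the paper's proof: the paper's entire argument is that the $\BC^*$-action on $\p^1$ fixing $0,\infty$ induces an action on $\Mbar_{g,n}(\p^1,\mu,\nu)$, and the claim follows by virtual localization and a dimension computation. That dimension computation is exactly what is needed to substantiate your final assertion about strictly negative powers of the equivariant parameter, so let me spell it out: the fixed loci consist of Galois covers of the rigid $\p^1$ totally ramified over $0$ and $\infty$, glued to rubber moduli (and contracted curves) over the two relative points, and under the stability hypothesis $2g-2+n+\ell(\mu)+\ell(\nu)>0$ every fixed locus has virtual dimension strictly less than the virtual dimension $D=2g-2+n+\ell(\mu)+\ell(\nu)$ of the whole space. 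Since the equivariant pushforward to $\Mbar_{g,n+\ell(\mu)+\ell(\nu)}$ (trivial action) is homogeneous of equivariant dimension $D$, a fixed locus $F$ can contribute a term $b_j t^j$ with $b_j \in A_{D+j}$ only if $D+j \leq \operatorname{vdim}(F) \leq D-1$, i.e.\ only if $j<0$; as the total is a polynomial in $t$, it vanishes, and so does its nonequivariant specialization. Your phrase ``strictly negative net power'' is correct, but only because of this inequality of virtual dimensions --- it is not automatic from the moving parts alone.

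By contrast, the first half of your proposal has genuine gaps as written, which you partly flag yourself. The map $q$ to the rubber space is neither flat nor l.c.i., so $q^{\ast}$ of the rubber virtual class is not defined, and the identity $[\Mbar_{g,n}(\p^1,\mu,\nu)]^{\vir} = q^{\ast}[\Mbar_{g,n}(\p^1,\mu,\nu)^{\sim}]^{\vir}$ is not the correct comparison of the two obstruction theories: the true rigidification statement (the one the paper quotes from \cite[Sec.1.5.3]{MP} to prove the \emph{next} lemma) is that capping the rigid virtual class with a point insertion $\ev^{\ast}(\pt)$ and pushing forward recovers the rubber/double-ramification class --- without such an insertion there is no clean pullback relation across the boundary, where $q$ contracts expanded-target strata. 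Moreover, your displayed ``projection formula'' multiplies two homology classes living on the rubber space, which is not meaningful as stated; the projection formula requires one factor to be a pulled-back cohomology class. Since you discard this route in favor of the localization argument, the proof you end with is the paper's, but the rubber-factorization paragraph should be removed or reworked along the lines of the rigidification relation above.
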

\begin{proof}
The $\BC^{\ast}$ action
on $\p^1$ which fixes the points $0, \infty \in \p^1$
induces a $\BC^{\ast}$-action on
$\Mbar_{g,n}(\p^1 , \mu, \nu)$.
The claim follows by virtual localization and a dimension computation.
\end{proof}

By the lemma we find that if a graph
is to contribute in the degeneration formula,
each stable vertex $v$ (those where $2g_v-2+n_v+\ell(\mu_v) + \ell(\nu_v) > 0$)
must contain a marked point. Hence there are at most $n$ stable vertices.
Since the $n$ marked points must map to $n$ different copies of $\p^1$ (by the incidence conditions)
and each lies on a stable vertex,
the graph therefore must have $n$ stable vertices containing a single marking each.

The contribution of each stable vertex is related
to the double ramification cycle by the following.
\begin{lemma} Let $\pt \in H^2(\p^1)$ be the point class. Then
\[ \pi_{\ast} \left( \left[ \Mbar_{g,1}(\p^1 , \mu, \nu) \right]^{\text{vir}} \ev_1^{\ast}(\pt) \right)
 = \DR_g(0, \mu_1, \ldots, \mu_{\ell(\mu)}, - \nu_1, \ldots, -\nu_{\ell(\nu)}).
\]
\end{lemma}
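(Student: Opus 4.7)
The strategy is to rigidify the $\BC^{\ast}$-symmetry of the rubber target using the point-incidence condition. Choose a representative $\pt_0 \in \p^1 \setminus \{0, \infty\}$ of the point class $\pt$. The $\BC^{\ast}$-action on $\p^1$ fixing $0$ and $\infty$ acts freely and transitively on $\p^1 \setminus \{0, \infty\}$, so imposing $\ev_1 = \pt_0$ uses up precisely this residual symmetry. Concretely, I would construct a natural isomorphism of Deligne--Mumford stacks
\[
\iota : \Mbar_{g, \{1\}}(\p^1, \mu, \nu)^{\sim} \xrightarrow{\ \sim\ } \ev_1^{-1}(\pt_0) \subset \Mbar_{g, 1}(\p^1, \mu, \nu),
\]
sending a rubber stable map to its unique $\BC^{\ast}$-representative whose first marking maps to $\pt_0$; the inverse simply forgets the rigidification.

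Next I would match virtual classes. The square
\[
\begin{tikzcd}[row sep=small]
\Mbar_{g, \{1\}}(\p^1, \mu, \nu)^{\sim} \ar{r}{\iota} \ar{d} & \Mbar_{g, 1}(\p^1, \mu, \nu) \ar{d}{\ev_1} \\
\Spec \BC \ar{r}{\pt_0} & \p^1
\end{tikzcd}
\]
is virtually Cartesian, a standard rubber--rigidification fact in relative Gromov--Witten theory in the spirit of J.~Li and Maulik--Pandharipande. Applying the refined Gysin map for the regular embedding $\pt_0 \hookrightarrow \p^1$ gives
\[
\ev_1^{\ast}(\pt) \cap \bigl[\Mbar_{g, 1}(\p^1, \mu, \nu)\bigr]^{\vir} = \iota_{\ast}\bigl[\Mbar_{g, \{1\}}(\p^1, \mu, \nu)^{\sim}\bigr]^{\vir}.
\]
The forgetful maps $\pi$ to $\Mbar_{g, n}$ on either side agree under $\iota$, since both record the underlying stable curve together with all its interior and relative markings. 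Pushing forward by $\pi$ then recovers the definition of $\DR_g(0, \mu_1, \ldots, \mu_{\ell(\mu)}, -\nu_1, \ldots, -\nu_{\ell(\nu)})$.

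The main technical obstacle is the virtual-class comparison in the Cartesian square. The delicate point is that on the boundary of $\Mbar_{g, \{1\}}(\p^1, \mu, \nu)^{\sim}$ the rubber target degenerates into a chain of $\p^1$s, whereas on the non-rubber side the condition $\ev_1 = \pt_0$ forces the marked point onto a specific non-degenerating $\p^1$-component; one must trace the correspondence between expanded targets and check compatibility of the perfect obstruction theories across these strata. A cleaner alternative, should the direct approach become awkward, is to apply virtual $\BC^{\ast}$-localization to $\Mbar_{g,1}(\p^1, \mu, \nu)$ with respect to the action on $\p^1$ fixing $0, \infty$ and observe that the residue of $\ev_1^{\ast}(\pt) \cap [\Mbar_{g,1}(\p^1, \mu, \nu)]^{\vir}$ matches the standard expression for $\DR_g$ obtained from the rubber moduli.
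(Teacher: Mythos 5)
Your proposal is correct and takes essentially the same route as the paper: the paper's entire proof is a one-line appeal to rigidification \cite[Sec.1.5.3]{MP}, which is exactly the content you reconstruct — using the interior point condition to fix the rubber $\BC^{\ast}$-scaling, identifying $\Mbar_{g,\{1\}}(\p^1,\mu,\nu)^{\sim}$ with $\ev_1^{-1}(\pt_0)$, and matching virtual classes via the refined Gysin pullback, with the forgetful maps to $\Mbar_{g,n}$ agreeing on both sides. The $\BC^{\ast}$-localization fallback you mention is unnecessary but harmless.
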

\begin{proof}
This follows from rigidification \cite[Sec.1.5.3]{MP}.
\end{proof}

At unstable vertices of the graph
we must have 
$g_v=n_v=0$ and $\mu_v = \nu_v = (d)$ for some $d$. The corresponding
moduli space $\Mbar_{0,0}(\p^1, (d), (d))$
is of virtual dimension $0$ and
parametrizes a map to $\p^1$ totally ramified at both ends.
We call such a component a \emph{tube}.
The contribution of a degree $d$ tube is
\begin{equation} \deg \, \left[ \Mbar_{0,0}(\p^1, (d), (d)) \right]^{\text{vir}} = \frac{1}{d}.
 \label{jrogwsrfg}
\end{equation}

Considering all possible contributions yields for all $d$ the formula
\begin{equation}
\CC_{g,d}(\pt^{\times n})
=
\sum_{\overline{\Gamma}} \frac{\prod_{h \colon \mathbf{w}(h)>0} \mathbf{w}(h)}{|\Aut( \overline{ \Gamma })|}
\xi_{\Gamma \ast} \left( \prod_{i=1}^{n} \DR_{g_i}\Big( ( \mathbf{w}(h) )_{h \in v_i} \Big) \right)
\label{goskogksf} \end{equation}
with the following notation.
The sum is over tuples $\overline{\Gamma} = (\Gamma, \mathbf{w}, \ell)$ where
\begin{itemize}
 \item $\Gamma$ is a connected stable graph\footnote{See \cite[Sec.0.3.2]{JPPZ} for the definition of a stable graph.}
 $\Gamma$ of genus $g$
 with exactly $n$ vertices $v_1, \ldots, v_n$, where each vertex $v_i$ has genus $g_i$ and exactly one leg with label $i$,
 \item $\mathbf{w} : H(\Gamma) \to \BZ$ is a weight function on the set of half-edges,
 \item $\ell : E(\Gamma) \to \BZ_{\geq 0}$ is a wrapping assignment on the set of edges,
\end{itemize}
satisfying the following conditions:
\begin{enumerate}
\item $\mathbf{w}(h) + \mathbf{w}(h') = 0$ for every edge $e = \{ h,h' \}$,
\item $\mathbf{w}(h) = 0$ if and only if $h$ is a leg,
\item $\sum_{h \in v} \mathbf{w}(h) = 0$.
\item For every edge $e = \{ h, h' \}$ with $\mathbf{w}(h) > 0$ and
$h \in v_i$ and $h' \in v_j$
let
\[ \mathbf{d}(e) =
\begin{cases}
 \mathbf{w}(h) ( \ell(e) + 1 ) & \text{ if } i \geq j \\
 \mathbf{w}(h) \ell(e) & \text{ if } i < j.
 \end{cases}
\]
Then $\sum_{e \in E(\Gamma)} \mathbf{d}(e) = d$.
\end{enumerate}
The group $\Aut( \overline{\Gamma} )$ is the automorphism group of the stable graph $\Gamma$
which preserves the decorations $\mathbf{w}$ and $\mathbf{\ell}$.
The morphism $\xi_{\Gamma}$ is the canonical gluing map
into the boundary stratum of $\Mbar_{g,n}$ determined by $\Gamma$.

We explain the graph data and the summands in \eqref{goskogksf}.
The vertex $v_i$ labels the unique stable component
which maps to $P_i$.
Every edge $e$ between vertices $v_i$ and $v_j$
corresponds to a chain of tubes
between the corresponding stable components
(the chain may have length $0$).
The tubes in the chain have a common degree $r$.
We set $\mathbf{w}(h) = r$ for the half-edge $h$ of $e$ which is glued to the
stable component over the point $0 \in P_i$.
For the opposite half-edge $h'$ 
we let $\mathbf{w}(h') = -r$.\footnote{
This corresponds to the following convention:
Assume the dual graph of the target $C_n$ is depicted in the plane
with labels increasing in clockwise direction as in Figure~\ref{FigureCn}.
Let $e = \{ h,h' \}$ be an edge with $\mathbf{w}(h) > 0$ and $h \in v_i$ and $h' \in v_j$.
Then the chain corresponding to $e$ 'travels' clockwise from $P_i$ to $P_j$ around
the cycle.}
We let $\ell(e)$ be the number of times the chain fully wraps
around the cycle.
If $e$ starts and ends at the same stable component
and traverses the cycle once we let $\ell(e)=0$.

We can read off the degree of the stable map at the intersection point
$x = P_1 \cap P_n$.
Let $e = \{ h,h' \}$ be an edge with $\mathbf{w}(h) > 0$ and $h \in v_i$ and $h' \in v_j$.
We may depict the chain corresponding to $e$ as leaving $P_i$
and traveling clockwise in Figure~\ref{FigureCn}.
If $i < j$ the chain crosses $x$ exactly $\ell(e)$ times
with ramification index $\mathbf{w}(h)$ each. It contributes therefore
$\mathbf{w}(h) \ell(e)$ to the degree of the stable map.
If $i \geq j$ the chain crosses $P$ exactly $(\ell(e)+1)$ times
with degree contribution $(\ell(e)+1) \mathbf{w}(h)$. Summing
up over all edges yields the degree condition (4).

We discuss the contributions coming from the kissing factors and the genus $0$ unstable components.
For every edge $e=\{h,h'\}$ with $\mathbf{w}(h)>0$ the corresponding chain of tubes
has $m$ components and $m+1$ gluing points (with itself or other components) for some $m$.
Each component contributes $1/\mathbf{w}(h)$ by \eqref{jrogwsrfg} and each gluing point contributes
the kissing factor $\mathbf{w}(h)$. The contibution from $e$ is therefore
\[ \frac{1}{\mathbf{w}(h)^m} \cdot \mathbf{w}(h)^{m+1} = \mathbf{w}(h). \]
The product over all these contributions yields $\prod_{h \colon \mathbf{w}(h)>0} \mathbf{w}(h)$.

We turn to the evaluation of \eqref{goskogksf}.
Forming a $q$-series over all $d$ yields
\begin{equation} \label{124235345}
\CC_{g}(\pt^{\times n})
=
\sum_{\Gamma} \frac{1}{|\Aut(\Gamma)|}
\xi_{\Gamma \ast}
\left( 
\sum_{\mathbf{w}} \prod_{e = \{h,h'\}} \frac{\mathbf{w}(h)}{1-q^{\mathbf{w}(h)}}
 \prod_{i=1}^{n} \DR_{g_i}\Big( ( \mathbf{w}(h) )_{h \in v_i} \Big) \right),
\end{equation}
where $(\Gamma, \mathbf{w})$ runs over the
same set as before (satisfying (1), (2) and (3)),
and the first product on the right side
is over the set of edges $e = \{h, h'\}$
where $h \in v_i$ and $h' \in v_j$ such that
\begin{itemize}
 \item $i < j$, or
 \item $i = j$ and $\mathbf{w}(h) < 0$.
\end{itemize}

By Proposition~\ref{Polynomiality} there exist classes
\[ \mathsf{C}_{g,k} \in A^{\ast}(\Mbar_{g,m}) \]
all vanishing except for finitely many $k = (k_1, \ldots, k_m) \in (\BZ_{\geq 0})^m$ and with $S_m$-equivariant dependence on $k$
such that
\begin{equation} \DR_g(a_1, \ldots, a_m) = \sum_{k = (k_1, \ldots, k_m)} \mathsf{C}_{g,k} a_1^{k_1} \cdots a_m^{k_m}.
\label{3513534}
\end{equation}
Plugging into \eqref{124235345} we obtain
\begin{equation}\label{eq:ppp1}
\CC_{g}(\pt^{\times n})
=
\sum_{\Gamma}
\sum_{k_v = (k_h)_{h \in v}}
\frac{\xi_{\Gamma \ast} \left( \prod_{v} \mathsf{C}_{g_v,k_v} \right)}{|\Aut(\Gamma)|}
\left( 
\sum_{\mathbf{w}} \prod_{e=\{h,h'\}} \frac{\mathbf{w}(h)}{1-q^{\mathbf{w}(h)}}
\cdot \prod_{h} \mathbf{w}(h)^{k_h}
\right),
\end{equation}
where the product over edges $e$ is as before and the
last product is over all half-edges $h$.

Suppose $e = \{h,h'\}$ is a loop and let $\widetilde{k}$ be defined by
$\widetilde{k}_h = k_{h'}$ and $\widetilde{k}_{h'} = k_h$, as well as $\widetilde{k}_{h''}=k_{h''}$ for
all all other half-edges $h''$.
Then by $S_n$-equivariance of the double ramification cycle we have
\[
\xi_{\Gamma \ast} \left( \prod_{v} \mathsf{C}_{g_v,k_v} \right) = \xi_{\Gamma \ast} \left( \prod_{v} \mathsf{C}_{g_v,\widetilde{k}_v} \right).
\]
If $k_h + k_{h'}$ is odd then the contribution to \eqref{eq:ppp1} of $k$ and $\bw$
cancels out with the contribution of $\widetilde{k}$ and $\widetilde{\bw}$, where $\widetilde{\bw}$
agrees with $\bw$ at all half-edges other than $h$ and $h'$ and has interchanged values there.
We conclude that we can restrict the sum over all $k = (k_v)$ to only those $k$ satisfying
\begin{equation}
k_h + k_{h'}\text{ is even for every loop $e = \{h,h'\}$}.
\end{equation}

Since the balancing conditions at vertices are independent of the weighting at loops,
we can factor the sum over $\mathbf{w}$ into a contribution from the loops and non-loops respectively.
The sum over loops further splits as a product over each individual loop,
with a loop $e = \{h,h'\}$ contributing the factor
\begin{align*}
L_{k_h, k_{h'}}(q)
& = 2\sum_{d < 0} \frac{d \cdot d^{k_h} \cdot (-d)^{k_{h'}}}{1-q^d} \\
& = 2 (-1)^{k_{h}} \sum_{d > 0} d^{k_h+k_{h'} + 1} \frac{q^d}{1-q^d}.
\end{align*}
In the notation of Section~\ref{sec_graphs} the non-loops contribute exactly
\[
F(\Gamma^{\text{no loops}},k,\sigma_0),
\]
where $\Gamma^{\text{no loops}}$ is the graph formed by deleting all the loops of $\Gamma$
and $\sigma_0$ is the vertex ordering defined by $v_i <_{\sigma_0} v_j \Leftrightarrow i < j$.

Hence we arrive at
\begin{equation} \label{eq:ppp2}
\CC_{g}(\pt^{\times n})
=
\sum_{\Gamma, k}
\frac{\xi_{\Gamma \ast} \left( \prod_{v} \mathsf{C}_{g_v,k_v} \right)}{|\Aut(\Gamma)|}
\Bigg( \prod_{\substack{\text{loops }\\e=\{h,h'\}}} L_{k_h, k_{h'}}(q)\Bigg)
\cdot F(\Gamma^{\text{no loops}},k,\sigma_0).
\end{equation}

For every $m \geq 0$ we have
\begin{equation} \label{4trfsdf}
\sum_{d > 0} d^{2 m + 1} \frac{q^{d}}{1-q^{d}}
= \frac{B_{2m+2}}{4 (m+1)} + \frac{(2m+2)!}{2} C_{2m+2}(q).
\end{equation}
Since we have already removed all terms with $k_h + k_{h'}$ odd from \eqref{eq:ppp2},
we conclude that the loop contribution $L_{k_h, k_{h'}}(q)$ is a quasimodular form.
The quasimodularity of $F(\Gamma^{\text{no loops}},k,\sigma_0)$ follows
from Proposition~\ref{prop:graph_to_coeff} and the first part of Theorem~\ref{THMAPP2} in Appendix~\ref{Section_Graphs_and_quasimodularforms}. This concludes the proof of Theorem~\ref{thm_E_quasimodularity_point}. \qed

\subsection{Proof of Theorem~\ref{thm_E_HAE_point}}
\label{Subsection_Proof_E_HAE_point}
We will begin the proof of \eqref{eq:ppp:HAE} on the left side using the formula \eqref{eq:ppp2}.
Since $\CC_{g}(\pt^{\times n})$ is $S_n$-invariant, we can average the formula above over all $n!$ vertex orderings to get
\begin{align*}
\CC_{g}(\pt^{\times n})
=
\sum_{\Gamma, k}
\frac{\xi_{\Gamma \ast} \left( \prod_{v} \mathsf{C}_{g_v,k_v} \right)}{|\Aut(\Gamma)|}
\Bigg( \prod_{\substack{\text{loops }\\e=\{h,h'\}}} L_{k_h, k_{h'}}(q)\Bigg)
\cdot
\frac{1}{n!}\sum_{\sigma}F(\Gamma^{\text{no loops}},k,\sigma).
\end{align*}
Using Proposition~\ref{prop:graph_to_coeff} we can rewrite this as
\begin{multline*}
\CC_{g}(\pt^{\times n})
=
\sum_{\Gamma, k}
\frac{\xi_{\Gamma \ast} \left( \prod_{v} \mathsf{C}_{g_v,k_v} \right)}{|\Aut(\Gamma)|} \\
\cdot \Bigg( \prod_{\substack{\text{loops }\\e=\{h,h'\}}} L_{k_h, k_{h'}}(q)\Bigg)
\left[\prod_{\substack{\text{non-loops } \\e=\{h,h'\}}}\partial_{z_h}^{k_h}\partial_{z_{h'}}^{k_{h'}}(\wp(z_h-z_{h'}) + 2C_2)\right]_{p^0},
\end{multline*}
where $[ \cdot ]_{p^0}$ is the coefficient
$[ \cdot ]_{p^0, \sigma}$ averaged over all orderings,
see \eqref{CSTCOEFFICIENTAVERAGED}.

We have already seen that the loop factor $L_{k_h, k_{h'}}(q)$ is quasimodular 
and by \eqref{4trfsdf} applying the $\frac{d}{dC_2}$
operator to it gives $2$ if $k_h = k_{h'} = 0$ and $0$ else. 
By Theorem~\ref{THMAPP2} the non-loop factor is
quasimodular and we have a formula for its $C_2$-derivative.
The sums over $\Gamma$ and $k$ are finite, so we can apply the $\frac{d}{dC_2}$ operator to the entire formula.
The result is a sum of three terms
\begin{align*}
& \frac{d}{dC_2}\CC_{g}(\pt^{\times n}) \\
& = \sum_{\Gamma,k_v}
\frac{\xi_{\Gamma \ast} \left( \prod_{v} \mathsf{C}_{g_v,k_v} \right)}{|\Aut(\Gamma)|}
\sum_{\substack{e_0=\{h_0,h'_0\}\text{ a loop}\\ \text{with }k_{h_0}=k_{h'_0}=0}}
2
\Bigg( \prod_{\substack{\text{other loops }\\e=\{h,h'\}}} L_{k_h, k_{h'}}(q)\Bigg)
\\
& \quad \quad \quad \quad \cdot
\left[\prod_{\substack{\text{non-loops } \\e=\{h,h'\}}}\partial_{z_h}^{k_h}\partial_{z_{h'}}^{k_{h'}}(\wp(z_h-z_{h'}) + 2C_2)\right]_{p^0}
\\
&+
\sum_{\Gamma,k_v}
\frac{\xi_{\Gamma \ast} \left( \prod_{v} \mathsf{C}_{g_v,k_v} \right)}{|\Aut(\Gamma)|}
\Bigg( \prod_{\substack{\text{loops }\\e=\{h,h'\}}} L_{k_h, k_{h'}}(q)\Bigg)
\\
& \quad \quad \quad \quad \cdot 
\sum_{\substack{e_0=(h_0,h'_0)\text{ a non-loop}\\ \text{with }k_{h_0}=k_{h'_0}=0}}
2
\left[\prod_{\substack{\text{other non-loops } \\e=\{h,h'\}}}\partial_{z_h}^{k_h}\partial_{z_{h'}}^{k_{h'}}(\wp(z_h-z_{h'}) + 2C_2)\right]_{p^0}
\\
&+
\sum_{\Gamma,k_v}
\frac{\xi_{\Gamma \ast} \left( \prod_{v} \mathsf{C}_{g_v,k_v} \right)}{|\Aut(\Gamma)|}
\Bigg( \prod_{\substack{\text{loops }\\e=\{h,h'\}}} L_{k_h, k_{h'}}(q)\Bigg) \cdot
\\
&(-1)\sum_{1 \le i \ne j \le n}
\left[(2\pi i)^2\underset{z_i = z_j}{\mathrm{Res}}\left((z_i-z_j)\prod_{\substack{\text{non-loops } \\e=\{h,h'\}}}\partial_{z_h}^{k_h}\partial_{z_{h'}}^{k_{h'}}(\wp(z_h-z_{h'}) + 2C_2)\right)\right]_{p^0}.
\end{align*}

The first two of these three terms agree with the first two of the three terms on the right of the holomorphic anomaly equation \eqref{eq:ppp:HAE} that we are trying to prove. To see this, commute the sum over $e_0$ out past the sum over $k_v$ in each of these terms. After doing so,
the conditions $k_{h_0}=k_{h'_0}=0$ are conditions on the $k_v$-sum.
Then taking $k_{h_0}=k_{h'_0}=0$ in the double ramification cycle coefficients $\mathsf{C}_{g_v,k_v}$
is the same thing as setting those parameters to be zero in the double ramification cycle
and thus is the same thing as computing the $\mathsf{C}_{g_v,k_v}$ for $\Gamma$ with edge $e_0$ deleted and then pulling back by forgetful maps.
In the case where $\Gamma$ is still connected after deleting the edge $e_0$, these contributions give precisely\footnote{The factor of $2$ in the first term above cancels with the factor of $2$ from the deleted loop's contribution to $\Aut(\Gamma)$.} the first term on the right of \eqref{eq:ppp:HAE}. For the second term of the above formula (deleting a non-loop) the graph might become disconnected after deleting edge $e_0$; this gives precisely the second term on the right of \eqref{eq:ppp:HAE}.

Thus it remains to show that the third term above agrees with the third term on the right of \eqref{eq:ppp:HAE}.
Removing a factor of $-1$, we want to show that
\begin{equation}
\begin{aligned}
\label{eq:ppp:HAE2}
&\sum_{\Gamma}
\sum_{k_v = (k_h)_{h\in v}}
\frac{\xi_{\Gamma \ast} \left( \prod_{v} \mathsf{C}_{g_v,k_v} \right)}{|\Aut(\Gamma)|}
\Bigg( \prod_{\substack{\text{loops }\\e=\{h,h'\}}} L_{k_h, k_{h'}}(q)\Bigg)
\\
&
\cdot\sum_{1 \le i \ne j \le n}
\left[(2\pi i)^2\underset{z_i = z_j}{\mathrm{Res}}\left((z_i-z_j)\prod_{\substack{\text{non-loops } \\e=\{h,h'\}}}\partial_{z_h}^{k_h}\partial_{z_{h'}}^{k_{h'}}(\wp(z_h-z_{h'}) + 2C_2)\right)\right]_{p^0}
\\
&= 2 \sum_{i=1}^{n} \psi_i \cdot p_i^*\CC_g( \pt^{\times n-1} ).
\end{aligned}
\end{equation}

We now move to the right side of \eqref{eq:ppp:HAE2}.
In this discussion $\Gamma'$ will always denote a graph with $n-1$ vertices and $\Gamma$ a graph with $n$ vertices.
We start with \eqref{124235345} with $n$ replaced by $n-1$:
\[
\CC_{g}(\pt^{\times n-1})
=
\sum_{\Gamma'} \frac{1}{|\Aut(\Gamma')|}
\xi_{\Gamma' \ast}
\left( 
\sum_{\bw} \prod_{e = \{h,h'\}} \frac{\bw(h)}{1-q^{\bw(h)}}
 \prod_{i=1}^{n-1} \DR_{g_i}\Big( ( \bw(h) )_{h \in v_i} \Big) \right),
\]
where the half-edges $h,h'$ satisfy $v(h) \le v(h')$ with respect to the vertex ordering
$v_a \le v_b$ for $1 \le a \le b \le n-1$ 
and if $v(h) = v(h')$ then $\bw(h)$ must be negative.
As before, we can average over all possible vertex orderings:
\begin{multline*}
\CC_{g}(\pt^{\times n-1})
=
\sum_{\Gamma'} \frac{1}{|\Aut(\Gamma')|} \\
\cdot \xi_{\Gamma' \ast}
\left( 
\frac{1}{(n-1)!}\sum_\sigma\sum_{\bw} \prod_{e = \{h,h'\}} \frac{\bw(h)}{1-q^{\bw(h)}}
 \prod_{i=1}^{n-1} \DR_{g_i}\Big( ( \bw(h) )_{h \in v_i} \Big) \right),
\end{multline*}
where $\sigma$ runs over all $(n-1)!$ orderings of the vertices and the sum over edges $e$ now uses $\sigma$ to choose which half-edge will be $h$.

We apply the pullback map $p_i^*$ for some $i\in\{1,\ldots,n\}$:
\begin{multline*}
p_i^*\CC_{g}(\pt^{\times n-1})
=
\sum_{\substack{1\le j \le n \\ j \ne i}}\sum_{\substack{\Gamma' \\ \text{legs $i,j$ on same vertex}}} \frac{1}{|\Aut(\Gamma')|}
\\
\cdot
\xi_{\Gamma' \ast}
\left( 
\frac{1}{(n-1)!}\sum_\sigma\sum_{\bw} \prod_{e = \{h,h'\}} \frac{\bw(h)}{1-q^{\bw(h)}}
 \prod_{\substack{1\le k\le n\\ k\ne i}} \DR_{g_k}\Big( ( \bw(h) )_{h \in v_k} \Big) \right),
\end{multline*}
where now $\Gamma'$ has $(n-1)$ vertices but $n$ legs and the legs $i,j$ are on the same vertex $v_i = v_j$ (of genus $g_i = g_j$).
Everything inside the first sum is symmetric in $i$ and $j$,
so after multiplying by $\psi_i$ and summing over $i$ we can write the entire formula more symmetrically as
\begin{multline}\label{eq:nminusone}
\sum_{i=1}^n\psi_i\cdot p_i^{\ast}\CC_{g}(\pt^{\times n-1})
=
\frac{1}{2}\sum_{1\le i \ne j \le n}(\psi_i+\psi_j)\sum_{\substack{\Gamma' \\ \text{legs $i,j$ on same vertex}}} \frac{1}{|\Aut(\Gamma')|}
\\
\cdot 
\xi_{\Gamma' \ast}
\left( 
\frac{1}{(n-1)!}\sum_\sigma\sum_{\bw} \prod_{e = \{h,h'\}} \frac{\bw(h)}{1-q^{\bw(h)}}
 \prod_{\substack{1\le k\le n\\ k\ne i}} \DR_{g_k}\Big( ( \bw(h) )_{h \in v_k} \Big) \right).
\end{multline}

We will need a formula for the product of a $\psi$ class with a double ramification cycle.
We use the following variant of the basic identity \cite[Cor.2.2]{BSSZ}.

\begin{lemma}\label{lemma_psi_times_DR}
For any $g\ge 0$ and $a_1,\ldots,a_n\in\BZ$ with sum zero, we have
\begin{multline*}
(\psi_1 + \psi_2)\DR_g(0,0,a_1,\ldots,a_n) = 
\\
\Bigg[ \sum_{m, g_i, S_i, c_i} \frac{c_1\cdots c_m}{m!}\DR_{g_1}(X,a_{S_1},-c_1,\ldots,-c_m) \boxtimes \DR_{g_2}(-X,a_{S_2},c_1,\ldots,c_m)\Bigg]_{X^1},
\end{multline*}
where $\boxtimes$ denotes gluing along the $m$ pairs of marked points with weights $\pm c_i$,
the bracket $[P]_{X^1}$ denotes taking the coefficient of $X^1$ in a polynomial function\footnote{Polynomiality here follows from the polynomiality of the double ramification cycle \cite{JPPZ,PZ}.} $P$ of $X$ (in this case defined for all sufficiently large integers $X$) and the sum $\sum_{m, g_i, S_i, c_i}$ signifies
\[
\sum_{1 \leq m \leq g+1} \
\sum_{\substack{ g = g_1 + g_2 + m - 1 \\ \{1,\ldots,n\} = S_1 \sqcup S_2 }} \ 
\sum_{ \substack{ c_1,\ldots,c_m > 0 \\ c_1 + \cdots + c_m = X + \sum_{i \in S_1}a_i }}.
\]
\end{lemma}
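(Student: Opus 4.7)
The plan is to obtain the lemma as a limit of the basic BSSZ identity \cite[Cor.2.2]{BSSZ} applied to the double ramification cycle $\DR_g(X,-X,a_1,\ldots,a_n)$ viewed as a polynomial in the integer parameter $X$. In the form I would use it, BSSZ provides, for any positive integer $X$ and any $(a_1,\ldots,a_n)$ with $\sum a_i = 0$, a formula expressing $(X\psi_1 + X\psi_2)\DR_g(X,-X,a_1,\ldots,a_n)$ as a sum over boundary strata where the first two markings land on different irreducible components. Each such stratum corresponds to a splitting $g = g_1 + g_2 + m - 1$, $\{1,\ldots,n\} = S_1\sqcup S_2$, with $m$ nodes carrying positive ramifications $c_1,\ldots,c_m$ balanced by $c_1 + \cdots + c_m = X + \sum_{i \in S_1}a_i$, and contributes the gluing of $\DR_{g_1}(X,a_{S_1},-c_1,\ldots,-c_m)$ with $\DR_{g_2}(-X,a_{S_2},c_1,\ldots,c_m)$, weighted by $c_1\cdots c_m/m!$ (the product coming from the kissing/multiplicity factors at the nodes, the $1/m!$ from the symmetrization).

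First I would apply this identity with the ramification vector $(X,-X,a_1,\ldots,a_n)$ and observe that the left-hand side factors as $X \cdot (\psi_1+\psi_2)\DR_g(X,-X,a_1,\ldots,a_n)$. By the polynomiality of the double ramification cycle (Proposition~\ref{Polynomiality}), the class $\DR_g(X,-X,a_1,\ldots,a_n)$ is a polynomial in $X$, so the left-hand side is a polynomial in $X$ with vanishing constant term. Its coefficient of $X^1$ is therefore exactly $(\psi_1+\psi_2)\DR_g(0,0,a_1,\ldots,a_n)$, which is what we want.

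Next I would verify that the right-hand side of the BSSZ identity, viewed as a function of $X$, is also polynomial. For fixed combinatorial data $(m,g_1,g_2,S_1,S_2)$ the inner sum
\[
\sum_{\substack{c_1,\ldots,c_m > 0 \\ c_1 + \cdots + c_m = X + \sum_{i \in S_1}a_i}} c_1\cdots c_m \cdot \DR_{g_1}(X,a_{S_1},-c_1,\ldots,-c_m)\boxtimes\DR_{g_2}(-X,a_{S_2},c_1,\ldots,c_m)
\]
is a sum of polynomial functions of $(X,c_1,\ldots,c_m)$ over the lattice points of a dilating simplex, hence polynomial in $X$ for $X$ sufficiently large by a standard Ehrhart-type argument. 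Taking the coefficient of $X^1$ on both sides of the BSSZ identity then yields precisely the stated lemma.

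The main obstacle in carrying this out is bookkeeping: one must match the exact combinatorial shape of BSSZ's boundary sum to the expression in the lemma, including the weight $c_1\cdots c_m/m!$, the range $1 \le m \le g+1$, the dimensional constraint $g_1 + g_2 + m - 1 = g$, and the relative signs of the half-edge decorations $\pm c_i$. A secondary check is that the two relevant cases of BSSZ's hypotheses (taking $a_1 = X > 0$ paired against $a_2 = -X$) combine to give precisely the coefficient $X$ in front of $(\psi_1+\psi_2)$, which is exactly the reason the final answer picks out the $X^1$ coefficient rather than the value at $X=0$.
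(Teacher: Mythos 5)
Your proposal is correct and is essentially identical to the paper's own proof: the authors likewise apply the basic identity \cite[Cor.~2.2]{BSSZ} to $(X\psi_1 - (-X)\psi_2)\DR_g(X,-X,a_1,\ldots,a_n) = X(\psi_1+\psi_2)\DR_g(X,-X,a_1,\ldots,a_n)$ and take the coefficient of $X^1$ on both sides, with the polynomiality in $X$ of the right-hand side (which you justify by an Ehrhart-type argument) footnoted in the paper as a consequence of the polynomiality of the double ramification cycle \cite{JPPZ,PZ}. Your observation that extracting the $X^1$ coefficient of the left side yields $(\psi_1+\psi_2)\DR_g(0,0,a_1,\ldots,a_n)$, via Proposition~\ref{Polynomiality}, is exactly the intended mechanism.
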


\begin{proof}
Use the basic identity \cite[Corollary 2.2]{BSSZ} to compute 
\[ (X\psi_1 - (-X) \psi_2)\DR_g(X,-X,a_1,\ldots,a_n) \]
and take the coefficient of $X^1$ of both sides.
\end{proof}

Using this lemma to expand the $(\psi_i+\psi_j)\DR_{g_i}$ factor in \eqref{eq:nminusone} effectively splits the vertex carrying legs $i$ and $j$ in $\Gamma'$ into two vertices each with one of the legs, connected by some positive number of edges.
We obtain
\begin{multline*}
\sum_{i=1}^n\psi_i\cdot p_i^*\CC_{g}(\pt^{\times n-1})
=
\frac{1}{2}\sum_{1\le i \ne j \le n}\sum_{\Gamma} \frac{1}{|\Aut(\Gamma)|}
\sum_{\substack{S\text{ a nonempty set of edges}\\ \text{between $v_i$ and $v_j$}}} \\
\cdot
\xi_{\Gamma \ast}
\Bigg( \frac{1}{(n-1)!}\sum_\sigma\sum_{\bw}
\prod_{\substack{e = \{h,h'\} \\ e \notin S}} \frac{\bw(h)}{1-q^{\bw(h)}} \\
\left[\sum_{\bc}
\left(\prod_{\substack{e = \{h,h'\}\in S \\ v(h)=v_i, v(h')=v_j}}\bc(h')\right)
\prod_{i=1}^{n} \DR_{g_i}\Big( ( (\bw \sqcup \bc)(h) )_{h \in v_i} \Big)\right]_{X^1} \Bigg).
\end{multline*}
Here $\Gamma$ is now a stable graph on $n$ vertices with one leg on each vertex. The set $S$ is a nonempty set of edges between $v_i$ and $v_j$ in $\Gamma$; these are the edges created by the $(\psi_i+\psi_j)\DR_{g_i}$ formula. To explain the later sums in the formula, let $\Gamma'$ be the graph formed from $\Gamma$ by contracting the edges of $S$, so $\Gamma'$ has $n-1$ vertices and legs $i$ and $j$ are on a single vertex.
In particular, edges between $v_i$ and $v_j$ that are not in $S$ become loops in $\Gamma'$. Then the remaining sums are over the following data:
\begin{itemize}
\item $\sigma$ is an ordering of the vertices of $\Gamma'$, and is used in the usual way to determine orientations $e=\{h,h'\}$;
\item $\bw$ is a balanced weighting of the non-leg half-edges of $\Gamma'$, which are naturally identified with the non-leg half-edges of $\Gamma$ that do not belong to edges in $S$;
\item $\bc$ is a weighting of the remaining half-edges of $\Gamma$ (those belonging to edges in $S$, and legs) such that $\bc(l_i) = X, \bc(l_j) = -X$ for some large integer variable $X$, $\bc$ vanishes on all the other legs, $\bc(h) < 0$ for any half-edge $h$ between $v_i$ and $v_j$ with $v(h) = v_i$, and $\bw\sqcup\bc$ forms a balanced weighting of $\Gamma$.
\end{itemize}

Using the polynomiality of the double ramification cycle the expression
inside $[ \, \cdot \, ]_{X^1}$ is polynomial in $X$ for fixed $\bw$ and sufficiently large $X\in\BZ$; we take the coefficient of $X^1$ in that polynomial, as in Lemma~\ref{lemma_psi_times_DR}.

Expanding the double ramification cycles as sums of monomials with coefficients $\mathsf{C}_{g_v,k_v}$, this becomes
\begin{multline*}
\sum_{i=1}^n\psi_i\cdot p_i^*\CC_{g}(\pt^{\times n-1}) \\
=
\frac{1}{2}\sum_{1\le i \ne j \le n}\sum_{\substack{\Gamma \\ \text{at least one edge} \\ \text{between $v_i$ and $v_j$}}}\sum_{k_v = (k_h)_{h\in v}} \frac{\xi_{\Gamma \ast}\left( \prod_{v} \mathsf{C}_{g_v,k_v} \right)}{|\Aut(\Gamma)|}\sum_{\substack{S\text{ a nonempty set of edges}\\ \text{between $v_i$ and $v_j$}}}
\\
\cdot
\frac{1}{(n-1)!}\sum_\sigma\sum_{\bw} \prod_{\substack{e = \{h,h'\} \\ e \notin S}} \frac{\bw(h)}{1-q^{\bw(h)}}\bw(h)^{k_h}\bw(h')^{k_{h'}}
\\
\left[\sum_{\bc}
\left(\prod_{\substack{e = \{h,h'\}\in S \\ v(h)=v_i, v(h')=v_j}}\bc(h')\bc(h)^{k_h}\bc(h')^{k_{h'}}\right)\right]_{X^1}.
\end{multline*}

We can multiply by $2$ and rearrange the sums slightly to make this look more
similar to the left side of \eqref{eq:ppp:HAE2}:
\begin{multline*}
2\sum_{i=1}^n\psi_i\cdot p_i^*\CC_{g}(\pt^{\times n-1}) \\
=
\sum_\Gamma\sum_{k_v = (k_h)_{h\in v}} \frac{\xi_{\Gamma \ast}\left( \prod_{v} \mathsf{C}_{g_v,k_v} \right)}{|\Aut(\Gamma)|}
\Bigg( \prod_{\substack{\text{loops in $\Gamma$}\\e=\{h,h'\}}} L_{k_h, k_{h'}}(q)\Bigg)
\\
\cdot
\sum_{1\le i \ne j \le n}\, 
\sum_{\substack{S\text{ a nonempty set of edges}\\ \text{between $v_i$ and $v_j$}}}
\frac{1}{(n-1)!}\sum_\sigma\sum_{\bw}
\prod_{\substack{\text{non-loops in $\Gamma$} \\ e = \{h,h'\}\notin S}} 
\frac{\bw(h)^{k_h+1}\bw(h')^{k_{h'}}}{1-q^{\bw(h)}} \\
\cdot \left[\sum_{\bc}
\left(\prod_{\substack{e = \{h,h'\}\in S \\ v(h)=v_i, v(h')=v_j}}\bc(h')\bc(h)^{k_h}\bc(h')^{k_{h'}}\right)\right]_{X^1}
\end{multline*}

Thus it is sufficient to show that
\begin{multline} \label{eq:ppp:HAE3}
\left[(2\pi i)^2\underset{z_i = z_j}{\mathrm{Res}}\left((z_i-z_j)\prod_{\substack{\text{non-loops } \\e=\{h,h'\}}}\partial_{z_h}^{k_h}\partial_{z_{h'}}^{k_{h'}}(\wp(z_h-z_{h'}) + 2C_2)\right)\right]_{p^0}
\\
=
\sum_{\substack{S\text{ a nonempty set of edges}\\ \text{between $v_i$ and $v_j$}}}
\frac{1}{(n-1)!}\sum_\sigma\sum_{\bw}
\prod_{\substack{\text{non-loops in $\Gamma$} \\ e = \{h,h'\}\notin S}} 
\frac{\bw(h)^{k_h+1}\bw(h')^{k_{h'}}}{1-q^{\bw(h)}} \\
\left[\sum_{\bc}
\left(\prod_{\substack{e = \{h,h'\}\in S \\ v(h)=v_i, v(h')=v_j}}\bc(h')\bc(h)^{k_h}\bc(h')^{k_{h'}}\right)\right]_{X^1}.
\end{multline}
We will expand both sides of \eqref{eq:ppp:HAE3} more explicitly and show they are equal.
On the left side we will expand the residue, while on the right side we will express the last line as a polynomial in the $\bw(h)$ and then apply Proposition~\ref{prop:graph_to_coeff} to interpret the right side as the $p^0$-coefficient of a multivariate elliptic function.

For simplicity, we will assume that $k_{h'} = 0$ for all $h'$ between $v_i$ and $v_j$ with $v(h') = v_j$. This reduction is justified because reducing $k_{h'}$ by one and increasing its partner $k_{h}$ by one just multiplies both sides by $-1$.

Let $e_1,\ldots,e_m$ be the edges in $\Gamma$ between $v_i$ and $v_j$. Let $e_a = (h_a,h'_a)$ with $v(h_a) = v_a$, and let $k_a = k_{h_a}$. On the right side of \eqref{eq:ppp:HAE3}, we will think of $S$ as a subset of $\{1,\ldots,m\}$. We write $c_a = \bc(h'_a)$ for $a\in S$ and $w_a = \bw(h_a)$ for $a\notin S$.

We first compute the residue on the left side of \eqref{eq:ppp:HAE3}. The only terms in the product with a pole along $z_i = z_j$ are $\prod_a\partial_{z_i}^{k_a}(\wp(z_i-z_j)+2C_2)$, so
using \eqref{RFEREas} and setting $w = 2\pi i z$ the residue is equal to
\begin{multline*}
\sum_{l \ge 0}\left[\prod_{a=1}^m\partial_z^{k_a}(\wp(z)+2C_2)\right]_{w^{-2-l}}\\
\cdot \left(\frac{\partial_{z_i}^l}{l!}\prod_{\substack{\text{non-loops in $\Gamma'$} \\e=\{h,h'\}}}\partial_{z_h}^{k_h}\partial_{z_{h'}}^{k_{h'}}(\wp(z_h-z_{h'}) + 2C_2)\right)\Bigg\vert_{z_i=z_j}.
\end{multline*}

We expand the first product. We start with the $w$-expansion
\[
\wp(z) + 2C_2 = \frac{1}{w^2} + \sum_{r\ge 0} (2r+1)(2r+2)C_{2r+2}w^{2r},
\]
so
\[
\partial_z^k(\wp(z)+2C_2) = \frac{(-1)^k(k+1)!}{w^{k+2}} + \sum_{r\ge \frac{k}{2}}\frac{(2r+2)!}{(2r-k)!}C_{2r+2}w^{2r-k}.
\]
Also, it will be convenient to use the notation
\[
D_{2k+2} = 2\sum_{d > 0}\frac{d^{2k+1}q^d}{1-q^d},
\]
so
\[
(2k+2)!\cdot C_{2k+2} = D_{2k+2} + \zeta(-1-2k)
\]
and
\begin{multline*}
\partial_z^k(\wp(z)+2C_2) \\
= \frac{(-1)^k(k+1)!}{w^{k+2}} + \sum_{r\ge \frac{k}{2}}D_{2r+2}\frac{w^{2r-k}}{(2r-k)!} + \sum_{r\ge \frac{k}{2}}\zeta(-1-2r)\frac{w^{2r-k}}{(2r-k)!}.
\end{multline*}

The residue is then equal to
\begin{multline}
\label{eq:ppp:residue}
\sum_{\substack{\{1,\ldots,m\} = A\sqcup B\sqcup C\\ r_a\in\BZ, r_a\ge \frac{k_a}{2}\text{ for }a\in B\sqcup C\\l\ge 0}} \prod_{a\in A}(k_a+1)!\prod_{a\in B}\frac{D_{2r_a+2}}{(2r_a-k_a)!}\prod_{a\in C}\frac{\zeta(-1-2r_a)}{(2r_a-k_a)!}
\\
\cdot \left(\frac{(-1)^l\partial_{z_i}^l}{l!}\prod_{\substack{\text{non-loops in $\Gamma'$} \\e=\{h,h'\}}}\partial_{z_h}^{k_h}\partial_{z_{h'}}^{k_{h'}}(\wp(z_h-z_{h'}) + 2C_2)\right)\Bigg\vert_{z_i=z_j},
\end{multline}
where $l$ is defined by
\[
l = -2 + \sum_{a\in A} (k_a+2) + \sum_{a\in B\sqcup C}(k_a-2r_a)
\]
and the constraint $l\ge 0$ in the sum should be viewed as a constraint on the variables used to define $l$.

We now switch to the right side of \eqref{eq:ppp:HAE3} and show that it is equal to the $p^0$-coefficient of \eqref{eq:ppp:residue}.
We will need the following combinatorial identity (a multivariate version of Euler-Maclaurin summation):

\begin{prop}\label{prop:multiEM}
Let $m$ and $X$ be positive integers. Let $P(x_1,\ldots,x_m)$ be a polynomial in $m$ variables. Then
\begin{multline*}
\sum_{\substack{x_1,\ldots,x_m \in\BZ_{>0} \\ x_1+\cdots+x_m = X}} P(x_1,\ldots,x_m) \\
= \sum_{\substack{I \subseteq \{1,\ldots,m\} \\ I \ne \emptyset}}\, 
\left[
\underset{\substack{x_i \ge 0\text{ for } i\in I\\ \sum_{i\in I}x_i = X - \sum_{i\notin I}x_i}}{\int}P(x_1,\ldots,x_m)
\right]
\Bigg|_{\substack{x_i^k\mapsto\zeta(-k)\\ \text{for $i\notin I$}}},
\end{multline*}
where the integral is over a $(|I|-1)$-dimensional simplex in the variables $(x_i)_{i\in I}$ and if $i_1 < \ldots < i_l$ are the elements of $I$ then we use the convention
\[
\underset{\substack{x_i \ge 0\text{ for } i\in I\\ \sum_{i\in I}x_i = X - \sum_{i\notin I}x_i}}{\int}P := \underset{\substack{x_i \ge 0\text{ for } i\in I\\ \sum_{i\in I}x_i = X - \sum_{i\notin I}x_i}}{\int}P\, dx_{i_1}\cdots dx_{i_{l-1}}.
\]
The value of the integral in the region $\sum_{i\notin I} x_i \le X$ is a polynomial in the variables $(x_i)_{i\notin I}$, and we extract a number by replacing each $x_i^k$ by the negative zeta value $\zeta(-k)$.
\end{prop}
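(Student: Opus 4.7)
The plan is to reduce by linearity to the case where $P = \prod_{i=1}^m x_i^{a_i}$ is a monomial and then proceed by induction on $m$. Both sides of the identity are polynomials in $X$ (once $X$ is large enough that the constraints make sense), so it suffices to verify the identity as a polynomial identity. The base case $m = 1$ is immediate: only the subset $I = \{1\}$ contributes, and the integral over the zero-dimensional simplex $\{x_1 = X\}$ yields $X^{a_1}$, matching the unique term on the left.

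For the inductive step, I would isolate the last variable $y := x_m$ and write
\[
\mathrm{LHS} = \sum_{y=1}^{X-m+1} F(X,y), \qquad F(X,y) := \sum_{\substack{x_i > 0,\, i<m \\ \sum_{i<m} x_i = X-y}} P(x_1, \ldots, x_{m-1}, y).
\]
The inductive hypothesis gives $F(X,y) = \sum_{\emptyset \neq J \subseteq \{1,\ldots,m-1\}} G_J(X,y)$, with each $G_J(X,y)$ a polynomial in $(X,y)$ produced by the $(m-1)$-variable multi-EM formula applied to $P(\cdot,y)$. Next, for each $J$ I would apply the classical one-variable Euler--Maclaurin identity for polynomials,
\[
\sum_{y=1}^{N-1} H(y) = \int_0^N H(y)\,dy + \mathcal{Z}[H(y)] + \mathcal{Z}[H(N-y)], \qquad \mathcal{Z}[y^k] := \zeta(-k),
\]
taking $N = X-m+2$, to evaluate $\sum_y G_J(X,y)$. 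The three resulting contributions should match terms on the right side of the multi-EM formula: the integral term corresponds to $I = J \cup \{m\}$, where the new $y$-integration fuses with the simplex integration inside $G_J$; the $\mathcal{Z}_y[G_J(X,y)]$ term corresponds to $I = J$; and a third ``upper boundary'' contribution $\mathcal{Z}_y[G_J(X, N-y)]$ comes from the awkward upper limit of the $y$-sum.

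The main obstacle I expect is the combinatorial bookkeeping around this upper boundary contribution, which must be shown to cancel or recombine across the different choices of $J$. A clean way to handle this is to exploit that both sides are already polynomials in $X$: the identity only needs to be checked up to polynomial equality, which allows extending or truncating ranges of summation freely, as long as one tracks the polynomial extensions of the $G_J$ carefully.

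An alternative approach avoiding the induction altogether uses the generating function identity $\prod_{i=1}^m \mathrm{Li}_{-a_i}(t) = \sum_X (\mathrm{LHS})\,t^X$ together with the asymptotic expansion
\[
\mathrm{Li}_{-a}(e^{-\mu}) = \frac{a!}{\mu^{a+1}} + \sum_{k\geq 0} \frac{(-\mu)^k}{k!}\,\zeta(-a-k), \qquad \mu \to 0^+.
\]
Expanding each factor into its polar part (producing simplex integrations for $i \in I$) and regular part (producing zeta substitutions for $i \notin I$) produces the right side of the proposition directly, bypassing the combinatorial obstacle above.
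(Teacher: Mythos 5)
Your primary route (induction on $m$ plus one-variable Euler--Maclaurin in the last variable) contains a genuine unresolved gap, and you have correctly located it yourself: the upper-boundary bookkeeping. The problem is sharper than ``combinatorial bookkeeping,'' though. In the target identity the $y$-integration for $I = J \cup \{m\}$ must run up to the \emph{outer-variable-dependent} limit $X - \sum_{i \notin I} x_i$, and only \emph{after} composing the integrals may one substitute $x_i^k \mapsto \zeta(-k)$; in your scheme the Euler--Maclaurin step forces a \emph{fixed numerical} upper limit $N$, because the inductive hypothesis hands you $G_J(X,y)$ with the zeta substitutions already performed. The discrepancy $\int_{X-\sum x_i}^{N}$ is a nontrivial polynomial correction that must be shown to cancel against the terms $\mathcal{Z}_y[G_J(X,N-y)]$ across all $J$, and the appeal to ``extending or truncating ranges freely by polynomiality'' does not do this: extending an integration range changes the value by a definite amount, and equality of polynomials is exactly what is at stake. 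As written, the induction does not close.

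Your alternative route, however, is correct in outline and is essentially the paper's own proof run in reverse. The paper starts from the right-hand side: it evaluates the simplex integral of a monomial as a Dirichlet/beta integral, performs the zeta substitution via $\zeta(-k) = (-1)^k B_{k+1}/(k+1)$, assembles the Bernoulli numbers through the generating function $\sum_{b \ge 0} \frac{B_{a+b+1}}{b!\,(a+b+1)} z^b = \left(\frac{d}{dz}\right)^a\left(\frac{1}{e^z-1}-\frac{1}{z}\right)$, observes that the $I = \emptyset$ term vanishes (no pole), and identifies the whole sum with $\left[e^{Xz}\prod_i\left(-\frac{d}{dz}\right)^{a_i}\left(\frac{1}{e^z-1}\right)\right]_{z^{-1}}$; the substitutions $w = e^z - 1$ and then $w = \frac{1}{u}-1$, together with the fact that the resulting rational function has poles only at $w = 0$ and $w = \infty$, convert this residue into the coefficient of $u^X$ in $\prod_i\left(u\frac{d}{du}\right)^{a_i}\left(\frac{u}{1-u}\right)$, which is the left-hand side. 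Your polar/regular split of $\mathrm{Li}_{-a}(e^{-\mu})$ is precisely the paper's split of $\left(\frac{d}{dz}\right)^a\left(\frac{1}{e^z-1}\right)$ into $\left(\frac{d}{dz}\right)^a\left(\frac{1}{z}\right)$ plus the Bernoulli part, and your factorization over subsets $I$ of ``which factors contribute their pole'' is the paper's sum over $I$ with the empty term dropped. Two points would still need to be supplied to make your sketch complete: (i) the identification of the polar-part contributions with the simplex-integral terms requires the explicit beta evaluation $\int_{\text{simplex}} \prod_{i \in I} x_i^{a_i} = \frac{\prod_{i\in I} a_i!}{\left(\sum_{i\in I}(a_i+1)-1\right)!}\left(X - \sum_{i \notin I} x_i\right)^{\sum_{i\in I}(a_i+1)-1}$, which is not free; and (ii) extracting the $t^X$-coefficient from the local expansion at $t = 1$ needs a global justification --- namely that $\prod_i \mathrm{Li}_{-a_i}(t)$ is a rational function whose only finite pole is $t = 1$ and whose residue at infinity vanishes for $X \ge 1$, which is exactly the role played by the paper's residue-at-$w=\infty$ step. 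With those two standard supplements your second argument is a complete proof, and it does indeed bypass the boundary obstacle that blocks your induction.
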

\begin{proof}
When $m=1$ this just says that $P(X) = P(X)$. Assume $m\ge 2$. We may also assume that $P(x_1,\ldots,x_m) = x_1^{a_1}\cdots x_m^{a_m}$ is a monomial. Then the integral on the right side is a beta integral and evaluates as
\begin{align*}
& \int_{\substack{x_i \ge 0\text{ for } i\in I\\ \sum_{i\in I}x_i = X - \sum_{i\notin I}x_i}}P(x_1,\ldots,x_m) \\
=\ &
\left(\prod_{i\notin I}x_i^{a_i}\right)\frac{\prod_{i\in I}a_i!}{(\sum_{i\in I}(a_i+1)-1)!}(X-\sum_{i\notin I}x_i)^{\sum_{i\in I}(a_i+1)-1}
\\
=\ &
\prod_{i\in I}a_i! \sum_{\substack{n\in\BZ_{\ge 0}\\ b_i\in\BZ_{\ge 0}\text{ for $i\notin I$}\\ n+\sum_{i\notin I}b_i = \sum_{i\in I}(a_i+1)-1}}\frac{X^n}{n!}\prod_{i\notin I}\frac{(-1)^{b_i}x_i^{a_i+b_i}}{b_i!}.
\end{align*}
Replacing powers $x_i^k$ by the negative zeta values
\[
\zeta(-k) = (-1)^k\frac{B_{k+1}}{k+1}
\]
(where $B_k$ are the Bernoulli numbers) then gives
\[
\prod_{i\in I}a_i! \prod_{i\notin I}(-1)^{a_i}\sum_{\substack{n\in\BZ_{\ge 0}\\ b_i\in\BZ_{\ge 0}\text{ for $i\notin I$}\\ n+\sum_{i\notin I}b_i = \sum_{i\in I}(a_i+1)-1}}
\frac{X^n}{n!}\prod_{i\notin I}\frac{B_{a_i+b_i+1}}{b_i!\cdot(a_i+b_i+1)}.
\]
Using the generating function
\[
\sum_{b\ge 0}\frac{B_{a+b+1}}{b!\cdot(a+b+1)}z^b = \left(\frac{d}{dz}\right)^a\left(\frac{1}{e^z-1}-\frac{1}{z}\right),
\]
we can rewrite this as
\begin{align*}
&\prod_{i\in I}a_i! \prod_{i\notin I}(-1)^{a_i}\left[
e^{Xz}\prod_{i\notin I}\left(\frac{d}{dz}\right)^{a_i}\left(\frac{1}{e^z-1}-\frac{1}{z}\right)
\right]_{z^{\sum_{i\in I}(a_i+1)-1}}
\\
&=
\prod_{i=1}^m(-1)^{a_i}\left[
e^{Xz}\prod_{i\notin I}\left(\frac{d}{dz}\right)^{a_i}\left(\frac{1}{e^z-1}-\frac{1}{z}\right)\prod_{i\in I}\left(\frac{d}{dz}\right)^{a_i}\left(\frac{1}{z}\right)
\right]_{z^{-1}}.
\end{align*}
If $I=\emptyset$ this expression is $0$, since in this case the expression inside $[]_{z^{-1}}$ has no pole at $z=0$. Hence we can safely enlarge our sum over $I\subseteq \{1,\ldots,m\}$ to include an empty $I$. The result is that the right side of the identity to be proved is equal to
\[
\left[e^{Xz}\prod_{i=1}^m\left(-\frac{d}{dz}\right)^{a_i}\left(\frac{1}{e^z-1}\right)\right]_{z^{-1}}.
\]
If we interpret this as a residue at $z=0$ and change variables by $w = e^z-1$ we obtain
\begin{multline*}
\mathrm{Res}_{z = 0}e^{Xz}\prod_{i=1}^m\left(-\frac{d}{dz}\right)^{a_i}\left(\frac{1}{e^z-1}\right)dz
\\ = \mathrm{Res}_{w = 0}(w+1)^{X-1}\prod_{i=1}^m\left(-(w+1)\frac{d}{dw}\right)^{a_i}\left(\frac{1}{w}\right)dw.
\end{multline*}
This only has poles at $w=0$ and $w=\infty$, so the residue at $w=0$ is $-1$ times the residue at $w=\infty$. We then change variables by $w = \frac{1}{u}-1$ to get
\begin{multline*}
-\mathrm{Res}_{w = \infty}(w+1)^{X-1}\prod_{i=1}^m\left(-(w+1)\frac{d}{dw}\right)^{a_i}\left(\frac{1}{w}\right)dw \\=
\mathrm{Res}_{u = 0}\frac{1}{u^{X+1}}\prod_{i=1}^m\left(u\frac{d}{du}\right)^{a_i}\left(\frac{u}{1-u}\right)du.
\end{multline*}
This is equal to the coefficient of $u^X$ in
\[
\prod_{i=1}^m\left(u\frac{d}{du}\right)^{a_i}\left(\frac{u}{1-u}\right)
= \prod_{i=1}^m\left(\sum_{x_i\in\BZ_{>0}}x_i^{a_i}u^{x_i}\right),
\]
which is the sum we were trying to compute.
\end{proof}

The conditions on the sum over $\bc$ on the right side of \eqref{eq:ppp:HAE3} are that $(c_a)_{a\in S}$ are positive integers with fixed sum
\[
X + \sum_{a\notin S}w_a + \sum_{\substack{\text{$h$ not part of a loop in $\Gamma'$}\\ v(h) = v_i}}\bw(h).
\]
Hence we may apply Proposition~\ref{prop:multiEM} above. The result is
\begin{multline} \label{eq:ppp:long}
\sum_{S\subseteq\{1,\ldots,m\}}\sum_{I\subseteq S, I \ne\emptyset}\frac{1}{(n-1)!}\sum_\sigma\sum_{\bw\text{ balanced on $\Gamma'$}}
\\
\cdot \prod_{\substack{\text{non-loops in $\Gamma'$} \\ e = \{h,h'\}}}\frac{\bw(h)}{1-q^{\bw(h)}}\bw(h)^{k_h}\bw(h')^{k_{h'}}
\cdot
\prod_{a\notin S}\left(\sum_{w_a\in\BZ\setminus\{0\}}\frac{|w_a|q^{|w_a|}}{1-q^{|w_a|}}(-|w_a|)^{k_a}\right) \\
\cdot \left[\underset{\substack{x_a \ge 0\text{ for } a\in I\\ \sum_{a\in I}x_a = X + \sum_{a\notin S}w_a + W - \sum_{a\in S\setminus I}x_a}}{\int}\, \prod_{a\in S}(-1)^{k_a}x_a^{k_a+1}\right]_{X^1}\Bigg|_{\substack{x_a^k\mapsto\zeta(-k)\\ \text{for $a\in S\setminus I$}}},
\end{multline}
where
\[
W := \sum_{\substack{\text{$h$ not part of a loop in $\Gamma'$}\\ v(h) = v_i}}\bw(h).
\]
The integral is a beta integral and evaluates to
\begin{multline*}
\prod_{a\in S}(-1)^{k_a}\prod_{a\in S\setminus I}x_a^{k_a+1}
\frac{\prod_{a\in I}(k_a+1)!}{(-1+\sum_{a\in I}(k_a+2))!} \\
\cdot (X + \sum_{a\notin S}w_a + W - \sum_{a\in S\setminus I}x_a)^{-1+\sum_{a\in I}(k_a+2)},
\end{multline*}
which has $X^1$ coefficient 
\[
\sum_{\substack{r_a\in\BZ,r_a\ge\frac{k_a}{2}\text{ for }a\notin{I}\\ l\ge 0}}
\prod_{a\notin I}\frac{(-1)^{k_a}}{(2r_a-k_a)!}\prod_{a\in I}(k_a+1)!\prod_{a\in S\setminus I}x_a^{2r_a+1}\prod_{a\notin S}w_a^{2r_a-k_a}\frac{W^l}{l!},
\]
where $l$ is defined by
\[
l = -2 + \sum_{a\in I}(k_a+2) + \sum_{a\notin I}(k_a-2r_a).
\]

Substituting this into \eqref{eq:ppp:long} and setting $A = I$, $B = \{1,\ldots,m\}\setminus S$, and $C = S\setminus I$ gives
\begin{multline*}
\sum_{\substack{\{1,\ldots,m\} = A\sqcup B\sqcup C\\ r_a\in\BZ, r_a\ge \frac{k_a}{2}\text{ for }a\in B\sqcup C\\l\ge 0}} \prod_{a\in A}(k_a+1)!\prod_{a\in B}\frac{D_{2r_a+2}}{(2r_a-k_a)!}\prod_{a\in C}\frac{\zeta(-1-2r_a)}{(2r_a-k_a)!}
\\
\cdot \frac{1}{(n-1)!}\sum_\sigma\sum_{\bw\text{ balanced on $\Gamma'$}}\prod_{\substack{\text{non-loops in $\Gamma'$} \\ e = \{h,h'\}}}\frac{\bw(h)}{1-q^{\bw(h)}}\bw(h)^{k_h}\bw(h')^{k_{h'}} \frac{(-1)^lW^l}{l!}.
\end{multline*}
Applying Proposition~\ref{prop:graph_to_coeff} to the second line
gives the desired equality with the $p^0$-coefficient of the residue \eqref{eq:ppp:residue}.
This completes the proof of Theorem~\ref{thm_E_HAE_point}. \qed

\section{Elliptic curves: The general case}
\label{Section_Generalcase}
\subsection{Overview}
In Section~\ref{Section_Elliptic_curves_Point_insertions} we proved
the quasimodularity and holomorphic anomaly equation for
\[ \CC_g(\gamma_1, \ldots, \gamma_n) \in H^{\ast}(\Mbar_{g,n}) \otimes \BQ[[q]] \]
if all $\gamma_i$ are point classes.
Here we show the point case implies the general case by
showing quasimodularity and the holomorphic anomaly equation are preserved by the following operations:
\begin{itemize}
 \item Pull-back under the map $\Mbar_{g,n+1} \to \Mbar_{g,n}$ forgetting a point,
 \item Pull-back under the map $\Mbar_{g-1,n+2} \to \Mbar_{g,n}$ gluing two points,
 \item Monodromy invariance.
\end{itemize}
In Section~\ref{Subsection_ProofofCorE} we also present the proof of Corollary~\ref{CorE}.

\subsection{Cohomology}
Let $E$ be a non-singular elliptic curve and let
\[ \1, \aaa,\bbb, \pt \]
be a basis of $H^{\ast}(E)$ with the following properties:
\begin{enumerate}
\item[(a)] $\1 \in H^0(E)$ is the unit,
\item[(b)] $\aaa \in H^{1,0}(E)$ and $\bbb \in H^{0,1}(E)$ determine
a symplectic basis of $H^1(E)$,
\[ \int_E \aaa \cup \bbb = 1, \]
\item[(c)] $\pt \in H^2(E)$ is the class Poincar\'e dual to a point.
\end{enumerate}

\subsection{Monodromy invariance}
For any $\phi = \binom{\alpha\ \beta}{\gamma\ \delta}\in \mathrm{SL}_2(\BZ)$ there exists
a monodromy of the elliptic curve $E$
whose action on cohomology
\[ \phi : H^{\ast}(E) \to H^{\ast}(E) \]
is the identity on $H^0(E)$ and $H^2(E)$ and satisfies
\[
\phi : \begin{pmatrix} \aaa \\ \bbb \end{pmatrix} \mapsto \begin{pmatrix} \alpha & \beta \\ \gamma & \delta \end{pmatrix} \begin{pmatrix} \aaa \\ \bbb \end{pmatrix}.
\]
By deformation invariance it follows that
\begin{equation} \CC_g(\gamma_1, \ldots, \gamma_n) = \CC_g( \phi(\gamma_1), \ldots, \phi(\gamma_n) ) \label{MONODROMY} \end{equation}
where the right hand side is defined by multilinearity.

This implies the following balancing condition,
which can be found in \cite[Sec.4]{Jtaut}
and is proven by an adaption of \cite[Sec.5]{OP3}.
\begin{lemma}[Janda \cite{Jtaut}] \label{Balanced}
If $\gamma_1, \ldots, \gamma_n \in \{ \1, \aaa, \bbb, \pt \}$ are non-balanced, i.e. if
\[ | \{ i \colon \gamma_i = \aaa \} | \neq | \{ i \colon \gamma_i = \bbb \} |, \]
then $\CC_g(\gamma_1, \ldots, \gamma_n) = 0$.
\end{lemma}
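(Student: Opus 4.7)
The plan is to upgrade the $\mathrm{SL}_2(\BZ)$-invariance in \eqref{MONODROMY} to $\mathrm{SL}_2$-invariance by a Zariski density argument, and then read off the balancing condition from a one-parameter torus.

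First I would view $\CC_g$ as a multilinear map
\[
\CC_g : H^{\ast}(E)^{\otimes n} \ra H^{\ast}(\Mbar_{g,n}) \otimes \BQ[[q]].
\]
With respect to the basis $\{\1,\aaa,\bbb,\pt\}$, the right-hand side of \eqref{MONODROMY} expands by multilinearity into a polynomial in the entries $\alpha,\beta,\gamma,\delta$ of $\phi$ with coefficients of the form $\CC_g(\gamma'_1,\ldots,\gamma'_n)$. Thus \eqref{MONODROMY} becomes a family of polynomial identities in $\alpha,\beta,\gamma,\delta$ subject to $\alpha\delta-\beta\gamma=1$, valid for all $(\alpha,\beta,\gamma,\delta)\in\mathrm{SL}_2(\BZ)$.

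Next I would invoke the classical fact that $\mathrm{SL}_2(\BZ)$ is Zariski dense in the algebraic group $\mathrm{SL}_2$ over $\BQ$. Since the identities above are polynomial in the matrix entries and hold on a Zariski dense subset, they hold for every $\phi\in\mathrm{SL}_2(\BQ)$ (and indeed $\mathrm{SL}_2(\BC)$). In particular, this extends the monodromy invariance to any diagonal torus element.

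Finally I would apply this to the diagonal element
\[
\phi_t = \begin{pmatrix} t & 0 \\ 0 & t^{-1} \end{pmatrix}, \qquad t\in\BQ^{\ast},
\]
which acts by $\aaa\mapsto t\aaa$, $\bbb\mapsto t^{-1}\bbb$, and trivially on $\1$ and $\pt$. By multilinearity, if $\gamma_1,\ldots,\gamma_n\in\{\1,\aaa,\bbb,\pt\}$ contain $k_{\aaa}$ copies of $\aaa$ and $k_{\bbb}$ copies of $\bbb$, then
\[
\CC_g(\gamma_1,\ldots,\gamma_n) = \CC_g(\phi_t(\gamma_1),\ldots,\phi_t(\gamma_n)) = t^{k_{\aaa}-k_{\bbb}}\,\CC_g(\gamma_1,\ldots,\gamma_n).
\]
Since this identity holds for every $t\in\BQ^{\ast}$, the class $\CC_g(\gamma_1,\ldots,\gamma_n)$ must vanish whenever $k_{\aaa}\neq k_{\bbb}$.

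The only subtle step is the Zariski density: it is standard (e.g.\ from the fact that $\mathrm{SL}_2(\BZ)$ is generated by $S,T$ whose products already exhaust Zariski generic matrices), but one should check that the polynomial identities in $(\alpha,\beta,\gamma,\delta)$ are genuinely polynomial identities modulo $\alpha\delta-\beta\gamma-1$ and not just identities on integer points; this follows because the coefficients $\CC_g(\gamma'_1,\ldots,\gamma'_n)$ lie in a $\BQ$-vector space and the equation is $\BQ$-linear in them. No further input is required beyond the monodromy invariance already established.
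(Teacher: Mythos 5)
Your proof is correct, but it takes a genuinely different route from the paper's. The paper does not prove Lemma~\ref{Balanced} directly: it cites Janda \cite{Jtaut}, whose argument (an adaptation of \cite[Sec.5]{OP3}, and the same mechanism this paper reuses in the induction step for Theorem~\ref{thm_E_quasimodularity}) stays entirely inside the honest monodromy group $\mathrm{SL}_2(\BZ)$. There one inducts on the imbalance: applying the unipotent element $\phi = \binom{1\ 0}{1\ 1}$, i.e.\ $\bbb \mapsto \aaa + \bbb$, to a maximally unbalanced tensor and expanding by multilinearity yields, modulo classes already known to vanish by induction, a system of linear relations of the form $R(T)=\sum_{S\subset T,\,|S|=b} C(S)=0$ among the classes of a fixed imbalance, which is then inverted combinatorially. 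You instead upgrade the integral invariance \eqref{MONODROMY} to invariance under the algebraic group $\mathrm{SL}_2$ via Zariski density and evaluate on the torus element $\mathrm{diag}(t,t^{-1})$, so the lemma becomes the trivial observation that only weight-zero (balanced) tensors are torus-invariant. Your handling of the two delicate points is adequate: the identity is a vector-valued regular function on $\mathrm{SL}_2$, and since only finitely many classes $\CC_g(\gamma'_1,\ldots,\gamma'_n)$ appear one may argue coordinatewise in a finite-dimensional subspace of $H^{\ast}(\Mbar_{g,n})\otimes\BQ[[q]]$; and Zariski density of $\mathrm{SL}_2(\BZ)$ is indeed standard, though your parenthetical justification is loose --- the clean argument is that the Zariski closure is a subgroup containing the two one-parameter unipotent subgroups $\binom{1\ \ast}{0\ 1}$ and $\binom{1\ 0}{\ast\ 1}$ (a polynomial vanishing at infinitely many integers vanishes identically on the line), and these generate $\mathrm{SL}_2$. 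Your approach buys brevity and conceptual transparency (vanishing forced by a single torus weight, no induction or combinatorics), while the induction of \cite{Jtaut} has the virtue of using only group elements realized by actual geometric monodromies and is the pattern the paper exploits again in Section~\ref{Section_Generalcase}.
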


\subsection{Proof of Theorem~\ref{thm_E_quasimodularity}}
\label{Subsection_Proof_of_Theorem_Quasimodularity}
For every $g$ and $n$ consider the homomorphism
\[ \CC_g : H^{\ast}(E)^{\otimes n} \to H^{\ast}(\Mbar_{g,n}) \otimes \BQ[[q]],
\ \gamma_1 \otimes \cdots \otimes \gamma_n \mapsto \CC_g(\gamma_1, \ldots, \gamma_n). \]
Define the subspace
\[ K_n \subset H^{\ast}(E)^{\otimes n} \]
to be the set of all $\gamma$ such that for all $g$ the series $\CC_g(\gamma)$
lies in $H^{\ast}(\Mbar_{g,n}) \otimes \QMod$.
We need to show that the inclusion
\[ K = \bigoplus_{n \geq 0} K_n \, \subset \, T(E) = \bigoplus_{n \geq 0} H^{\ast}(E)^{\otimes n}. \]
is an equality. 
Consider any element
\begin{equation} \gamma = \gamma_1 \otimes \cdots \otimes \gamma_n, \quad \quad
\gamma_1, \ldots, \gamma_n \in \{ \1, \aaa, \bbb, \pt \} \label{54524} \end{equation}
If $\gamma$ is non-balanced, then $\gamma \in K$ by Lemma~\ref{Balanced}.
Hence we may assume $\gamma$ is balanced.
We will show $\gamma \in K$ by induction on
\[ m(\gamma) = | \{ i \colon \gamma_i = \aaa \} |, \]
the number of factors in $\gamma$ equal to $\aaa$.

\vspace{7pt}
\noindent \textbf{Base.} If $m(\gamma)=0$, then every $\gamma_i$ is either the point class or the unit.
Since every $K_n$ is invariant under permutation we may assume
\[ \gamma = \underbrace{\pt \otimes \cdots \otimes \pt}_{k} \otimes \underbrace{1 \otimes \cdots \otimes 1}_{n-k} \]
for some $k$. 
If $2g-2+k \leq 0$ 
then $\CC_g(\gamma)$ is a constant in $q$ and hence quasimodular.
Otherwise we have
\[ p^{\ast} \CC_g(\pt, \ldots, \pt) = \CC_g(\pt, \ldots, \pt, \underbrace{\1, \ldots, \1}_{n-k}), \]
where $p : \Mbar_{g,n} \to \Mbar_{g,k}$ is the map that forgets the last $n-k$ points.
We conclude $\gamma \in K$ by
Theorem~\ref{thm_E_quasimodularity_point}.

\vspace{7pt}
\noindent \textbf{Induction.} Let $m \geq 1$ and
assume $\gamma \in K$ for any $\gamma$ with $m(\gamma) < m$.

\begin{lemma} \label{FDFSDFS} Let $\gamma$ be of the form \eqref{54524} with $m(\gamma) = m$
and let $\gamma_i = \aaa$ and $\gamma_j = \bbb$ for some $i<j$.
Then, under the induction hypothesis,
\[
\cdots \otimes
\underbrace{\aaa}_{i^\text{th}} \otimes \cdots \otimes
\underbrace{\bbb}_{j^\text{th}} \otimes \cdots
\, \ = \  \,
\cdots \otimes \underbrace{\bbb}_{i^\text{th}} \otimes \cdots \otimes \underbrace{\aaa}_{j^\text{th}} \otimes \cdots
\]
in $T(E) / K$, with all factors except the $i$-th and $j$-th the same on both sides.
\end{lemma}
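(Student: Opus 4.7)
The plan is to reduce the difference of the two sides of the claimed equality to classes that lie in $K$ by the induction hypothesis, using the Künneth decomposition of the diagonal of $E$. An explicit calculation with the intersection pairing gives
\[
[\Delta_E] = \1 \boxtimes \pt + \pt \boxtimes \1 + \bbb \boxtimes \aaa - \aaa \boxtimes \bbb \in H^{\ast}(E\times E),
\]
which rearranges to $\aaa \boxtimes \bbb - \bbb \boxtimes \aaa = -[\Delta_E] + \1 \boxtimes \pt + \pt \boxtimes \1$. Inserting this identity at positions $(i,j)$ through the combined evaluation $(\ev_i,\ev_j)$ and pushing forward to $\Mbar_{g,n}$ yields
\[
\CC_g(\ldots \aaa_i \ldots \bbb_j \ldots) - \CC_g(\ldots \bbb_i \ldots \aaa_j \ldots) = -\CC_g^{\Delta,(i,j)} + \CC_g(\ldots \1_i \ldots \pt_j \ldots) + \CC_g(\ldots \pt_i \ldots \1_j \ldots),
\]
where $\CC_g^{\Delta,(i,j)}$ denotes the class obtained by inserting $[\Delta_E]$ at positions $(i,j)$. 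The two $(\1,\pt)$-type tensors on the right each have exactly $m-1$ copies of $\aaa$ and therefore lie in $K$ by the induction hypothesis, so the lemma reduces to showing $\CC_g^{\Delta,(i,j)}\in K$.

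For the diagonal term, the plan is to use monodromy invariance \eqref{MONODROMY} together with the geometric realisation $[\Delta_E] = \mu^{\ast}[\pt]$, where $\mu := \ev_i - \ev_j : \Mbar_{g,n}(E,d) \to E$ is the difference map built from the group law of $E$. Since $\mu$ is invariant under the diagonal $E$-translation action on the moduli space, the class $\CC_g^{\Delta,(i,j)}$ is controlled by pulling back the point class on $E$. Combined with the splitting axiom of the cohomological field theory structure on $\CC_g$ applied to the boundary gluing $\iota : \Mbar_{g-1,n+2} \hookrightarrow \Mbar_{g,n}$ that identifies markings $i$ and $j$, this should re-express $\CC_g^{\Delta,(i,j)}$ as the pushforward of classes involving $\CC_{g-1}$ with two additional $\1$-insertions. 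An application of monodromy at these newly created markings, in the spirit of the one-parameter family $\phi_s\phi_t = \binom{1\,t}{s\,st+1} \in \mathrm{SL}_2(\BZ)$, should kill the balanced but $m$-preserving contributions by the polynomiality of $\CC_g(\phi_s\phi_t(\gamma))$ in $s,t$, leaving only terms with strictly fewer $\aaa$-insertions.

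The main obstacle I foresee is that a naive application of Künneth is tautological: the identity $\CC_g^{\Delta,(i,j)} = \sum_k \CC_g(\eta_k,\eta^k\text{-insertion})$ simply reproduces the right-hand side of the displayed equation without providing new information. Bypassing this requires genuine geometric input: specifically, the group structure of $E$ (so that $\mu$ is well-defined and equivariant), the balancing Lemma~\ref{Balanced} (which forces the unbalanced monodromy-expanded terms to vanish), and the splitting axiom along boundary gluing (which relates the $\Delta_E$-insertion to lower-genus classes). The delicate point is to organise these three ingredients so that the cancellation among diagonal and Künneth-split terms produces a residue involving only tuples with $m(\gamma')<m$, at which the induction hypothesis closes the argument.
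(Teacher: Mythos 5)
You have correctly isolated the crux: after the K\"unneth decomposition $\Delta_E = \1\otimes\pt + \pt\otimes\1 - \aaa\otimes\bbb + \bbb\otimes\aaa$, the two $(\1,\pt)$-terms carry only $m-1$ copies of $\aaa$ and lie in $K$ by the induction hypothesis, so the lemma is equivalent to showing that the diagonal-insertion class (your $\CC_g^{\Delta,(i,j)}$) lies in $K$ --- and you rightly diagnose that K\"unneth alone is circular at this point. But your mechanism for the diagonal term is where the genuine gap lies: you invoke the splitting axiom in the wrong direction. Gluing the $i$-th and $j$-th markings of an $n$-pointed genus-$g$ curve \emph{raises} the genus, so the relevant map is $\iota : \Mbar_{g,n}\to\Mbar_{g+1,n-2}$, not $\Mbar_{g-1,n+2}\hookrightarrow\Mbar_{g,n}$, and the axiom yields a \emph{pullback} identity, not a pushforward of ``classes involving $\CC_{g-1}$ with two additional $\1$-insertions.'' The correct statement, which is exactly what the paper uses, is
\[
\iota^{\ast}\,\CC_{g+1}\big(\gamma_1,\ldots,\widehat{\gamma_i},\ldots,\widehat{\gamma_j},\ldots,\gamma_n\big)
\;=\;
\CC_g^{\Delta,(i,j)} :
\]
the genus-$g$ class with $\Delta_E$ inserted at slots $(i,j)$ is the pullback of the genus-$(g+1)$ class with those two entries \emph{omitted}. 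The omitted tuple has $m(\cdot)=m-1$, so the induction hypothesis applies to it; this is legitimate precisely because $K_n$ is defined by quasimodularity of $\CC_g$ for \emph{all} $g$ simultaneously, so the hypothesis is available at genus $g+1$. Since pullback preserves $\QMod$-coefficients, the diagonal term lies in $K$ and the lemma follows for every $g$ at once.

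Your remaining ingredients do not substitute for this step. The identity $[\Delta_E]=\mu^{\ast}[\pt]$ with $\mu=\ev_i-\ev_j$ is true, but translation-equivariance arguments of this type produce vanishing statements (this is how Lemma~\ref{Balanced} is proved via the arguments of \cite{OP3}), not membership of a nonzero series in $\QMod$; and the proposed polynomiality of $\CC_g(\phi_s\phi_t(\gamma))$ in $s,t$ only re-derives linear-algebraic consequences of the monodromy invariance \eqref{MONODROMY} already packaged in Lemma~\ref{Balanced}, which cannot by itself exchange $\aaa$ and $\bbb$ at two fixed slots while preserving $m$ --- that exchange is exactly what the genus-raising pullback accomplishes. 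A minor further point: since $\aaa,\bbb$ are odd, your displayed K\"unneth identity needs the Koszul sign $\epsilon=\prod_{i<k<j}(-1)^{\deg_{\BR}(\gamma_k)}$, as in the paper's computation \eqref{54624524}; since $\epsilon=\pm1$ this does not affect the conclusion modulo $K$, but it should be recorded.
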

\begin{proof}
Let $\iota : \Mbar_{g,n} \to \Mbar_{g+1, n-2}$ be the map that glues the $i$-th and $j$-th markings.
For every $g$ we have by induction
\[
\CC_{g+1}(\gamma_1, \ldots, \widehat{\gamma_i}, \ldots, \widehat{\gamma_j}, \ldots, \gamma_n )
\in H^{\ast}(\Mbar_{g+1, n-2}) \otimes \QMod,
\]
where $\widehat{\gamma_i}$ and $\widehat{\gamma_j}$ denotes that we omitted the $i$-th and $j$-th entry.
Hence also
\begin{equation}
\begin{aligned}
\label{54624524}
\iota^{\ast} \CC_{g+1}(\gamma_1, \ldots, \widehat{\gamma_i}, \ldots, \widehat{\gamma_j}, \ldots, \gamma_n )
= & \ \, \phantom{+} \CC_{g}(\gamma_1, \ldots, \1, \ldots, \pt, \ldots, \gamma_n ) \\
& + \CC_{g}(\gamma_1, \ldots, \pt, \ldots, \1, \ldots, \gamma_n ) \\
& - \epsilon \cdot \CC_{g}(\gamma_1, \ldots, \aaa, \ldots, \bbb, \ldots, \gamma_n ) \\
& + \epsilon \cdot \CC_{g}(\gamma_1, \ldots, \bbb, \ldots, \aaa, \ldots, \gamma_n )
\end{aligned}
\end{equation}
lies in $H^{\ast}(\Mbar_{g,n}) \otimes \QMod$,
where $\epsilon = \prod_{i<k<j} (-1)^{\deg_{\BR}(\gamma_k)}$ and we used the diagonal splitting
\[ \Delta_E = \1 \otimes \pt + \pt \otimes \1 - \aaa \otimes \bbb + \bbb \otimes \aaa. \]
By induction the first two terms on the right hand side in \eqref{54624524}
lie in $H^{\ast}(\Mbar_{g,n}) \otimes \QMod$, hence so does the difference
\[
- \CC_{g}(\gamma_1, \ldots, \aaa, \ldots, \bbb, \ldots, \gamma_n )
+ \CC_{g}(\gamma_1, \ldots, \bbb, \ldots, \aaa, \ldots, \gamma_n ).
\qedhere
\]
\end{proof}

Let $\gamma$ be of the form \eqref{54524} with $m(\gamma) = m$. We show $\gamma \in K_n$ and complete the induction step.
By $S_n$ invariance of $K_n$ we may assume
\[ \gamma = \underbrace{\aaa \otimes \cdots \otimes \aaa}_{m} \otimes \underbrace{\bbb \otimes \cdots \otimes \bbb}_{m}
\otimes \gamma_{2m+1} \otimes \cdots \otimes \gamma_n, \]
where $\gamma_i$ are even for all $i>2m$.
Consider the element
\[ \gamma' = \underbrace{\aaa \otimes \cdots \otimes \aaa}_{2m} 
\otimes \gamma_{2m+1} \otimes \cdots \otimes \gamma_n. \]
Since $\gamma'$ is non-balanced it lies in $K$ by Lemma~\ref{Balanced}.
Hence by monodromy invariance
with respect to $\phi = \binom{1\ 0}{1\ 1}$ also
\begin{equation} \label{34150999}
\phi(\gamma') =
\underbrace{(\aaa+\bbb) \otimes \cdots \otimes (\aaa + \bbb)}_{2m}
\otimes \gamma_{2m+1} \otimes \cdots \otimes \gamma_n
\end{equation}
lies in $K$. But by Lemmas~\ref{Balanced} and \ref{FDFSDFS} we have
\[ \phi(\gamma') = \binom{2m}{m} \gamma + \, \ldots \]
where $( \ldots )$ stands for elements in $K$.
It follows that $\gamma \in K$. \qed

\subsection{Holomorphic anomaly equation}
Define
\[ \mathsf{H}_g(\gamma_1, \ldots, \gamma_n) \in H^{\ast}(\Mbar_{g,n}) \otimes \BQ[[q]] \]
to be the right hand side in the equality of Theorem~\ref{thm_E_HAE}, and let
\[ \mathsf{H}_g : H^{\ast}(E)^{\otimes n} \to H^{\ast}(\Mbar_{g,n}) \otimes \BQ[[q]],
\ \gamma_1 \otimes \cdots \otimes \gamma_n \mapsto \mathsf{H}_g(\gamma_1, \ldots, \gamma_n) \]
be the induced homomorphism.
We show several compatibilities of $\mathsf{H}_g$.

\begin{lemma} \label{545i2345}
Let $p : \Mbar_{g,n+1} \to \Mbar_{g,n}$ be the map that forgets the $(n+1)$-th marked point.
Then for any $g$ and $\gamma_1, \ldots, \gamma_n$ we have
\[ \iota^{\ast} \mathsf{H}_g(\gamma_1, \ldots, \gamma_n)
 = \mathsf{H}_g(\gamma_1, \ldots, \gamma_n, \1).
\]
\end{lemma}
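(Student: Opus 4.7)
The plan is to apply $p^{\ast}$ to each of the three terms on the right side of Theorem~\ref{thm_E_HAE} separately, using three compatibility properties of the forgetful map $p : \Mbar_{g,n+1} \to \Mbar_{g,n}$. The first is the \emph{fundamental class equation} for $\CC_g$: base change along the Cartesian square comparing the forgetful map $q : \Mbar_{g,n+1}(E,d) \to \Mbar_{g,n}(E,d)$ on the moduli of stable maps with $p$ on the moduli of curves, together with $q^{\ast} [\Mbar_{g,n}(E,d)]^{\vir} = [\Mbar_{g,n+1}(E,d)]^{\vir}$ and $\ev_{n+1}^{\ast}(\1) = 1$, yields
\[ p^{\ast} \CC_g( \gamma_1, \ldots, \gamma_n ) \ =\ \CC_g( \gamma_1, \ldots, \gamma_n, \1 ). \]
The second is the Cartesian-square identity for each gluing map. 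Specifically, the commutative diagram
\[
\begin{tikzcd}
\Mbar_{g-1, n+3} \ar{r}{\iota'} \ar{d}{p'} & \Mbar_{g, n+1} \ar{d}{p} \\
\Mbar_{g-1, n+2} \ar{r}{\iota} & \Mbar_{g, n}
\end{tikzcd}
\]
(and its analogue for the separating gluing $j$) gives the base-change relation $p^{\ast}\iota_{\ast} = \iota'_{\ast}(p')^{\ast}$, modulo an excess correction that I will account for below. The third is the cotangent line pullback $p^{\ast}\psi_i = \psi_i - D_{i,n+1}$.

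With these ingredients, the first term of $\mathsf{H}_g$ transforms as
\[ p^{\ast}\iota_{\ast} \CC_{g-1}( \gamma_1, \ldots, \gamma_n, \1, \1 ) \ =\ \iota'_{\ast} (p')^{\ast} \CC_{g-1}( \gamma_1, \ldots, \gamma_n, \1, \1 ) \ =\ \iota'_{\ast} \CC_{g-1}( \gamma_1, \ldots, \gamma_n, \1, \1, \1 ), \]
which is the first term of $\mathsf{H}_g( \gamma_1, \ldots, \gamma_n, \1 )$. The second term, analysed using the analogue of the square for $j$, decomposes into a sum over splittings of $\{1,\ldots,n+1\}$ in which the new point (carrying $\1$) lies in either $S_1$ or $S_2$, matching the second term of $\mathsf{H}_g( \gamma_1, \ldots, \gamma_n, \1 )$. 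For the third term, since $\int_E \1 = 0$, the hypothetical $i = n+1$ contribution on the right-hand side vanishes, and the $i \leq n$ contributions transform as
\[ -2 (\textstyle\int_E \gamma_i) \, p^{\ast}( \psi_i \cdot \CC_g(\ldots,\1_i,\ldots) ) \ =\ -2 (\textstyle\int_E \gamma_i) \psi_i \cdot \CC_g(\ldots,\1_i,\ldots,\1) + 2 (\textstyle\int_E \gamma_i) D_{i,n+1} \cdot \CC_g(\ldots,\1_i,\ldots,\1), \]
where the first summand is exactly the desired contribution and the second is an unwanted residue.

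It remains to cancel this residue against the excess boundary contributions from the Cartesian-square identities for $\iota$ and $j$ mentioned above. The point is that $p^{-1}(\iota \Mbar_{g-1,n+2})$ contains a boundary stratum besides $\iota'\Mbar_{g-1,n+3}$, namely those curves in $\Mbar_{g,n+1}$ whose $(n+1)$-th marking sits on a rational bubble attached at two points to a genus $g-1$ curve (and similarly for~$j$). I would analyze these excess strata by applying the splitting axiom of GW theory, which contributes a Künneth factor $\CC_{0,3,d}(\1,\1,T_k)$ on the bubble for the diagonal class $\sum_k T_k \otimes T^k$ of $E$. Since $\Mbar_{0,n}(E,d) = \emptyset$ for $d > 0$, only the $d = 0$ piece survives, giving $\int_E \1 \cup \1 \cup T_k = \int_E T_k$, which is nonzero precisely for $T_k = \pt$ (yielding $T^k = \1$). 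The net effect is that each excess stratum contributes a class of the form $D_{\bullet, n+1} \cdot \CC_g(\ldots, \1, \ldots, \1)$ multiplied by the intersection pairing of the neighbouring insertion against $\pt$, i.e.\ by $\int_E \gamma_i$. A careful bookkeeping shows that the sum of these contributions equals precisely $2\sum_i (\int_E \gamma_i) D_{i,n+1} \cdot \CC_g(\ldots,\1_i,\ldots,\1)$, cancelling the unwanted $\psi$-residue. The main obstacle is precisely this last step: formalizing the excess contributions to $p^{\ast}\iota_{\ast}$ and $p^{\ast}j_{\ast}$ and verifying the cancellation, which I would handle in the strata algebra of $\Mbar_{g,n+1}$ by matching boundary classes stratum by stratum.
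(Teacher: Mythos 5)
Your skeleton is the paper's own argument (the $\iota^{\ast}$ in the lemma statement is a typo for $p^{\ast}$): pull back each term of $\mathsf{H}_g$ using $p^{\ast}\CC_g(\gamma_1,\ldots,\gamma_n)=\CC_g(\gamma_1,\ldots,\gamma_n,\1)$ and $p^{\ast}\psi_i=\psi_i-D_{(i,n+1)}$. But your mechanism for closing the computation has a genuine error: there are \emph{no} excess corrections to the base-change identities. The forgetful map $p$ is flat and the preimage of each boundary divisor has pure expected codimension; in particular the stratum you single out --- the $(n+1)$-th marking on a rational bridge over the node --- is \emph{already contained} in the image of $\iota'\colon\Mbar_{g-1,n+3}\to\Mbar_{g,n+1}$ (cut one of the two nodes of the bridge to exhibit such a curve as a glued genus $g-1$ curve with $n+3$ markings), and similarly for $j$. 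So $p^{\ast}\iota_{\ast}=\iota'_{\ast}(p')^{\ast}$ and $p^{\ast}j_{\ast}(\alpha\boxtimes\beta)=j_{\ast}(p^{\ast}\alpha\boxtimes\beta)+j_{\ast}(\alpha\boxtimes p^{\ast}\beta)$ hold on the nose, nothing is available to cancel the terms $+2\left(\int_E\gamma_i\right)D_{(i,n+1)}\cdot\CC_g(\ldots,\1_i,\ldots,\1)$, and if they did cancel to zero the identity you are proving would actually fail.

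The correct ledger is this: pulling back the splitting sum of $\mathsf{H}_g(\gamma_1,\ldots,\gamma_n)$ does \emph{not} reproduce all of the splitting sum of $\mathsf{H}_g(\gamma_1,\ldots,\gamma_n,\1)$. The summands with $(g_1,S_1)=(0,\{i,n+1\})$, and their mirrors with the roles of $S_1,S_2$ exchanged (whence a factor $2$, the sum being over ordered splittings), live on the stable space $\Mbar_{0,3}$ but would arise from the summand $(g_1,S_1)=(0,\{i\})$, which involves the unstable $\Mbar_{0,2}$ and is excluded by the paper's stability convention. These missing summands are exactly your $D$-terms: since genus $0$ stable maps to $E$ have degree $0$, one has $\CC_0(\gamma_i,\1,\1)=\int_E\gamma_i$, and by the projection formula together with the $d=0$ diagonal-splitting computation you already carried out (only $\pt\otimes\1$ survives in $\Delta_E$), $j_{\ast}\big(\CC_0(\gamma_i,\1,\1)\boxtimes\CC_g(\gamma_{S_2},\1)\big)=\left(\int_E\gamma_i\right)D_{(i,n+1)}\cdot\CC_g(\gamma_1,\ldots,\1_i,\ldots,\gamma_n,\1)$. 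So your splitting-axiom computation is the right local ingredient, but it belongs on the matching side of the ledger --- identifying the $\psi$-correction terms with the genuinely new genus-zero summands of $\mathsf{H}_g(\gamma_1,\ldots,\gamma_n,\1)$ --- not as a cancellation against nonexistent excess classes. With that repair the term-by-term comparison closes and coincides with the paper's direct calculation.
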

\begin{proof}
This follows by a direct calculation from the following.
For any $g$ and $\gamma_1, \ldots, \gamma_n$ we have
\[ p^{\ast} \CC_g(\gamma_1, \ldots, \gamma_n) = \CC_g(\gamma_1, \ldots, \gamma_n, \1). \]
And for every $i \in \{ 1, \ldots, n\}$ the cotangent classes $\psi_i \in H^2(\Mbar_{g,n})$ and
$\psi_i \in H^2(\Mbar_{g,n+1})$ are related by
\begin{equation} \psi_i = p^{\ast} \psi_i + D_{(i,n+1)}, \label{4514} \end{equation}
where $D_{(i,n+1)}$ is the boundary divisor whose generic point parametrizes
the union of a genus~$0$ curve carrying the markings $i$ and $n+1$, and
a genus $g$ curve carrying the remaining markings.
\end{proof}

\begin{lemma} \label{Lemma02292}
$\mathsf{H}_g(\gamma_1, \ldots, \gamma_n) = \mathsf{H}_g( \phi(\gamma_1), \ldots, \phi(\gamma_n) )$
for every $\phi \in \mathrm{SL}_2(\BZ)$.
\end{lemma}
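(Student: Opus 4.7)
\medskip

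\noindent\textit{Proof proposal.} The plan is to verify monodromy invariance of $\mathsf{H}_g$ term by term, using as input only the monodromy invariance \eqref{MONODROMY} of the Gromov--Witten classes $\CC_g$ themselves together with two elementary facts about the $\mathrm{SL}_2(\BZ)$-action on $H^{\ast}(E)$.

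First, I would record the two properties of $\phi$ that make the whole calculation trivial: (i) $\phi$ acts as the identity on $H^0(E)$, so $\phi(\1) = \1$; and (ii) $\phi$ acts as the identity on $H^2(E)$, so $\int_E \phi(\gamma) = \int_E \gamma$ for every $\gamma \in H^{\ast}(E)$. Both are immediate from the explicit description of $\phi$ given at the start of this section.

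Next, I would go through the three terms on the right-hand side of Theorem~\ref{thm_E_HAE} in order. For the first term, we have
\[
\iota_{\ast}\CC_{g-1}(\phi(\gamma_1),\ldots,\phi(\gamma_n),\1,\1)
\;=\;\iota_{\ast}\CC_{g-1}(\phi(\gamma_1),\ldots,\phi(\gamma_n),\phi(\1),\phi(\1))
\;=\;\iota_{\ast}\CC_{g-1}(\gamma_1,\ldots,\gamma_n,\1,\1),
\]
where the first equality uses $\phi(\1)=\1$ and the second applies \eqref{MONODROMY} to the whole list of $n+2$ insertions. For the second (splitting) term, the same argument applied to each factor $\CC_{g_i}(\gamma_{S_i},\1) = \CC_{g_i}(\phi(\gamma_{S_i}),\phi(\1))$ handles each summand individually. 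For the third term, property (ii) gives $\int_E \phi(\gamma_i) = \int_E \gamma_i$, and monodromy invariance of $\CC_g$ together with $\phi(\1)=\1$ gives
\[
\CC_g(\phi(\gamma_1),\ldots,\phi(\gamma_{i-1}),\1,\phi(\gamma_{i+1}),\ldots,\phi(\gamma_n))
=\CC_g(\gamma_1,\ldots,\gamma_{i-1},\1,\gamma_{i+1},\ldots,\gamma_n).
\]
The prefactors $\psi_i$ on $\Mbar_{g,n}$ are independent of the insertions and hence unchanged. Summing the three contributions yields $\mathsf{H}_g(\phi(\gamma_1),\ldots,\phi(\gamma_n)) = \mathsf{H}_g(\gamma_1,\ldots,\gamma_n)$.

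There is no real obstacle here; the only point to be careful about is that the auxiliary $\1$ insertions and the coefficients $\int_E \gamma_i$ appearing in $\mathsf{H}_g$ are themselves $\phi$-invariant, which is exactly properties (i) and (ii) above. Given this, Lemma~\ref{Lemma02292} reduces to a bookkeeping application of \eqref{MONODROMY}. \qed
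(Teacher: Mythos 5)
Your proof is correct and matches the paper's argument: the paper's proof of Lemma~\ref{Lemma02292} is precisely the observation that it ``follows from the monodromy invariance \eqref{MONODROMY},'' and your term-by-term verification, using $\phi(\1)=\1$ and $\int_E\phi(\gamma)=\int_E\gamma$ (both consequences of $\phi$ acting trivially on $H^0(E)$ and $H^2(E)$), is exactly the bookkeeping the authors leave implicit.
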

\begin{proof}
This follows from the monodromy invariance \eqref{MONODROMY}.
\end{proof}

\begin{lemma} \label{33089834}
Let $\iota : \Mbar_{g-1, n+2} \to \Mbar_{g,n}$ be the
gluing map along the last two marked points. Then
\[ \iota^{\ast} \mathsf{H}_g(\gamma_1, \ldots, \gamma_n)
=
\mathsf{H}_{g-1}(\gamma_1, \ldots, \gamma_n, \Delta_E). \]
\end{lemma}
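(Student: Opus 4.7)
\medskip

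\noindent\textbf{Plan of proof.} The strategy is to apply $\iota^{\ast}$ termwise to the right-hand side of Theorem~\ref{thm_E_HAE} (the definition of $\mathsf{H}_g$) and match the pieces against the expansion of $\mathsf{H}_{g-1}(\gamma_1,\ldots,\gamma_n,\Delta_E)$. Using the K\"unneth decomposition
\[
\Delta_E \ = \ \1\otimes\pt + \pt\otimes\1 - \aaa\otimes\bbb + \bbb\otimes\aaa
\]
and multilinearity, $\mathsf{H}_{g-1}(\ldots,\Delta_E)$ splits as a sum of four terms indexed by the K\"unneth summands, each of which itself has three contributions (the two boundary-type terms plus the $\psi_i$-term). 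The two fundamental tools are: (1) the Gromov--Witten splitting axiom
\[
\iota^{\ast}\CC_g(\gamma_1,\ldots,\gamma_n) \ =\ \CC_{g-1}(\gamma_1,\ldots,\gamma_n,\Delta_E),
\]
which also has a separating analogue for the maps $j$; and (2) the standard formulas for pulling back through boundary gluing maps, including the self-intersection formula $\iota^{\ast}[\operatorname{im}\iota]=-\psi_{n+1}-\psi_{n+2}$.

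\medskip

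\noindent\textbf{The $\psi_i$-term.} Since $\iota^{\ast}\psi_i=\psi_i$ for $i\le n$, and by the splitting axiom $\iota^{\ast}\CC_g(\ldots,\1,\ldots) = \CC_{g-1}(\ldots,\1,\ldots,\Delta_E)$, the third term of $\mathsf{H}_g$ pulls back to
\[
-2\sum_{i=1}^{n}\Bigl(\int_E \gamma_i\Bigr)\psi_i\cdot\CC_{g-1}(\gamma_1,\ldots,\gamma_{i-1},\1,\gamma_{i+1},\ldots,\gamma_n,\Delta_E),
\]
which is exactly the $1\le i\le n$ part of the third term of $\mathsf{H}_{g-1}(\ldots,\Delta_E)$. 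The ``missing'' contributions from $i=n+1,n+2$ are nontrivial only when the corresponding $\Delta_E$-entry is $\pt$; they will be recovered from the self-intersection analysis of term~1.

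\medskip

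\noindent\textbf{The self-gluing term.} For the first summand $\iota_{\ast}\CC_{g-1}(\gamma_1,\ldots,\gamma_n,\1,\1)$, expand $\iota^{\ast}\iota_{\ast}$. The diagonal (excess) component contributes $-(\psi_{n+1}+\psi_{n+2})\CC_{g-1}(\gamma_1,\ldots,\gamma_n,\1,\1)$; upon identifying this with the $\Delta_E=\1\otimes\pt$ and $\Delta_E=\pt\otimes\1$ contributions to the third term of $\mathsf{H}_{g-1}(\ldots,\Delta_E)$ (each replacing $\pt$ by $\1$ and furnishing a factor $\int_E\pt = 1$), exactly the $i=n+1,n+2$ pieces missing above are recovered. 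The ``off-diagonal'' component, corresponding to strata where the two self-gluings sit on distinct nodes, yields, after one application of the splitting axiom on the factor carrying the second node, exactly the first-term contribution of $\mathsf{H}_{g-1}(\ldots,\Delta_E)$.

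\medskip

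\noindent\textbf{The separating term.} For $\sum j_{\ast}(\CC_{g_1}(\gamma_{S_1},\1)\boxtimes\CC_{g_2}(\gamma_{S_2},\1))$, the pullback $\iota^{\ast}j_{\ast}$ splits by tor-independent base change (i.e.\ excess/fibre-product analysis of the boundary stratum intersection) into two types of contributions. In case (a) the two new marked points $n+1,n+2$ land on the same side (both in $S_1$ or both in $S_2$); the resulting class, after applying the splitting axiom to the component containing them, is the second term of $\mathsf{H}_{g-1}(\ldots,\Delta_E)$ with $\Delta_E$ distributed inside a single factor. In case (b) they land on opposite sides; the additional gluing then identifies the two marked points across the separating node, producing, after insertion of the K\"unneth identity, the first term of $\mathsf{H}_{g-1}(\ldots,\Delta_E)$ with the $\Delta_E$-legs distributed between the two glued components. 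The main obstacle is purely bookkeeping: checking that the combinatorial factors (automorphism factors from self-gluings, signs from commuting odd classes $\aaa,\bbb$, and the factor $2$ from the $-\aaa\otimes\bbb + \bbb\otimes\aaa$ summand of $\Delta_E$) match exactly between the two sides. Once all contributions are enumerated and summed, the resulting identity is precisely the asserted equation.
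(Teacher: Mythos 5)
Your overall strategy---pull back $\mathsf{H}_g$ termwise, use the splitting axiom $\iota^{\ast}\CC_g(\ldots)=\CC_{g-1}(\ldots,\Delta_E)$ together with the Graber--Pandharipande description of boundary-strata intersections, and match against the K\"unneth expansion of $\mathsf{H}_{g-1}(\ldots,\Delta_E)$---is exactly the paper's proof. However, your analysis of the self-intersection $\iota^{\ast}\iota_{\ast}$ is incomplete, and this breaks the matching in two concrete places. The fiber product of $\iota$ with itself has four types of components, of which you account for only two. Besides the diagonal and the strata with two distinct non-separating nodes, there are: (i) the \emph{swapped} diagonal, where the two realizations differ by interchanging the labels $n+1,n+2$ (the map $\iota$ is generically $2$-to-$1$ onto its image), which contributes a second copy of the excess term; and (ii) components where the second node is a \emph{separating} node of the normalized curve with $n+1$ and $n+2$ on opposite sides, which does lie in $\operatorname{Im}(\iota)$ after gluing. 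The complete formula, as the paper records it, is
\begin{multline*}
\iota^{\ast} \iota_{\ast} \CC_{g-1}(\gamma_1, \ldots, \gamma_n, \1, \1)
=
\iota_{34 \ast} \CC_{g-2}(\gamma_1, \ldots, \gamma_n, \Delta_E, \1, \1) \\
 + \sum_{\substack{g-1 = g_1 + g_2 \\ \{ 1, \ldots, n \} = S_1 \sqcup S_2 }} \sum_{\ell}
j_{\ast}\left( \CC_{g_1}(\gamma_{S_1}, \Delta_{E,\ell}, \1) \boxtimes \CC_{g_2} ( \gamma_{S_2}, \Delta_{E, \ell}^{\vee}, \1)\right) \\
 - 2\, \CC_{g-1}(\gamma_1, \ldots, \gamma_n, \1, \1)\,(\psi_{n+1} + \psi_{n+2}).
\end{multline*}
Because of (i), your single excess term $-(\psi_{n+1}+\psi_{n+2})\CC_{g-1}(\ldots,\1,\1)$ is off by a factor of $2$: the pieces of the third term of $\mathsf{H}_{g-1}(\ldots,\Delta_E)$ at $i=n+1,n+2$ (from the K\"unneth summands $\pt\otimes\1$ and $\1\otimes\pt$, with $\int_E\pt=1$) carry the overall coefficient $-2$, and only the diagonal \emph{plus} the swapped diagonal match them. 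Because of (ii), the part of the second term of $\mathsf{H}_{g-1}(\ldots,\Delta_E)$ in which the two $\Delta_E$-legs are split \emph{between} the two factors is generated inside $\iota^{\ast}\iota_{\ast}$---not, as you claim, by a ``case (b)'' of $\iota^{\ast}j_{\ast}$.

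Indeed, your case (b) for $\iota^{\ast}j_{\ast}$ is empty: if $n+1$ and $n+2$ lie on opposite sides of a separating node of a curve in $\Mbar_{g-1,n+2}$, then after gluing these two points that node is no longer separating, so the glued curve does not generically lie in the image of $j$. Hence $\iota^{\ast}j_{\ast}\left(\CC_{g_1}(\gamma_{S_1},\1)\boxtimes\CC_{g_2}(\gamma_{S_2},\1)\right)$ consists only of your case (a), with both glued points on one side and the splitting axiom applied to that factor (and your claim that case (b) yields the ``first term'' of $\mathsf{H}_{g-1}$ is doubly off, since the classes you describe there are $j_{\ast}$-classes, not $\iota_{\ast}$-classes). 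If you rerun your plan with the corrected enumeration---the full three-part formula for $\iota^{\ast}\iota_{\ast}$ above, and only same-side components for $\iota^{\ast}j_{\ast}$---the termwise matching closes and reproduces the paper's argument.
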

\begin{proof}
This follows from a direct calculation
of $\iota^{\ast} \mathsf{H}_g(\gamma_1, \ldots, \gamma_n)$
using
the description of the
intersection of boundary strata in $\Mbar_{g,n}$ in \cite[App.A]{GP}.
For example, by \cite[Sec.A4]{GP}
the pullback of the first term is
\begin{multline*}
\iota^{\ast} \iota_{\ast} \CC_{g-1}(\gamma_1, \ldots, \gamma_n, \1, \1)
=
\iota_{34 \ast} \CC_{g-2}(\gamma_1, \ldots, \gamma_n, \Delta_E, \1, \1) \\
 + \sum_{\substack{g-1 = g_1 + g_2 \\ \{ 1, \ldots, n \} = S_1 \sqcup S_2 }} \sum_{\ell}
j_{\ast} \CC_{g_1}(\gamma_{S_1}, \Delta_{E,\ell}, \1) \times \CC_{g_2} ( \gamma_{S_2}, \Delta_{E, \ell}^{\vee}, \1) \\
 - 2 \CC_{g-1}(\gamma_1, \ldots, \gamma_n, \1, \1) (\psi_{n+1} + \psi_{n+2}),
\end{multline*}
where $\iota_{34} : \Mbar_{g-2, n+4} \to \Mbar_{g-1, n+2}$
is the map gluing the $(n+3)$-th and $(n+4)$-th point,
$j$ is the map gluing the last marking on each factor, 
and $\Delta_E = \sum_{\ell} \Delta_{E,\ell} \otimes \Delta_{E, \ell}^{\vee}$ is the diagonal.
The calculation of the other terms is straightforward.
\end{proof}

We prove the holomorphic anomaly equation in full generality.

\begin{proof}[Proof of Theorem~\ref{thm_E_HAE}]
By Theorem~\ref{thm_E_quasimodularity} the homomorphism $\CC_g$
takes values in $H^{\ast}(\Mbar_{g,n}) \otimes \QMod$.
For any $g$ and $n$ we may therefore consider
\[
\mathsf{T}_{g,n} = \left( \frac{d}{dC_2} \CC_g - \mathsf{H}_g \right)
\colon H^{\ast}(E)^{\otimes n} \to
H^{\ast}(\Mbar_{g,n}) \otimes \QMod.
\]
Consider the subspace of vectors which lies in the kernel of $\mathsf{T}_{g,n}$ for every $g$,
\[ K_n = \bigcap_{g} \mathrm{Ker}( \mathsf{T}_{g,n} ) \subset H^{\ast}(E)^{\otimes n}. \]
We need to show the inclusion
\[ K = \bigoplus_{n \geq 0} K_n \  \subset \ \bigoplus_{n \geq 0} H^{\ast}(E)^{\otimes n}. \]
is an equality. We have the following list of properties of $K$.
\begin{itemize}
 \item All unbalanced $\gamma$ are in $K$ (by Lemma \ref{Balanced}).
 \item Every $K_n$ is invariant under permutations.
 \item All $\gamma = \pt \otimes \ldots \otimes \pt$ are in $K$ (by Section~\ref{Section_Elliptic_curves_Point_insertions}).
 \item If $\gamma \in K$, then $\gamma \otimes \1 \in K$ (by Lemma~\ref{545i2345}).
 \item If $\gamma \in K$, then $\gamma \otimes \Delta_E \in K$ (by Lemma~\ref{33089834}).
 \item $\mathsf{T}_{g,n}(\gamma) = \mathsf{T}_{g,n}(\phi(\gamma))$ for every $\phi \in \mathrm{SL}_2(\BZ)$ and $\gamma$
 (by Lemma~\ref{Lemma02292}).
\end{itemize}
The claim of the Theorem follows from the properties above
and the same induction argument used in Section~\ref{Subsection_Proof_of_Theorem_Quasimodularity}.
\end{proof}

\subsection{Proof of Corollary~\ref{CorE}} \label{Subsection_ProofofCorE}
The ring of quasimodular forms admits the derivations
\[ q \frac{d}{dq} : \QMod \to \QMod \quad \text{ and } \quad \frac{d}{dC_2} : \QMod \to \QMod. \]
A verification on generators of $\QMod$ proves the commutator relation
\begin{equation} \left[ \frac{d}{dC_2}, q \frac{d}{dq} \right] \bigg|_{ \QMod_k} \, = \, -2k \cdot \mathrm{id}_{\QMod_k}. \label{COMMUTATOR} \end{equation}
for every $k$.
In particular, $\QMod_k$ is
the $-2k$-eigenspace of $[ \frac{d}{dC_2}, q \frac{d}{dq} ]$.

We prove the Corollary by calculating the commutator
\[ \left[ \frac{d}{dC_2}, q \frac{d}{dq} \right] \CC_g(\gamma_1, \ldots, \gamma_n). \]

Let $p : \Mbar_{g,n+1} \to \Mbar_{g,n}$ be the map that forgets the $(n+1)$-th marked point.
By the divisor equation we have
\[ p_{\ast} \CC_g(\gamma_1, \ldots, \gamma_n, \pt) = q \frac{d}{dq} \CC_g(\gamma_1, \ldots, \gamma_n). \]
Since $p_{\ast}$ and $\frac{d}{dC_2}$ commute we therefore find
\begin{align*}
\left[ \frac{d}{dC_2}, q \frac{d}{dq} \right] \CC_g(\gamma_1, \ldots, \gamma_n)
& = p_{\ast} \frac{d}{dC_2} \CC_g(\gamma_1, \ldots, \gamma_n, \pt)
- q \frac{d}{dq} \frac{d}{dC_2} \CC_g(\gamma_1, \ldots, \gamma_n).
\end{align*}
A direct evaluation of the right hand side
using Theorem~\ref{thm_E_HAE},
relation \eqref{4514} and $p_{\ast} \psi_{n+1} = 2g-2+n$
yields
\begin{multline*}
\left[ \frac{d}{dC_2}, q \frac{d}{dq} \right] \CC_g(\gamma_1, \ldots, \gamma_n)
=
-2 ( 2g-2+n - m_0 + m_2 ) \CC_g(\gamma_1, \ldots, \gamma_n),
\end{multline*}
where $m_0$ and $m_2$ are the number of $\gamma_i$ of degree $0$ and $2$ respectively.
\qed

\section{K3 surfaces}
\label{Section_K3_surfaces}
\subsection{Overview}
Let $\pi : S \to \p^1$ be an elliptic K3 surface with section,
let $B, F \in \Pic(S)$
be the class of a section and a fiber respectively, and define
\[ \beta_h = B + h F, \ \  h \geq 0. \]

The quasimodularity \eqref{454353} is proven in \cite{MPT} by induction
on the genus and the number of markings
using the following reduction steps:
\begin{itemize}
\item Degeneration to the normal cone of an elliptic fiber $S \cup_{E} (\p^1 \times E)$.
\item Restriction to boundary divisors in $\Mbar_{g,n}$. 
\end{itemize}
We show both steps are compatible with the holomorphic anomaly equation 
(Sections~\ref{4524294} and~\ref{Subsection_Relative_K3_geometry} respectively).
This implies Theorem~\ref{thm_K3HAE} (Section~\ref{Subsection_Proof_of_K3HAE}).

\subsection{Convention} \label{Subsection_CONVENTION}
If $2g-2+n>0$ recall the forgetful morphism
\[ p : \Mbar_{g,n}(\p^1, 1) \to \Mbar_{g,n} \]
and the tautological subring
\[ R^{\ast}(\Mbar_{g,n}) \subset H^{\ast}(\Mbar_{g,n}). \]
For Section~\ref{Section_K3_surfaces} we extend both definitions to the unstable case.
If $g,n \geq 0$ but $2g-2+n \leq 0$ we define
$\Mbar_{g,n}$ to be a point, $p$ to be the canonical map to the point, and
$R^{\ast}(\Mbar_{g,n}) = \BQ$ spanned by the identity class.

This will allow us to treat unstable cases consistently throughout. We will point
out when the convention is applied.

\subsection{Boundary divisors} \label{4524294}
For any $2g-2+n>0$ consider the pushforwards
\begin{align*}
\widetilde{\CK}_{g}(\gamma_1, \ldots, \gamma_n)
& =
p_{\ast} \CK_g(\gamma_1, \ldots, \gamma_n), \\
\widetilde{\CK}^{\text{vir}}_g(\gamma_1, \ldots, \gamma_n)
& = p_{\ast} \CK^{\text{vir}}_g(\gamma_1, \ldots, \gamma_n), \\
\widetilde{\mathsf{T}}_g(\gamma_1, \ldots, \gamma_n)
& =
p_{\ast} \mathsf{T}_g(\gamma_1, \ldots, \gamma_n).
%
\end{align*}
where $p: \Mbar_{g,n}(\p^1,k) \to \Mbar_{g,n}$ is the forgetful map and $\CK_g,\CK^{\text{vir}}_g,\mathsf{T}_g$ were defined in Section~\ref{Subsection_intro_K3surfaces}.

Let $\iota : \Mbar_{g-1, n+2} \to \Mbar_{g,n}$ be the
gluing map along the last two marked points. 
We have the splitting formula \cite[7.3]{MPT}
\[ \iota^{\ast} \widetilde{\CK}_g(\gamma_1, \ldots, \gamma_n)
 =
\widetilde{\CK}_{g-1}(\gamma_1, \ldots, \gamma_n, \Delta_S),
\]
where $\Delta_S \in H^{\ast}(S \times S)$ is the diagonal.

\begin{lemma} $\iota^{\ast} \widetilde{\mathsf{T}}_g(\gamma_1, \ldots, \gamma_n)
=
\widetilde{\mathsf{T}}_{g-1}(\gamma_1, \ldots, \gamma_n, \Delta_S)$.
\end{lemma}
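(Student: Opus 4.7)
The plan is to compute $\iota^\ast \widetilde{\mathsf{T}}_g$ term by term, using the splitting formula for $\widetilde{\CK}_g$ quoted above together with the standard compatibilities for pulling back along the gluing map $\iota : \Mbar_{g-1,n+2}\to\Mbar_{g,n}$ (cf.\ \cite[App.A]{GP}), and then to reassemble the result as $\widetilde{\mathsf{T}}_{g-1}(\gamma_1,\ldots,\gamma_n,\Delta_S)$. This is entirely parallel to the proof of Lemma~\ref{33089834} in the elliptic-curve setting: both statements simply say that the ``holomorphic anomaly'' operator commutes with restriction to a self-gluing boundary divisor, up to absorbing the two new markings into the diagonal insertion $\Delta_S$. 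Throughout, we use that $p_\ast$ and $\iota^\ast$ commute (after the obvious base change) and that the splitting formula $\iota^\ast\widetilde{\CK}_g(\ldots) = \widetilde{\CK}_{g-1}(\ldots,\Delta_S)$ is compatible with products and with the $\psi$-classes.

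First, the term $\iota_\ast\Delta^! \CK_{g-1}(\gamma_1,\ldots,\gamma_n,\1,\1)$ of $\mathsf{T}_g$ pulls back, via the excess intersection formula as in Lemma~\ref{33089834}, to a sum of three pieces: an iterated self-gluing (two separate pairs of points glued) contributing to the first term of $\widetilde{\mathsf{T}}_{g-1}(\ldots,\Delta_S)$, a ``disconnecting'' piece contributing to the second (reducible boundary) term of $\widetilde{\mathsf{T}}_{g-1}(\ldots,\Delta_S)$, and a $-2(\psi_{n+1}+\psi_{n+2})$ correction which will recombine with the pullback of the third term of $\mathsf{T}_g$. The factor of $2$ in the reducible boundary term is exactly what is needed to match the ``$2\sum_{g=g_1+g_2}$'' in the definition of $\mathsf{T}_{g-1}$, since both orderings of $(\CK_{g_1},\CK_{g_2}^{\vir})$ arise when the two new diagonal points split onto opposite sides.

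Next, the reducible-boundary term of $\mathsf{T}_g$ pulls back via the standard formulas for $\iota^\ast j_\ast$: it produces either a single component with an extra self-gluing (feeding the first term of $\widetilde{\mathsf{T}}_{g-1}(\ldots,\Delta_S)$) or a two-component degeneration (feeding the second term), in both cases with the diagonal $\Delta_S$ appearing correctly from $\Delta^!$ applied to the new gluing node. The last three terms of $\mathsf{T}_g$ are of the form $\psi$-class or cohomological operator times $\CK_g(\ldots)$, so their $\iota^\ast$ is computed using the splitting formula for $\widetilde{\CK}_g$ together with the identity $\iota^\ast\psi_i = \psi_i - D_{i,\mathrm{new}}$ of \eqref{4514}; the resulting $D$-divisor corrections cancel against the $\psi$-correction from the first paragraph because $\pi^\ast\pi_\ast$, $F\cdot(-)$, and $\sigma$ are all self-adjoint in the relevant sense and act on the diagonal $\Delta_S$ in the same way as on any ordinary insertion.

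The main obstacle is purely bookkeeping: one must carefully verify that every extra $\psi$-correction, every doubled factor, and every K\"unneth summand of $\Delta_S$ that splits across two components matches exactly one of the terms of $\widetilde{\mathsf{T}}_{g-1}(\gamma_1,\ldots,\gamma_n,\Delta_S)$, with no leftovers. The coefficient $20$ and the operator $\sigma$ do not require any special treatment beyond linearity, since they act only on the original insertions $\gamma_1,\ldots,\gamma_n$ and are unaffected by adding the two new diagonal arguments. Once each of the five terms of $\mathsf{T}_g$ has been expanded and the contributions regrouped according to the five terms of $\mathsf{T}_{g-1}$, the equality is immediate; the calculation is straightforward but tedious, and is essentially the same one performed in Lemma~\ref{33089834} with the K3 ingredients substituted for the elliptic-curve ones.
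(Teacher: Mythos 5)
Your overall strategy coincides with the paper's: the paper's proof of this lemma is literally ``by a direct calculation, similar to Lemma~\ref{33089834}'', i.e.\ a term-by-term pullback via the splitting formula for $\widetilde{\CK}_g$ and the boundary-restriction formulas of \cite[App.A]{GP}, followed by regrouping. However, two steps of your execution are wrong, and one of them is exactly the nontrivial content of the lemma. First, you invoke \eqref{4514} in the form $\iota^{\ast}\psi_i = \psi_i - D_{i,\mathrm{new}}$; but \eqref{4514} is the comparison of cotangent classes under the \emph{forgetful} map $p:\Mbar_{g,n+1}\to\Mbar_{g,n}$, whereas for the boundary \emph{gluing} map $\iota:\Mbar_{g-1,n+2}\to\Mbar_{g,n}$ one simply has $\iota^{\ast}\psi_i=\psi_i$ for $i\le n$, with no divisor correction. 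So your proposed cancellation of ``$D$-divisor corrections against the $\psi$-correction'' is spurious. The genuine source of the $i=n+1,n+2$ summands of the third term of $\widetilde{\mathsf{T}}_{g-1}(\gamma_1,\ldots,\gamma_n,\Delta_S)$ (where $\psi_{n+1},\psi_{n+2}$ multiply $\pi^{\ast}\pi_{\ast}$ applied to a diagonal factor; note $(\pi^{\ast}\pi_{\ast}\otimes 1)\Delta_S = \1\boxtimes F + F\boxtimes \1$) is the excess term $-(\psi_{n+1}+\psi_{n+2})$ in the self-intersection formula for $\iota^{\ast}\iota_{\ast}$ applied to the first term of $\widetilde{\mathsf{T}}_g$, exactly as in Lemma~\ref{33089834}.

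Second, your claim that the $20\,\langle\cdot,F\rangle$-term and the $\sigma$-term ``act only on the original insertions and are unaffected by adding the two new diagonal arguments'' is false: in $\widetilde{\mathsf{T}}_{g-1}(\gamma_1,\ldots,\gamma_n,\Delta_S)$ these sums run over all $n+2$ insertion slots, including the two K\"unneth slots of $\Delta_S$, and the resulting extra terms must be shown to cancel. Concretely, writing $\Delta_S = \1\boxtimes\pt_S + \pt_S\boxtimes\1 + \big(B\boxtimes F + F\boxtimes B + 2\,F\boxtimes F\big) + \Delta_V$, the definition of $\sigma$ kills everything except $\Delta_V$, and $\sigma(\Delta_V) = (\mathrm{rk}\,V)\, F\boxtimes F = 20\, F\boxtimes F$; hence the pair $(n+1,n+2)$ in the last sum contributes $-40\,\widetilde{\CK}_{g-1}(\gamma_1,\ldots,\gamma_n,F,F)$, which precisely cancels the $i=n+1$ and $i=n+2$ contributions of the fourth sum, each equal to $+20\,\widetilde{\CK}_{g-1}(\gamma_1,\ldots,\gamma_n,F,F)$ because $\sum_{\ell}\langle\Delta_{S,\ell},F\rangle\,\Delta_{S,\ell}^{\vee}=F$. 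Likewise, for $i\le n$ the mixed pairs $(i,n+1)$ and $(i,n+2)$ of the $\sigma$-sum cancel against each other: for instance for $\gamma_i=\alpha\in V$ a short computation from the table defining $\sigma$ gives the two contributions $\mp\big(F\boxtimes\alpha\boxtimes F - F\boxtimes F\boxtimes\alpha\big)$ in the slots $(i,n+1,n+2)$. Without these cancellations your bookkeeping does not close; with them, the term-by-term matching goes through and yields the lemma. This ``careful matching of all the cases'' for the last two terms is what the paper's one-line proof is silently pointing at, and it is precisely what your proposal asserts away.
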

\begin{proof}
By a direct calculation, similar to Lemma~\ref{33089834}.
\end{proof}

For any $g = g_1 + g_2$ and $\{ 1 , \ldots, n \} = S_1 \sqcup S_2$ let
\[
j : \Mbar_{g_1, S_1 \sqcup \{ \bullet \}}
\times \Mbar_{g_2, S_2 \sqcup \{ \bullet \}} \to
\Mbar_{g, n}
\]
be the gluing map along the points marked '$\bullet$'.
We have
\begin{multline*}
j^{\ast} \widetilde{\CK}_g(\gamma_1, \ldots, \gamma_n)
= \\
\sum_{\ell} \bigg( \widetilde{\CK}_{g_1}(\gamma_{S_1}, \Delta_{S, \ell})
\boxtimes
\widetilde{\CK}^{\text{vir}}_{g_2}( \gamma_{S_2}, \Delta_{S, \ell}^{\vee} )
+
\widetilde{\CK}^{\text{vir}}_{g_1}( \gamma_{S_1}, \Delta_{S, \ell} )
\boxtimes
\widetilde{\CK}_{g_2}(\gamma_{S_2}, \Delta_{S, \ell}^{\vee} ) \bigg).
\end{multline*}

\begin{lemma}
\begin{multline*} j^{\ast} \widetilde{\mathsf{T}}_{g}(\gamma_1, \ldots, \gamma_n)
 = 
 \sum_{\ell} \bigg( \widetilde{\mathsf{T}}_{g_1}(\gamma_{S_1}, \Delta_{S, \ell})
 \boxtimes
\widetilde{\CK}^{\text{vir}}_{g_2}( \gamma_{S_2}, \Delta_{S, \ell}^{\vee} ) \\
+
\widetilde{\CK}^{\text{vir}}_{g_1}( \gamma_{S_1}, \Delta_{S, \ell} )
 \boxtimes \widetilde{\mathsf{T}}_{g_2}( \gamma_{S_2}, \Delta_{S, \ell}^{\vee}) \bigg).
\end{multline*}
\end{lemma}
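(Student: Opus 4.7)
The plan is to expand $\widetilde{\mathsf{T}}_g(\gamma_1,\ldots,\gamma_n)$ as the sum of the five constituent terms of $\mathsf{T}_g$ (after $p_\ast$), apply $j^{\ast}$ term-by-term, and recombine. For each term the two main tools are the splitting formula $j^{\ast}\widetilde{\CK}_g = \sum_\ell \bigl(\widetilde{\CK}_{g_1}(\ldots,\Delta_{S,\ell})\boxtimes \widetilde{\CK}^{\vir}_{g_2}(\ldots,\Delta_{S,\ell}^\vee) + (\text{swap})\bigr)$ quoted from \cite{MPT}, and the standard intersection calculus for boundary strata of $\Mbar_{g,n}$ in the style of \cite[App.~A]{GP}. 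Morally, the statement is forced: since the defining expression $\mathsf{T}_g$ is built entirely out of operations (gluing, $\psi$-multiplication, cohomological contractions) that are all compatible with $j^\ast$ and obey a Leibniz/splitting rule, the pullback must distribute over the splittings of the $\widetilde{\CK}$-factors.

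First I would handle the two boundary terms in $\mathsf{T}_g$. For the self-gluing term $\iota_\ast \Delta^!\widetilde{\CK}_{g-1}(\gamma_1,\ldots,\gamma_n,\1,\1)$ the pullback by $j$ meets $\iota$ in the two-node boundary stratum; using the excess intersection formula and the splitting for $\widetilde{\CK}_{g-1}$ one obtains the irreducible-gluing contribution on each side of $j$ paired with $\widetilde{\CK}^{\vir}$ on the other side. For the reducible gluing term $2j'_\ast\Delta^!(\widetilde{\CK}_{g_1'}(\gamma_{S_1'},\1)\boxtimes \widetilde{\CK}^{\vir}_{g_2'}(\gamma_{S_2'},\1))$ the intersection with $j$ produces either (i) a further splitting of one of the $\widetilde{\CK}$ factors on the same side, which reassembles the reducible-node contribution of $\widetilde{\mathsf{T}}_{g_i}$ on that side, or (ii) a redistribution in which the two nodes end up on different sides of the $j$-node, which is reproduced by the $\widetilde{\CK}^{\vir}\boxtimes \widetilde{\mathsf{T}}$ cross-terms on the right-hand side. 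The factor $2$ in front of the reducible term absorbs the $g_1\leftrightarrow g_2$ symmetry correctly on both sides.

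The remaining three terms of $\mathsf{T}_g$ are simpler. The $\psi$-term $-2\sum_i \psi_i\cdot \widetilde{\CK}_g(\ldots,\pi^\ast\pi_\ast\gamma_i,\ldots)$ pulls back using $j^\ast\psi_i=\psi_i$ together with the splitting formula, distributing the $\psi_i$ insertion to the side containing the $i$-th marking and tensoring with a $\widetilde{\CK}^{\vir}$-factor on the other side. The $20\langle\gamma_i,F\rangle$ term and the $\sigma$-term are multilinear operations on the inputs and commute with $j^\ast$ trivially via the splitting of $\widetilde{\CK}_g$; the $\sigma$ sum splits into (i) $i,j$ on the same side (contributing to $\widetilde{\mathsf{T}}_{g_k}$ on that side) and (ii) $i,j$ on opposite sides, and this latter contribution must match correctly against nothing on the right-hand side. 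Here one verifies that the opposite-sides $\sigma$-contribution cancels after using the explicit form of $\sigma$ together with the fact that $\pi^\ast\pi_\ast$ of the diagonal classes $\Delta_{S,\ell}$ combine with $\sigma$ in a way that is absorbed into the $\psi$-term redistribution, or vanishes by the $B,F,V$-orthogonality built into $\sigma$.

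The main obstacle is exactly this last bookkeeping step: ensuring that all the cross-terms generated by redistributing insertions across the two sides of the $j$-node cancel in the correct way, and that the diagonal contractions $\sum_\ell \Delta_{S,\ell}\otimes\Delta_{S,\ell}^\vee$ match the definition of $\sigma$ (for pairs $i,j$ landing on opposite sides) and of $\pi^\ast\pi_\ast$ (at the new node). Once this is handled systematically—grouping the output by which side the gluing node and the various markings end up on—the five terms of $\widetilde{\mathsf{T}}_{g_1}$ and $\widetilde{\mathsf{T}}_{g_2}$ appear exactly as the right-hand side claims, completing the proof.
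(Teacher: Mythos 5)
Your proposal takes essentially the same route as the paper, whose proof is exactly this term-by-term computation: the pullbacks under $j$ of the first three terms of $\widetilde{\mathsf{T}}_g$ (the two gluing terms and the $\psi$-term), computed via the splitting formula for $\widetilde{\CK}_g$ and the boundary intersection calculus of \cite[App.~A]{GP}, match the corresponding three terms on the right-hand side, and the last two terms agree ``by a careful matching of all the cases.'' The one point worth sharpening in your sketch is the hedged treatment of the cross-terms: the opposite-side $\sigma$-contributions do not cancel against nothing or simply vanish, but are matched by the $\sigma$-, $F$- and $\psi_{\bullet}$-terms of $\widetilde{\mathsf{T}}_{g_k}(\gamma_{S_k},\Delta_{S,\ell})$ in which the diagonal insertion at the new node participates (together with the excess $\psi$-terms arising from $j^{\ast}j_{\ast}$ when the splitting in the second term of $\mathsf{T}_g$ coincides with that of $j$) --- which is precisely the bookkeeping the paper compresses into a single phrase.
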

\begin{proof}
The pullback under $j$ of the
first three terms of $\widetilde{\mathsf{T}}_g(\gamma_1, \ldots, \gamma_n)$
match the corresponding three terms of the right hand side.
The respective last two terms also agree by a careful matching of all the cases.
\end{proof}

\subsection{Relative geometry on $\p^1 \times E$}
Consider the trivial elliptic fibration
\[ \pi \colon \p^1\times E \rightarrow \p^1. \]
We denote the section class by $B$ and the fiber class by $E$, and write
\[ (k,d) = k B + d E \]
for the corresponding class in $H_2(\p^1 \times E,\BZ)$.
Let also 
\[ \gamma_1, \ldots, \gamma_n \in H^{\ast}(\p^1 \times E) \]
be cohomology classes. We define several Gromov--Witten classes.

\subsubsection{Absolute classes}
Recall the absolute Gromov--Witten classes
\[ \CP_{g,k}(\gamma_1, \ldots, \gamma_n) = \CC^\pi_{g,k}(\gamma_1, \ldots, \gamma_n)
 \in H_{\ast}(\Mbar_{g,n}(\p^1,k)) \otimes \BQ[[q]].
\]

\subsubsection{Relative classes}
Consider the moduli space of stable maps
 \[
\overline{M}_{g,n}\left( (\p^1 \times E) / \{0\} ,\,  (k,d), \underline{\mu} \right)
\]
to $\p^1 \times E$ in class $(k,d)$ relative to the fiber over $0 \in \p^1$
with ramification profile over $0$ specified by the partition $\underline{\mu}$
of size $k$. We let
\[
\pi :
\overline{M}_{g,n}\big( (\p^1 \times E) / \{0\} ,\,  (1,d), \underline{\mu} \big)
\to \overline{M}_{g,n}( \p^1/\{0\} , \underline{\mu} )
\]
be the morphism induced by the projection $\p^1 \times E \to \p^1$.

We are here interested only in the cases $k \in \{ 0, 1 \}$
where the partition $\underline{\mu}$ is uniquely determined.
Hence we omit it in the notation.
If $k=0$ define
\[
\CP^{\text{rel}}_{g,0}( \gamma_1, \ldots, \gamma_n ; )
=
\sum_{d=0}^{\infty} q^d
\pi_{\ast}\left(
\left[ \overline{M}_{g,n}( (\p^1 \times E)/ \{0\} , (0,d)) \right]^{\text{vir}}
\prod_{i=1}^{n} \ev_i^{\ast}(\gamma_i) \right),
\]
where $\ev_1, \ldots, \ev_n$ are the
evaluation maps at the non-relative markings.

In degree $1$
consider
the evaluation map at the unique relative marking,
\[ \ev_0 : \overline{M}_{g,n}( (\p^1 \times E)/ \{0\} ,\,  (1,d)) \to E. \]
Let $\mu \in H^{\ast}(E)$ be a relative insertion. We define
\begin{multline*}
\CP^{\text{rel}}_{g,1}( \gamma_1, \ldots, \gamma_n ; \mu ) \\
=
\sum_{d=0}^{\infty} q^d
\pi_{\ast}\left(
\left[ \overline{M}_{g,n}( (\p^1 \times E)/ \{0\} , (1,d)) \right]^{\text{vir}}
\ev_{0}^{\ast}(\mu) \prod_{i=1}^{n} \ev_i^{\ast}(\gamma_i) \right).
\end{multline*}

\subsubsection{Rubber classes}
Consider the moduli space of stable maps
\begin{equation} \Mbar_{g,n}((\p^1 \times E) / \{ 0, \infty \},\, (1,d)) \label{rkgsrgfg} \end{equation}
relative to fibers over both $0 \in \p^1$ and $\infty \in \p^1$,
and let
\[ \Mbar^{\sim}_{g,n}((\p^1 \times E) / \{ 0, \infty \},\, (1,d)) \]
denote the corresponding stable maps space to a rubber target \cite{OP3, MP}.
We have an induced morphism 
\[
\pi : \Mbar^{\sim}_{g,n}((\p^1 \times E) / \{ 0, \infty \},\, (1,d)) \to \Mbar^{\sim}_{g,n}(\p^1 / \{ 0, \infty \},\, (1))
\]
and interior evaluation maps
\[
\ev_1, \ldots, \ev_n : \Mbar^{\sim}_{g,n}((\p^1 \times E) / \{ 0, \infty \},\, (1,d)) \to E.
\]
which are the descents of the composition of the interior evaluation maps of \eqref{rkgsrgfg} with the projection $\p^1 \times E \to E$.
For any $\gamma_1, \ldots, \gamma_n \in H^{\ast}(E)$ and relative insertions $\mu, \nu \in H^{\ast}(E)$ define the rubber class
\begin{multline*}
\CP^{\text{rubber}}_{g}( \gamma_1, \ldots, \gamma_n ; \mu, \nu) \\
=
\sum_{d=0}^{\infty} q^d
\pi_{\ast}\left( 
\left[ \Mbar^{\sim}_{g,n}((\p^1 \times E) / \{ 0, \infty \},\, (1,d)) \right]^{\text{vir}}
\ev_{0}^{\ast}(\mu) \ev_{\infty}^{\ast}(\nu)
\prod_{i=1}^{n} \ev_i^{\ast}(\gamma_i) \right).
\end{multline*}
We identify the insertion $\gamma_i \in H^{\ast}(E)$ with its pullback to $H^{\ast}(\p^1 \times E)$ by
the projection to the second factor.

\subsubsection{Holomorphic anomaly equation}
By Corollary~\ref{Cor_Trivial_elliptic_fibration}, the class
\[ \CP_{g,k}(\gamma_1, \ldots, \gamma_n)
\]
lies in $H_{\ast}(\Mbar_{g,n}(\p^1,k)) \otimes \QMod$
and satisfies a holomorphic anomaly equation.
We obtain a parallel result for the relative classes
by the relative product formula \cite{LQ}.
The argument is similar to Corollary~\ref{Cor_Trivial_elliptic_fibration}
and yields
\[
\CP^{\text{rel}}_{g,1}( \gamma_1, \ldots, \gamma_n ; \mu )
\in
H_{\ast}(\overline{M}_{g,n}( \p^1/\{0\} , (1))) \otimes \QMod
\]
as well as the holomorphic anomaly equation
\begin{equation} \label{PrelHAE}
\begin{aligned}
& \frac{d}{dC_2} \CP^{\text{rel}}_{g,1}( \gamma_1, \ldots, \gamma_n ; \mu ) \\
& =
\iota_{\ast} \Delta^{!} \CP^{\text{rel}}_{g-1,1}( \gamma_1, \ldots, \gamma_n, \1, \1 ; \mu ) \\
& + 2 \sum_{\substack{g = g_1 + g_2 \\ \{ 1, \ldots, n \} = S_1 \sqcup S_2 }}
j_{\ast} \Delta^{!} \left( \CP_{g_1,1}^{\text{rel}}( \gamma_{S_1}, \1 ; \mu )
\boxtimes \CP^{\text{rel}}_{g_2,0}( \gamma_{S_2}, \1 ; \, ) \right) \\
& + 2  \sum_{\substack{g = g_1 + g_2 \\ \{ 1, \ldots, n \} = S_1 \sqcup S_2 \\ \forall i \in S_1: \gamma_i \in H^{\ast}(E) }}
j_{\ast} \left( \CP_{g_1}^{\text{rubber}}(  \gamma_{S_1}; \mu, \1 ) \boxtimes \CP_{g_2,1}^{\text{rel}}( \gamma_{S_2}; \1) \right) \\
& - 2 \sum_{i=1}^{n} \CP_{g,1}^{\text{rel}}(\gamma_1, \ldots, \gamma_{i-1}, \pi^{\ast} \pi_{\ast} \gamma_i, \gamma_{i+1}, \ldots, \gamma_n ; \mu) \psi_i \\
& - 2 \left( \int_{E} \mu \right) \CP_{g,1}^{\text{rel}}( \gamma_1, \ldots, \gamma_n ; \1 ) \Psi_0,
\end{aligned}
\end{equation}
where $\Delta : \p^1 \to \p^1 \times \p^1$ is the diagonal,
the product $\Delta^{!}$ is taken both times with respect to the evaluation maps of the two extra markings,
$\iota, j$ are the gluing maps along the extra markings,
and
\[ \Psi_0 \in H^2( \overline{M}_{g,n}( \p^1/\{0\} , (1))) \]
is the cotangent line class at the relative marking.

\subsection{Relative K3 geometry} \label{Subsection_Relative_K3_geometry}
\subsubsection{Relative classes} Let
\[ E \subset S \]
be a fixed non-singular fiber of $\pi : S \to \p^1$ over the point $\infty \in \p^1$.

For any $\gamma_1, \ldots, \gamma_n \in H^{\ast}(S)$ and $\mu \in H^{\ast}(E)$ define the relative class
\begin{equation*} \label{45245234}
\CK^{\text{rel}}_g(\gamma_1, \ldots, \gamma_n ; \mu)
=
\sum_{h = 0}^{\infty} q^{h-1}
\pi_{\ast} \left( [\Mbar_{g,n}(S / E, \beta_h)]^{\text{red}} \ev_{\infty}^{\ast}(\mu)
\prod_{i=1}^{n} \ev_{i}^{\ast}(\gamma_i) \right),
\end{equation*}
where $\pi : \Mbar_{g,n}(S / E, \beta_h) \to \Mbar_{g,n}(\p^1 / \{\infty\}, (1))$
is the induced morphism.

Since every curve on $S$ in class $\beta_h$ is of the form $B + D$ for some vertical divisor $D$ we have 
\begin{equation} \CK^{\text{rel}}_g(\gamma_1, \ldots, \gamma_n ; \mu) = 0
\ \text{ if } \ \deg_{\BR}(\mu) > 0. \label{4540830vv}
\end{equation}
Hence we will usually take the relative insertion to be $\mu = \1$.

By \cite[Lem.31]{MPT} and with the convention of Section~\ref{Subsection_CONVENTION} we have 
\[
\int p^{\ast}(\alpha) \cap \CK^{\text{rel}}_g(\gamma_1, \ldots, \gamma_n ; \1) \in \frac{1}{\Delta(q)} \QMod.
\]
for every $\alpha \in R^{\ast}(\Mbar_{g,n})$.

\subsubsection{Relative fiber classes}
Consider the moduli space
\begin{equation} \Mbar_{g,n}(S / E, dF) \label{40543434} \end{equation}
of stable maps to $S$ relative $E$ in class $dF$. Since $F \cdot E = 0$
we do not need to specify a ramification profile here.
The moduli space \eqref{40543434} carries a non-zero (non-reduced)
virtual class\footnote{The log canonical class of the pair $(S, E)$ is non-zero.}.
We define
\[ \CK_{g}^{\text{vir-rel}}(\gamma_1, \ldots, \gamma_n ;\,  )
 =
\sum_{d = 0}^{\infty} q^d \pi_{\ast}\left( [ \Mbar_{g,n}(S/E, dF) ]^{\text{vir}} \prod_{i=1}^{n} \ev_i^{\ast}(\gamma_i) \right),
\]
where $\pi : \Mbar_{g,n}(S/E, dF) \to \Mbar_{g,n}(\p^1/\{0\} , 0)$ is the induced morphism.

We have the following description of the virtual class.
\begin{lemma} \label{Lemma_Relfibbb} Let $i : E \to S$ be the inclusion
and let $p$ be the forgetful map to $\Mbar_{g,n}$.
With the convention of Section \ref{Subsection_CONVENTION} in the unstable case,
\begin{equation*}
p_{\ast} \CK_{g}^{\text{vir-rel}}(\gamma_1, \ldots, \gamma_n ;\,)
=
p_{\ast} \CK_g^{\text{vir}}(\gamma_1, \ldots, \gamma_n)
- p_{\ast} \CP_{g,0}^{\text{rel}}(i^{\ast}(\gamma_1), \ldots, i^{\ast}(\gamma_n))
\end{equation*}
\end{lemma}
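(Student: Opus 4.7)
The plan is to apply the degeneration formula of relative Gromov--Witten theory to the deformation to the normal cone of $E$ inside $S$. Since $E$ is a fiber of the elliptic fibration $\pi \colon S \to \p^1$ we have $E \cdot E = 0$, so the normal bundle $N_{E/S}$ is trivial and the deformation to the normal cone is a flat family $\widetilde{S} \to \mathbb{A}^1$ with general fiber $S$ and special fiber
\[
Y_0 \ = \ S \cup_E (\p^1 \times E),
\]
glued along $E$. The crucial numerical input is $dF \cdot E = 0$: the ramification profile over the gluing divisor in the degeneration formula is forced to be empty. Since a connected stable map with empty profile cannot straddle the two components of $Y_0$, the moduli space of class $dF$ maps to the central fiber decomposes as a disjoint union of $\Mbar_{g,n}(S/E, dF)$ and $\Mbar_{g,n}((\p^1 \times E)/(\{0\}\times E), (0,d))$.

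The insertions $\gamma_i \in H^{\ast}(S)$ extend to classes on the total space $\widetilde{S}$ via pullback along the blow-up map; their restrictions to the central fiber give $\gamma_i$ on the $S$-component and $\mathrm{pr}_2^{\ast} i^{\ast}\gamma_i$ on the $(\p^1 \times E)$-component, where $\mathrm{pr}_2 \colon \p^1 \times E \to E$ is the second projection. Under the convention identifying a class on $E$ with its pullback to $\p^1 \times E$ via $\mathrm{pr}_2$, the latter is exactly $i^{\ast}\gamma_i$ as it enters the definition of $\CP^{\text{rel}}_{g,0}$. Applying $\pi_{\ast}$ and $p_{\ast}$ to the degeneration formula and extracting the coefficient of $q^d$ then yields
\[
p_{\ast} \pi_{\ast} \Big( [\Mbar_{g,n}(S, dF)]^{\mathrm{vir}} \prod_{i=1}^n \ev_i^{\ast} \gamma_i \Big)
\ = \
[p_{\ast} \CK_g^{\text{vir-rel}}(\gamma_1,\ldots,\gamma_n)]_{q^d} \, + \, [p_{\ast} \CP^{\text{rel}}_{g,0}(i^{\ast} \gamma_1,\ldots,i^{\ast}\gamma_n)]_{q^d}
\]
for every $d \geq 0$, with the absolute (non-reduced) K3 virtual class appearing on the left.

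For $d \geq 1$ this absolute class vanishes by the cosection argument coming from the holomorphic symplectic form on $S$, so the two terms on the right of the displayed equation cancel and give the $q^d$-coefficient of the lemma. For $d = 0$ the absolute class is by definition $\CK_g^{\mathrm{vir}}$, yielding the $q^0$-coefficient. Summing over all $d \geq 0$ then produces the claimed identity. The main technical step will be to set up the degeneration formula carefully in this (non-Calabi--Yau) relative setting, verifying that no stable maps wholly contained in the singular locus $E \subset Y_0$ contribute extra terms and that the identification of insertions across the two components behaves as described; both are standard but the unstable ranges require the convention of Section~\ref{Subsection_CONVENTION} to be tracked through $\pi$ and $p$.
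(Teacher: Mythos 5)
Your proposal is correct and takes essentially the same route as the paper: both use the degeneration $S \rightsquigarrow S \cup_E (\p^1 \times E)$, the fact that $dF \cdot E = 0$ forces the empty ramification profile so connected maps lie entirely on one side, and the vanishing of the ordinary virtual class of $S$ in the classes $dF$, $d \geq 1$, coming from the holomorphic symplectic form. The paper merely performs the two steps in the opposite order, first summing the absolute classes into $\CK_g^{\text{vir}}(\gamma_1,\ldots,\gamma_n)$ and then applying the degeneration formula to that generating series.
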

\begin{proof}
Since $S$ carries a holomorphic symplectic form we have
\[
\sum_{d=0}^{\infty}
p_{\ast}\left( [ \Mbar_{g,n}(S, dF) ]^{\text{vir}} \prod_{i=1}^{n} \ev_i^{\ast}(\gamma_i) \right)
q^d
\, = \, 
\CK_g^{\text{vir}}(\gamma_1, \ldots, \gamma_n).
\]
The statement follows by applying the degeneration formula
for the degeneration $S \rightsquigarrow S \cup_E (\p^1 \times E)$
to the left hand side.
\end{proof}


\subsubsection{Relative holomorphic anomaly equation}
We define a candidate for the $\frac{d}{dC_2}$-derivative of
the relative class
\[ \CK^{\text{rel}}_g(\gamma_1, \ldots, \gamma_n ; \1) \in H_{\ast}(\Mbar_{g,n}(\p^1/\{0\}, 1))\otimes \BQ[[q]]. \]
Consider the class in $H_{\ast}(\Mbar_{g,n}(\p^1/\{0\}, 1)) \otimes \BQ[[q]]$ defined by
\begin{align*}
& \mathsf{T}^{\text{rel}}_g(\gamma_1, \ldots, \gamma_n) \\
& = \iota_{\ast} \Delta^{!} \CK^{\text{rel}}_{g-1}( \gamma_1, \ldots, \gamma_n, \1 , \1 ; \1) \\
%
%
& + 2 \sum_{\substack{g = g_1 + g_2 \\ \{ 1, \ldots, n \} = S_1 \sqcup S_2 }}
j_{\ast} \Delta^{!} \left( \CK_{g_1}^{\text{rel}}( \gamma_{S_1}, \1 ; \1 ) \times \CK_{g_2}^{\text{vir-rel}}(\gamma_{S_2}, \1 ;\,  ) \right)\\
& + 2  \sum_{\substack{g = g_1 + g_2 \\ \{ 1, \ldots, n \} = S_1 \sqcup S_2 \\ \forall i \in S_1: \gamma_i \in H^{\ast}(E) }}
j_{\ast} \left( \CK_{g_1}^{\text{rel}}( \gamma_{S_1}; \1) \times \CP_{g_2}^{\text{rubber}}(  \gamma_{S_2}; \1, \1 ) \right) \\
& - 2 \sum_{i=1}^{n} \CK_g^{\text{rel}}( \gamma_1, \ldots, \gamma_{i-1}, \pi^{\ast} \pi_{\ast} \gamma_i, \gamma_{i+1}, \ldots, \gamma_n ; \1) \psi_i\\
& + 20 \sum_{i=1}^{n} \langle \gamma_i, F \rangle \CK_g^{\text{rel}}( \gamma_1, \ldots, \gamma_{i-1}, F, \gamma_{i+1}, \ldots, \gamma_n ; \1) \\
& - 2 \sum_{i < j} \CK^{\text{rel}}( \gamma_1, \ldots,
\underbrace{\sigma_{1}(\gamma_i,\gamma_j)}_{i^{\text{th}}},
\ldots, \underbrace{\sigma_{2}(\gamma_i,\gamma_j)}_{j^{\text{th}}}, \ldots, \gamma_n ; \1 ),
\end{align*}
where $\Delta : \p^1 \to \p^1 \times \p^1$ is the diagonal,
in the first and second term on the right hand side
the intersection $\Delta^{!}$ and the gluing maps $j$ are taken with respect to the extra interior marked points,
and in the third term $j$ is the gluing map between the relative point on the K3 and one of the markings on the rubber class.

\subsubsection{Compatibility with the degeneration formula I}
\label{foigsdfgdfg}
Assuming quasimodularity we expect the holomorphic anomaly equations
\begin{align}
\frac{d}{dC_2} \CK_g(\gamma_1, \ldots, \gamma_n)
& = \mathsf{T}_g(\gamma_1, \ldots, \gamma_n) \label{eq:abscase} \\
\frac{d}{dC_2} \CK^{\text{rel}}_g(\gamma_1, \ldots, \gamma_n ; \1)
& = \mathsf{T}^{\text{rel}}_g(\gamma_1, \ldots, \gamma_n) \label{eq:relcase}
\end{align}

The degeneration formula yields a compatibility check for these equations.
Consider the degeneration
\begin{equation} S \rightsquigarrow S \cup_E ( \p^1 \times E ) \label{sdfsdfsd} \end{equation}
and apply the degeneration formula to $\CK_{g}(\gamma_1, \ldots, \gamma_n)$
for any choice of lift of $\gamma_1, \ldots, \gamma_n$. The result is
\begin{equation} \label{RFGRFG222}
p_{\ast} \CK_{g}(\gamma_1, \ldots, \gamma_n)
=
\sum_{\substack{ g = g_1 + g_2 \\ \{ 1, \ldots, n \} = S_1 \sqcup S_2 }}
p_{\ast} \iota_{\ast} \left( \CK_{g_1}^{\text{rel}}(\gamma_{S_1}; \1) \boxtimes \CP^{\text{rel}}_{g_2}( \gamma_{S_2} ; \pt) \right)
\end{equation}
where $\iota$ is the gluing map along the relative point and we used the vanishing \eqref{4540830vv}.
Assuming \eqref{eq:relcase} and using \eqref{PrelHAE}
we can calculate the $C_2$ derivative of the right hand side of \eqref{RFGRFG222}.
A direct check shows the result coincides with applying the degeneration formula to
the right hand side of \eqref{eq:abscase}.
Hence the formulas \eqref{eq:abscase} and \eqref{eq:relcase} are compatible
with the degeneration formula.

\subsubsection{Compatibility with the degeneration formula II}
Let $\leq$ be the lexicographic order on the set of pairs $(g,n)$, i.e.
\begin{equation} \label{lexigraph}
(g_1, n_1) \leq (g_2, n_2) \quad \Longleftrightarrow \quad g_1 < g_2 \text{ or } \big( g_1=g_2 \text{ and } n_1 \leq n_2 \big).
\end{equation}

The following proposition shows the holomorphic anomaly equation
in the absolute case implies the relative case.
Here and in the proof we use the convention of Section~\ref{Subsection_CONVENTION}.

\begin{prop} \label{DEGENERATIONPROP} Let $G,N$ be fixed. Assume
\[
\frac{d}{dC_2} \int p^{\ast}(\alpha) \cdot \CK_g(\gamma_1, \ldots, \gamma_n)
=
\int p^{\ast}(\alpha) \cdot \mathsf{T}_g(\gamma_1, \ldots, \gamma_n)
\]
for every $(g,n) \leq (G,N)$, $\alpha \in R^{\ast}(\Mbar_{g,n})$
and $\gamma_1, \ldots, \gamma_n \in H^{\ast}(S)$.
Then
\[
\frac{d}{dC_2} \int p^{\ast}(\alpha) \cdot \CK^{\text{rel}}_g(\gamma_1, \ldots, \gamma_n)
=
\int p^{\ast}(\alpha) \cdot \mathsf{T}^{\text{rel}}_g(\gamma_1, \ldots, \gamma_n)
\]
for every $(g,n) \leq (G,N)$, $\alpha \in R^{\ast}(\Mbar_{g,n})$
and $\gamma_1, \ldots, \gamma_n \in H^{\ast}(S)$.
\end{prop}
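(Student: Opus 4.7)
The plan is to reduce the relative statement to the absolute one via the degeneration formula for $S \rightsquigarrow S \cup_E (\p^1\times E)$, and then argue by induction on $(g,n)$ in the lexicographic order \eqref{lexigraph}. More precisely, fix $(g,n) \leq (G,N)$ and assume the relative equation has been established for all strictly smaller pairs $(g',n')$. Applying the degeneration formula exactly as in \eqref{RFGRFG222}, expand
\[
p_{\ast} \CK_g(\gamma_1,\ldots,\gamma_n)
\;=\;
\sum_{\substack{g=g_1+g_2\\ \{1,\ldots,n\}=S_1\sqcup S_2}}
p_{\ast}\iota_{\ast}\bigl(\CK^{\text{rel}}_{g_1}(\gamma_{S_1};\1)\boxtimes \CP^{\text{rel}}_{g_2}(\gamma_{S_2};\pt)\bigr),
\]
where the $\gamma_i$ are lifted to the total space so that their restrictions make sense on both components. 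The key observation is that the summand with $(g_2,|S_2|)=(0,0)$ factors the unknown $\CK^{\text{rel}}_g(\gamma_1,\ldots,\gamma_n;\1)$ against the rubber evaluation $\CP^{\text{rel}}_{0,0}(\,;\pt)$, which is an explicit nonzero constant series, while every other summand involves a factor $\CK^{\text{rel}}_{g_1}$ with $(g_1,|S_1|)<(g,n)$.

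Next, pair both sides of this identity with $p^{\ast}(\alpha)$ for a tautological $\alpha\in R^{\ast}(\Mbar_{g,n})$. Since the degeneration formula is an equality of cycles and $p^{\ast}$ commutes with proper pushforward via the projection formula, integrating gives a scalar identity of quasimodular forms whose leading term (in the sense just described) is a constant multiple of $\int p^{\ast}(\alpha)\cdot \CK^{\text{rel}}_g(\gamma_1,\ldots,\gamma_n;\1)$. I would now apply $\tfrac{d}{dC_2}$. On the left I use the assumed absolute holomorphic anomaly equation at level $(g,n)\leq(G,N)$ to replace $\tfrac{d}{dC_2}\CK_g$ by $\mathsf{T}_g$. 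On the right I apply $\tfrac{d}{dC_2}$ termwise: for the $\CP^{\text{rel}}_{g_2}$ factors I use \eqref{PrelHAE}, and for the $\CK^{\text{rel}}_{g_1}$ factors with $(g_1,|S_1|)<(g,n)$ I invoke the inductive hypothesis to replace $\tfrac{d}{dC_2}\CK^{\text{rel}}_{g_1}$ by $\mathsf{T}^{\text{rel}}_{g_1}$.

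The remaining step is a matching computation. On one hand, I re-apply the degeneration formula to the class $\mathsf{T}_g(\gamma_1,\ldots,\gamma_n)$ term by term, using the analogues of the splitting formulas of Section~\ref{4524294} for the boundary maps $\iota$ and $j$, and the expression of $\CK^{\text{vir}}_g$ via $\CK^{\text{vir-rel}}$ and $\CP^{\text{rel}}$ given by Lemma~\ref{Lemma_Relfibbb}. On the other hand, I collect the Leibniz-derivative expansion of the degenerated expression for $p_{\ast}\CK_g$. Except for the unique top term $\iota_{\ast}(\tfrac{d}{dC_2}\CK^{\text{rel}}_g\boxtimes\CP^{\text{rel}}_{0,0}(\,;\pt))$, every contribution on each side admits a direct comparison: terms from $\mathsf{T}_g$ involving gluings entirely on the K3 component contribute the $\iota_{\ast}\Delta^{!}$ and the first of the $j_{\ast}\Delta^{!}$ terms defining $\mathsf{T}^{\text{rel}}_g$; terms from $\mathsf{T}_g$ involving insertions on the $(\p^1\times E)$-side, combined with the rubber and point-insertion terms coming from \eqref{PrelHAE}, contribute the rubber term, the $\psi_i$-term, and the $F$- and $\sigma$-correction terms in $\mathsf{T}^{\text{rel}}_g$. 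Cancelling the inductively known pieces on both sides isolates the top coefficient and yields the desired equation for $\tfrac{d}{dC_2}\CK^{\text{rel}}_g$.

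The genuinely delicate part, and the expected main obstacle, is the combinatorial bookkeeping in this last matching: correctly distributing the Leibniz derivatives across the degenerated sum, keeping track of which factor carries the relative marking $\pt$, handling the boundary contribution from the $\psi_i$-pullback under $p$ that produces the $F$- and $\sigma$-correction terms, and verifying that the contribution of degree-$0$ maps via Lemma~\ref{Lemma_Relfibbb} exactly accounts for the difference between $\CK^{\text{vir}}$ and $\CK^{\text{vir-rel}}$ in the second term of $\mathsf{T}^{\text{rel}}_g$. This is exactly the compatibility sketched in Section~\ref{foigsdfgdfg}, and carrying it out carefully (together with the base case, where the moduli spaces reduce to a point or the relevant classes vanish for trivial reasons) completes the induction.
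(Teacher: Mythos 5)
Your proposal follows the paper's own proof essentially verbatim: apply the degeneration formula \eqref{RFGRFG222} for $S \rightsquigarrow S \cup_E (\p^1\times E)$ so that the summand with $(g_2,|S_2|)=(0,0)$ isolates $\CK^{\text{rel}}_g(\gamma_1,\ldots,\gamma_n;\1)$ as the unique top-order unknown, induct on $(g,n)$ in the lexicographic order \eqref{lexigraph} using the assumed absolute equation, the inductive hypothesis and \eqref{PrelHAE} to determine all lower-order terms after integrating against $p^{\ast}(\alpha)$, and close with the compatibility check of Section~\ref{foigsdfgdfg}. The only quibbles are cosmetic: the leading factor is the degree-one relative class $\CP^{\text{rel}}_{0,1}(\,;\pt)$ (not a rubber class, and not $\CP^{\text{rel}}_{0,0}$ in the paper's indexing), and the paper additionally pins down the lifts --- taking $\gamma_i\in\{\1,\pt_S,F,W\}\cup V$ and specializing every $\gamma_i\notin\{\1,W\}$ to the K3 component --- so that the top term is exactly $p_{\ast}\CK^{\text{rel}}_g(\gamma_1,\ldots,\gamma_n;\1)$ with coefficient one.
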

\begin{proof}
Let $\pt_S \in H^4(S)$ be the point class and assume
\[ \gamma_i \in \{ \1, \pt_S , F, W \} \cup V \]
for every $i$.
We apply the degeneration formula for the degeneration \eqref{sdfsdfsd}
where we choose all $\gamma_i$ with $\gamma_i \notin \{ \1, W \}$
to specialize to the component $S$.
Writing out \eqref{RFGRFG222} we find
\begin{equation} \label{fgodjfgfdg}
p_{\ast} \CK_{g}(\gamma_1, \ldots, \gamma_n) = p_{\ast} \CK_{g}^{\text{rel}}(\gamma_1, \ldots, \gamma_n; \1) + \ldots,
\end{equation}
where '$\ldots$' stands for terms of lower order (i.e. for which $(g_1, n_1) < (g,n)$).

We argue now by induction over $(g,n)$. Let $(g,n)$ be given and assume the claim holds for all
$(g',n')$ with $(g',n') < (g,n)$.
After integration against any tautological class $\alpha$
both sides of \eqref{fgodjfgfdg} are quasimodular forms.
Hence after integration we may apply $\frac{d}{dC_2}$.
By the induction hypothesis, the assumption in the Proposition, and \eqref{PrelHAE},
all terms except for
\[ \frac{d}{dC_2} \int_{\Mbar_{g,n}} \alpha \cdot p_{\ast} \CK_{g}^{\text{rel}}(\gamma_1, \ldots, \gamma_n; \1) \]
are determined. By the compatibility check
of Section~\ref{foigsdfgdfg} the claim follows also in the case $(g,n)$.
\end{proof}

\subsection{Proof of Theorem~\ref{thm_K3HAE}} \label{Subsection_Proof_of_K3HAE}
With the convention of Section~\ref{Subsection_CONVENTION}, we show
\begin{equation} \label{904goijf}
\frac{d}{dC_2} \int p^{\ast}(\alpha) \cap \CK_g(\gamma_1, \ldots, \gamma_n)
=
\int p^{\ast}(\alpha) \cap  \mathsf{T}_g(\gamma_1, \ldots, \gamma_n)
\end{equation}
for all $\gamma_1, \ldots, \gamma_n \in H^{\ast}(E)$ and $\alpha \in R^{\ast}(\Mbar_{g,n})$.

Assume the classes $\gamma_1, \ldots, \gamma_n \in H^{\ast}(S)$ and $\alpha$ are homogenenous and consider the dimension constraint
\begin{equation} \label{dimensionconstraint}
g + n = \deg(\alpha) + \sum_{i=1}^{n} \deg(\gamma_i), 
\end{equation}
where $\deg( )$ denotes half the real cohomological degree.
The left hand side in \eqref{dimensionconstraint} is the reduced virtual dimension of $\Mbar_{g,n}(S, \beta_h)$.
If the dimension constraint is violated, both sides of \eqref{904goijf} are zero and the claim holds.
Hence we may assume \eqref{dimensionconstraint}.

We argue by induction on $(g,n)$ with respect to the ordering \eqref{lexigraph}.
If $(g,n) = (0, 0)$, then by the Yau-Zaslow formula \cite{BL}
\[ \int \CK_{0}() = \frac{1}{\Delta(q)}. \]
Hence the left hand side in \eqref{904goijf} vanishes, and by inspection also the right hand side.
We may therefore assume $(g,n) > (0,0)$ and
the claim holds for any
$(g',n') < (g,n)$.
We have four cases.

\vspace{5pt}
\noindent \textbf{Case (i):} $g=0$ and $\deg(\gamma_i) = 1$ for all $i$.

\vspace{5pt}
By the dimension constraint $\deg(\alpha) = 0$.
Let 
\[ p_n : \Mbar_{g,n}(\p^1,1) \to \Mbar_{g,n-1}(\p^1,1) \]
be the map that forgets the last point and for
any $D \in H^2(S)$ let
\[ \frac{d}{dD} = \langle D, F \rangle q \frac{d}{dq} + \langle D,W \rangle. \]
Then we have
\begin{align*}
& \frac{d}{dC_2} \int \CK_g(\gamma_1, \ldots, \gamma_n)\\
= & \frac{d}{dC_2} \int p_{n \ast} \CK_g(\gamma_1, \ldots, \gamma_n) \\
= & \frac{d}{dC_2} \frac{d}{d \gamma_{n}} \int \CK_{g}( \gamma_1, \ldots, \gamma_{n-1} ) \\
= & 
\left[ \frac{d}{dC_2}, \frac{d}{d \gamma_n} \right] \int \CK_{g}( \gamma_1, \ldots, \gamma_{n-1} ) +
\frac{d}{d \gamma_n} \int \mathsf{T}_{g}(\gamma_1, \ldots, \gamma_{n-1}),
\end{align*}
where we used the divisor equation in the second last and
the induction hypothesis in the last step.
By a direct computation using \eqref{COMMUTATOR}
and the weight statement \cite[Thm.9]{BOPY}
the last term equals precisely
\[ \int p_{n \ast} \mathsf{T}_g(\gamma_1, \ldots, \gamma_n). \]

\vspace{5pt}
\noindent \textbf{Case (ii):} $\deg(\gamma_i) = 2$ for some $i$.

\vspace{5pt}
We may assume $\deg(\gamma_1)=2$. 
We apply the degeneration formula to the degeneration
$S \rightsquigarrow S \cup_E ( \p^1 \times E )$
and specialize $\gamma_1$ to the $\p^1 \times E$ component, while
choosing an arbitrary lift for the other insertions.
The result is
\[
p_{\ast} \CK_{g}(\gamma_1, \ldots, \gamma_n)
=
\sum_{\substack{ g = g_1 + g_2 \\ \{ 2, \ldots, n \} = S_1 \sqcup S_2}}
p_{\ast} \iota_{\ast} \left( \CK_{g_1}^{\text{rel}}(\gamma_{S_1}; \1) \boxtimes \CP^{\text{rel}}_{g_2}( \gamma_1, \gamma_{S_2} ; \pt) \right).
\]
Since every $(g_1, |S_1|) < (g,n)$ the $\frac{d}{dC_2}$ derivative of 
(the integral against any tautological class of) the right hand side is determined by induction, Proposition~\ref{DEGENERATIONPROP} and \eqref{PrelHAE}.
By Section~\ref{foigsdfgdfg}
it matches the output of the degeneration formula applied to
\[ \int_{\Mbar_{g,n}} \alpha \cap p_{\ast} \mathsf{T}_g(\gamma_1, \ldots, \gamma_n) \]
which completes the step.

\vspace{5pt}
\noindent \textbf{Case (iii):} $g>0$ and $\deg(\gamma_i) \leq 1$ for all $i$.

\vspace{5pt}
By the dimension constraint we must have $\deg(\alpha) \geq g$.
By a strong form of Getzler's vanishing \cite[Prop.2]{FPrel} we have
\[ \alpha = \iota_{\ast} \alpha' \]
for some $\alpha'$, where $\iota : \partial \Mbar_{g,n} \to \Mbar_{g,n}$ is the inclusion of the boundary.
By the compatibilities of Section~\ref{4524294}
we are reduced to lower order.

\vspace{5pt}
\noindent \textbf{Case (iv):} $g = 0$, $\det(\gamma_i) \leq 1$ for all $i$ and $\deg(\gamma_i) = 0$ for at least one $i$.

\vspace{5pt}
By the dimension constraint we have $\deg(\alpha) > 0$ 
and $\alpha$ is the pushforward of a class on the boundary.
The case follows again by Section~\ref{4524294}. \qed

\subsection{An example}
We use the bracket notation
\[
\blangle \tau_{k_1}(\gamma_1) \cdots \tau_{k_n}(\gamma_n) \rangle_g
= \sum_{h = 0}^{\infty} q^{h-1} \int_{[\Mbar_{g,n}(S, \beta_h)]^{\text{red}}} \prod_{i=1}^{n} \ev_{i}^{\ast}(\gamma_i) \psi_i^{k_i}.
\]
We give an example of the holomorphic anomaly equation in genus $1$.
Consider the series $\langle \tau_1(W) \rangle_1$.
By a monodromy argument and a direct evaluation\footnote{
The evaluation $\langle \tau_1(F) \rangle_1 = \frac{2 C_2}{\Delta}$
follows also from the holomorphic anomaly equation.}
following \cite[App.A]{K3xP1} we have
\[
\blangle \tau_1(W) \brangle_1 = q \frac{d}{dq} \blangle \tau_1(F) \brangle_1 = q \frac{d}{dq} \left( \frac{2 C_2(q)}{\Delta(q)} \right).
\]
Hence using the commutator relation \eqref{COMMUTATOR} we calculate
\begin{equation} \frac{d}{dC_2} \blangle \tau_1(W) \brangle_1 = 40 \frac{C_2(q)}{\Delta(q)} + q \frac{d}{dq} \frac{2}{\Delta(q)}.
\label{45943}
\end{equation}

On the other hand the holomorphic anomaly equation yields
\begin{equation} \label{529544}
\frac{d}{dC_2} \blangle \tau_1(W) \brangle_1
= \blangle \tau_1(W) \tau_0(\Delta_{\p^1}) \brangle_0 - 2 \blangle \tau_2(\1) \brangle_1 + 20 \blangle \tau_1(F) \brangle_1.
\end{equation}
A direct calculation shows
\begin{align*}
\blangle \tau_1(W) \tau_0(\Delta_{\p^1}) \brangle_0 & = 2 \blangle \tau_1(W) \tau_0(\1) \tau_0(F) \brangle_0 = 2 q \frac{d}{dq} \frac{1}{\Delta(q)} \\
\blangle \tau_2(\1) \brangle_1 & = 0.
\end{align*}
Plugging everything into \eqref{529544} we arrive exactly at \eqref{45943}.

\section{The Igusa cusp form conjecture}
\label{Section_Igusa_cusp_form_conjecture}
\subsection{Overview}
Let $S$ be a non-singular projective K3 surface,
let $E$ be a non-singular elliptic curve, and let
\[ X = S \times E. \]
We present the proof of the Igusa cusp form conjecture
(Theorem~\ref{thmIgusa}).
In Section~\ref{Subsection_PTtheory}
we introduce reduced Pandharipande--Thomas invariants.
In Section~\ref{Subsection_JacobiForms}
we recall properties of Jacobi forms.
Sections~\ref{Subsection_Constraints},
\ref{Subsection_Comparision}
and \ref{Subsection_ProofConstraintsII} are the heart of the proof.
We first state a list of constraints on three-variable generating series
and prove they determine the series from initial data.
Then we show both $\CZ(u,q,\tilde{q})$
and $\chi_{10}^{-1}$ satisfy these constraints.
In Section~\ref{Subsection_Completeproof}
we put the pieces together and complete the proof.

\subsection{Pandharipande--Thomas theory}
\label{Subsection_PTtheory}
Let $\beta \in H_2(S,\BZ)$ be a curve class and let $d \geq 0$.
Following \cite{PT1} let
\[
P_n(X, (\beta,d))
\]
be the moduli space of stable pairs $(F,s)$ on $X$
with numerical invariants
\[
\chi(F)=n\in \mathbb{Z}
\  \ \ \text{and} \ \ \ \ch_2(F) = (\beta,d) \in H_2(X,\mathbb{Z})\,.
\]
For any non-zero $\beta$ 
the
group $E$ acts on the moduli space 
by translation with finite stabilizers. 
\emph{Reduced Pandharipande--Thomas invariants} are defined
by integrating the Behrend function \cite{B}
\[ \nu : P_n(X, (\beta,d))/E \to \BZ \]
with respect to the
orbifold topological Euler characteristic $e( \cdot )$,
\[
\mathsf{P}_{n, (\beta,d)} = \int_{ P_n(X, (\beta,d))/E } \nu \dd{e}
= \sum_{k \in \BZ} k \cdot e\left( \nu^{-1}(k) \right) \,.
\]
The definition is equivalent to
integrating the reduced virtual class against insertions \cite{O1}.
In particular, $\mathsf{P}_{n, (\beta,d)}$ is
deformation invariant.

Let $\beta_h \in H_2(S,\BZ)$
be a primitive curve class satisfying
$\langle \beta_h, \beta_h \rangle = 2h-2$.
By deformation invariance
$\mathsf{P}_{n,(\beta_h,d)}$ only depends on $n, h$ and $d$.
We write
\[
\mathsf{P}_{n,h,d} = \mathsf{P}_{n,(\beta_h,d)}.
\]
By \cite[Prop.5]{K3xE}
every $\sum_{n \in \BZ} \mathsf{P}_{n, h, d} y^n$
is the Laurent expansion of a rational function
and we have the Gromov--Witten/Pairs correspondence
\begin{equation} \label{GWPTcorr}
\sum_{n \in \BZ} \mathsf{P}_{n, h,d} y^n
=
\sum_{g} \mathsf{N}_{g,h,d} u^{2g-2}
\end{equation}
under the variable change $y = -e^{iu}$.

\subsection{Jacobi forms}\label{Subsection_JacobiForms}
Jacobi forms are generalizations
of modular forms which depend on an elliptic parameter
$u \in \BC$ and a modular parameter
$q$, see \cite{EZ} for an introduction\footnote{
The variables $z\in \BC$ and $\tau \in \BH$ of \cite{EZ}
are related to $(u,q)$ by $u = 2 \pi z$ and $q = e^{2 \pi i \tau}$.
}.
We will also use the variables
\begin{equation*} p = e^{iu}, \quad y = -p \label{varchange} \end{equation*}
and make the convention to identify
a function in $u$ with the corresponding
function in $y$ or $p$. 
The $q^{k}$-coefficient
in the expansion of a function $f(u,q)$ is denoted by
$[ f(u,q) ]_{q^k}$ and similarly for the other variables.

Consider the Jacobi theta function
\begin{equation}  \label{Theta}
\begin{aligned}
\Theta(u,q)
& = u \exp \Big( \sum_{k \geq 1} (-1)^{k-1} C_{2k} u^{2k} \Big)  \\
& = (p^{1/2} - p^{-1/2}) \prod_{m \geq 1} 
\frac{ (1-pq^m) (1-p^{-1}q^m)}{ (1-q^m)^2 } 
\end{aligned}
\end{equation}
and the Weierstra{\ss} elliptic function
\begin{equation} \label{WP}
\begin{aligned} \wp(u,q) & =
-\frac{1}{u^2} - \sum_{k \geq 2} (-1)^k (2k-1) 2k C_{2k} u^{2k-2} \\
& =
\frac{1}{12} + \frac{p}{(1-p)^2} + \sum_{d \geq 1}
\sum_{k | d} k (p^k - 2 + p^{-k}) q^{d}.
\end{aligned}
\end{equation}
Define
\begin{equation} \phi_{-2,1}(u,q) = \Theta(u,q)^2,
\quad
\phi_{0,1}(u,q) = 12 \Theta(u,q)^2 \wp(u,q).
\label{phik1}
\end{equation}
The ring of weak Jacobi forms of even weight
is the free polynomial algebra
\[ \CJ
= \BQ[ C_4, C_6, \phi_{-2,1}, \phi_{0,1}]. \]
We assign the functions $\phi_{k,1}$ weight $k$ and index $1$,
and the Eisenstein series $C_k$ weight $k$ and index $0$.
We let
\[ \CJ = \bigoplus_{k,m} \CJ_{k,m} \]
denote the induced bi-grading by weight $k$ and index $m$.

Recall also the ring of modular forms
\[ \Mod = \bigoplus_k \Mod_k = \BQ[C_4, C_6] \]
graded by weight.
The following fact is well-known.

\begin{lemma} \label{Modconstraint}
Let $f \in \Mod_k$.
If $[f(q)]_{q^{\ell}} = 0$ for all
$\ell \leq \lfloor \frac{k}{12} \rfloor$,
then $f(q) = 0$.
\end{lemma}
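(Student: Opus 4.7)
The plan is to deduce this from the classical valence formula (the $k/12$ formula) for modular forms on $\mathrm{SL}_2(\BZ)$. Recall that for a nonzero modular form $f$ of weight $k$ for the full modular group, one has the identity
\[
v_\infty(f) + \tfrac{1}{2} v_i(f) + \tfrac{1}{3} v_\rho(f) + \sum_{P} v_P(f) \ =\  \tfrac{k}{12},
\]
where the last sum runs over all points of a fundamental domain other than $i$ and $\rho = e^{2\pi i/3}$, and $v_\infty(f)$ denotes the order of vanishing of $f$ at the cusp, i.e.\ the smallest $n$ with $[f(q)]_{q^n} \neq 0$. Since every term on the left is nonnegative, this immediately gives $v_\infty(f) \leq k/12$.

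Assume for contradiction that $f \in \Mod_k$ is nonzero but $[f(q)]_{q^\ell} = 0$ for all $\ell \leq \lfloor k/12 \rfloor$. Then $v_\infty(f) \geq \lfloor k/12 \rfloor + 1 > k/12$, contradicting the valence formula. Hence $f = 0$. The only point of care is that $\Mod = \BQ[C_4, C_6]$ consists of holomorphic modular forms of even weight, which is precisely the setting where the $k/12$ formula applies without modification, so no further case analysis is needed.

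An alternative and essentially equivalent route would be to invoke the well-known dimension bound $\dim \Mod_k \leq \lfloor k/12 \rfloor + 1$ (derived from the structure $\Mod = \BQ[C_4, C_6]$ by counting monomials $C_4^a C_6^b$ of weight $k$) together with the injectivity of the map $\Mod_k \to \BQ^{\lfloor k/12 \rfloor + 1}$ sending $f$ to its first $\lfloor k/12 \rfloor + 1$ Fourier coefficients. Injectivity itself reduces again to the valence formula, so the proof above is the most direct. There is no real obstacle here; the lemma is a standard consequence of a standard tool, recorded in this form for convenience in Section~\ref{Subsection_Constraints} where it will be applied to force the equality of two weight-$k$ modular forms from a finite number of matching Fourier coefficients.
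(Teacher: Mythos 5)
Your proof is correct: the valence formula bounds $v_\infty(f)\le k/12$ for any nonzero $f\in\Mod_k$ (the normalization of the generators $C_4,C_6$ is irrelevant, as elements of $\Mod_k$ are holomorphic weight-$k$ forms for the full modular group), so vanishing of all coefficients up to $q^{\lfloor k/12\rfloor}$ forces $f=0$. The paper states Lemma~\ref{Modconstraint} without proof as a well-known fact, and your argument is precisely the standard justification it implicitly invokes, so there is nothing to compare beyond noting that your write-up supplies the omitted details.
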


For Jacobi forms we have
the following analog.

\begin{lemma} \label{Jacconstraint}
Let $\phi \in \CJ_{k,m}$.
If $[\phi]_{q^{\ell}} = 0$ for all
$\ell \leq \lfloor \frac{k+2m}{12} \rfloor$,
then $\phi=0$. 
\end{lemma}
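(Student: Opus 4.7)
The plan is to induct on the index $m$, using Lemma~\ref{Modconstraint} as both the base case and a tool in the inductive step. Set $L = \lfloor (k+2m)/12 \rfloor$ throughout.

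For the base case $m=0$, we have $\CJ_{k,0} = \Mod_k$ (no positive-index generators are available), and the hypothesis becomes exactly the hypothesis of Lemma~\ref{Modconstraint}, so $\phi=0$. For the inductive step, assume $m \geq 1$ and that the lemma holds in index $m-1$. Since $\CJ$ is the free polynomial algebra on $C_4,C_6,\phi_{-2,1},\phi_{0,1}$, we may write uniquely
\[ \phi = \sum_{a=0}^{m} f_a(q)\, \phi_{-2,1}^{a}\, \phi_{0,1}^{m-a}, \qquad f_a \in \Mod_{k+2a}. \]
From the $u$-expansions \eqref{Theta} and \eqref{WP} one reads $\phi_{-2,1}(0,q)=0$ and $\phi_{0,1}(0,q)=-12$, so evaluating at $u=0$ kills every term with $a \geq 1$ and leaves
\[ \phi(0,q) = (-12)^{m}\, f_0(q). \]
The hypothesis on $\phi$ forces $f_0$ to vanish modulo $q^{L+1}$, and since $\lfloor k/12 \rfloor \leq L$, Lemma~\ref{Modconstraint} applied to $f_0 \in \Mod_k$ gives $f_0 = 0$. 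Therefore $\phi = \phi_{-2,1} \cdot \psi$ for some $\psi \in \CJ_{k+2,m-1}$.

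It remains to verify that $\psi \equiv 0 \pmod{q^{L+1}}$ so that the inductive hypothesis applies (note $\lfloor((k+2)+2(m-1))/12\rfloor = L$, so the required vanishing order matches exactly). Write $\phi = \sum_{\ell \geq 0} a_\ell(p)\, q^\ell$, $\phi_{-2,1} = \sum_{\ell \geq 0} b_\ell(p)\, q^\ell$, $\psi = \sum_{\ell \geq 0} c_\ell(p)\, q^\ell$, viewing the $q^\ell$-coefficients as Laurent series in $p^{1/2}$. From \eqref{Theta} the leading coefficient $b_0(p) = (p^{1/2}-p^{-1/2})^2$ is a non-zero Laurent polynomial, hence a non-zero divisor. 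The identity $a_\ell = \sum_{i+j=\ell} b_i c_j$ together with the known vanishing $a_\ell = 0$ for $\ell \leq L$ then gives $c_\ell = 0$ for $\ell \leq L$ by induction on $\ell$. Applying the inductive hypothesis to $\psi$ yields $\psi=0$, and hence $\phi=0$.

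There is no real obstacle here beyond bookkeeping: the key observation is that the free polynomial structure of $\CJ$ combined with the single vanishing $\Theta(0,q)=0$ isolates one modular form of weight $k$ at each step, reducing the Jacobi-form statement to the modular statement of Lemma~\ref{Modconstraint} and an index reduction via division by $\phi_{-2,1}$.
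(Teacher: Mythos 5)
Your proof is correct, but it takes a genuinely different route from the paper's. The paper reduces to modular forms in one shot via the Eichler--Zagier development coefficients: by \cite[Thm.3.1]{EZ} each $\CD_\nu \phi$ is a modular form of weight $k+2\nu$, the hypothesis kills its $q^{\ell}$-coefficients for $\ell \leq \lfloor (k+2m)/12 \rfloor \geq \lfloor (k+2\nu)/12 \rfloor$, so Lemma~\ref{Modconstraint} gives $\CD_\nu \phi = 0$ for all $\nu \leq m$, and then $\phi = 0$ because $\CD_0 \oplus \cdots \oplus \CD_m$ is an isomorphism by \cite[Thm.9.2]{EZ}. You instead induct on the index using only the free presentation $\CJ = \BQ[C_4, C_6, \phi_{-2,1}, \phi_{0,1}]$: evaluation at $u=0$ (where $\phi_{-2,1}$ vanishes and $\phi_{0,1}(0,q) = -12$, which is indeed the correct constant in the paper's normalization of $\Theta$ and $\wp$) isolates the weight-$k$ form $f_0$, Lemma~\ref{Modconstraint} kills it, and division by $\phi_{-2,1} = \Theta^2$ reduces the index by one; your division argument is sound since the leading $q$-coefficient $(p^{1/2}-p^{-1/2})^2$ is a non-zero-divisor (the $q^{\ell}$-coefficients are in fact Laurent \emph{polynomials} in $p$, invariant under $p \mapsto p^{-1}$, not merely series), and the key numerical point that makes the induction run with no slack is the exact match $\lfloor ((k+2)+2(m-1))/12 \rfloor = \lfloor (k+2m)/12 \rfloor$, which you verify. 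What each approach buys: yours is more elementary and self-contained, invoking nothing beyond the ring presentation the paper already uses to define $\CJ$ and the classical valence bound; the paper's is shorter given the EZ machinery and treats all indices simultaneously rather than recursively. Morally the two are close --- evaluation at $u=0$ is essentially $\CD_0$, and your iterated division by $\phi_{-2,1}$ plays the role of the higher development coefficients $\CD_1, \ldots, \CD_m$.
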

\begin{proof}
Let $\phi \in \CJ_{k,m}$ and
let $\phi = \sum_{n,r} c(n,r) q^n p^r$ be its Fourier expansion.
By \cite[Thm.3.1]{EZ} for every $\nu \geq 0$
the series
\begin{equation} \label{Duf}
\CD_{\nu} f = \sum_{n=0}^{\infty}
\left( \sum_{r \in \BZ} p_{2 \nu}^{(k-1)}(r,nm) c(n,r) \right) q^n
\end{equation}
is a modular form of weight $k+2 \nu$;
here $p_{2\nu}^{(k-1)}$ is a certain explicit polynomial.
Moreover, by \cite[Thm.9.2]{EZ} the mapping
\[ \CD = \CD_0 \oplus \ldots \oplus \CD_m \colon
\CJ_{k,m} \to \Mod_k \oplus \ldots \oplus \Mod_{k+2 m} \]
is an isomorphism.

If $[\phi]_{q^{\ell}} = 0$
for all $\ell \leq \lfloor \frac{k+2m}{12} \rfloor$,
then
$[ \CD_{\nu} \phi ]_{q^{\ell}} = 0$
for all $\nu$ and $\ell \leq \lfloor \frac{k+2m}{12} \rfloor$
by \eqref{Duf}.
Applying Lemma~\ref{Modconstraint} we find
$\CD_{\nu} \phi = 0$ for all $\nu \leq m$, so $\phi = 0$.
\end{proof}

We require the following property of the $u$-expansion of Jacobi forms.

\begin{lemma}
\label{Lemma_JacddC2}
Let $\phi \in \CJ_{k,m}$ and let
$\phi(u,q) = \sum_{\ell \geq 0} f_{\ell}(q) u^{\ell}$ be its $u$-expansion.
 Then every $f_\ell(q)$ is a quasimodular form of weight $\ell+k$ and
 \[ \frac{d}{dC_2} f_{\ell} \, = \, 2m \cdot f_{\ell-2}. \]
\end{lemma}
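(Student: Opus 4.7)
The plan is to reduce the statement to the four generators of $\CJ$ via the polynomial ring structure, and then verify it directly on each generator using the explicit $u$-expansions \eqref{Theta} and \eqref{WP}. Since
\[
\CJ = \BQ[C_4, C_6, \phi_{-2,1}, \phi_{0,1}]
\]
is a bi-graded polynomial algebra, any $\phi \in \CJ_{k,m}$ can be written as a $\BQ$-linear combination of monomials $C_4^a C_6^b \phi_{-2,1}^c \phi_{0,1}^d$ with $4a+6b-2c=k$ and $c+d=m$. Both assertions of the lemma are additive and respect the weight-index grading, and the Leibniz rule for $\tfrac{d}{dC_2}$ together with $u$-expansions multiplying as power series are compatible with this decomposition, so it suffices to check the lemma on each generator and then argue that products of generators preserve the two claims.

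First I would establish the weight statement. The Eisenstein series $C_4, C_6$ are (quasi)modular of the correct weights with trivial $u$-expansion, so they are immediate. For $\phi_{-2,1} = \Theta^2$, expand \eqref{Theta} as
\[
\Theta(u,q)^2 = u^2 \exp\Bigl( 2 \sum_{k \geq 1} (-1)^{k-1} C_{2k} u^{2k} \Bigr),
\]
so the coefficient of $u^{2+2j}$ is a polynomial in $C_2, C_4, \ldots$ which is quasimodular of weight $2j = (2+2j) - 2$. For $\phi_{0,1} = 12\Theta^2 \wp$, observe from \eqref{WP} that $\wp$ is a power series in $u^2$ with only $C_{2k}$ for $k \geq 2$, so $\Theta^2 \wp$ is a power series in $u^2$ whose $u^{2j}$ coefficient is quasimodular of weight $2j$. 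Since weights add under multiplication, the $u^\ell$ coefficient of a monomial of total weight $k$ is quasimodular of weight $\ell + k$, proving the first claim for all of $\CJ$.

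Next I would verify the holomorphic anomaly part. The key observation is that in the expansion \eqref{WP}, the Weierstra{\ss} function $\wp(u,q)$ does not involve $C_2$, so $\tfrac{d}{dC_2} \wp = 0$. On the other hand, from the exponential form \eqref{Theta},
\[
\frac{d}{dC_2} \Theta^2 = 2u^2 \cdot \Theta^2,
\]
and hence
\[
\frac{d}{dC_2} \phi_{-2,1} = 2u^2 \phi_{-2,1}, \qquad \frac{d}{dC_2} \phi_{0,1} = 12 \, \frac{d}{dC_2}(\Theta^2) \cdot \wp = 2u^2 \phi_{0,1}.
\]
Both generators have index $1$, so the identity $\tfrac{d}{dC_2} \psi = 2m u^2 \psi$ holds when $\psi$ is either $\phi_{-2,1}$ or $\phi_{0,1}$ with $m=1$. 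Since $\tfrac{d}{dC_2}$ annihilates $C_4, C_6$ (of index $0$), and since the Leibniz rule gives
\[
\frac{d}{dC_2}(\psi_1 \psi_2) = 2(m_1 + m_2) u^2 \psi_1 \psi_2
\]
whenever $\psi_i$ satisfies the same identity with index $m_i$, a straightforward induction on the degree of a monomial shows that every $\phi \in \CJ_{k,m}$ satisfies
\[
\frac{d}{dC_2} \phi(u,q) = 2m u^2 \phi(u,q).
\]
Extracting the coefficient of $u^\ell$ on both sides yields $\tfrac{d}{dC_2} f_\ell = 2m f_{\ell-2}$, completing the proof. There is no serious obstacle here; the only point requiring care is to confirm that $\wp$ genuinely has no $C_2$ dependence in the normalization \eqref{WP}, which is visible directly from the formula.
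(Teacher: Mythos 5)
Your proof is correct and follows essentially the same route as the paper: both arguments rest on the two identities $\tfrac{d}{dC_2}\Theta^2 = 2u^2\Theta^2$ and $\tfrac{d}{dC_2}\wp = 0$ read off from \eqref{Theta} and \eqref{WP}, and then propagate them to all of $\CJ_{k,m}$ to get $\tfrac{d}{dC_2}\phi = 2mu^2\phi$. The only cosmetic difference is that you run a Leibniz induction over the generators $C_4, C_6, \phi_{-2,1}, \phi_{0,1}$, whereas the paper factors $\phi = \Theta^{2m}P(C_4,C_6,\wp)$ and differentiates once --- the same computation in different packaging (note only that $\wp$ itself is a Laurent series with a $u^{-2}$ term, not a power series; this is harmless since $\Theta^2\wp$ is regular, as your argument implicitly uses).
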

\begin{proof}
Consider $\Theta$ and $\wp$ as power series
in $u$ with quasimodular form coefficients.
By \eqref{Theta} and \eqref{WP}
we have
\begin{equation} \frac{d}{dC_2} \Theta = u^2 \Theta, \quad
\frac{d}{dC_2} \wp = 0. 
\label{dddc2} \end{equation}
Every $\phi \in \CJ_{k,m}$ can be written as
\[ \phi = \Theta^{2m} P(C_4,C_6, \wp) \]
for some (weighted homogeneous) polynomial $P$.
Hence $\phi$ is a power series in $u$ with quasimodular form coefficients 
and
\[ \frac{d}{dC_2} \phi = 2m u^2 \phi . \]
This proves the quasimodularity of the $f_{\ell}$ and
the second claim. The weight statement
follows by an inspection
of the weights of the quasimodular forms entering the definition
of $\Theta$ and $\wp$.
\end{proof}

\begin{lemma}
\label{Lemmaconstraint2}
Let
$\phi \in \frac{1}{\phi_{-2,1} \Delta} \CJ_{k.m}$.
Then there exist $c_{g,d} \in \BQ$ such that
\[ \phi(u,q) =
\sum_{d \geq 0}
\left( \sum_{g=0}^{m+d}
c_{g,d} (y^{\frac{1}{2}} + y^{-\frac{1}{2}})^{2g-2} \right)
q^{d-1}
\]
under the variable change $y=-e^{iu}$
and in the region $0 < |q| < |y| < 1$.
\end{lemma}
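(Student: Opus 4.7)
The plan is to reduce the identity to a polynomial statement in a single auxiliary variable $X := (y^{1/2}+y^{-1/2})^2 = 2 - p - p^{-1}$ (where $p = e^{iu}$ and $y = -p$). All expansions will be taken in the annular region $0<|q|<|p|<1$, which coincides with the prescribed region $0<|q|<|y|<1$ since $|y|=|p|$. What we need to show is that each $q^{d-1}$-coefficient of $\phi$ is a Laurent polynomial in $X$ with lowest term $X^{-1}$ and highest term $X^{m+d-1}$, since then rewriting $X^{g-1} = (y^{1/2}+y^{-1/2})^{2g-2}$ gives exactly the claimed expansion.

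The key lemma I would prove first is: for any $\psi \in \CJ_{k,m}$, the $q^d$-coefficient $[\psi]_{q^d}$ is a polynomial in $X$ of degree at most $m+d$. Writing the Fourier expansion $\psi = \sum_{n,r} c(n,r) q^n p^r$, the elliptic transformation law yields $c(n,r) = c(n-r+m,\,r-2m)$, so iteration forces $c(d,r) = 0$ whenever $|r|>m+d$ (using the weak Jacobi form property $c(n'',r'')=0$ for $n''<0$); meanwhile, the even-weight invariance $\psi(-u,q) = \psi(u,q)$ yields $c(d,r) = c(d,-r)$. Hence $[\psi]_{q^d}$ is a symmetric Laurent polynomial in $p$ with $|r| \leq m+d$, equivalently a polynomial in $p+p^{-1}$ (equivalently in $X$) of degree at most $m+d$.

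With the lemma in hand, the product formula \eqref{Theta} factors $\phi_{-2,1} = -X \cdot G$, where
\[
G(p,q) = \prod_{k \geq 1} \frac{(1-pq^k)^2(1-p^{-1}q^k)^2}{(1-q^k)^4}\in 1 + q\,\BQ[X][[q]].
\]
Indeed, applying the lemma to $\phi_{-2,1}$ (index $1$) together with the vanishing $\phi_{-2,1}(0,q) = \Theta(0,q)^2 = 0$ forces $[G]_{q^d}$ to be a polynomial in $X$ of degree $\leq d$. Writing $\Delta = q \cdot \eta'$ with $\eta' = \prod(1-q^k)^{24} \in \BQ[[q]]$, formal inversion in $\BQ[X][[q]]$ gives the same degree bound $\leq d$ on $[(G\eta')^{-1}]_{q^d}$, and
\[
\phi = -\frac{1}{X}\cdot q^{-1}\cdot \psi \cdot (G\eta')^{-1}.
\]
A convolution degree count using the lemma gives $\deg_X [\psi \cdot (G\eta')^{-1}]_{q^d} \leq \max_{i+j=d}\bigl((m+i)+j\bigr) = m+d$, and dividing by $X$ yields the desired form $\sum_{g=0}^{m+d} c_{g,d} X^{g-1}$.

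The main obstacle is the key lemma, which is the only step using the full Jacobi-form structure; the remaining ingredients are formal manipulations in $\BQ[X][[q]]$, together with a verification that the Laurent expansions all converge in the prescribed annulus. As a sanity check, one can alternatively deduce the lemma from the structure theorem $\CJ = \BQ[C_4,C_6,\phi_{-2,1},\phi_{0,1}]$ by induction on index, using the explicit $X$-expansions $\phi_{-2,1}|_{q=0} = -X$ and $\phi_{0,1}|_{q=0} = 12-X$.
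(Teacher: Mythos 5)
Your proposal is correct, and it reorganizes the paper's argument in a genuinely different way, though both run on the same engine: the elliptic transformation law. The paper first obtains the qualitative shape --- finitely many powers of $(y^{1/2}+y^{-1/2})^2$ with a single possible negative power --- by inspecting the $y$-expansion of $\Theta$, and then bounds the top power $N_d$ by deriving a functional equation for the quotient $\phi$ itself: it combines the transformation law of the numerator $\psi \in \CJ_{k,m}$ with the explicit identity $\tfrac{1}{\phi_{-2,1}}(y^{-1}q,q) = y^{-2}q\,\tfrac{1}{\phi_{-2,1}}(y,q)$ to get $\phi(y^{-1}q,q) = y^{2(m-1)}q^{-(m-1)}\phi(y,q)$, whence the coefficient relation $b(d,r) = b(d+r+m-1,\,-2m-r+2)$ and a support bound on $b(d,r)$ for very negative $r$ (using that the $q$-expansion of $\phi$ starts at $q^{-1}$). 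You instead apply the transformation law only to the numerator, where weak holomorphy ($c(n,r)=0$ for $n<0$) together with the even-weight symmetry $c(d,r)=c(d,-r)$ gives the clean, reusable statement $c(d,r)=0$ for $|r|>m+d$, i.e.\ $[\psi]_{q^d}$ is a polynomial of degree at most $m+d$ in $X$; the division by $\phi_{-2,1}\Delta$ is then handled by the factorization $\phi_{-2,1}=-XG$ and elementary degree bookkeeping under inversion and convolution in $\BQ[X][[q]]$. Your route is slightly longer but more self-contained: it \emph{proves} the a priori shape (exactly one possible $X^{-1}$ term, coming from the single explicit factor $1/X$) rather than obtaining it by inspection, and it isolates a standard fact about weak Jacobi forms that can be quoted elsewhere; the paper's route is shorter because one functional equation for $\phi$ bounds the coefficient support in a single stroke, with no factorization bookkeeping. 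One cosmetic remark: in your key lemma no genuine iteration is needed --- for $r>m+d$ a single application of $c(d,r)=c(d-r+m,\,r-2m)$ already lands at negative first index, and the case $r<-(m+d)$ follows from the symmetry.
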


\begin{proof}
Let $y = -p = -e^{iu}$ throughout.
The $q$-coefficients of functions in $\CJ$
are Laurent polynomials in
$y$ that are invariant under $y \mapsto y^{-1}$,
and hence can be written as
a linear combination of even non-negative powers of
$y^{\frac{1}{2}} + y^{-\frac{1}{2}}$.
By inspection of the $y$-expansion of $\Theta$ we find
\begin{equation}
\label{4omrfgdfg}
\phi(u,q) =
\sum_{d \geq 0}
\left( \sum_{g=0}^{N_d}
c_{g,d} (y^{\frac{1}{2}} + y^{-\frac{1}{2}})^{2g-2} \right)
q^{d-1}
\end{equation}
for some $c_{g,d} \in \BQ$ and $N_d$.
We need to show $N_d \leq m+d$.

Let
$\phi = \psi / ( \phi_{-2,1} \Delta )$
for some $\psi \in \CJ_{k,m}$ and consider
the functions $\phi, \psi, \phi_{-2,1}^{-1}$ as formal power series
in $y$ and $q$ expanded in the region $0 < |q| < |y| < 1$.
Since $\psi$ satisfies the elliptic transformation law \cite{EZ}
the power series $\psi(y,q)$ satisfies
\[ \psi(y^{-1} q,q) = y^{2m} q^{-m} \psi(y,q). \]
By \eqref{Theta}
the series $1/\phi_{-2,1}(y,q)$
satisfies the equality of power series
\begin{equation} \frac{1}{\phi_{-2,1}}(y^{-1} q, q)
= y^{-2} q^{1} \frac{1}{\phi_{-2,1}}(y,q). \label{poeldas}
\end{equation}
Combining both equations
we obtain the identity of power series
\begin{equation}
\phi(y^{-1} q, q) = y^{2(m-1)} q^{-(m-1)} \phi(y,q). \label{5452646}\end{equation}
Let
$\phi = \sum_{d,r} b(d,r) y^r q^{d-1}$.
Then \eqref{5452646} is equivalent to
\[ b(d,r) = b(d+r+m-1, -2m-r+2). \]
In particular, $b(d,r) = 0$ if
$d+r+m-1 < -1$ or equivalently $r < -(m+d)$.
This shows $N_{d} \leq m+d$ in \eqref{4omrfgdfg}.
\end{proof}

\subsection{Constraints} 
\label{Subsection_Constraints}
Let $\CF(u,q,\tilde{q})$
be a formal power series
in the variables $u, q, \tilde{q}$
which satisfies the following properties:

\vspace{4pt}
\noindent \textbf{Property 1.} There exist $a_{g,h,d} \in \BQ$ such that
\[ \CF(u,q,\tilde{q})
=
\sum_{h,d \geq 0} \sum_{g \geq 0} a_{g,h,d} u^{2g-2} q^{h-1} \tilde{q}^{d-1}. \]

\vspace{4pt}
\noindent \textbf{Property 2.} For every $h$ we have
\[ \big[ \, \CF \, \big]_{q^{h-1}} \in \frac{1}{ \phi_{-2,1} \Delta}
\CJ_{0,h} \]
where the right side denotes Jacobi forms in the variables $(u, \tilde{q})$.

\vspace{4pt}
\noindent \textbf{Property 3.} For every $g$ and $d$
the series
\begin{equation} \CF_{g,d}(q) = \big[ \, \CF \, \big]_{u^{2g-2} \tilde{q}^{d-1}}
\label{Cgd} \end{equation}
satisfies
\begin{enumerate}
\item[(a)] $\CF_{g,d}(q) \in \frac{1}{\Delta(q)} \QMod_{2g}$,
\item[(b)] $\frac{d}{dC_2} \CF_{g,d} = (2d-2) \CF_{g-1,d}$.
\end{enumerate}
\vspace{9pt}

We show Properties 1--3 determine the series $\CF$ up to a single coefficient.

\begin{prop} \label{PropUniqueness}
Assume the series $\CF(u,q,\tilde{q})$ satisfies Properties 1,2,3 above.
If moreover $a_{0,0,0} = 0$, then $\CF = 0$.
\end{prop}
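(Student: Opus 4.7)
The plan is to show by strong induction on $g$ that $\CF_{g,d}(q) = 0$ for all $d \geq 0$; together with Property~1 this forces $\CF = 0$. For the base case $g=0$, apply Property~2 at $h=0$: since $\CJ_{0,0} = \BQ$, there is a constant $c \in \BQ$ with $[\CF]_{q^{-1}} = c / \big(\phi_{-2,1}(u,\tilde{q})\Delta(\tilde{q})\big)$. Since $\phi_{-2,1} = u^2 + O(u^4)$ and $\Delta(\tilde{q}) = \tilde{q} + O(\tilde{q}^2)$, the $u^{-2}\tilde{q}^{-1}$-coefficient of this series equals $c$, forcing $c = a_{0,0,0} = 0$ and hence $[\CF]_{q^{-1}} = 0$, i.e. $a_{g,0,d} = 0$ for all $g,d$. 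Property~3a gives $\CF_{0,d} \in \frac{1}{\Delta}\QMod_0 = \BQ/\Delta$, whose leading Fourier coefficient is $a_{0,0,d} = 0$, so $\CF_{0,d} = 0$.

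For the inductive step, fix $g \geq 1$ and assume $\CF_{g',d} = 0$ for all $g' < g$ and all $d$. Property~3b gives $\tfrac{d}{dC_2}\CF_{g,d} = (2d-2)\CF_{g-1,d} = 0$, so $\CF_{g,d} \in \frac{1}{\Delta}\Mod_{2g}$. By Lemma~\ref{Modconstraint} applied to $\Delta \cdot \CF_{g,d} \in \Mod_{2g}$, it suffices to show $a_{g,h,d} = 0$ for every $0 \leq h \leq \lfloor g/6 \rfloor$ and every $d$. Fix such an $h$. By Property~2 the class $\psi_h := \phi_{-2,1}\Delta \cdot [\CF]_{q^{h-1}} \in \CJ_{0,h}$ is a Jacobi form; write its $u$-expansion as $\psi_h = \sum_\ell f_\ell(\tilde{q}) u^\ell$. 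By the inductive hypothesis $[\CF]_{q^{h-1}}$ starts at $u^{2g-2}$, so $\psi_h$ starts at $u^{2g}$ and $f_\ell = 0$ for $\ell < 2g$. By Lemma~\ref{Lemma_JacddC2}, $\tfrac{d}{dC_2}f_{2g} = 2h\cdot f_{2g-2} = 0$, so $f_{2g} \in \Mod_{2g}$; a direct extraction from the product expansion yields $f_{2g}(\tilde{q}) = \Delta(\tilde{q}) \sum_d a_{g,h,d}\tilde{q}^{d-1}$. Applying Lemma~\ref{Modconstraint} to $f_{2g}$ reduces the vanishing of all $a_{g,h,d}$ (in $d$) to the vanishing of $a_{g,h,d}$ for $d \leq \lfloor g/6 \rfloor$ only.

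Finally, treat the finite set $\{(h,d) : h,d \leq \lfloor g/6 \rfloor\}$. Apply Lemma~\ref{Lemmaconstraint2} to $[\CF]_{q^{h-1}}$: there exist scalars $c^{(h)}_{g',d}$ with $g' \leq h+d$ such that $[\CF]_{q^{h-1}} = \sum_d \big(\sum_{g'=0}^{h+d} c^{(h)}_{g',d} (y^{1/2}+y^{-1/2})^{2g'-2}\big)\tilde{q}^{d-1}$. Since $(y^{1/2}+y^{-1/2})^{2g'-2} = u^{2g'-2} + O(u^{2g'})$, the change of basis between $\{a_{G,h,d}\}_{G \leq h+d}$ and $\{c^{(h)}_{g',d}\}_{g' \leq h+d}$ is lower-triangular with unit diagonal, and the inductive hypothesis $a_{g',h,d} = 0$ for $g' < g$ forces $c^{(h)}_{g',d} = 0$ for all $g' < g$. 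Under the present constraint $h,d \leq \lfloor g/6 \rfloor$ we have $h + d \leq 2\lfloor g/6 \rfloor < g$, so in fact every $c^{(h)}_{g',d}$ with $g' \leq h+d$ already vanishes, giving $a_{g,h,d} = 0$. This closes the induction.

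The main obstacle is coordinating the three Properties into a finite check: Property~3 alone only reduces the $C_2$-dependence without producing numerical data, and Property~2 alone constrains each $q$-coefficient to live in an infinite-dimensional space. The key is that the weight bound $\lfloor g/6 \rfloor$ coming from Lemma~\ref{Modconstraint} appears twice — once in the $q$-direction via Property~3 and once in the $\tilde{q}$-direction via Lemma~\ref{Lemma_JacddC2} — and the resulting inequality $h + d \leq 2\lfloor g/6 \rfloor < g$ is precisely what makes the finiteness statement of Lemma~\ref{Lemmaconstraint2} close against the inductive hypothesis.
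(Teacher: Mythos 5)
Your proof is correct, but it is organized differently from the paper's. The paper runs its induction on $h$: it shows the Jacobi-form coefficients $[\CF]_{q^{h-1}}$ vanish one at a time, with a nested induction on $g$ inside each step, and its closing move is to apply Lemma~\ref{Jacconstraint} to $\Theta^2\Delta\,[\CF]_{q^{N}} \in \CJ_{0,N+1}$ — a step that rests on the Eichler--Zagier development-coefficient machinery ([EZ, Thms.~3.1, 9.2]). You instead run a single strong induction on the genus $g$, killing the $u^{2g-2}$-coefficients wholesale, and you never invoke Lemma~\ref{Jacconstraint} at all: your substitute is Lemma~\ref{Lemma_JacddC2}, which shows the leading Taylor coefficient $f_{2g} = \Delta(\tilde{q})\sum_d a_{g,h,d}\tilde{q}^{d-1}$ of $\psi_h = \phi_{-2,1}\Delta\,[\CF]_{q^{h-1}}$ is $C_2$-free (since $f_{2g-2}=0$ by induction), hence an honest modular form, so that the elementary Lemma~\ref{Modconstraint} can be applied twice — once in the $q$-direction via Property~3 and once in the $\tilde{q}$-direction via $f_{2g}$. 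Both arguments ultimately turn on the same numerology, namely the index--degree bound $g \leq h+d$ of Lemma~\ref{Lemmaconstraint2} played against the weight-$2g$ coefficient bound $\lfloor g/6\rfloor$; your closing inequality $h+d \leq 2\lfloor g/6\rfloor < g$ is the counterpart of the paper's condition $\lfloor (h+d)/6\rfloor \leq N$. What your route buys is self-containedness and symmetry: it needs only the free-algebra structure of $\CJ$ (through Lemma~\ref{Lemma_JacddC2}, proved in the paper by direct computation) rather than the injectivity of the development map $\CD$, and it treats $q$ and $\tilde{q}$ on an equal footing; what the paper's route buys is the stronger intermediate statement that each full Jacobi form $[\CF]_{q^{h-1}}$ vanishes at every stage, exploiting Jacobi-form rigidity directly. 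One small point worth making explicit in your write-up: the identity $f_{2g} = \Delta(\tilde{q})\sum_d a_{g,h,d}\tilde{q}^{d-1}$ uses that $\phi_{-2,1} = \Theta^2$ begins at exactly $u^2$ with coefficient $1$, which follows from the expansion \eqref{Theta}; you gesture at this with ``direct extraction,'' and it does check out.
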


\begin{proof}
Let $\CF$ be a series which satisfies Properties 1--3 and $a_{0,0,0} = 0$.
We show by induction that
$[ \CF ]_{q^{h-1}} = 0$ for every $h \geq 0$.

\vspace{4pt}
\noindent \textbf{Base case:}
By Property 2, Lemma~\ref{Lemmaconstraint2}
and $a_{0,0,0} = 0$
we have
$[\CF]_{q^{-1} \tilde{q}^{-1}} = 0$,
hence $[\phi_{-2,1} \Delta \cdot \CF]_{q^{-1} \tilde{q}^{0}} = 0$,
and since $\CJ_{0,0} = \BQ$ therefore
$\phi_{-2,1} \Delta [ \CF ]_{q^{-1}} = 0$.

\vspace{4pt}
\noindent \textbf{Induction:}
Let $N \geq 0$ and assume $[ \CF ]_{q^{h-1}} = 0$ for all $h \leq N$.
Then
for all $g$ and $d$
the series $\CF_{g,d}(q)$ defined in \eqref{Cgd} satisfies
\begin{equation}
[ \CF_{g,d}(q) ]_{q^{\ell}} = 0 \ \text{ for all } \ell < N.
\label{Cgdvanish} \end{equation}

\noindent \emph{Claim:} $\CF_{g,d}(q) = 0$ whenever $\lfloor g/6 \rfloor \leq N$.

\noindent \emph{Proof of Claim:}
We use a second induction over all $g$
such that $\lfloor g/6 \rfloor \leq N$. If $g = 0$, then
by Property 3 and $\QMod_0 = \BQ$
we have $\CF_{0,d} = a/\Delta(q)$ for some $a \in \BQ$,
so the claim follows from \eqref{Cgdvanish}.
Assume the claim holds for $g-1$.
We show it holds for $g$.
By Property 3 and induction we have
\[ \frac{d}{dC_2} \CF_{g,d} = (2d-2) \CF_{g-1,d} = 0. \]
We conclude
$\CF_{g,d} \Delta \in \Mod_{2g}$.
By \eqref{Cgdvanish} we have
$[ \CF_{g,d} \Delta ]_{q^{\ell}} = 0$
for all $\ell \leq N$,
hence in particular
for all $\ell \leq \lfloor g/6 \rfloor$
(since $g$ lies in the range $\lfloor g/6 \rfloor \leq N$).
Using Lemma~\ref{Modconstraint} we conclude $\CF_{g,d} = 0$.
\qed

We continue the proof of the Proposition.
By Property~2 and Lemma~\ref{Lemmaconstraint2},
for every $h,d$ there exist $c_{g,h,d} \in \BQ$
such that
\begin{equation} \label{399i8}
\big[ \, \CF  \, \big]_{q^{h-1} \tilde{q}^{d-1}}
=
\sum_{g=0}^{h+d} c_{g,h,d} (y^{\frac{1}{2}} + y^{-\frac{1}{2}})^{2g-2}
\end{equation}
under the variable change $y = -e^{iu}$.
We have
\[ y^{1/2} + y^{-1/2} =
- 2 \sin\left( \frac{u}{2} \right) = - u + \frac{1}{24} u^3 + \ldots \ . \]
Hence for every $h,d$
we have an invertible
and upper-triangular relation
between the coefficients $\{ a_{g,h,d} \}_{g \geq 0}$
and the coefficients $\{ c_{g,h,d} \}_{g \geq 0}$.
By the Claim we have
$a_{g,h,d} = 0$
whenever $\lfloor g/6 \rfloor \leq N$.
Therefore
$c_{g,h,d} = 0$ for all $\lfloor g/6 \rfloor \leq N$.
Since $g \leq h+d$ in
the sum in \eqref{399i8}
we thus find
\begin{equation} \big[ \, \CF  \, \big]_{q^{h-1} \tilde{q}^{d-1}} = 0
\, \text{ for all } h,d \text{ such that }\,
\left\lfloor \frac{h+d}{6} \right\rfloor \leq N.
\label{vanish3} \end{equation}

Let $\phi(u,\tilde{q}) = [ \CF ]_{q^{N}}$.
We show $\phi = 0$ and conclude the induction step.
By Property 2 we have $\Theta^2 \Delta \phi \in \CJ_{0,N+1}$.
On the other hand
specializing to $h = N+1$ in \eqref{vanish3} and shifting by
$\Theta^2 \Delta$ yields
\begin{equation} \label{vanish4}
\left[ \Theta^2 \Delta \phi \right]_{\tilde{q}^{\ell}} = 0
\end{equation}
for all $\ell$ such that $\lfloor \frac{1}{6} (N+\ell+2) \rfloor \leq N + 1$,
or equivalently, such that
$\ell < 5N+10$.
Since $N \geq 0$
this implies
the vanishing of \eqref{vanish4}
for all $\ell \leq \lfloor (N+1)/6 \rfloor$.
An application of Lemma~\ref{Jacconstraint} yields $\phi = 0$.
\end{proof}

\subsection{Proof of constraints I}
\label{Subsection_Comparision}
Recall the Igusa cusp form $\chi_{10}$ and let
\begin{equation}
\CF(u,q, \tilde{q}) = -\frac{1}{\chi_{10}(p,q,\tilde{q})}
\label{CFdef} \end{equation}
be the Laurent expansion in $u$ under the variable
change $p=e^{iu}$.

\begin{prop}
\label{ConstraintProp1}
$\CF(u,q, \tilde{q})$ satisfies
Properties 1--3 of Section~\ref{Subsection_Constraints}.
\end{prop}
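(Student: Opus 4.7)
The strategy is to use the Fourier--Jacobi expansion of the Igusa cusp form $\chi_{10}$, together with the classical fact that $\chi_{10}$ is a genus $2$ Siegel cusp form of weight $10$ whose leading Fourier--Jacobi coefficient is a nonzero multiple of $\phi_{-2,1}(u,q)\Delta(q)$, and to use the $q\leftrightarrow\tilde q$ symmetry of $\chi_{10}$.

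First I would verify Property 1. From the Borcherds product \eqref{IGUSA} the form $\chi_{10}$ is invariant under $u\mapsto -u$ (equivalently $p\mapsto p^{-1}$), vanishes to exact order two along the diagonal $u=0$, and has leading $(q,\tilde q)$-bidegree equal to $q\tilde q$ times an invertible power series. Taking the reciprocal yields a Laurent expansion in $u$ with only even powers starting at $u^{-2}$, and an overall $q^{-1}\tilde q^{-1}$ prefactor; this gives the shape of Property 1.

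Next I would prove Property 2. Writing the Fourier--Jacobi expansion
\[
\chi_{10}(p,q,\tilde q)=\sum_{n\geq 1}\phi_n(u,\tilde q)\,q^n,
\qquad \phi_n\in\CJ_{10,n}(u,\tilde q),
\]
(where I am using the $q\leftrightarrow\tilde q$ symmetry of $\chi_{10}$ so that $q$ plays the role of the Fourier--Jacobi variable), one has $\phi_1 = c\cdot \phi_{-2,1}(u,\tilde q)\,\Delta(\tilde q)$ for a nonzero constant $c$ since the space of weight $10$ Jacobi cusp forms of index $1$ is one-dimensional. Expanding the geometric series
\[
-\frac{1}{\chi_{10}}=-\frac{1}{\phi_1 q}\sum_{k\geq 0}\Bigl(-\sum_{n\geq 2}(\phi_n/\phi_1)q^{n-1}\Bigr)^{k}
\]
shows that each coefficient of $q^{h-1}$ is a $\BQ$-linear combination of ratios $\phi_{n_1}\cdots\phi_{n_r}/\phi_1^{r+1}$ with $n_1+\cdots+n_r=h+r$, hence lies in $\frac{1}{\phi_{-2,1}(u,\tilde q)\Delta(\tilde q)}\CJ_{0,h}(u,\tilde q)$ as required.

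Finally for Property 3 I would extract the $u$-expansion. Using the non-symmetrized Fourier--Jacobi expansion in $\tilde q$ (the same argument as above with $q,\tilde q$ interchanged), the coefficient of $\tilde q^{d-1}$ in $\CF$ has the form
\[
[\CF]_{\tilde q^{d-1}}=\frac{\phi^{\ast}_d(u,q)}{\phi_{-2,1}(u,q)\Delta(q)},\qquad \phi^{\ast}_d\in\CJ_{0,d}(u,q).
\]
I would now invoke the key computational fact (an extension of Lemma~\ref{Lemma_JacddC2} to Laurent polynomials in $\Theta$): since $\phi_{-2,1}=\Theta^2$ and $\frac{d}{dC_2}\Theta=u^2\Theta$, $\frac{d}{dC_2}\wp=0$, every element of $\Theta^{-2}\CJ$ of ``effective index'' $m$ has $u$-expansion coefficients that are quasimodular forms of the expected weight and satisfies $\frac{d}{dC_2}(\,\cdot\,)=2mu^2(\,\cdot\,)$. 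Applied to $\psi_{d-1}:=\phi^{\ast}_d/\phi_{-2,1}\in \Theta^{-2}\CJ_{2,d}$, which has effective index $d-1$, the $u^{2g-2}$-coefficient of $\psi_{d-1}$ is a quasimodular form of weight $2g$, and
\[
\frac{d}{dC_2}[\psi_{d-1}]_{u^{2g-2}}=2(d-1)\,[u^2\psi_{d-1}]_{u^{2g-2}}=(2d-2)\,[\psi_{d-1}]_{u^{2g-4}}.
\]
Dividing by $\Delta(q)$ gives Property 3(a) and 3(b).

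The main obstacle will be the extension of Lemma~\ref{Lemma_JacddC2} to the mildly meromorphic setting of Laurent polynomials in $\Theta$ and the careful bookkeeping of weights and indices: one needs to confirm that the Fourier--Jacobi coefficients of the reciprocal really do lie in $\Theta^{-2}\BQ[\Theta^2,\wp,C_4,C_6]$ rather than in some larger class of meromorphic Jacobi forms, and that the $\frac{d}{dC_2}$-identity $\frac{d}{dC_2}=2m u^2$ survives the division by $\phi_{-2,1}$. This is the only nontrivial structural point; everything else is bookkeeping from the known theory of $\chi_{10}$ and from Lemmas~\ref{Lemma_JacddC2} and \ref{Lemmaconstraint2}.
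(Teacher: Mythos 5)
Your proposal is correct and reaches all three properties, but for Property 2 it takes a genuinely different route from the paper. The paper never inverts the Fourier--Jacobi expansion termwise: it uses the Gritsenko--Nikulin form of the Borcherds lift, rewriting \eqref{IGUSA} as the exponential formula \eqref{102},
\[
\chi_{10} \;=\; -\,\tilde q\,\Theta(u,q)^2\,\Delta(q)\,
\exp\Bigl(-\sum_{\ell\ge 1}\tilde q^{\ell}\,(Z|_{0,1}V_{\ell})(u,q)\Bigr),
\qquad Z = 2\phi_{0,1}\in\CJ_{0,1},
\]
so that $-1/\chi_{10}$ is $\tfrac{1}{\tilde q\,\Theta^2\Delta}$ times the same exponential with the opposite sign; since $Z|_{0,1}V_\ell\in\CJ_{0,\ell}$ by Eichler--Zagier, every $\tilde q^{d-1}$-coefficient visibly lies in $\frac{1}{\Theta^2\Delta}\CJ_{0,d}$ and no divisibility question ever arises. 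Your geometric-series expansion instead needs exactly the statement you flag at the end: that each Fourier--Jacobi coefficient $\phi_n$ of $\chi_{10}$ is divisible by $\phi_1 = c\,\phi_{-2,1}\Delta$ inside the ring of weak Jacobi forms, i.e.\ $\phi_n/\phi_1\in\CJ_{0,n-1}$. This is true and fillable by standard facts, so your flagged obstacle is not a fatal gap: $\phi_n$ is a Jacobi \emph{cusp} form of weight $10$ and index $n$, hence $\phi_n(\tau,0)$ is a cusp form in $S_{10}(\mathrm{SL}_2(\BZ))=0$; by evenness in $z$ and the elliptic transformation law, $\phi_n$ then vanishes to order $\ge 2$ along $z\in\BZ\tau+\BZ$, so $\phi_n/\phi_{-2,1}$ is a weak Jacobi form of weight $12$ and index $n-1$, and cuspidality of $\phi_n$ forces the $q^0$-term of this quotient to vanish (compare leading coefficients in the integral domain of Laurent polynomials), permitting the further division by $\Delta$. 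With that inserted, your ratios $\phi_{n_1}\cdots\phi_{n_r}/\phi_1^{r+1}$ indeed land in $\frac{1}{\phi_{-2,1}\Delta}\CJ_{0,h}$, which is the paper's \eqref{phidddd}. What each approach buys: the paper's formula \eqref{102} is shorter and makes Property 2 structural, while your argument uses only the general Siegel/Fourier--Jacobi theory plus $\dim S_{10}=0$, not the specific multiplicative lift structure, so it is somewhat more robust. Your treatments of Properties 1 and 3 essentially coincide with the paper's: Property 1 from the shape of the product \eqref{IGUSA} (equivalently from the expansions of $\Theta$, $\Delta$ and the generators of $\CJ$), and Property 3 by the extension of Lemma~\ref{Lemma_JacddC2} via $\frac{d}{dC_2}\Theta = u^2\Theta$ and $\frac{d}{dC_2}\wp=0$, which is precisely the paper's ``argument similar to the proof of Lemma~\ref{Lemma_JacddC2}'' yielding $\frac{d}{dC_2}\phi_d=(2d-2)u^2\phi_d$; your effective-index bookkeeping $\psi_{d-1}\in\Theta^{2(d-1)}\BQ[\wp,C_4,C_6]$ is the correct way to make it precise, the identity trivially survives division by $\phi_{-2,1}$ since $\frac{d}{dC_2}\Theta^{-2}=-2u^2\Theta^{-2}$, and well-definedness of $d/dC_2$ is covered by the paper's observation that the relations among the generators do not involve $C_2$.
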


\begin{proof}
Let $V_\ell$ be the $\ell^{\text{th}}$ Hecke operator on
Jacobi forms defined in \cite[\S 4]{EZ}.
Definition \eqref{IGUSA} is equivalent to
\begin{equation}
\chi_{10} = - \tilde{q} \Theta(u,q)^2 \Delta(q)
\exp\left( - \sum_{\ell = 1}^{\infty}
\tilde{q}^\ell \cdot (Z|_{0,1}V_{\ell})(u,q) \right)
\label{102} \end{equation}
where $Z = 2 \phi_{0,1}(u,q) \in \CJ_{0,1}$.
By \cite[\S 4]{EZ} for every $\ell \geq 1$ we have
\[ Z|_{0,1}V_{\ell} \in \CJ_{0, \ell} \]
from which we obtain for all $d$
\begin{equation} \phi_d = \big[ \, \CF \, \big]_{\tilde{q}^{d-1}}
\in \frac{1}{\Theta^2 \Delta} \CJ_{0,d}.
\label{phidddd} \end{equation}
Using the $(u,q)$-expansions
of $\Theta$, $\Delta$ and the generators of $\CJ$
we conclude Property 1.
By \eqref{IGUSA}
the series $\CF$ is invariant
under
interchanging $q$ and $\tilde{q}$,
\begin{equation*}
\CF(u,q, \tilde{q}) = \CF(u, \tilde{q}, q).
\end{equation*}
Hence \eqref{phidddd} implies also Property 2.

By an argument similar to the proof of Lemma~\ref{Lemma_JacddC2}
the $u^{2g-2}$-coefficient of $\Delta(q) \phi_d(u,q)$
is a quasimodular form of weight $2g$ and
\[ \frac{d}{dC_2} \phi_d = (2d-2) u^2 \phi_d. \]
This shows Property 3.
\end{proof}

\subsection{Proof of constraints II}
\label{Subsection_ProofConstraintsII}
Recall from \eqref{K3xEpartition} the three-variable
generating series of Gromov--Witten invariants
\[
\CZ(u,q, \tilde{q})
=
\sum_{h=0}^{\infty} \sum_{d=0}^{\infty} \sum_{g=0}^{\infty}
\mathsf{N}_{g,h,d} u^{2g-2} q^{h-1} \tilde{q}^{d-1}.
\]

\begin{prop}
\label{ConstraintProp2}
$\CZ(u,q, \tilde{q})$ satisfies
Properties 1-3 of Section~\ref{Subsection_Constraints}.
\end{prop}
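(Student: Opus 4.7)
The proof proceeds property by property. Property~1 is immediate from the definition \eqref{K3xEpartition}.

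Property~2 is handled by passing to the Pandharipande--Thomas side via the GW/PT correspondence \eqref{GWPTcorr}. For each fixed primitive K3 class $\beta_h$, the reduced PT partition function $\sum_{n,d} \mathsf{P}_{n,h,d} y^n \tilde{q}^{d-1}$ was shown in \cite{O1, OS1} to lie in $\frac{1}{\phi_{-2,1}\Delta}\CJ_{0,h}$, using wall-crossing in the motivic Hall algebra together with the action of derived auto-equivalences of $S \times E$. After the substitution $y = -e^{iu}$, this delivers Property~2 for $\CZ$.

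Property~3 is obtained by combining Behrend's product formula \cite{B2} with the holomorphic anomaly equations proven on both factors. Applying the product formula to the reduced virtual class on $\Mbar_{g,1}^{\bullet}(S \times E, (\beta_h, d))$ and using primitivity of $\beta_h$ (which confines the entire K3 class to the single connected component bearing the marked point) expresses $\mathsf{N}_{g,h,d}$ as a tautological intersection pairing on $\Mbar_{g,n}$ between the K3 cycle-valued class $\CK_g$ and the elliptic cycle-valued classes $\CC_g$, summed over graph splittings encoding the disconnected floating elliptic components. Summing over $h$, Part~(a) follows from the numerical quasimodularity \eqref{454353} of $\CK_g$; the weight refinement $\QMod_{2g}$ comes from the refined K3 weight statement of \cite[Thm.9]{BOPY} combined with Corollary~\ref{CorE} on the elliptic side.

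Part~(b), the identity $\frac{d}{dC_2}\CF_{g,d} = (2d-2)\CF_{g-1,d}$, is the main obstacle. Since $\frac{d}{dC_2}$ is taken with respect to $q$, it acts only on the K3 factor inside the product-formula expression, and Theorem~\ref{thm_K3HAE} replaces $\frac{d}{dC_2}\CK_g$ by $\mathsf{T}_g$ inside the resulting tautological integral. The hard part is verifying that the output collapses to the clean coefficient $(2d-2)\CF_{g-1,d}$: the genus-drop summand $\iota_*\Delta^!\CK_{g-1}$ in $\mathsf{T}_g$ produces $\CF_{g-1,d}$ directly, while the remaining four summands (the splitting term, the $-2\psi_i$ correction involving $\pi^*\pi_*$, the $20\,F$-insertion, and the $\sigma$-correction encoding $H^2(S) = \langle B,F\rangle \oplus V$) must conspire with the elliptic-side contribution --- organized via Corollary~\ref{Cor_Trivial_elliptic_fibration} and the elliptic HAE of Theorem~\ref{thm_E_HAE} --- to yield the single numerical factor $2d-2$, which ultimately reflects an Euler-characteristic-type contribution from degree-$d$ covers of $E$. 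Tracking this combinatorial matching carefully is the delicate step.
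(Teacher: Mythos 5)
Your outline gets Properties 1 and 3(a) right and correctly identifies Theorem~\ref{thm_K3HAE} as the engine for Property 3(b), but there is a genuine gap in your Property 2, and it is precisely the point where the paper needs its new Gromov--Witten input. You assert that \cite{O1, OS1} show the fixed-$h$ partition function lies in $\frac{1}{\phi_{-2,1}\Delta}\CJ_{0,h}$. They do not: the wall-crossing and derived auto-equivalence arguments yield only the \emph{elliptic} transformation law. What \cite[Thm.4]{OS1} gives, as used in \eqref{393i3}, is the structural form $[\CZ]_{q^{h-1}} = \frac{\Theta^{2h-2}}{\Delta(\tilde q)}\sum_{i=0}^{h} f_i(\tilde q)\,\wp^{h-i}(u,\tilde q)$ with coefficients only known to be \emph{quasimodular}, $f_i \in \QMod_{2i}$; membership in $\CJ_{0,h}$ requires the $f_i$ to be honest modular forms. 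The paper closes this gap with Lemma~\ref{Lemma9457}: the product formula expresses the connected invariants as $\int \CC_g(\pt)\cup \pi_{\ast}([\Mbar_{g,1}(S,\beta_h)]^{\text{red}}\ev_1^{\ast}(\beta_h^{\vee}))$, and Corollary~\ref{CorE} together with the elliptic-curve holomorphic anomaly equation (Theorem~\ref{thm_E_HAE}) yields $\frac{d}{dC_2}\CZ_h = (2h-2)u^2\,\CZ_h$ in the $\tilde q$-direction, which forces $\frac{d}{dC_2}f_i = 0$ and hence $f_i \in \Mod$. Note that your invocation of Theorem~\ref{thm_E_HAE} and Corollary~\ref{Cor_Trivial_elliptic_fibration} occurs in your Property 3(b) discussion, where it plays no role (there $\frac{d}{dC_2}$ is taken in $q$ and acts only on the K3 factor, the elliptic class $\CC_{g,d}(\pt)$ being a fixed cohomology class); its true role is here, in Property 2. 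If the full Jacobi property were already available on the sheaf side, a large part of this paper's elliptic-curve machinery would be superfluous.

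For Property 3(b) your strategy matches the paper's, but you stop at declaring the matching of the five summands of $\mathsf{T}_g$ with $(2d-2)\CF_{g-1,d}$ ``the delicate step,'' i.e.\ the actual verification is absent, and attacked head-on it would be unpleasant. The paper makes it tractable by two devices you do not mention. First, the connected invariant is written as $\int \widetilde{\CK}_g(F)\cup\CC_{g,d}(\pt)$ with the single insertion the fiber class $F$, for which $\pi^{\ast}\pi_{\ast}F = 0$, $\langle F,F\rangle = 0$, and the $\sigma$-term is empty since $n=1$; thus $\mathsf{T}_g$ collapses to exactly two terms \eqref{53134}: the genus-reduction term, which via the splitting of $\CC_{g,d}(\pt)$ along $\Delta_E$ and the divisor equation gives $2d\,\CZ'_{g-1,d}$, and the $(g_2,d)=(1,0)$-virtual term $2\cdot 24\cdot j_{\ast}(\widetilde{\CK}_{g-1}(F,F)\times[\Mbar_{1,1}])$, which pairs with $\int_{\Mbar_{1,1}}\CC_{1,d_2}(\pt) = [C_2]_{q^{d_2}}$ to produce the correction in \eqref{53315645}. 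Second, the passage from connected to disconnected series, $\CZ_g = \CZ'_g\cdot\Delta(\tilde q)^{-1}$, absorbs that correction via $\tilde q\frac{d}{d\tilde q}\log\Delta^{-1}(\tilde q) = 24\,C_2(\tilde q)$, turning $2\tilde q\frac{d}{d\tilde q} + 48C_2(\tilde q)$ into plain $2\tilde q\frac{d}{d\tilde q}$ and giving the factor $(2d-2)$ on $\tilde q^{d-1}$-coefficients --- so the shift from $2d$ to $2d-2$ comes from the disconnected normalization, not from an unexplained conspiracy. One further prerequisite you gloss over: Theorem~\ref{thm_K3HAE} is only valid against tautological classes $p^{\ast}(\alpha)$, so one must first know $\CC_{g,d}(\pt)$ is tautological, which follows from \cite{FPrel} and a degeneration argument.
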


We begin the proof with two Lemmas.
\begin{lemma} \label{Lemma9457}
For all $g$ and $h$ the series
$f_{g,h}(q) = \sum_{d \geq 0} \mathsf{N}_{g,h,d} q^{d-1}$
satisfies
\begin{enumerate}
\item[(a)] $f_{g,h}(q) \in \frac{1}{\Delta(q)} \QMod_{2g}$,
\item[(b)] $\frac{d}{dC_2} f_{g,h} = (2h-2) f_{g-1,h}$.
\end{enumerate}
\end{lemma}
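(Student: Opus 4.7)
The plan is to use Behrend's product formula \cite{B2} to factor the reduced virtual class on $\Mbar_{g,n}^{\bullet}(X,(\beta_h,d))$ for $X = S \times E$ as an intersection of the reduced K3 virtual class with the ordinary virtual class on $E$, and then to transfer the quasimodularity and holomorphic anomaly equation from Theorems~\ref{thm_E_quasimodularity} and~\ref{thm_E_HAE} to $f_{g,h}(q)$. After pushing to $\Mbar_{g,1}^{\bullet}$ and pairing with $\ev_1^{\ast}(\pi_1^{\ast}\beta^{\vee}\cup\pi_2^{\ast}\pt)$ via the K\"unneth decomposition, this expresses $f_{g,h}(q)$ as the intersection on $\Mbar_{g,1}^{\bullet}$ of a fixed K3 cycle (built from the reduced K3 Gromov--Witten theory in class $\beta_h$ with insertion $\beta^{\vee}$) with a disconnected elliptic-curve generating series assembled from connected factors $\CC_{g_0}(\pt)$ with insertion $\pt$.

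Because $\beta_h$ is primitive, the only way to distribute the K3 class over the connected components of a disconnected source is to place $\beta_h$ entirely on the marked component and K3 class zero on each other component. Since $[\Mbar_{g'}(S,0)]^{\vir} = 0$ for $g' \geq 2$ by holomorphic symplecticity, the auxiliary unmarked components are constrained to genus $0$ or $1$, and a standard multi-cover calculation identifies their contribution in the elliptic variable as a weight-zero factor of $1/\Delta(q)$. Quasimodularity (a) then follows from Theorem~\ref{thm_E_quasimodularity}, and the weight $2g$ follows from the refined statement Corollary~\ref{CorE}: the insertion $\pt$ has real degree $2$, so $\CC_{g_0}(\pt)$ has weight $2g_0$, and the weights add across the connected components.

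For the holomorphic anomaly equation (b), I would apply Theorem~\ref{thm_E_HAE} to each elliptic factor and trace the terms through the product formula. The structure parallels the CY threefold specialization of Conjecture~\ref{Conj_HAE} recorded in the introduction: the splitting sum $\sum_{\kk = \kk_1 + \kk_2}\langle\kk_1,\kk_2\rangle F_{g_1,\kk_1}F_{g_2,\kk_2}$ vanishes because primitivity of $\beta_h$ forces one $\kk_i$ to be zero, making $\langle\kk_1,\kk_2\rangle = 0$; the Euler-characteristic term $-\tfrac{1}{240}\delta_{g2}\delta_{\kk 0}e(X)$ vanishes because $\beta_h \neq 0$; and the loop term $\iota_{\ast}\CC_{g_0-1}(\pt,\1,\1)$, after gluing the two new markings on the K3 side via the K\"unneth decomposition of $\Delta_S \in H^{\ast}(S\times S)$ and pairing against $\beta^{\vee}$ using $\langle\beta^{\vee},\beta_h\rangle = 1$, produces the coefficient $\langle\beta_h,\beta_h\rangle = 2h-2$ multiplying $f_{g-1,h}(q)$. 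The forget-marking term $-2\psi_1\CC_{g_0}(\1)$, when paired with $\CK_{g_0}(\beta^{\vee})$, is handled by the divisor equation on the K3 side and absorbed into the main term.

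The main obstacle is the careful bookkeeping verifying that after applying the product formula, the elliptic holomorphic anomaly equation, and the K\"unneth decomposition of $\Delta_S$, the various boundary and gluing contributions collapse precisely to the single coefficient $\langle \beta_h, \beta_h \rangle = 2h-2$ with no residual boundary terms. This is the same self-intersection identity that, in general, produces the factor $\langle \kk + K_B, \kk \rangle$ in the CY threefold specialization of Conjecture~\ref{Conj_HAE}, specialized here to $B = S$ (so $K_S = 0$) and $\kk = \beta_h$.
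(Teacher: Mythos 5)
Your overall route --- Behrend's product formula to split the reduced class on $S \times E$ into the reduced K3 class against the ordinary class on $E$, the connected/disconnected reduction producing the $1/\Delta(q)$ factor, Corollary~\ref{CorE} for part (a), and Theorem~\ref{thm_E_HAE} together with the diagonal self-intersection $\langle \beta_h,\beta_h\rangle = 2h-2$ for part (b) --- is exactly the paper's proof. The paper simply quotes \cite[Prop.1]{K3xE} for the relation $f_{g,h} = f'_{g,h}/\Delta(q)$ instead of re-deriving it by your multiple-cover analysis, and it flags in a footnote that the product formula must be justified for \emph{reduced} classes (either by re-running the argument of \cite{B2} or by degeneration) --- a point you use silently and should at least acknowledge.

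That said, two of your vanishing/absorption steps in (b) are justified incorrectly, even though both conclusions are true. First, the forget-marking term: with a single marking, Theorem~\ref{thm_E_HAE} produces $-2\psi_1 \cdot \CC_g(\1)$, and this vanishes identically because $\CC_g(\1) = p^{\ast}\CC_g()$ and $\CC_g() = 0$; it is \emph{not} ``handled by the divisor equation on the K3 side and absorbed into the main term'' --- the elliptic insertion here is the unit, not a divisor, and if this term were nonzero, absorbing it would change the coefficient away from $2h-2$. Second, your vanishing of the splitting sum via ``primitivity forces one $\kk_i = 0$'' imports the fibration picture of Conjecture~\ref{Conj_HAE} into a computation in which the K3 class never splits at all: after the product formula one has $f'_{g,h}(q) = \int_{\Mbar_{g,1}} \CC_g(\pt) \cup \mathsf{K}$ with the K3 cycle $\mathsf{K} = \pi_{\ast}\big([\Mbar_{g,1}(S,\beta_h)]^{\text{red}}\,\ev_1^{\ast}(\beta_h^{\vee})\big)$ a fixed, $q$-independent class, so $\frac{d}{dC_2}$ acts only on $\CC_g(\pt)$, and the splitting term of Theorem~\ref{thm_E_HAE} is $j_{\ast}\big(\CC_{g_1}(\pt,\1)\boxtimes \CC_{g_2}(\1)\big)$, which dies because $\CC_{g_2}(\1) = 0$ again. (As a standalone claim, primitivity does not forbid effective decompositions --- e.g.\ $\beta_h = B + hF$ splits as $B$ plus $hF$; to make your argument literal you would additionally need deformation invariance to a K3 with $\Pic = \BZ\beta_h$, or the holomorphic-symplectic vanishing of the non-reduced factor.) With these two corrections, your remaining bookkeeping --- $\iota^{\ast}$ of the K3 cycle inserting $\Delta_S$ at the two new markings, then the divisor equation extracting $\langle\beta_h,\beta_h\rangle$ from the $H^2 \otimes H^2$ K\"unneth part of $\Delta_S$ (note the coefficient comes from there, not from $\langle\beta_h^{\vee},\beta_h\rangle = 1$, which merely preserves the normalization of $f'_{g-1,h}$) --- is precisely the paper's computation, and $\frac{d}{dC_2}$ passes through $1/\Delta(q)$ since $\Delta$ is purely modular.
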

\begin{proof}
Let $\beta_h \in \Pic(S)$ be a primitive curve class
satisfying $\langle \beta_h, \beta_h \rangle = 2h-2$.
With the same notation as in \eqref{DefnNghd} let
\[ \mathsf{N}'_{g,h,d} = 
\int_{ [ \Mbar_{g,1}(X, (\beta_h,d)) ]^{\text{red}} }
\ev_1^{\ast}\left(
\pi_1^{\ast} ( \beta_h^{\vee} ) \cup \pi_2^{\ast} ( \pt ) \right)
\]
be the \emph{connected} reduced Gromov--Witten invariant
in class $(\beta_h,d)$. Define
\[
f'_{g,h}(q) = \sum_{d \geq 0} \mathsf{N}'_{g,h,d} q^d.
\]
By
\cite[Prop.1]{K3xE}
the connected and disconnected invariants are related by
\[
f_{g,h}(q) = \frac{1}{\Delta(q)} \cdot f'_{g,h}(q).
\]

An application of Behrend's product formula\footnote{The arguments
of \cite{B2} carry over to the reduced virtual class.
Alternatively, we may use a degeneration argument
similar to \cite[Prop.5]{K3xE} to reduce to the
standard case.} \cite{B2} yields
\[
f'_{g,h}(q)
=
\int_{\Mbar_{g,1}}
\CC_{g}(\pt) \cup
\pi_{\ast}\left( [\Mbar_{g,1}(S,\beta_h)]^{\text{red}} \ev_1^{\ast}(\beta_h^{\vee}) \right),
\]
where $\pi : \Mbar_{g,n}(S,\beta_h) \to \Mbar_{g,n}$
is the forgetful map.
By Corollary~\ref{CorE} therefore 
\[ f'_{g,h}(q) \in \QMod_{2g}. \]

By Theorem~\ref{thm_E_HAE} we have further
\[
\frac{d}{dC_2} \CC_g(\pt)
= \iota_{\ast} p^{\ast} \CC_{g-1}(\pt)
\]
where $p : \Mbar_{g-1,3} \to \Mbar_{g-1,1}$ is the map
forgetting the last two points. Hence
\begin{align} \label{132435}
\frac{d}{dC_2} f'_{g,h}
& = 
\int_{\Mbar_{g-1,1}}
\CC_{g-1}(\pt) \cup
p_{\ast} \iota^{\ast} \pi_{\ast}\left( [\Mbar_{g,1}(S,\beta_h)]^{\text{red}} \ev_1^{\ast}(\beta_h^{\vee}) \right).
\end{align}
By the compatibility
of the reduced virtual class under gluing
and by the divisor equation
we have
\begin{align*}
& 
p_{\ast} \iota^{\ast} \pi_{\ast}\left( [\Mbar_{g,1}(S,\beta_h)]^{\text{red}} \ev_1^{\ast}(\beta_h^{\vee}) \right) \\
& =
p_{\ast} \pi_{\ast}\left( [\Mbar_{g-1,3}(S,\beta_h)]^{\text{red}} \ev_1^{\ast}(\beta_h^{\vee}) (\ev_2 \times \ev_3)^{\ast}(\Delta_S) \right)  \\
& = 
\blangle \beta_h, \beta_h \brangle 
\pi_{\ast}\left( [\Mbar_{g-1,1}(S,\beta_h)]^{\text{red}} \ev_1^{\ast}(\beta_h^{\vee}) \right),
\end{align*}
where $\Delta_S \in H^{\ast}(S\times S)$ is the class
of the diagonal. Plugging into \eqref{132435}
and using the product formula again we conclude that
\[ \frac{d}{dC_2} f'_{g,h}
= \langle \beta_h, \beta_h \rangle f'_{g-1,h}(q). \qedhere \]
\end{proof}

\begin{lemma} \label{Lemma9458}
For all $g$ and $d$ the series
$\CZ_{g,d}(q) = \sum_{h \geq 0} \mathsf{N}_{g,h,d} q^{h-1}$
satisfies
\begin{enumerate}
\item[(a)] $\CZ_{g,d}(q) \in \frac{1}{\Delta(q)} \QMod_{2g}$,
\item[(b)] $\frac{d}{dC_2} \CZ_{g,d} = (2d-2) \CZ_{g-1,d}$.
\end{enumerate}
\end{lemma}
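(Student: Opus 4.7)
The argument will mirror the proof of Lemma~\ref{Lemma9457}, with the roles of the K3 factor $S$ and elliptic factor $E$ interchanged. Choose $\beta_h^\vee = F$, which satisfies $\langle \beta_h, F\rangle = 1$ uniformly in $h$. Behrend's product formula \cite{B2} applied to the reduced virtual class on $\Mbar^\bullet_{g,1}(X,(\beta_h,d))$ expresses
\[
\CZ_{g,d}(q) \ =\  \int_{\Mbar^\bullet_{g,1}} \widetilde{\CK}^\bullet_g(F) \cup \widetilde{\CC}^\bullet_{g,d}(\pt),
\]
where $\widetilde{\CK}^\bullet_g(F)$ is the K3-side pushforward summed over primitive classes $\beta_h$ (a series in $q$) and $\widetilde{\CC}^\bullet_{g,d}(\pt)$ is the $\tilde q^{d-1}$-coefficient of the disconnected elliptic pushforward, which is $q$-independent and tautological on $\Mbar^\bullet_{g,1}$. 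Primitivity of $\beta_h$ forces extra K3-side components of a disconnected source to lie in class $0$, so $\widetilde{\CK}^\bullet_g(F)$ decomposes as a finite sum indexed by graph data involving the connected $\widetilde{\CK}_{g_0}(F)$ and the $q$-independent $\widetilde{\CK}_{g_i}^{\mathrm{vir}}$ classes of Section~\ref{Section_K3_surfaces}.

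Part (a) then follows directly from the numerical quasimodularity~\eqref{454353} (together with the trivial $q$-independence of the virtual degree-$0$ factors): apply it term-by-term to the graph decomposition, noting that $\widetilde{\CC}^\bullet_{g,d}(\pt)$ pushes forward to a tautological class on $\Mbar_{g,1}$. The weight-$2g$ refinement is obtained by induction on $g$ from part (b), using that $\tfrac{d}{dC_2}$ drops weight by $2$ on $\QMod$ (cf.~\eqref{COMMUTATOR}); the base case $g=0$ is handled directly, using that the relevant genus-$0$ contributions are rigidly controlled by the combinatorics of disconnected genus-$0$ sources mapping to the positive-genus curve $E$.

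For part (b), since $\widetilde{\CC}^\bullet_{g,d}(\pt)$ is independent of $q$, the operator $\tfrac{d}{dC_2}$ with respect to $C_2(q)$ commutes past it and acts only on the $\widetilde{\CK}_{g_0}(F)$-factors. Theorem~\ref{thm_K3HAE} applied to each such factor replaces $\widetilde{\CK}_{g_0}(F)$ by $p_\ast \mathsf{T}_{g_0}(F)$. With only a single $F$-insertion, several terms simplify: the $20\langle F,F\rangle$-line vanishes since $\langle F,F\rangle = 0$, the $\sigma$-term is empty because $n=1$, and $\pi^\ast\pi_\ast F = 1$ reduces the $\psi$-correction to $-2\psi_1 \widetilde{\CK}_{g_0}(1)$. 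Repackaging the remaining nodal, splitting, and $-2\psi_1 \widetilde{\CK}_{g_0}(1)$ contributions through the product formula in reverse, the two extra markings introduced by the gluing maps $\iota$ and $j$ are dressed on the $E$ side with the K\"unneth decomposition $\Delta_E = 1\otimes \pt + \pt \otimes 1 - \aaa\otimes \bbb + \bbb\otimes\aaa$; the divisor equation $p_\ast \CC^\bullet_g(\dots,\pt) = d\cdot \CC^\bullet_g(\dots)$ on $E$ extracts the factor $+2d$ from the two $\pt$-terms of $\Delta_E$, while the $-2\psi_1 \widetilde{\CK}_{g_0}(1)$ term contributes $-2$ via a dilaton-type identity. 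Summing yields $(2d-2)\CZ_{g-1,d}$. The main obstacle will be the bookkeeping: tracking the graph-type expansion of $\widetilde{\CK}^\bullet_g$, expanding each surviving term of $\mathsf{T}_{g_0}(F)$, and systematically matching contributions on both sides via projection formulas, diagonal splittings, and divisor/dilaton identities on $E$---considerably more delicate than the single identification $\langle \beta_h,\beta_h\rangle = 2h-2$ that appears in the proof of Lemma~\ref{Lemma9457}.
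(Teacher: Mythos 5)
Your skeleton is indeed the paper's (product formula with $\beta_h^\vee = F$, the numerical K3 holomorphic anomaly equation of Theorem~\ref{thm_K3HAE} applied to the K3 factor, integrated against the tautological $E$-side class), but your execution of part (b) contains a genuine error. Since $F$ is a fiber class, it is contracted by $\pi$, so $\pi_\ast F = 0$ in $H^0(\p^1)$ and hence $\pi^\ast\pi_\ast F = 0$: the $\psi$-correction in $\mathsf{T}_g(F)$ vanishes identically. It is not $-2\psi_1\widetilde{\CK}_{g_0}(\1)$, and no dilaton-type identity supplies the $-2$ (a dilaton pushforward would in any case produce a $g$-dependent factor $2g-2+n$, not $-2$). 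With your accounting the divisor terms give $2d\,\CZ_{g-1,d}$ and nothing closes the books. The term you have not tracked is the splitting term of $\mathsf{T}$ with a degree-zero genus-one K3 factor: $\CK^{\vir}_1$ carries $\int_S c_2(S) = 24$, and paired with the genus-one $E$-side point class $\int_{\Mbar_{1,1}}\CC_{1,d_2}(\pt) = [C_2]_{q^{d_2}}$ it yields, on the level of \emph{connected} invariants, $\frac{d}{dC_2}\CZ'_{g,d} = 2d\,\CZ'_{g-1,d} + 48\sum_{d_1+d_2=d}\CZ'_{g-1,d_1}\,[C_2(\tilde q)]_{\tilde q^{d_2}}$. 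The $-2$ appears only after converting to the disconnected series via $\CZ_g(q,\tilde q) = \CZ'_g(q,\tilde q)\cdot\Delta(\tilde q)^{-1}$ (\cite[Prop.1]{K3xE}): the identity $C_2 = \frac{1}{24}\,\tilde q\frac{d}{d\tilde q}\log\Delta^{-1}$ absorbs the $48\,C_2(\tilde q)$-term into the logarithmic derivative of $\Delta(\tilde q)^{-1}$, giving $\frac{d}{dC_2}\CZ_g = 2\tilde q\frac{d}{d\tilde q}\CZ_{g-1}$, whose $\tilde q^{d-1}$-coefficient is $(2d-2)\CZ_{g-1,d}$. So the $-2$ is the exponent shift coming from the degree-zero vertical components, not a $\psi$-class contribution; working with the disconnected series from the start, as you propose, forces you to rediscover exactly this mechanism, and your sketch cannot close without it.

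There is a second gap, in part (a): the weight-$2g$ refinement cannot be obtained by induction from (b). If $\frac{d}{dC_2}(\CZ_{g,d}\Delta)$ has pure weight $2g-2$, then writing $\CZ_{g,d}\Delta = \sum_k f_k$ with $f_k \in \QMod_k$ only yields $\frac{d}{dC_2}f_k = 0$ for $k \neq 2g$, i.e. $f_k \in \Mod_k$: annihilation by $\frac{d}{dC_2}$ means \emph{purely modular}, not zero (this is precisely how the paper exploits the same mechanism in the proof of Proposition~\ref{ConstraintProp2}, where $\frac{d}{dC_2}f_i=0$ is used to conclude $f_i\in\Mod$, nothing more). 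The paper instead imports the weight statement directly from \cite[Sec.4.6]{BOPY}, the weight refinement of \cite[Prop.29]{MPT} underlying \eqref{454353}, applied to $\int_{\Mbar_{g,1}}\widetilde{\CK}_g(F)\cup\CC_{g,d}(\pt)$, where tautologicalness of $\CC_{g,d}(\pt)$ follows from Faber--Pandharipande \cite{FPrel} together with a degeneration argument --- an ingredient you assert but would also need to justify. Your non-refined claim $\CZ_{g,d}\in\frac{1}{\Delta}\QMod$ is fine, but the grading requires this external input.
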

\begin{proof}
Let $S \to \p^1$ be an elliptic surface with section $B$
and fiber class $F$, let $\beta_h = B + hF$ and define
\[ \widetilde{\CK}_{g}(\gamma_1, \ldots, \gamma_n)
=
\sum_{h \geq 0} q^{h-1} \pi_{\ast} 
\left( [\Mbar_{g,n}(S,\beta_h)]^{\text{red}} \prod_i \ev_i^{\ast}(\gamma_i) \right),
\]
where $\pi : \Mbar_{g,n}(S,\beta_h) \to \Mbar_{g,n}$
is the forgetful map.

Consider the generating series
of connected invariants
\[
\CZ'_{g,d}(q) = \sum_{h=0}^{\infty} \mathsf{N}'_{g,h,d} q^{h-1},
\]
and recall the classes $\CC_{g,d}(\ldots)$
from \eqref{GWclasses}.
By the product formula we have
\[
\CZ'_{g,d}(q)
=
\int_{\Mbar_{g,1}}
\widetilde{\CK}_{g}(F) \cup \CC_{g,d}(\pt).
\]
By a result of Faber and Pandharipande \cite{FPrel}
and a degeneration argument,
$\CC_{g,d}(\pt)$ is a tautological class.
Hence \cite[Prop.29]{MPT} resp. \cite[Sec.4.6]{BOPY} yields
\[ \CZ'_{g,d}(q) \in \frac{1}{\Delta(q)} \QMod_{2g}. \]

If Conjecture~\ref{conj_K3HAE} would hold we have
\begin{equation} \label{53134}
\frac{d}{dC_2}
\widetilde{\CK}_g(F)
=
\iota_{\ast} \widetilde{\CK}_{g-1}(F, \Delta_{\p^1})
+
2 \cdot 24 \cdot j_{\ast} \big( \widetilde{\CK}_{g-1}(F,F) \times [\Mbar_{1,1}] \big)
\end{equation}
where $\Delta_{\p^1}$ is the pullback
of the diagonal under $S^2 \to \p^1$.
By Theorem~\ref{thm_K3HAE} equation \eqref{53134}
holds after integration against any tautological class.
Hence
\begin{multline*}
\frac{d}{dC_2} \CZ'_{g,d}
=
\int_{\Mbar_{g-1,3}}
\widetilde{\CK}_{g-1}(F, \Delta_{\p^1})
\cup \CC_{g-1,d}(\pt, \Delta_E) \\
+
48 \sum_{d = d_1 + d_2}
\left( \int_{\Mbar_{g-1,2}} \widetilde{\CK}_{g-1}(F, F) \cup
\CC_{g-1,d_1}(\pt, \1) \right)
\times
\int_{\Mbar_{1,1}} \CC_{1,d_2}(\pt).
\end{multline*}
Rewriting in terms of the Gromov--Witten theory of $X$,
using the divisor equation and
\[ \int_{\Mbar_{1,1}} \CC_{1,d_2}(\pt) = [ C_2(q) ]_{q^{d_2}} \]
we find
\begin{equation} \label{53315645} \frac{d}{dC_2} \CZ'_{g,d}
=
2d \CZ'_{g-1,d}
+ 48 \sum_{d=d_1 + d_2} \CZ'_{g-1,d_1} \cdot [ C_2(q) ]_{q^{d_2}}.
\end{equation}

Consider the generating series
\[ \CZ'_g(q,\tilde{q}) = \sum_d \CZ'_{g,d}(q) \tilde{q}^{d},
\quad \CZ_g(q,\tilde{q}) = \sum_d \CZ_{g,d}(q) \tilde{q}^{d-1}.
\]
By \cite[Prop.1]{K3xE} we have
\[ \CZ_g(q,\tilde{q}) = \CZ'_g(q,\tilde{q}) \cdot \Delta^{-1}(\tilde{q}). \]
Rewriting \eqref{53315645} yields
\[
\frac{d}{dC_2(q)} \CZ_g' = 2 D_{\tilde{q}} \CZ_{g-1}'
+ 48 \CZ_{g-1}' C_2(\tilde{q})
\]
where $D_{\tilde{q}} = \tilde{q} \frac{d}{d \tilde{q}}$.
Using the identity
\[ C_2 = \frac{1}{24} q \frac{d}{dq} \log( \Delta^{-1} ),\]
we conclude that
\[ \frac{d}{dC_2(q)} \CZ_g =2 D_{\tilde{q}} \CZ_{g-1}. \qedhere \]
\end{proof}

\begin{proof}[Proof of Proposition~\ref{ConstraintProp2}]
Property 1 holds by definition, and Property 3 holds
by Lemma~\ref{Lemma9458}.
By \cite[Thm.4]{OS1} and the Gromov--Witten/Pairs correspondence \eqref{GWPTcorr} we have
\begin{equation} \label{393i3}
\CZ_h(u,\tilde{q}) = [ \CZ ]_{q^{h-1}}
=
\frac{\Theta(u,\tilde{q})^{2h-2}}{\Delta(\tilde{q})}
\sum_{i=0}^{h}
f_i(\tilde{q}) \wp^{h-i}(u,\tilde{q})
\end{equation}
for some $f_i \in \QMod_{2i}$.
By Lemma~\ref{Lemma9457} we have 
\[ \frac{d}{dC_2} \CZ_h = (2h-2) u^2 \CZ_h. \]
Applying $d/dC_2$ to \eqref{393i3}
and using the last equation and \eqref{dddc2} we find
\[
\frac{\Theta^{2h-2}}{\Delta} \sum_{i=0}^{h}
\left( \frac{d}{dC_2} f_i\right) \wp^{h-i} = 0. \]
This implies
$\frac{d}{dC_2} f_i = 0$,
hence that $f_i \in \Mod_{2h-2i}$,
and therefore Property~2 holds.
\end{proof}

\subsection{Proof of Theorem~\ref{thmIgusa}}
\label{Subsection_Completeproof}
By Propositions \ref{ConstraintProp1} and \ref{ConstraintProp2}
respectively
the series $\CZ(u,q,\tilde{q})$
and $\CF(u,q,\tilde{q})$ both
satisfy Properties 1--3 of Section~\ref{Subsection_Constraints}, so their difference does as well.
Moreover the Gromov--Witten invariant
$\mathsf{N}_{0,0,0} = 1$
matches the $u^{-2} q^{-1} \tilde{q}^{-1}$-coefficient in $\CF$.
We conclude that $\CZ = \CF$
by Proposition~\ref{PropUniqueness}. \qed

\appendix
\section{Elliptic functions and quasimodular forms}
\label{Section_Graphs_and_quasimodularforms}
\subsection{Overview}
We prove that for certain multivariate elliptic functions $F$, the constant term of the Fourier expansion of $F$ (in the elliptic parameter) is a quasimodular form. We also calculate the $C_2$-derivative of these quasimodular forms.
In Section~\ref{Subsection_SinglevarEllfun}
we treat the single variable case as a warm-up
for the general case which appears in Section~\ref{sec:multi}.
The main result of this appendix is Theorem~\ref{THMAPP2}.


\subsection{Preliminaries}
Let $z \in \BC$ and $\tau \in \BH$, where $\BH = \{ z \in \BC | \mathrm{Im}(z)>0 \}$ is the upper half plane.
We will use the auxiliary variables
\[ w = 2 \pi i z, \quad p = e^{2 \pi i z}, \quad q = e^{2 \pi i \tau}. \]
The operator of differentiation with respect to $z$ is denoted
\[ \partial_z = \frac{1}{2 \pi i} \frac{d}{dz} = \frac{d}{dw} = p \frac{d}{dp} \]
and for the $k$-th derivative of a function $f(z)$ we write 
\[ f^{(k)}(z) = \partial_z^k f(z). \]
For any meromorphic function $f(z)$
we let $[ f(z) ]_{(z-a)^{\ell}}$ denote the coefficient of $(z-a)^{\ell}$
in the Laurent expansion around $a$. The residue at $a$ is
\[ \mathrm{Res}_{z=a} f(z) \, = \, \big[ f(z) \big]_{(z-a)^{-1}}. \]
If $f(z) = g(z) h(z)$ where $h(z)$ is regular at $a$ we have
\begin{equation} \mathrm{Res}_{z=a} f(z) = \sum_{k \geq 1} \big[ g(z) \big]_{(z-a)^{-k}}
\frac{(2 \pi i)^{k-1} h^{(k-1)}(a)}{(k-1)!}. \label{RFEREas}
\end{equation}

\subsection{Elliptic functions} \label{Subsection_SinglevarEllfun}
Consider the Eisenstein series $C_{2k}(\tau)$
defined in \eqref{EisensteinSeries}
as functions on $\BH$ under the change of variables $q= e^{2 \pi i \tau}$.
Consider also the Weierstra{\ss} function $\wp(z)$ which
has Laurent expansion
\begin{equation}\label{eq:wp}
\wp(z) = \frac{1}{12} + \frac{p}{(1-p)^2} + \sum_{d \geq 1} \sum_{k | d} k (p^k - 2 + p^{-k}) q^{d}
\end{equation}
in the region $0 < |q| < |p| < 1$, and has Laurent expansion
\[ \wp(z) = \frac{1}{w^2} + \sum_{k \geq 2} (2k-1) 2k C_{2k}(\tau) w^{2k-2} \]
at $w=0$.

Let $\E$ be the ring generated by quasimodular forms and derivatives of the Weierstra{\ss} function,
\[ \E = \BQ\left[ C_2(\tau), C_4(\tau), C_6(\tau), \wp^{(k)}(z) \middle| \, k \geq 0 \, \right]. \]
The ring is graded by weight:
\[ \E = \bigoplus_{k \geq 0} \E_{k}, \]
where $C_{k}$ has weight $k$ and $\wp^{(k)}(z)$ has weight $2+k$.
We also let
\[ \frac{d}{dC_2} : \E \to \E \]
be the formal differentiation with respect to the generator $C_2$.\footnote{There exist relations among the generators of $\E$ but they
do not involve $C_2$. The ring $\E$ is free over $\BQ[ C_4, C_6, \wp^{(k)}(z)|k\geq 0]$ and the derivative
with respect to $C_2$ is well-defined.}

Every $F(z) \in \E$ admits a Fourier expansion in the region $0 < |q|< |p|<1$,
\[ F(z) = \sum_{n \in \BZ} a_n(\tau) p^n. \]
The constant term in the expansion is denoted by
\[ \big[ F(z) \big]_{p^0} = a_0(\tau). \]
As a warm-up for the general case we prove the following proposition.

\begin{prop} \label{4dji9isfg}
For every $F \in \E_k$
the series $\big[ F \big]_{p^0}$ is a quasimodular form of weight $k$
and we have
\[ \frac{d}{dC_2} \Big[ \, F \, \Big]_{p^0}
=
\left[ \frac{d}{dC_2} F \right]_{p^0} - 2 \big[ \, F \, \big]_{w^{-2}}.
\] 
\end{prop}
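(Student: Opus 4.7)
The plan has three parts: a change of generators that makes the Fourier extraction transparent, application of standard quasimodularity results to obtain part (a), and a combinatorial matching of two $C_2$-dependencies for part (b).

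First, I would introduce the auxiliary series
\[
A_m(z,\tau) = \sum_{a\in\BZ\setminus 0}\frac{a^m p^a}{1-q^a}, \qquad m\geq 1,
\]
expanded in the region $|q|<|p|<1$. Direct comparison with \eqref{eq:wp} gives $A_1 = \wp+2C_2$ and $A_{j+1}=\wp^{(j)}$ for $j\geq 1$, so $\E = \BQ[C_2,C_4,C_6,A_1,A_2,\ldots]$ as a weighted polynomial ring. The key property is that $[A_m]_{p^0}=0$ for every $m$. By bilinearity in $F$, the whole proposition reduces to monomials $M = C_2^{\alpha_0}P(C_4,C_6)\,A_{m_1}\cdots A_{m_s}$, for which
\[
[M]_{p^0} \;=\; C_2^{\alpha_0}P(C_4,C_6)\sum_{\substack{a_1,\ldots,a_s\in\BZ\setminus 0 \\ a_1+\cdots+a_s=0}}\prod_{i=1}^s \frac{a_i^{m_i}}{1-q^{a_i}}.
\]
The convolution sum on the right is a classical Bloch--Okounkov/Eisenstein-type generating function, known to lie in $\QMod$ of weight $\sum_i (m_i+1)$; this handles part (a), including the weight count.

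For the anomaly in part (b), splitting $F=\sum_i C_2^i F_i$ with $F_i \in\BQ[C_4,C_6,\wp^{(k)}]$ and noting that the Laurent expansions of all $\wp^{(j)}$ at $w=0$ lie in $\BQ[C_4,C_6][w^{\pm 1}]$ (since $\wp = 1/w^2 + 12C_4 w^2+30C_6 w^4+\cdots$), I match $C_2$-powers on both sides to reduce the identity to the special case
\[
\frac{d}{dC_2}[G]_{p^0} \;=\; -2\,[G]_{w^{-2}}, \qquad G\in\BQ[C_4,C_6,\wp^{(k)}].
\]
On the right, $[G]_{w^{-2}}\in\BQ[C_4,C_6]$ is a genuine modular form; on the left, the $C_2$-dependence of $[G]_{p^0}$ enters from two places: the substitution $\wp = A_1-2C_2$ made in each $\wp$-factor (which produces explicit $C_2$'s) and the $C_2$-content of the convolution sums $[A^\alpha]_{p^0}$ themselves.

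The main obstacle is the final combinatorial matching. I would isolate it by expanding $G$ in the $A$-basis with $C_2$-tracking, then showing that differentiation in $C_2$ selects precisely one $\wp$-pole at $w=0$ from the original expression (with the factor $-2$ arising because $[\wp]_{w^{-2}}=1$ while $[\wp^{(j)}]_{w^{-2}}=0$ for $j\geq 1$). The key computational input is the identity $\sum_{a>0}a\,q^a/(1-q^a) = C_2 + \tfrac{1}{24}$, via which the $C_2$-dependence of the convolution sums can be tracked factor by factor; the resulting bookkeeping matches $-2[G]_{w^{-2}}$ exactly. This single-variable anomaly calculation is the warm-up for, and prototype of, the more elaborate multivariate argument in the proof of Theorem~\ref{THMAPP2}.
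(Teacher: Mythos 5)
Your algebraic setup is correct: $A_1=\wp+2C_2$, $A_{j+1}=\wp^{(j)}$ for $j\geq 1$, the vanishing $[A_m]_{p^0}=0$, the reduction of $[M]_{p^0}$ to the zero-sum convolution sums $S(m_1,\ldots,m_s)=\sum_{a_1+\cdots+a_s=0}\prod_i a_i^{m_i}/(1-q^{a_i})$, and the linearity argument reducing part (b) to $\frac{d}{dC_2}[G]_{p^0}=-2[G]_{w^{-2}}$ for $C_2$-free $G$ are all fine. But part (a) rests on an appeal to ``classical Bloch--Okounkov/Eisenstein-type'' results. Quasimodularity of such constrained graph sums is in the literature (Goujard--M\"oller, which this paper's appendix footnote cites as an alternative source for the first part of Theorem~\ref{THMAPP2}), but it is not the Bloch--Okounkov theorem, and what you actually need --- homogeneity of weight exactly $\sum_i(m_i+1)$, not merely a bound $\QMod_{\leq k}$ --- requires care: in the multivariate setting a fixed ordering only yields $\QMod_{\leq k}$, and purity is a finer statement. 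If you cite, you must cite a statement that delivers purity.

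The genuine gap is in part (b). Rewriting your target identity in the $A$-basis, and using $\frac{d}{dC_2}A_1=2$, $\frac{d}{dC_2}A_m=0$ for $m\geq 2$, the claim $\frac{d}{dC_2}[G]_{p^0}=-2[G]_{w^{-2}}$ is \emph{equivalent} to the anomaly formula for the convolution sums themselves,
\[
\frac{d}{dC_2}S(m_1,\ldots,m_s) \;=\; 2\sum_{i\,:\,m_i=1}S(m_1,\ldots,\widehat{m_i},\ldots,m_s)\;-\;2\Big[\textstyle\prod_i A_{m_i}\Big]_{w^{-2}},
\]
which is the proposition restated in different notation; asserting that ``the resulting bookkeeping matches $-2[G]_{w^{-2}}$ exactly'' is therefore circular. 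Moreover the proposed mechanism cannot work as stated: the identity $\sum_{a>0}aq^a/(1-q^a)=C_2+\tfrac{1}{24}$ only detects $C_2$ arising from diagonal sub-sums over pairs $\{a,-a\}$, but the constraint $\sum_i a_i=0$ couples all factors, and for $s\geq 3$ the $C_2$-content of $S$ is not localized factor by factor (already $S(1,1,1)$, i.e.\ $[(\wp+2C_2)^3]_{p^0}$, admits no such pairwise decomposition). What is missing is precisely the paper's key idea: using the quasi-elliptic function $\A(z)=\partial_z\log\Theta(z)$, which satisfies $\A(z+\lambda\tau+\mu)=\A(z)-\lambda$, one converts $[F]_{p^0}=\int_{\C_a}F\,dz$ into a contour integral of $F\cdot\A$ around the fundamental parallelogram and hence, by the residue theorem, into local data at $w=0$:
\[
[F]_{p^0} \;=\; \big[F\cdot\A\big]_{w^{-1}} \;=\; [F]_{w^0}-\sum_{\ell\geq 1}2\ell\,C_{2\ell}\,[F]_{w^{-2\ell}}.
\]
Since the Laurent expansion of $\wp$ at $w=0$ involves no $C_2$, the only explicit $C_2$ beyond those in $F$ is the $\ell=1$ term $-2C_2[F]_{w^{-2}}$, and quasimodularity, the exact weight $k$, and the anomaly formula all follow at once by inspection. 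Without this identity (or an equivalent closed-form control of the $S(m_1,\ldots,m_s)$, which you do not supply), your part (b) does not close.
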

\vspace{7pt}

Consider the function
\begin{align*}
\mathsf{A}(z) 
& = - \frac{1}{2} - \sum_{m \neq 0} \frac{p^m}{1-q^m} \\
& = \frac{1}{w} - \sum_{\ell \geq 1} 2\ell C_{2\ell}(q) w^{2 \ell - 1},
\end{align*}
where the expansion in $p,q$ is taken in the region $0 < |q| < |p| < 1$.
For the proof of the Proposition we require the following Lemma.

\begin{lemma}\label{LemmaA}
$\mathsf{A}(z + \lambda \tau + \mu) = \mathsf{A}(z) - \lambda$ for every $\lambda, \mu \in \BZ$.
\end{lemma}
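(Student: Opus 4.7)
The plan is to prove the claim by reducing it to a residue computation, using that the derivative of $\mathsf{A}$ is doubly periodic.

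\textbf{Step 1: Identify the derivative of $\mathsf{A}$.} Comparing the given $w$-expansion
\[ \mathsf{A}(z) = \frac{1}{w} - \sum_{\ell \geq 1} 2\ell\, C_{2\ell}(q)\, w^{2\ell-1} \]
with the expansion $\wp(z) = \frac{1}{w^{2}} + \sum_{k \geq 2}(2k-1)2k\, C_{2k}(q)\, w^{2k-2}$ from Section A.2, I would apply $\partial_{z} = d/dw$ termwise and read off
\[ \partial_{z}\mathsf{A}(z) = -\bigl(\wp(z) + 2C_{2}(q)\bigr). \]
Since $\wp$ is doubly periodic with respect to $\BZ + \BZ\tau$ and $C_{2}$ depends only on $\tau$, the derivative $\partial_{z}\mathsf{A}$ is an elliptic function of $z$. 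Therefore
\[ \mathsf{A}(z+\lambda\tau+\mu) - \mathsf{A}(z) = c_{\lambda,\mu}(\tau) \]
is independent of $z$ for each $(\lambda,\mu)\in\BZ^{2}$.

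\textbf{Step 2: The horizontal direction is trivial.} For $\mu\in\BZ$, substitution $z\mapsto z+\mu$ leaves $p = e^{2\pi iz}$ unchanged, so the $(p,q)$-expansion gives immediately $\mathsf{A}(z+\mu) = \mathsf{A}(z)$, i.e.\ $c_{0,\mu}=0$. Combined with additivity in $(\lambda,\mu)$ from Step 1, it suffices to prove $c_{1,0}=-1$.

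\textbf{Step 3: The vertical constant via residues.} This is the main point. The function $\mathsf{A}(z)$ is meromorphic on $\BC$ with simple poles exactly at the lattice points $\BZ+\BZ\tau$, and near $z=0$ we have $\mathsf{A}(z)\sim 1/w = 1/(2\pi i z)$, so $\mathrm{Res}_{z=0}\mathsf{A}(z) = 1/(2\pi i)$. Choose $a\in\BC$ such that the parallelogram $P$ with vertices $a, a+1, a+1+\tau, a+\tau$ encloses only the pole at the origin. Then by the residue theorem
\[ \oint_{\partial P} \mathsf{A}(z)\,dz = 2\pi i \cdot \frac{1}{2\pi i} = 1. \]
On the other hand, parameterizing the top and bottom edges and using $\mathsf{A}(z+\tau)=\mathsf{A}(z)+c_{1,0}$ yields
\[ \int_{\text{bot}} - \int_{\text{top (reversed)}} = -\int_{a}^{a+1}\bigl(\mathsf{A}(t+\tau)-\mathsf{A}(t)\bigr)\,dt = -c_{1,0}, \]
while the two vertical edges cancel by Step 2. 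Thus $-c_{1,0}=1$, so $c_{1,0}=-1$.

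\textbf{Step 4: Conclude by $\BZ$-linearity.} Iterating Steps 2 and 3 we obtain $c_{\lambda,\mu} = -\lambda$ for all $(\lambda,\mu)\in\BZ^{2}$, which is the claim.

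The only real obstacle is Step 3, and it reduces to a standard contour computation once the double periodicity in Step 1 is established; the potentially worrying manipulation of the divergent naive substitution $p\mapsto pq$ in the Fourier series is sidestepped entirely by working with meromorphic continuations.
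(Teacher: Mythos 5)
Your proof is correct, but it takes a genuinely different route from the paper. The paper writes $\mathsf{A}(z) = \partial_z \log \Theta(z)$ and reads off the quasi-periodicity from the transformation law $\Theta(z+\lambda\tau+\mu) = (-1)^{\lambda+\mu}p^{-\lambda}q^{-\lambda^2/2}\Theta(z)$, which is itself verified directly on the product formula for $\Theta$; this is a two-line argument once the theta function is in play. You instead run the classical Weierstrass-zeta-style argument: from the $w$-expansion you identify $\partial_z\mathsf{A} = -(\wp + 2C_2)$, which is elliptic, so each period difference $c_{\lambda,\mu}$ is a constant; the $1$-periodicity $c_{0,\mu}=0$ is manifest from the Fourier series since $p$ is unchanged; and the remaining constant $c_{1,0}$ is pinned down by integrating $\mathsf{A}$ around a fundamental parallelogram and applying the residue theorem, using $\mathrm{Res}_{z=0}\mathsf{A} = \tfrac{1}{2\pi i}$ (your signs and the cancellation of the vertical edges check out, giving $c_{1,0}=-1$). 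What each approach buys: the paper's is shorter but imports the theta function and its transformation law; yours is self-contained given only the two expansions of $\mathsf{A}$ stated in the paper and avoids $\Theta$ entirely, at the cost of tacitly using that $\mathsf{A}$ continues to a single-valued meromorphic function on $\BC$ with simple poles exactly at the lattice points. You assert this pole structure rather than prove it, but it is immediate from your own Step 1 (integrate $-(\wp+2C_2)$, whose poles are double with vanishing residues, so the antiderivative is single-valued with a simple pole at each lattice point), and the paper itself uses the same global picture of $\mathsf{A}$ in the proof of the proposition following this lemma, so this is a reasonable standing assumption rather than a gap.
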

\begin{proof}
We have
$\mathsf{A}(z) = \partial_z \log \Theta(z)$
where $\Theta$ is the Jacobi theta function
\[ \Theta(z) =  (p^{1/2} - p^{-1/2}) \prod_{m \geq 1} 
\frac{ (1-pq^m) (1-p^{-1}q^m)}{ (1-q^m)^2 }. \]
A direct check using this definition shows
\[ \Theta(z + \lambda \tau + \mu) = (-1)^{\lambda + \mu} p^{-\lambda} q^{-\lambda^2/2} \Theta(z) \]
for all $\lambda, \mu \in \BZ$ which implies the claim.
\end{proof}

\begin{proof}[Proof of Proposition~\ref{4dji9isfg}] We have
\[
\big[ F(z) \big]_{p^0} = \int_{\C_a} F(z) \dd{z}
\]
where $\C_a$ is the line segment from $a$ to $a+1$ for some
$a \in \BC$ with $0 < \mathrm{Im}(a) < \mathrm{Im}(\tau)$.
Since $F(z)$ is periodic, i.e.
\[ F(z + \lambda \tau + \mu) = F(z) \]
for every $\lambda, \mu \in \BZ$, we may instead assume
$-\mathrm{Im}(\tau) < \mathrm{Im}(a) < 0$.

By Lemma~\ref{LemmaA} the function $f(z) = F(z) \cdot \A(z)$ satisfies
\[ f(z+1) = f(z), \quad f(z+\tau) = f(z) - F(z). \]
Hence we may replace the integral of $F$ over $\C_a$ by the integral
of $F \cdot \A$ over the boundary of the fundamental domain $B_a$ depicted in Figure~\ref{FigureBOXB_i},
\[
\big[ F(z) \big]_{p^0} = \oint_{B_a} F(z) \cdot \A(z) \dd{z}.
\]
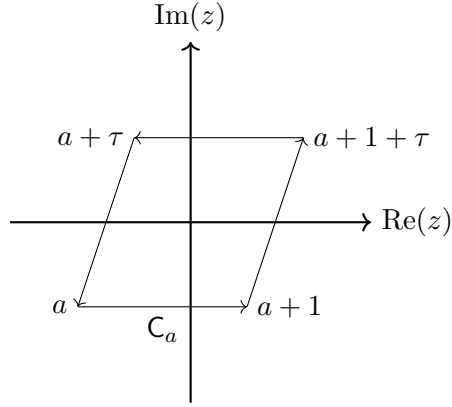
\begin{figure}
\centering
\begin{tikzpicture}[
    scale=1.5,
    axis/.style={thick, ->},
    ]
\draw[axis] (-0.1,1.5)  -- (3.1,1.5) node(xline)[right] {$\mathrm{Re}(z)$};
\draw[axis] (1.5,-0.1) -- (1.5,3.1) node(yline)[above] {$\mathrm{Im}(z)$};
\draw[->] (0.5, 0.75) node[left]{$a$} -- (1.25, 0.75) node[below]{$\C_a$} -- (2.0,0.75) node[right]{$a + 1$};
\draw[->] (2.0, 0.75) -- (2.5,2.25) node[right]{$a+1+\tau$};
\draw[->] (2.5, 2.25) -- (1,2.25) node[left]{$a+\tau$};
\draw[->] (1,2.25) -- (0.5, 0.75);
\end{tikzpicture}
\caption{The closed path $B_{a}$.}
\label{FigureBOXB_i}
\end{figure}
Since both $F$ and $\A$ have poles inside $B_a$ only at $0$,
an application of the residue theorem gives
\begin{align*}
\big[ F(z) \big]_{p^0}
& = \big[ F(z) \cdot \A(z) \big]_{w^{-1}} \\
& = [ F ]_{w^0} - \sum_{\ell \geq 1} 2 \ell C_{2\ell}(\tau) [ F(z) ]_{w^{-2 \ell}}.
\end{align*}
An inspection of the Laurent series of $F(z)$ yields now both claims.
\end{proof}

\subsection{Multiple variables}\label{sec:multi}
Let $n \geq 2$ and $z = (z_1, \ldots, z_n) \in \BC^n$, and denote
\[ w_a = 2 \pi i z_a, \quad p_a = e^{2 \pi i z_a}, \quad a \in \{ 1, \ldots, n \}. \]
Every permutation $\sigma \in S_n$ determines a region $U_{\sigma} \subset \BC^n$ by requiring
\begin{equation} \mathrm{Im}(\tau) > \mathrm{Im}(z_a - z_b) > 0 \label{49gjfdfg} \end{equation}
whenever $\sigma(a) > \sigma(b)$, or equivalently by
\begin{equation} \mathrm{Im}(z_{\sigma^{-1}(n)}) > \ldots > \mathrm{Im}(z_{\sigma^{-1}(1)}) > \mathrm{Im}(z_{\sigma^{-1}(n)} - \tau).
 \label{dsoijsdfjio}
\end{equation}

Consider the ring of multivariate elliptic functions
\[ \Sb = \BQ\left[\, C_2(\tau), C_4(\tau), C_6(\tau), \wp^{(k)}( z_a - z_b )\ \middle|\ k \geq 0,\, 1 \leq a < b \leq n \, \right]. \]
We assign $\wp^{(k)}$ and $C_k$ the weights $2+k$ and $k$ respectively and let
\[ \Sb = \bigoplus_{k \geq 0} \Sb_k \]
be the induced grading by weight $k$.
Let
\[ \frac{d}{dC_2} : \Sb \to \Sb \]
be the formal differentiation with respect to the generator $C_2$.

Every $F \in \ME$
has a well-defined Fourier expansion in the region $U_{\sigma}$,
\[ F = \sum_{k_1, \ldots, k_n \in \BZ} a_{k_1 \ldots k_n}(\tau) p_1^{k_1} \ldots p_n^{k_n}, \quad (z_1, \ldots, z_n) \in U_{\sigma}. \]
The constant coefficient in this expansion,
i.e. the coefficient of $\prod_i p_i^0$, is denoted
\[ \left[ F \right]_{p^0, \sigma} = a_{0\ldots 0}(\tau). \]
Define the constant coefficient of $F$ averaged over all permutation $\sigma$,
\begin{equation} \label{CSTCOEFFICIENTAVERAGED}
\left[ F(z_1, \ldots, z_n) \right]_{p^0}
= \frac{1}{n!} \sum_{\sigma \in S_n} \left[ F(z_1, \ldots, z_n) \right]_{p^0, \sigma}.
\end{equation}

The following is the main result of this appendix\footnote{
The first part of Theorem~\ref{THMAPP2}
can also be found in work of Goujard and M\"oller \cite{GM}.
Our argument gives a new proof of their result.
We thank M.~Raum for pointing out this connection.
}.

\begin{thm} \label{THMAPP2} Let $F \in \ME_k$. Then the following holds.
\begin{enumerate}
 \item $\big[ F(z) \big]_{p^0, \sigma} \in \QMod_{\leq k}$ for every permutation $\sigma$. 
 \item $\big[ F(z) \big]_{p^0} \in \QMod_k$.
 \item We have
\[ \frac{d}{dC_2} \Big[ F(z) \Big]_{p^0}
=
\left[ \frac{d}{dC_2} F \right]_{p^0} - \sum_{\substack{a,b = 1 \\ a \neq b}}^{n} \left[ (2 \pi i)^2 \mathrm{Res}_{z_a = z_b}\Big(  (z_a - z_b) \cdot F \Big) \right]_{p^0}.
\]
\end{enumerate}
\end{thm}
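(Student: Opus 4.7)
The plan is to prove Theorem~\ref{THMAPP2} by induction on $n$, with the base case $n=1$ furnished by Proposition~\ref{4dji9isfg}. The inductive step reduces the problem from $n$ variables to $n-1$ by integrating out one variable at a time using the quasi-periodic function $\A$.

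Fix $\sigma \in S_n$ and let $a = \sigma^{-1}(1)$, so that $\mathrm{Im}(z_a)$ is the smallest imaginary part in the region $U_\sigma$. I will view $F$ as a function of $z_a$ with coefficients depending on the remaining $n-1$ variables. Choosing a reference index $b \neq a$, Lemma~\ref{LemmaA} shows that $F \cdot \A(z_a - z_b)$ is periodic under $z_a \to z_a + 1$ and shifts by $-F$ under $z_a \to z_a + \tau$. Hence the line integral computing the constant term $[F]_{p_a^0, \sigma}$ (as a function of the remaining variables) equals the contour integral of $F \cdot \A(z_a - z_b)$ around a fundamental box in $z_a$, which by the residue theorem becomes a sum of residues at the poles of $F \cdot \A(z_a - z_b)$ in $z_a$. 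These poles all lie on the diagonals $z_a = z_c$ for $c \neq a$, coming from the $\wp$-factors of $F$ and (when $c = b$) from $\A$. Each residue is an element of $\ME$ in the remaining $n-1$ variables (with $z_a$ replaced by $z_c$) of weight at most $k$, so the induction hypothesis applies.

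For (1), the Laurent expansion $\A(w) = w^{-1} - \sum_{\ell \geq 1} 2\ell C_{2\ell} w^{2\ell - 1}$ introduces factors of $C_{2\ell}$ paired with lower-order coefficients of $F$, assembling into a quasimodular form of weight at most $k$. For (2), I will argue that after averaging over $\sigma \in S_n$, the contributions of weight strictly less than $k$ cancel; the mechanism is that lower-weight terms arise from the specific choice of iteration order, while the leading weight-$k$ piece is manifestly $\sigma$-independent. For (3), I apply $\frac{d}{dC_2}$ to the iterated residue formula: the contribution $[\frac{d}{dC_2} F]_{p^0}$ comes from $C_2$ factors already in $F$, while the extra diagonal-residue terms arise from the $-2 C_2 w$ summand in the Laurent expansion of each $\A(z_a - z_b)$, whose $C_2$-derivative contributes $-2 \cdot [F]_{(z_a - z_b)^{-2}}$ at the corresponding diagonal. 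Converting between the $z$ and $w$ variables accounts for the $(2\pi i)^2$ factor, and averaging over $\sigma$ symmetrizes the contributions to the stated sum over all ordered pairs $a \neq b$.

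The main obstacle will be the careful combinatorial bookkeeping needed to verify (a) that the weight-lowering $C_{2\ell}$ contributions in (1) cancel exactly under averaging in (2), and (b) that the iterated residues at various diagonals combine into the clean symmetric expression in (3). The $\sigma$-dependence of individual $[F]_{p^0, \sigma}$'s (through both the choice of reference index $b$ at each stage of iteration and the order of integration) must be shown to assemble into a fully symmetric answer after averaging, and this is the technical heart of the argument. A clean framework—perhaps phrased in terms of a symmetric residue pairing on $\ME$—should make this combinatorial structure transparent.
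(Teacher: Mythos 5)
Your strategy (integrate out one variable via the quasi-periodic function $\A$, convert the constant term to residues on diagonals, induct on $n$) is the correct starting point, but the inductive step fails at the first move. When you take the residue of $F\cdot\A(z_a-z_b)$ at a diagonal $z_a=z_c$ with $c\neq b$ (a pole of the $\wp$-factors of $F$), the Leibniz terms in which no derivative falls on $\A$ leave behind the \emph{undifferentiated} factor $\A(z_c-z_b)$. Since $\A$ is only quasi-periodic, $\A(z+\lambda\tau+\mu)=\A(z)-\lambda$, this residue is \emph{not} an element of $\ME$ in the remaining $n-1$ variables (only the derivatives $\A^{(k)}$, $k\geq 1$, are elliptic, being $-\wp^{(k-1)}-2C_2\delta_{k,1}$), so your induction hypothesis does not apply to it. This is not mere bookkeeping: it forces an enlargement of the function class and a genuinely iterative scheme. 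The paper's proof handles exactly this by establishing closure of the enlarged class $F\cdot\A_{1n}^{r}$ under chains of residue operators, $(F\A_{i_1 i_m}^{r})R_{i_1i_2}\cdots R_{i_{m-1}i_m}\in\ME_{k+r-(m-1)}$ (Lemma~\ref{erkfprogdf}), and by replacing your single $\A$-factor with the discrete antiderivatives $\binom{\A_{1n}+\ell-2-g_{i_1i_2}-\cdots-g_{i_{\ell-1}i_\ell}}{\ell-1}$, whose shift under $z\mapsto z+\tau$ produces the next stage of the iteration (Proposition~\ref{Proposition_p0_Reduction_with_sigma}). A second feature your one-step fundamental-box argument cannot see: at later stages the relevant poles sit at \emph{shifted} diagonals $z_{i_\ell}=z_{i_{\ell+1}}+g_{i_\ell i_{\ell+1}}\tau$, which is precisely the origin of the combinatorial corrections $g_{ab}$.

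Your proposed mechanism for part (2) — cancellation of lower-weight contributions upon averaging over $\sigma$ — is also not what actually happens. In the paper, averaging converts the binomial coefficients into pure powers $\A_{1n}^{m}/m!$ via Worpitzky's identity (Proposition~\ref{Prop3534545}); each full chain of $m$ residues applied to $F\A_{1n}^{m}$ then lands in $\ME_k$ \emph{exactly} (the weight $+m$ from $\A^m$ cancels the $-m$ from the $m$ residues), so weight-$k$ homogeneity follows by induction with no cancellation required, while the weight spread in (1) comes from the lower-order terms of the binomials — whence only $\QMod_{\leq k}$ holds $\sigma$-wise. For (3), your identification of the source of the extra diagonal terms (the $C_2$ appearing in the expansion of $\A$, as in the one-variable Proposition~\ref{4dji9isfg}) is right in spirit, but pushing $\frac{d}{dC_2}$ through the iterated formula additionally requires the commutation relations $R_{ab}R_{cb}=R_{cb}R_{ab}+R_{ca}R_{ab}$ and $R_{ab}R_{bc}=-R_{ba}R_{ac}$ (Lemma~\ref{Lemma_REsiduerelations}) to move the newly created residue operators past the chain; without the enlarged-class iteration these manipulations have no framework in which to take place.
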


\subsection{Preparations for the proof}
We prove a series of results leading up to the proof of Theorem~\ref{THMAPP2} in Section~\ref{Subsection_Proof_of_Thm_Appendix}.

\begin{lemma} \label{Lemma_cyclic_permutation}
Let $F \in \Sb$ and $\sigma \in S_n$.
Then 
\[ [ F ]_{p^0, \sigma} = [ F ]_{p^0, \widetilde{\sigma}} \]
for every cyclic permutation $\widetilde{\sigma}$ of $\sigma$.
\end{lemma}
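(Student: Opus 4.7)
The plan is to reduce to the case of a single elementary cyclic shift and exploit the translation invariance of every element of $\Sb$ under $z_i \mapsto z_i + \tau$ in each coordinate separately.

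First, I would observe that the group of cyclic permutations of $\sigma \in S_n$ is generated by left-multiplication by the elementary rotation $\rho \in S_n$ defined by $\rho(j) = j+1 \pmod{n}$. Hence it suffices to establish $[F]_{p^0,\sigma} = [F]_{p^0, \widetilde{\sigma}}$ for $\widetilde{\sigma} = \rho \circ \sigma$, since the general case follows by iteration.

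Next, set $i_0 = \sigma^{-1}(n)$, so that $z_{i_0}$ has the largest imaginary part in $U_\sigma$. Consider the translation
\[
T : (z_1, \ldots, z_n) \longmapsto (z_1, \ldots, z_{i_0} - \tau, \ldots, z_n),
\]
applied only to the $i_0$-th coordinate. Using the characterization \eqref{dsoijsdfjio} of $U_\sigma$ and $U_{\widetilde\sigma}$, together with the relations $\widetilde{\sigma}^{-1}(1) = \sigma^{-1}(n)$ and $\widetilde{\sigma}^{-1}(j+1) = \sigma^{-1}(j)$ for $j < n$, a direct check of the chain of inequalities shows $T(U_\sigma) = U_{\widetilde{\sigma}}$.

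Now, every generator of $\Sb$ (namely $C_{2k}(\tau)$ and $\wp^{(k)}(z_a - z_b)$) is invariant under $z_i \mapsto z_i + \tau$ for each fixed $i$, because $\wp$ is doubly periodic with period $\tau$ in its argument. Therefore $F \circ T = F$ as meromorphic functions on $\BC^n$.

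Finally, I would compare Fourier expansions. Write
\[
F(z) = \sum_{K \in \BZ^n} a_K(\tau)\, p^K \text{ in } U_\sigma, \qquad F(z) = \sum_{K} b_K(\tau)\, p^K \text{ in } U_{\widetilde{\sigma}}.
\]
For $z \in U_\sigma$ we have $T(z) \in U_{\widetilde{\sigma}}$ and $p_{i_0}(T(z)) = p_{i_0}(z)\cdot q^{-1}$, so
\[
F(z) = F(T(z)) = \sum_K b_K(\tau)\, q^{-k_{i_0}}\, p^K,
\]
valid in $U_\sigma$. Uniqueness of the Laurent expansion in $U_\sigma$ gives $a_K = b_K\, q^{-k_{i_0}}$ for all $K$. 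Setting $K = 0$ yields $a_0 = b_0$, i.e.\ $[F]_{p^0,\sigma} = [F]_{p^0,\widetilde{\sigma}}$, as required.

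The only genuinely combinatorial step is verifying $T(U_\sigma) = U_{\widetilde\sigma}$; the rest is formal once one has this identification together with the $\tau$-periodicity of $\Sb$.
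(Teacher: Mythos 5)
Your proof is correct and is essentially the paper's argument in different packaging: the paper extracts $[F]_{p^0,\sigma}$ as an iterated contour integral $\int_{\C_{a_1}}\cdots\int_{\C_{a_n}} F\, \dd z_n \cdots \dd z_1$ and uses the periodicity $F(z+\lambda\tau+\mu)=F(z)$ to shift the contour of the variable $z_{\sigma^{-1}(n)}$ down by $\tau$, which is exactly your translation $T$ carrying $U_\sigma$ to $U_{\rho\circ\sigma}$. The only cosmetic difference is that you compare the two full Laurent expansions via uniqueness (getting $a_K = b_K\, q^{-k_{i_0}}$ for all $K$, a harmless strengthening) instead of shifting the integration contour, and both arguments reduce in the same way to the elementary rotation $\rho$.
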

\begin{proof}
Let $(a_1, \ldots, a_n) \in U_{\sigma}$ and let $\C_{a_i}$ be the
line segment from $a_i$ to $a_i + 1$ in the $z_i$-plane. Then
\[ [ F ]_{p^0, \sigma} = \int_{\C_{a_1}} \cdots \int_{\C_{a_n}}  F(z_1, \ldots, z_n) \dd{z_n} \cdots \dd{z_{1}}.  \]
Since $F$ is periodic, i.e. $F(z + \lambda \tau + \mu) = F(z)$ for every $\lambda, \mu \in \BZ^n$,
we may replace the integral over $\C_{a_{\sigma^{-1}(n)}}$ by the integral over $\C_{a_{\sigma^{-1}(n)}-\tau}$.
But comparing with \eqref{dsoijsdfjio} this corresponds to taking the constant coefficient of $F$
with respect to a cyclic permutation of $\sigma$.
\end{proof}

For every $a \neq b$ let $R_{ab}$ denote the operation of taking the residue in $z_a = z_b$ written as a right operator,
\[ f(z_1, \ldots, z_n) R_{ab} := 2 \pi i \cdot \mathrm{Res}_{z_a = z_b} f(z_1, \ldots, z_n). \]
We also write
\[ \A_{ab} = \A(z_a - z_b). \]

\begin{lemma} \label{erkfprogdf}
Let $F(z) \in \Sb_k$, and let $i_1, \ldots, i_m \in \{ 1, \ldots, n \}$ be pairwise distinct.
Then for any $r \geq 0$ we have
\[ \big( F(z) \A_{i_1 i_{m}}^{r} \big) R_{i_1 i_2} R_{i_2 i_3} \cdots R_{i_{m-1} i_{m}} \in \Sb_{k+r-(m-1)}. \]
\end{lemma}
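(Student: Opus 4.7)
The plan is to induct on $m \geq 2$. The weight count $k+r-(m-1)$ is forced: each residue $R_{ab}$ decreases weight by one (the coefficient of $(z_a-z_b)^{-1}$ in a weight-$k$ Laurent series has weight $k-1$), and $\A^r$ carries weight $r$. So the content of the lemma is not the weight but the fact that the final expression lands back in $\Sb$, despite $\A$ itself not being in $\Sb$.

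The base case $m=2$ proceeds by direct Laurent expansion at $z_{i_1}=z_{i_2}$. Writing $w=2\pi i(z_{i_1}-z_{i_2})$, expand $F=\sum_\ell F_\ell w^\ell$ and $\A(z_{i_1}-z_{i_2})^r=\sum_{j\ge -r}a_j w^j$. Since $F\in\Sb_k$ is a polynomial in $C_{2\ell}$ and derivatives of $\wp(z_a-z_b)$, and every such $\wp^{(k)}$ has a Laurent expansion at $z=0$ with quasimodular form coefficients (combined with Taylor expansion in $z_{i_1}$ at $z_{i_1}=z_{i_2}$ for the off-diagonal factors), each $F_\ell$ lies in $\Sb$ with weight $k+\ell$; similarly $a_j\in\QMod$ has weight $r+j$. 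The residue $\sum_{\ell+j=-1}F_\ell a_j$ is then in $\Sb_{k+r-1}$.

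For the inductive step $m\ge 3$, apply $R_{i_1 i_2}$ first. Since $i_2\ne i_m$, the factor $\A(z_{i_1}-z_{i_m})^r$ is holomorphic at $z_{i_1}=z_{i_2}$, so Taylor-expand it and use the crucial identity $\A'=-\wp-2C_2\in\Sb$ together with induction on $j$ to rewrite
\[
\partial_{z_{i_1}}^j \A(z_{i_1}-z_{i_m})^r\Big|_{z_{i_1}=z_{i_2}} \ = \ \sum_s P_{j,s}(z_{i_2}-z_{i_m})\,\A(z_{i_2}-z_{i_m})^s
\]
with $P_{j,s}\in\Sb$ of weight $r+j-s$, and $P_{j,s}=0$ for $s>r$. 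Combining with the Laurent expansion of $F$ and extracting the $w^{-1}$ coefficient yields
\[
(F\,\A_{i_1 i_m}^{\,r})R_{i_1 i_2}\ =\ \sum_{s=0}^{r} G_s\,\A_{i_2 i_m}^{\,s},\qquad G_s\in\Sb_{k+r-1-s}.
\]
Now apply the remaining residues $R_{i_2 i_3}\cdots R_{i_{m-1} i_m}$ termwise. The inductive hypothesis, applied to each summand with chain $(i_2,\ldots,i_m)$ of length $m-1$, $F$ replaced by $G_s$, and $r$ replaced by $s$, places each term in $\Sb_{(k+r-1-s)+s-(m-2)}=\Sb_{k+r-(m-1)}$, closing the induction.

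The main obstacle is really the bookkeeping that $\partial_{z_{i_1}}^j\A^r$ is an $\Sb$-polynomial in $\A$ itself (this is where the closure of $\Sb[\A]$ under differentiation, afforded by $\A'\in\Sb$, is essential), together with verifying that the Laurent/Taylor expansions of $F\in\Sb$ at a diagonal $z_a=z_b$ produce coefficients back in $\Sb$ of the appropriate weight. Both points follow from direct inspection on generators, but they are what guarantees the elliptic function $\A$ is ultimately eliminated by the chain of residues, leaving behind a bona fide element of $\Sb_{k+r-(m-1)}$.
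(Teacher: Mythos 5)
Your proof is correct and takes essentially the same route as the paper: both induct along the chain of residues, compute each $R_{i_1 i_2}$ by splitting off the factors of $F$ singular at the diagonal and differentiating the regular part, use the key identity $\partial_z \A = -\wp - 2C_2$ to show derivatives of $\A^r$ stay in $\Sb[\A]$ with total weight dropping by one per residue, and treat the terminal residue (where $\A$ itself is singular, with quasimodular Laurent coefficients) separately. Your explicit $G_s$ and $P_{j,s}$ bookkeeping simply spells out what the paper compresses into the statement that $(F\A_{ac}^r)R_{ab}$ is a sum of terms $F'\A_{bc}^{r'}$ with $F' \in \Sb_{k'}$ and $k'+r' = k+r-1$.
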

\begin{proof}
Let $F(z) \in \Sb_k$ be a monomial in the generators and consider the splitting
\[ F(z) = F_{ab}(z_a - z_b) \cdot \widetilde{F}_{ab}(z), \]
where $F_{ab}$ is the product of all factors in $F$ of the form $\wp^{(s)}(z_a - a_b)$ for some~$s$.
In particular, $\widetilde{F}_{ab}(z)$ is regular at $z_a = z_b$.

Consider the action of $R_{ab}$ on $F(z) \A_{ac}^r$. If $b \neq c$ we have
\begin{equation} \label{524524} (F \A_{ac}^r) R_{a b} =
\sum_{\ell \geq 1}
\left[ F_{ab} \right]_{(w_a - w_b)^{-\ell}}
\frac{1}{(\ell - 1)!}
\partial_{z_a}^{\ell-1} \left( \widetilde{F}_{ab} \A^r_{ac}  \right) \Big|_{z_a = z_b},
\end{equation}
where we have used \eqref{RFEREas}. Since
\[ \partial_z \A(z) = - \wp(z) - 2 C_2(\tau) \]
the right hand side of \eqref{524524}
can be written as a sum of terms
\[ F'(z) \cdot \A^{r'}_{bc}, \]
where $F' \in \Sb_{k'}$ with $k' + r' = k+r-1$.
Similarly, if $b = c$ we have
\[ ( F(z) \A_{ac}^r ) R_{a c} \in \Sb_{k+r-1}. \]
The claim follows from the steps above and an induction argument.
\end{proof}

Let $\sigma \in S_n$ be a permutation,
let
\[
g_{ab} =
\begin{cases}
1 & \text{ if }  \sigma(a) > \sigma(b), \\
0 & \text{ otherwise},
\end{cases}
\]
and for all $x \in \BC$ and non-negative integers $a$ define
\[ \binom{x}{a} = \frac{x \cdot (x-1) \cdots (x-a+1)}{a!}. \]

\begin{prop} \label{Proposition_p0_Reduction_with_sigma} Let $F(z) \in \ME$. Then
 \begin{multline*}
\big[ F(z) \big]_{p^0, \sigma} \\
= 
\sum_{\ell \geq 1} \sum_{i_1, i_2, \ldots, i_{\ell}}
\left[ F \cdot \binom{ \A_{1n} + \ell - 2
- g_{i_1 i_2} - \ldots - g_{i_{\ell-1} i_{\ell}}
}{\ell-1}
R_{i_1 i_2} \cdots R_{i_{\ell-1} i_{\ell}} \right]_{p^0, \sigma},
\end{multline*}
where the inner sum is over all non-recurring\footnote{
A sequence $x_1, x_2, x_3, \ldots $ is non-recurring if
$x_i \neq x_j$ for all $i \neq j$.}
sequences $i_1, \ldots, i_{\ell} \in \{ 1, \ldots, n \}$
with endpoints $i_1=1$ and $i_{\ell}=n$.
\end{prop}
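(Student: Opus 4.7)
The plan is to iterate the residue technique from the proof of Proposition~\ref{4dji9isfg}, peeling off one integration variable at a time until the path of residues reaches $z_n$. Fix base points $(a_1, \ldots, a_n) \in U_\sigma$ so that
\[
[F]_{p^0, \sigma} = \int_{\C_{a_1}} \cdots \int_{\C_{a_n}} F \, dz_n \cdots dz_1,
\]
where each $\C_{a_j}$ is a horizontal segment from $a_j$ to $a_j + 1$ in the $z_j$-plane.

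First, I would apply the single-variable argument to the innermost integral in $z_1$, using the auxiliary function $\A_{1n} = \A(z_1 - z_n)$. By Lemma~\ref{LemmaA} this satisfies $\A_{1n}(z_1 + \tau) = \A_{1n}(z_1) - 1$ in the variable $z_1$, so the argument of Proposition~\ref{4dji9isfg} converts $\int_{\C_{a_1}} F \, dz_1$ into a boundary integral $\oint_{\partial B_{a_1}} F \A_{1n} \, dz_1$. By the residue theorem this becomes a sum of residues at $z_1 = z_{i_2} + \lambda_{i_2} \tau$ for each $i_2 \in \{2, \ldots, n\}$, with $\lambda_{i_2} \in \{-1, 0\}$ determined by whether $z_{i_2}$ already lies in $B_{a_1}$; by the defining condition of $U_\sigma$ one checks $\lambda_{i_2} = -g_{1 i_2}$. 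Using $\A(z + \lambda \tau) = \A(z) - \lambda$ and the periodicity of $F$, the corresponding residue becomes $R_{1, i_2}$ applied to $F$ times a shifted copy of $\A_{1n}$. This is the first step of the iteration, and corresponds to sequences beginning with $i_1 = 1$.

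Iterating the same procedure in $z_{i_2}$ using $\A_{i_2, n}$, then in $z_{i_3}$, and so on, produces paths $(1 = i_1, i_2, \ldots, i_\ell = n)$ and accumulates one new $\A$-factor plus an integer shift $-g_{i_j i_{j+1}}$ at each step. The iteration terminates when the path reaches $i_\ell = n$, since once the chain of residues has identified $z_1$ with $z_n$ there are no further nontrivial poles of the remaining $\A$-factor to exploit. The non-recurrence condition $i_j \ne i_k$ for $j \ne k$ is forced because $R_{i_j i_{j+1}}$ sets $z_{i_j} = z_{i_{j+1}}$, eliminating $z_{i_j}$ from the subsequent integrand.

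The main obstacle is the combinatorial bookkeeping that produces the exact binomial coefficient $\binom{\A_{1n} + \ell - 2 - \sum_{j=1}^{\ell-1} g_{i_j i_{j+1}}}{\ell-1}$. After $\ell - 1$ iterations the accumulated prefactor is an ordered product
\[
\prod_{j=1}^{\ell-1}\bigl(\A_{1n} + c_j\bigr),
\]
where each $c_j$ is an integer recording the cumulative lattice shifts up to step $j$; the free choice of which residue to take at each stage of the iteration, together with the symmetrization implicit in the averaged bracket, reorganizes this product into a falling factorial divided by $(\ell-1)!$, i.e.\ the displayed binomial. I would verify this identification by an auxiliary induction on $\ell$ using the Pascal relation $\binom{x+1}{k} = \binom{x}{k} + \binom{x}{k-1}$ to match the length-$\ell$ contribution with the length-$(\ell-1)$ contribution from the previous stage. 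Once this combinatorial step is in place, the Proposition follows by assembling the iterated residue expression in the displayed form.
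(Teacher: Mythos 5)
Your skeleton---iterated one-variable box/residue arguments extending chains from $1$ toward $n$, lattice shifts recorded by the $g_{ab}$, and a Pascal identity behind the binomial---is the same family of ideas as the paper's proof, and your first step agrees with it. But the mechanism you propose for producing the binomial coefficient has a genuine gap, in two respects. First, the bracket $[\,\cdot\,]_{p^0,\sigma}$ in this Proposition is taken for a \emph{fixed} ordering $\sigma$: no averaging is available, so ``the symmetrization implicit in the averaged bracket'' cannot supply the $1/(\ell-1)!$ or reorganize your product. Averaging enters only afterwards, in Proposition~\ref{Prop3534545}, where the present formula is summed over $\sigma$ and the binomials collapse to $\A_{1n}^m/m!$ via Worpitzky's identity; the descent-dependent shifts $-g_{i_1i_2}-\ldots-g_{i_{\ell-1}i_\ell}$ in the fixed-$\sigma$ statement cannot arise from any averaging argument. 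Second, multiplying by a fresh factor $\A_{i_j n}$ at each stage does not satisfy the functional equation the box argument needs. After one step the partially reduced integrand $T=(F\A_{1n})R_{1i_2}$ is no longer periodic in $z_{i_2}$: by Lemma~\ref{LemmaA} and periodicity of $F$ it has the additive defect $T(z_{i_2}+\tau)=T-FR_{1i_2}$. Hence the defect of your product has cross terms,
\[
T\A_{i_2n}-\big(T\A_{i_2n}\big)\big|_{z_{i_2}\mapsto z_{i_2}+\tau}
= T + (FR_{1i_2})(\A_{i_2n}-1),
\]
so the contour integral over the box recovers $\int T$ only up to leftover terms your ansatz drops. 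Relatedly, the accumulated product $\prod_j(\A_{1n}+c_j)$ is simply not the falling factorial: already at $\ell=3$ with no descents the Proposition requires $(\A_{1n}+1)\A_{1n}$, whereas fresh factors give $\A_{1n}^2$.

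What is actually needed is a one-term discrete antiderivative, and (up to normalization) only the binomial family has this property: $\binom{x}{\ell}-\binom{x-1}{\ell}=\binom{x-1}{\ell-1}$. The paper therefore strengthens the induction hypothesis to carry the binomial at every stage: for an incomplete chain it sets $\mathsf{H}=F\binom{\A_{1n}+\ell-1-\sum_a g_{i_ai_{a+1}}}{\ell}R_{i_1i_2}\cdots R_{i_{\ell-1}i_\ell}$, verifies via Lemma~\ref{LemmaA} and Pascal that $\mathsf{H}(z_{i_\ell}+\tau)=\mathsf{H}-F\binom{\A_{1n}+\ell-2-\sum_a g_{i_ai_{a+1}}}{\ell-1}R_{i_1i_2}\cdots R_{i_{\ell-1}i_\ell}$, and then applies the residue theorem on the box; the poles at $z_{i_\ell}=z_{i_{\ell+1}}+g_{i_\ell i_{\ell+1}}\tau$, with $i_{\ell+1}$ avoiding used indices, extend the chain with the binomial of one higher degree. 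Your proposed ``auxiliary induction on $\ell$ using the Pascal relation'' would be forced into exactly this strengthened ansatz, but as written---a product ansatz repaired by symmetrization---the inductive step does not close. A minor further point: your stated reason for termination at $i_\ell=n$ is loose; the precise statement is Lemma~\ref{erkfprogdf}, which shows that once the endpoint of the $\A$-factor matches the endpoint of the residue chain the result lies again in the elliptic ring $\Sb$, so its constant term requires no further auxiliary factor.
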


\begin{proof}
 We argue by induction on $L$ that for every $L \geq 1$ we have
\begin{multline} \label{aaaaaaa}
\big[ F(z) \big]_{p^0, \sigma}
= \\
\sum_{\ell = 1}^{L} \sum_{i_1, i_2, \ldots, i_{\ell}}
\left[ F \cdot \binom{ \A_{1n} + \ell - 2 - g_{i_1 i_2} - \ldots - g_{i_{\ell-1} i_{\ell}} }{\ell-1}
R_{i_1 i_2} \cdots R_{i_{\ell-1} i_{\ell}} \right]_{p^0, \sigma},
\end{multline}
where the inner sum runs over all non-recurring sequences $(i_1, \ldots, i_{\ell})$ 
such that $i_1 = 1$ and the following holds:
\begin{itemize}
 \item if $\ell < L$ then $i_{\ell} = n$,
 \item if $\ell = L$ and $i_r = n$ then $r=\ell$.
\end{itemize}

If $L = 1$ equality \eqref{aaaaaaa} holds by definition.
Hence we may assume the claim holds for $L \geq 1$ and we show the case $L+1$.
Every summand on the right hand side of \eqref{aaaaaaa} with $i_{\ell} \neq n$
is equal to the $p^0$-coefficient (in $U_{\sigma}$) of
\begin{equation} \label{INTTTT}
\int_{\C_{a}}
F \cdot \binom{ \A_{1n} + \ell - 2 - g_{i_1 i_2} - \ldots - g_{i_{\ell-1} i_{\ell}} }{\ell - 1}
R_{i_1 i_2} \cdots R_{i_{\ell-1} i_{\ell}} \dd{z_{i_{\ell}}}
\end{equation}
for some $a \in \BC$ such that 
\[ (z_1, \ldots, z_{i_{\ell}-1}, a, z_{i_{\ell}+1},  \ldots, z_n) \in U_{\sigma} \]
and $\C_a$ is the line segment from $a$ to $a+1$ in the $z_{i_{\ell}}$-plane.
Define the function
\[ \mathsf{H}(z) =
 F \cdot \binom{ \A_{1n} + \ell - 1 - g_{i_1 i_2} - \ldots - g_{i_{\ell-1} i_{\ell}} }{\ell} R_{i_1 i_2} \cdots R_{i_{\ell-1} i_{\ell}}
\]
Using Lemma~\ref{LemmaA} and 
$\mathrm{Res}_{z=r+s} f(z) = \mathrm{Res}_{z=r}f(z+s)$
repeatedly we find
\begin{multline*}
\mathsf{H}(z_1, \ldots, z_{i_{\ell}} + \tau, \ldots , z_n)
=
\mathsf{H}(z_1, \ldots, z_n) \\
-
F \cdot \binom{ \A_{1n} + \ell - 2 - g_{i_1 i_2} - \ldots - g_{i_{\ell-1} i_{\ell}} }{\ell - 1} R_{i_1 i_2} \cdots R_{i_{\ell-1} i_{\ell}}.
\end{multline*}
Hence arguing as in the proof of Proposition~\ref{4dji9isfg}
we may replace \eqref{INTTTT} by an integral of $\mathsf{H}(z)$
over the box $B_{a}$ depicted in Figure~\ref{FigureBOXB_i}.
The function $\mathsf{H}$ has possible poles inside $B_{a}$ only at the points\footnote{Since $F$ and $\A$ are both $1$-periodic we may assume
there is no shift by an integer.}
\[ z_{i_{\ell}} = z_{i_{\ell+1}} + g_{i_{\ell} i_{\ell+1}} \tau \]
for some $i_{\ell+1} \notin \{ i_1, \ldots, i_{\ell} \}$.
By the residue theorem \eqref{INTTTT} is therefore
\[
2 \pi i \sum_{i_{\ell+1} \notin \{ i_1, \ldots, i_{\ell} \}}
\mathrm{Res}_{z_{i_{\ell}} = z_{i_{\ell+1}} + g_{i_{\ell} i_{\ell+1}} \tau}
\mathsf{H}(z),
\]
which after moving the shift by $g_{i_{\ell} i_{\ell+1}} \tau$ inside simplifies to
\[
\sum_{i_{\ell+1} \notin \{ i_1, \ldots, i_{\ell} \}}
F \cdot \binom{ \A_{1n} + \ell - 1 - \sum_{a=1}^{\ell} g_{i_a i_{a+1}} }{\ell}
R_{i_1 i_2} \cdots R_{i_{\ell-1} i_{\ell}} R_{i_{\ell} i_{\ell+1}}.
\]
Plugging back into \eqref{aaaaaaa} we obtain the case $L+1$. The induction is complete.
\end{proof}

Averaging Proposition~\ref{Proposition_p0_Reduction_with_sigma}
over all permutations $\sigma$ yields the following. 

\begin{prop} \label{Prop3534545} Let $F(z) \in \ME$. Then
\[
\big[ F(z) \big]_{p^0} = \sum_{m \geq 1}
\sum_{i_1=1, i_2, \ldots, i_{m+1}=n}
\left[
\left( F \cdot \frac{\A_{1n}^{m}}{m!}\right) R_{i_1 i_2} R_{i_2 i_3} \cdots R_{i_m i_{m+1}} \right]_{p^0},
\]
where the inner sum runs over all non-recurring
sequences $i_1, \ldots, i_{\ell+1} \in \{ 1, \ldots, n \}$
with endpoints $i_1=1$ and $i_{\ell+1}=n$.
\end{prop}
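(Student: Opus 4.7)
The plan is to derive the identity by averaging both sides of Proposition~\ref{Proposition_p0_Reduction_with_sigma} over $\sigma \in S_n$. For a fixed non-recurring sequence $(i_1,\ldots,i_\ell)$ with $i_1 = 1$ and $i_\ell = n$, the binomial coefficient $\binom{\A_{1n} + \ell - 2 - g_{i_1 i_2} - \cdots - g_{i_{\ell-1}i_\ell}}{\ell-1}$ depends on $\sigma$ only through the restriction $\widetilde\sigma$ of $\sigma$ to $\{i_1,\ldots,i_\ell\}$. Since each such $\widetilde\sigma$ arises from exactly $n!/\ell!$ permutations of $\{1,\ldots,n\}$, the averaged binomial coefficient equals
\[
\frac{1}{\ell!}\sum_{k=0}^{\ell-1} A(\ell,k)\binom{\A_{1n}+\ell-2-k}{\ell-1},
\]
where $A(\ell,k)$ is the Eulerian number counting permutations of $\{1,\ldots,\ell\}$ with exactly $k$ descents.

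This averaged expression is a polynomial in $\A_{1n}$ of degree $\ell-1$ whose leading coefficient is $\frac{1}{(\ell-1)!}$, matching the coefficient of $\A_{1n}^{\ell-1}$ in the target formula. Hence the proposition reduces to showing that the subleading terms contribute nothing, i.e.\ that
\[
\left[F\cdot \A_{1n}^k\cdot R_{i_1 i_2}\cdots R_{i_{\ell-1}i_\ell}\right]_{p^0} = 0
\qquad\text{for every } 0 \leq k \leq \ell-2.
\]
In the base case $\ell = 2$, this amounts to $F R_{1n} = 0$, which is immediate from the sum-of-residues identity for $F$ viewed as an elliptic function of $z_1$: its unique fundamental-domain pole is at $z_1 = z_n$, so the residue vanishes. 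For general $\ell$ I would argue by induction on $\ell$, exploiting the ellipticity of the partially reduced function in the next variable $z_{i_r}$ together with Proposition~\ref{4dji9isfg} to peel off one residue at a time.

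The main obstacle will be organizing the vanishing step uniformly across all $k \leq \ell - 2$. A clean formulation seems to require expanding $\A_{1n}$ by its Taylor series around each successive identification $z_{i_r} = z_{i_{r+1}}$, expressing $F\cdot\A_{1n}^k \cdot R_{i_1 i_2}\cdots R_{i_{\ell-1}i_\ell}$ as an iterated contour integral at the nested locus $z_{i_1} = z_{i_2} = \cdots = z_{i_\ell}$, and then invoking the identities $\A' = -\wp - 2 C_2$ and $[\A^{(j)}]_{w^{-1}} = 0$ for $j \geq 1$ (which are immediate from the explicit expansion $\A = w^{-1} - \sum_{\ell\geq 1} 2\ell C_{2\ell} w^{2\ell-1}$) together with sum-of-residues cancellations such as $\sum_{a=2}^{n} F R_{1a} = 0$. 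Direct verification in the cases $n=2$ and $n=3$ (with the sequences $(1,3)$ and $(1,2,3)$) confirms the pattern and shows how contributions from different sequences pair up to cancel; a systematic proof should generalize these cancellations by induction on $\ell$ and $n$.
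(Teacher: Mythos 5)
Your opening move---averaging Proposition~\ref{Proposition_p0_Reduction_with_sigma} over all $\sigma\in S_n$---is the same as the paper's, but the execution contains a genuine gap at the very step where the averaged binomial coefficient is computed. You assume the average factors, i.e.\ that one may replace the binomial factor by its mean over the $\ell!$ orderings of $\{i_1,\ldots,i_\ell\}$ independently of the factor $[\,\cdot\,]_{p^0,\sigma}$. But the two factors are correlated through $\sigma(n)$: the binomial depends on the induced ordering of $\{i_1,\ldots,i_{\ell-1},n\}$ (note $i_\ell=n$), while the residue-reduced function still depends on the variable $z_n$, so $[\,\cdot\,]_{p^0,\sigma}$ depends on the induced ordering of the surviving variables, which also include $n$. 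The paper removes this correlation \emph{before} averaging: by the cyclic-invariance Lemma~\ref{Lemma_cyclic_permutation} one may restrict to $\sigma$ with $\sigma(n)=n$, which forces $g_{i_{\ell-1}i_\ell}=0$ and leaves a descent statistic on the set $\{i_1,\ldots,i_m\}$, $m=\ell-1$, disjoint from the surviving variables; the average then factors legitimately, and Worpitzky's identity $\sum_{\tau\in S_m}\binom{x+m-1-a_\tau}{m}=x^m$ (with binomial lower index $m$, not $\ell-1$ paired with Eulerian numbers for $S_\ell$) produces \emph{exactly} $\A_{1n}^m/m!$, with no subleading terms to dispose of. Your unconditional Eulerian average instead gives $\frac{1}{\ell!}\bigl(\A_{1n}^\ell-(\A_{1n}-1)^\ell\bigr)$, whose lower-order terms are an artifact of the invalid factorization.

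The vanishing statement your reduction then requires, namely $\bigl[F\,\A_{1n}^{k}\,R_{i_1i_2}\cdots R_{i_{\ell-1}i_\ell}\bigr]_{p^0}=0$ for $0\le k\le\ell-2$ and each fixed sequence, is in fact false. Already for $\ell=2$, $k=0$, $n=3$, take $F=\wp(z_1-z_3)\,\wp(z_1-z_2)\,\wp^{(1)}(z_2-z_3)\in\ME_7$. Then $F R_{13}=\wp^{(1)}(z_3-z_2)\,\wp^{(1)}(z_2-z_3)=-\bigl(\wp^{(1)}(z_2-z_3)\bigr)^2$, and the single-variable formula from the proof of Proposition~\ref{4dji9isfg} applied to the Laurent expansion $\bigl(\wp^{(1)}\bigr)^2 = 4w^{-6}-96C_4w^{-2}-480C_6+O(w^2)$ gives $[FR_{13}]_{p^0}=504\,C_6-192\,C_2C_4\neq 0$. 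Relatedly, your base case is only correct for $n=2$: for $n\ge 3$ the function $F$ has poles in $z_1$ along every $z_1=z_j$, so the sum-of-residues identity yields $\sum_{j\ge 2}FR_{1j}=0$, not $FR_{1n}=0$. Your closing remark that contributions from different sequences ``pair up to cancel'' concedes that the individual vanishing fails, but no cancellation mechanism is proved, and the final paragraph is explicitly conjectural; with the paper's conditioning step the entire issue evaporates, since the correctly weighted average of binomials collapses to $\A_{1n}^m/m!$ on the nose.
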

\begin{proof}
Setting $\ell = m+1$ in Proposition~\ref{Proposition_p0_Reduction_with_sigma} yields
\begin{multline*}
\big[ F(z) \big]_{p^0, \sigma}
=
\sum_{m \geq 1} \sum_{i_1, i_2, \ldots, i_{m+1}} \\
\left[ F \cdot \binom{ \A_{1n} + m - 1 - g_{i_1 i_2} - \ldots - g_{i_{m} i_{m+1}} }{m} R_{i_1 i_2} \cdots R_{i_{m} i_{m+1}} \right]_{p^0, \sigma},
\end{multline*}
where the non-recurring sequence $(i_1, \ldots, i_{m+1})$ satisfies $i_1 = 1$, $i_{m+1} = n$.

We sum the previous equation over all permutations $\sigma \in S_n$.
By Lemmas~\ref{Lemma_cyclic_permutation} and~\ref{erkfprogdf} it is enough to sum over all $\sigma$ with $\sigma(n) = n$.
It follows $g_{i_{m} i_{m+1}} = 0$ above.
We then split the sum over all such $\sigma$ into
a sum over orderings $\rho$ of the variables $z_{i}, i \notin \{ i_1, \ldots, i_{m}, n \}$,
a sum over orderings $\tau \in S_{m}$ of the variables $z_{i_1}, \ldots, z_{i_{m}}$
and the $\binom{n-1}{m}$ refinements of both orderings.
Since
\[ F \cdot \binom{ \A_{1n} + m - 1 - g_{i_1 i_2} - \ldots - g_{i_{m-1} i_{m}} }{m} R_{i_1 i_2} \cdots R_{i_{m} i_{m+1}} \]
depends only on the variables $z_i$ with $i \notin \{ i_1, \ldots, i_{m} \}$
we find
\begin{multline*}
 \big[ F(z) \big]_{p^0}
 =
\sum_{m \geq 1} \sum_{i_1, i_2, \ldots, i_{m+1}}
\sum_{\rho \in S_{n-m-1}} \frac{1}{(n-1)!} \cdot \binom{n-1}{m} \\
\cdot \left[
\sum_{\tau \in S_{m}} F \cdot \binom{ \A_{1n} + m - 1 - g_{i_1 i_2} - \ldots - g_{i_{m-1} i_{m}} }{m} R_{i_1 i_2} \cdots R_{i_{m} i_{m+1}}
\right]_{p^0, \tilde{\rho}} 
\end{multline*}
where $\tilde{\rho}$ is any fixed refinement
of the ordering $\rho$.
The proposition follows now by an application of Worpitzky's identity
\[
\sum_{\tau \in S_{m}} \binom{x+m-1-a_{\tau}}{m} = x^m,
\]
where $a_{\tau}$ is the number of ascents of $\tau$, i.e. the number of $i \in \{ 1, \ldots, \ell-1 \}$ with $\tau(i+1) > \tau(i)$.
\end{proof}

\begin{lemma} \label{Lemma_REsiduerelations}
The action of the residue operators $R_{ab}$
on meromorphic functions of variables $z_1,\ldots,z_n$ with poles only along $z_i-z_j = 0$ for $i<j$ satisfy
\[ R_{ab} R_{cb} = R_{cb} R_{ab} + R_{ca} R_{ab}, \ \quad \ R_{ab} R_{bc} = -R_{ba} R_{ac} \]
for all pairwise distinct $a,b,c$.
\end{lemma}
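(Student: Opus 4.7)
My plan is to verify both identities by direct residue calculations. By linearity and partial-fraction decomposition in $z_a, z_b, z_c$, it suffices to treat test functions $f = g(z)/D(z)$ where $D$ is a monomial in the relevant $(z_i - z_j)$ factors and $g$ is regular along all the diagonals appearing in $D$. The key tool is the Cauchy formula
\[
\mathrm{Res}_{z_a = z_b}\frac{\phi(z_a,\ldots)}{(z_a - z_b)^r} = \frac{1}{(r-1)!}\bigl(\partial_{z_a}^{r-1}\phi\bigr)\big|_{z_a = z_b},
\]
applied iteratively together with the chain-rule identity $\partial_{z_b}[h(z_a,z_b)|_{z_a=z_b}] = (\partial_{z_a}+\partial_{z_b})h|_{z_a=z_b}$.

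For the second relation, I would reduce to $f = g/((z_a - z_b)^r(z_b - z_c)^s)$ with $g$ regular at $z_a = z_b = z_c$, and compute both $f R_{ab} R_{bc}$ and $f R_{ba} R_{ac}$ explicitly by iterating the Cauchy formula. The identity $(z_a - z_b)^{r} = (-1)^r(z_b - z_a)^{r}$ shows that $R_{ba}$ and $R_{ab}$ differ by a sign $(-1)^r$ on a pole of order $r$, and a parallel sign appears for $R_{ac}$ versus $R_{ca}$; combining these signs with the multinomial structure of the iterated derivatives, both computations produce the same combinatorial sum of partial derivatives of $g$ evaluated on the triple diagonal $z_a = z_b = z_c$, differing only by an overall sign $-1$.

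For the first relation, reduce to $f = g/((z_a - z_b)^r(z_c - z_b)^s(z_a - z_c)^t)$ with $g$ regular. When $t = 0$, the operators $R_{ab}$ and $R_{cb}$ act on independent factors of the denominator, so they commute, and $R_{ca} f = 0$; the identity holds trivially. When $t > 0$, the crucial point is that $(z_a - z_c)^{-t}$ admits different Laurent expansions depending on whether one expands in $(z_a - z_b)$ around $z_a = z_b$ (used when $R_{ab}$ is applied first), or in $(z_c - z_b)$ around $z_c = z_b$ after rewriting $(z_a - z_c)^{-t} = (-1)^t(z_c - z_a)^{-t}$ (used when $R_{cb}$ is applied first). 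The discrepancy between the two expansions contributes a residue at $z_a = z_c$ which, after applying the remaining $R_{ab}$, equals $f R_{ca} R_{ab}$. The main technical obstacle is the combinatorial bookkeeping of signs and multinomial coefficients in these expansions; conceptually the identity reflects the fact that iterated residues along different codimension-one divisors commute up to extra residues along higher-codimension intersections they generate.
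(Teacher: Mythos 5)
Your proposal is correct and takes essentially the same route as the paper: the paper's proof likewise reduces to explicit test functions --- it assumes $f(z) = \prod_{i<j}(z_i-z_j)^{m_{ij}}$ with $m_{ij}\in\BZ$ --- and then checks both identities by ``a direct calculation,'' which is precisely the iterated Cauchy-formula computation with sign and multinomial bookkeeping that you outline. Your pole-merging explanation of the correction term $R_{ca}R_{ab}$ (the poles of $f$ in $z_c$ at $z_b$ and at $z_a$ collide when the residue $R_{ab}$ sets $z_a=z_b$, so the single residue afterwards equals the sum of the two residues taken first) is exactly the content of the paper's unspoken direct calculation.
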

\begin{proof}
We may assume that
\[ f(z) = \prod_{1 \leq i < j < n} (z_i - z_j)^{m_{ij}} \]
for some $m_{ij} \in \BZ$. The claim follows then from a direct calculation.
\end{proof}

\subsection{Proof of Theorem~\ref{THMAPP2}}
\label{Subsection_Proof_of_Thm_Appendix}
We prove the quasimodularity of $[F]_{p^0, \sigma}$,
the homogeneity of $[F]_{p^0}$, and the formula
\begin{equation} \label{dfsdfsd}
\begin{aligned}
\frac{d}{dC_2} \Big[ F(z) \Big]_{p^0}
=
\left[ \frac{d}{dC_2} F \right]_{p^0}
& - 2\sum_{a < b = n} \left[ (w_a - w_b) F R_{ab} \right]_{p^0} \\
& - 2 \sum_{a < b < n} \left[ (w_b - w_a) F R_{ba} \right]_{p^0},
\end{aligned}
\end{equation}
which implies the formula in the Theorem by symmetrization over $S_n$.
We argue by induction on $n$, the number of variables $z_i$ on which $F$ depends.

If $n = 1$, then $F$ is a quasimodular form
and all three statements hold by inspection.
Assume the statement is known for all functions which depend on
a smaller number of variables.
By Proposition~\ref{Prop3534545}, we have
\[
\Big[ F(z) \Big]_{p^0} = \sum_{m \geq 1}
\sum_{i_1=1, i_2, \ldots, i_{m+1}=n}
\left[
\left( F \frac{\A_{1n}^{m}}{m!}\right) R_{i_1 i_2} R_{i_2 i_3} \cdots R_{i_m i_{m+1}} \right]_{p^0}.
\]
Each summand on the right
side depends on fewer variables than $F$
and is therefore a quasi-modular form of weight $k$
by Lemma~\ref{erkfprogdf} and induction.
To obtain \eqref{dfsdfsd} we apply the $\frac{d}{dC_2}$ operator,
use induction on the right side,
and use Lemma~\ref{Lemma_REsiduerelations} to commute the resulting $R_{ab}$
operators past the $R_{i_k i_{k+1}}$ operators.
This yields \eqref{dfsdfsd} also for $F$.
The quasimodularity of $[F]_{p^0, \sigma}$
(and the weight bound)
follows similarly from Lemma~\ref{erkfprogdf} and Proposition~\ref{Proposition_p0_Reduction_with_sigma}. \qed

\section{Elliptic fibrations} \label{Section_Elliptic_fibrations}
\subsection{Overview}
We present a refinement of Conjecture~\ref{Conj_HAE}
by weight, and give evidence in the case of
elliptic Calabi--Yau threefolds in fiber classes.

\subsection{Weight refinement} \label{Subsection_WeightE}
Let $\pi : X \to B$ be an elliptic fibration with a section
and integral fibers.
The holomorphic anomaly equation of Conjecture~\ref{Conj_HAE}
and the argument used in the proof of Corollary~\ref{CorE}
yield a refinement of Conjecture~\ref{Conj_Quasimodularity} by weight as follows.

Recall the divisor class $W$ defined in Section~\ref{Subsection_Elliptic_fibrations}.
The endomorphisms of $H^{\ast}(X)$ defined by
\[
T_+(\alpha) = ( \pi^{\ast} \pi_{\ast} \alpha ) \cup W, \quad
T_-(\alpha) = \pi^{\ast} \pi_{\ast} ( \alpha \cup W )
\]
satisfy
$T_+^2 = T_+$ and $T_-^2 = T_-$ as well as $T_+ T_- = T_- T_+ = 0$. Hence
the cohomology of $X$ splits as
\[ H^{\ast}(X) = \mathrm{Im}(T_+) \oplus \mathrm{Im}(T_-)
\oplus \big( \mathrm{Ker}( T_+ ) \cap \mathrm{Ker}(T_-) \big) .
\]
Define a modified degree function $\underline{\deg}(\gamma)$ by
the assignment
\begin{equation*} \label{modified_degree_function}
\underline{\deg}(\gamma) =
\begin{cases}
2 & \text{if } \gamma \in \mathrm{Im}(T_+) \\
1 & \text{if } \gamma \in \mathrm{Ker}( T_+ ) \cap \mathrm{Ker}(T_-) \\
0 & \text{if } \gamma \in \mathrm{Im}(T_-). \\
\end{cases}
\end{equation*}
If $X$ is an elliptic curve and $B$ is a point then
$\underline{\deg}$ specializes to the real cohomological degree $\deg_{\BR}$.

\begin{corstar} \label{Cors}
Assume Conjectures~\ref{Conj_Quasimodularity} and \ref{Conj_HAE}
hold.
Then for any $\underline{\deg}$-homogeneous classes
$\gamma_1, \ldots, \gamma_n \in H^{\ast}(X)$
we have
\[
\CC^{\pi}_{g, \kk}( \gamma_1, \ldots, \gamma_n )
\in 
H_{\ast}(\Mbar_{g,n}(B, \kk))
\otimes
\frac{1}{\Delta(q)^{m}}
\QMod_{\ell},
\]
where $m = -\frac{1}{2} c_1(N_{\iota}) \cdot \mathsf{k}$
and $\ell = 2g - 2 + 12m + \sum_i \underline{\deg}(\gamma_i)$.
\end{corstar}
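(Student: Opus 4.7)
The plan is to adapt the eigenspace argument used in the proof of Corollary~\ref{CorE}. Using $q\tfrac{d}{dq}\log\Delta = -24C_2$ and the commutator relation \eqref{COMMUTATOR} on $\QMod$, a short computation yields
\[
\left[\tfrac{d}{dC_2},\, q\tfrac{d}{dq}\right]\bigl(\Delta(q)^{-m}f\bigr) = (24m-2k)\,\Delta(q)^{-m}f, \qquad f\in\QMod_k,
\]
so that $\tfrac{1}{\Delta(q)^m}\QMod_k$ is precisely the $(24m-2k)$-eigenspace of the commutator inside $\tfrac{1}{\Delta(q)^m}\QMod$. By Conjecture~\ref{Conj_Quasimodularity}, $\CC^{\pi}_{g,\kk}(\gamma_1,\ldots,\gamma_n)\in\tfrac{1}{\Delta(q)^m}\QMod$, so it suffices to show that the commutator acts on this class by the scalar $24m-2\ell = -2(2g-2+\sum_i\underline{\deg}(\gamma_i))$; the weight decomposition then forces homogeneity of weight $\ell$.

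To compute the commutator I would first apply the divisor equation for $W \in H^2(X)$ to write
\[
q\tfrac{d}{dq}\CC^{\pi}_{g,\kk}(\gamma_1,\ldots,\gamma_n) = p_{n+1,*}\CC^{\pi}_{g,\kk}(\gamma_1,\ldots,\gamma_n, W),
\]
where $p_{n+1}\colon\Mbar_{g,n+1}(B,\kk)\to\Mbar_{g,n}(B,\kk)$ forgets the last marking. Expanding both $\tfrac{d}{dC_2}\CC^{\pi}_{g,\kk}(\gamma_1,\ldots,\gamma_n,W)$ and $\tfrac{d}{dC_2}\CC^{\pi}_{g,\kk}(\gamma_1,\ldots,\gamma_n)$ via Conjecture~\ref{Conj_HAE}, I expect the bulk of the corresponding terms to cancel after commuting $p_{n+1,*}$ past $\iota_*$, $j_*$ and $\Delta^!$ (by proper base change, since the forgotten marking is never a glued point) and reapplying the divisor equation on each residual factor. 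The remaining residual contributions fall into three types.

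The first is a dilaton contribution from the new marking in the third HAE term: since $\pi^*\pi_*W = \1$ and $p_{n+1,*}\psi_{n+1} = 2g-2+n$, it produces $-2(2g-2+n)\CC^{\pi}_{g,\kk}(\gamma_1,\ldots,\gamma_n)$. The second comes from the cotangent comparison $\psi_i^{(n+1)} = p_{n+1}^*\psi_i^{(n)} + D_{i,n+1}$ applied to the third HAE term at the old markings: the $D_{i,n+1}$ summand produces $-2\CC^{\pi}_{g,\kk}(\gamma_1,\ldots,(\pi^*\pi_*\gamma_i)\cdot W,\ldots,\gamma_n) = -2\CC^{\pi}_{g,\kk}(\gamma_1,\ldots,T_+\gamma_i,\ldots,\gamma_n)$, which equals $-2\CC^{\pi}_{g,\kk}(\gamma_1,\ldots,\gamma_n)$ when $\underline{\deg}(\gamma_i)=2$ and vanishes otherwise, summing to $-2\,\#\{i:\underline{\deg}(\gamma_i)=2\}\cdot\CC^{\pi}_{g,\kk}(\gamma_1,\ldots,\gamma_n)$. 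The third, most subtle contribution comes from splittings appearing in Conjecture~\ref{Conj_HAE} for $\CC^{\pi}_{g,\kk}(\gamma_1,\ldots,\gamma_n,W)$ that are stable \emph{only} because of the extra $W$ marking, namely strata with $(g_i,\kk_i,|S_i|) = (0,0,2)$ and $S_i = \{j,n+1\}$. Each such stratum produces the genus-zero three-point factor $\CC^{\pi}_{0,0}(\gamma_j,W,\1) = \pi_*(\gamma_j\cdot W)$; since $\pi^*$ is injective (through the section $\iota$), $\pi_*(\gamma_j\cdot W)$ is nonzero exactly when $\gamma_j\in\mathrm{Im}(T_-) = \pi^*H^*(B)$, i.e.\ when $\underline{\deg}(\gamma_j)=0$. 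After pushing forward by $p_{n+1,*}$ the destabilized rational tail contracts, and each such term contributes $+2\CC^{\pi}_{g,\kk}(\gamma_1,\ldots,\gamma_n)$, totalling $+2\,\#\{i:\underline{\deg}(\gamma_i)=0\}\cdot\CC^{\pi}_{g,\kk}(\gamma_1,\ldots,\gamma_n)$.

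Adding the three contributions and using the identity $n+\#\{\underline{\deg}=2\}-\#\{\underline{\deg}=0\} = \sum_i\underline{\deg}(\gamma_i)$ gives the desired scalar $-2(2g-2+\sum_i\underline{\deg}(\gamma_i)) = 24m-2\ell$. The hardest step will be the third piece: one must enumerate precisely the strata whose stability is gained only by adjoining $W$, carry out the $p_{n+1,*}j_*\Delta^!$ push-forward by explicit contraction of the destabilized bubble, and use the projection formula together with $\pi_*W=1$ to identify $\mathrm{Im}(T_-)$ with $\pi^*H^*(B)$ and to match the resulting scalar multiple of $\CC^{\pi}_{g,\kk}$. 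When $B$ is a point this reduces to the unstable-$\Mbar_{0,2}$ correction underlying the proof of Corollary~\ref{CorE}.
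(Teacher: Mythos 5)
Your proposal is correct and follows essentially the same route as the paper: the paper disposes of Corollary*~\ref{Cors} in a single sentence by invoking Conjecture~\ref{Conj_HAE} together with ``the argument used in the proof of Corollary~\ref{CorE}'', which is precisely the commutator/eigenvalue computation you carry out, your three residual contributions (the dilaton term $-2(2g-2+n)$, the $T_+$-collision term from $\psi_i = p^{\ast}\psi_i + D_{(i,n+1)}$, and the destabilized $(g_2,\kk_2,|S_2|)=(0,0,2)$ splittings producing the $T_-$ term) being the exact fibration analogues of the terms that yield the eigenvalue $-2(2g-2+n-m_0+m_2)$ in the elliptic-curve case. Your preliminary identity $\left[\tfrac{d}{dC_2},\,q\tfrac{d}{dq}\right] = (24m-2k)\,\mathrm{id}$ on $\Delta(q)^{-m}\QMod_k$ and the final scalar $24m-2\ell = -2\bigl(2g-2+\sum_i \underline{\deg}(\gamma_i)\bigr)$ both check out, so your writeup is in fact more detailed than the paper's own justification.
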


\subsection{An example} \label{Subsection_CY3example} 
Let $X$ be a Calabi--Yau threefold and let
$\pi : X \to B$ be an elliptic fibration
with section and integral fibers over a Fano surface $B$.
We consider the genus $g$ Gromov--Witten potentials in fiber classes
\[ F_g(q) = \sum_{d = 0}^{\infty} q^d \int_{[ \Mbar_{g,0}(X,dF) ]^{\text{vir}}} 1 \]
with the convention that the summation starts at $d=1$ if $g \in \{ 0, 1 \}$.
By Toda's calculation \cite[Thm 6.9]{T12}, the Pandharipande--Thomas invariants $\mathsf{P}_{n,\beta}$
of $X$ in fiber classes form the generating series
\[
\sum_{d = 0}^{\infty} \sum_{n \in \BZ}
\mathsf{P}_{n, dF} y^n q^d
=
\prod_{\ell, m \geq 1} (1- (-y)^{\ell} q^{m} )^{-\ell \cdot e(X)}
\cdot \prod_{m \geq 1} (1-q^m)^{-e(B)} \,.
\]
Assuming $X$ satisfies the Gromov--Witten/Pairs correspondence \cite{PaPix1, PaPix2},
we therefore obtain
\begin{align*}
F_0(q) & = - e(X) \sum_{m, a \geq 1} \frac{1}{a^3} q^{ma} \\
F_1(q) & = \left( e(B) - \frac{1}{12} e(X) \right) \sum_{m,a \geq 1} \frac{1}{a} q^{ma} \\
F_g(q) & = e(X) \frac{(-1)^g B_{2g}}{4g} C_{2g-2}(q), \quad g \geq 2.
\end{align*}
If $g \geq 2$ the series
\[ \int \CC^\pi_{g,0}() = F_g(q) \]
is quasimodular of weight $2g-2$ in agreement with Corollary*~\ref{Cors}.

In genus $g \leq 1$ the series $F_0$ and $F_1$ are not quasimodular forms.
However, this does not contradict Corollary*~\ref{Cors}
since the moduli spaces $\Mbar_{g,0}(\p^1,0)$ are unstable here and $\CC^\pi_g()$ is not defined.
Instead, we need to add additional insertions to stabilize the moduli space. In genus $0$ we obtain
\begin{align*}
\int \CC^\pi_{0,0}(W,W,W) & = \int_X W^3 + \left( q \frac{d}{dq} \right)^3 F_0(q) = -12 e(X) C_4(q), \\
\int \CC^\pi_{0,0}(\pi^{\ast}D,W,W) & = \int_X \pi^{\ast} D \cup W^2 = 0,\\
\int \CC^\pi_{0,0}(\pi^{\ast}D, \pi^{\ast} D',W) & = \int_X \pi^{\ast} D \cup \pi^{\ast} D' \cup W = \int_B D \cdot D',
\end{align*}
for any $D, D' \in H^{2}(B)$, where in the first equality we used
\[ e(X) = -60 \int_{B} K_B^2. \]
All three evaluations are in perfect agreement with Corollary*~\ref{Cors}.

In genus $1$ we obtain agreement with Corollary*~\ref{Cors} by
\begin{align*}
\int \CC^{\pi}_{1,0}(W)
& = \int_{\Mbar_{1,1}(X,0)} \ev_1^{\ast}(W) + \left( q \frac{d}{dq} \right) F_1(q) \\
& = \left( e(B) -\frac{1}{12} e(X) \right) C_2(q),
\end{align*}
where we used
\[ c_2(X) = \pi^{\ast} c_2(B) + 11 \pi^{\ast} c_1(B)^2 + 12 \iota_{\ast} c_1(B). \]

A direct check shows that all evaluations above are also compatible with the conjectured holomorphic anomaly equation.
For example, in genus $1$ Conjecture~\ref{Conj_HAE} predicts correctly
\begin{align*}
\frac{d}{dC_2} \int \CC^{\pi}_{1,0}(W)
& = \int \CC_{0,0}^{\pi}(W, \Delta_B) - 2 \int \CC_{1,0}(\1) \psi_1 \\
& = e(B) - 2 \int_{\Mbar_{1,1}} \psi_1 \int_X c_3(X) \\
& = e(B) - \frac{1}{12} e(X).
\end{align*}

\end{document}